\newtheorem{thm}{Theorem}[section]
\newtheorem{prop}[thm]{Proposition}
\newtheorem{conj}[thm]{Conjecture}
\newtheorem{cor}[thm]{Corollary}
\newtheorem{lem}[thm]{Lemma}
\theoremstyle{definition}
\newtheorem{define}[thm]{Definition}
\theoremstyle{remark}
\newtheorem{rem}[thm]{Remark}
\newcommand{\ve}[1]{\boldsymbol{\mathbf{#1}}}
\newcommand{\R}{\mathbb{R}}
\newcommand{\Z}{\mathbb{Z}}
\newcommand{\N}{\mathbb{N}}
\newcommand{\C}{\mathbb{C}}
\renewcommand{\d}{\partial}
\renewcommand{\subset}{\subseteq}
\renewcommand{\tilde}{\widetilde}
\newcommand{\iso}{\cong}
\DeclareMathOperator{\ad}{{ad}}
\DeclareMathOperator{\codim}{{codim}}
\DeclareMathOperator{\gr}{{gr}}
\DeclareMathOperator{\Hom}{{Hom}}
\DeclareMathOperator{\id}{{id}}
\DeclareMathOperator{\Int}{{int}}
\DeclareMathOperator{\Spin}{{Spin}}
\DeclareMathOperator{\Span}{{Span}}
\DeclareMathOperator{\Sym}{{Sym}}
\DeclareMathOperator{\Tot}{{Tot}}
\renewcommand{\injlim}{\varinjlim}
\DeclareMathOperator{\Kom}{\mathbf{Kom}}
\newcommand{\bC}{\mathbb{C}}
\newcommand{\bE}{\mathbb{E}}
\newcommand{\bF}{\mathbb{F}}
\newcommand{\bR}{\mathbb{R}}
\newcommand{\bT}{\mathbb{T}}
\newcommand{\cA}{\mathcal{A}}
\newcommand{\cB}{\mathcal{B}}
\newcommand{\cC}{\mathcal{C}}
\newcommand{\cG}{\mathcal{G}}
\newcommand{\cR}{\mathcal{R}}
\newcommand{\cU}{\mathcal{U}}
\newcommand{\CF}{\mathit{CF}}
\newcommand{\SHI}{\mathit{SHI}}
\newcommand{\HFh}{\widehat{\mathit{HF}}}
\newcommand{\HFK}{\mathit{HFK}}
\newcommand{\SFH}{\mathit{SFH}}
\newcommand{\KHI}{\mathit{KHI}}
\newcommand{\KHM}{\mathit{KHM}}
\newcommand{\xs}{\ve{x}}
\newcommand{\ys}{\ve{y}}
\newcommand{\as}{\ve{\alpha}}
\newcommand{\bs}{\ve{\beta}}
\renewcommand{\a}{\alpha}
\renewcommand{\b}{\beta}
\newcommand{\g}{\gamma}
\newcommand{\dt}{\delta}
\newcommand{\veps}{\varepsilon}
\DeclareMathOperator{\Cone}{{Cone}}
\numberwithin{equation}{section}
\newcommand{\scF}{\mathscr{F}}
\newcommand{\scG}{\mathscr{G}}
\newcommand{\scH}{\mathscr{H}}
\newcommand{\scK}{\mathscr{K}}
\newcommand{\scM}{\mathscr{M}}
\newcommand{\scN}{\mathscr{N}}
\newcommand{\scS}{\mathscr{S}}
\newcommand{\G}{\Gamma}
\newcommand{\tildeotimes}{\mathrel{\tilde{\otimes}}}
\title{Connected sums and directed systems in  knot Floer homologies}
\author{Sudipta Ghosh}
\author{Ian Zemke}
\address{Department of Mathematics\\University of Notre Dame\\  South Bend, IN, USA}
\email{sghosh7@nd.edu}
\address{Department of Mathematics\\Princeton University\\  Princeton, NJ, USA}
\email{izemke@math.princeton.edu}
\begin{document}
\maketitle

\begin{abstract}
We prove a number of fundamental properties about instanton knot Floer homology. Our arguments rely on general properties of sutured Floer theories and apply also in the Heegaard Floer and monopole Floer settings, where many of our results were already known. Our main result is the connected sum formula for instanton knot Floer homology. An extension of this result proves the oriented skein exact triangle for the minus version of instanton knot Floer homology. Finally, we derive a new model of the minus version of instanton knot Floer homology, which takes the form of a free, finitely generated chain complex over a polynomial ring, as opposed to a direct limit. This construction is new to all of the Floer theories. We explore these results also in the context of Heegaard Floer theory as well.
\end{abstract}


\tableofcontents

\section{Introduction} \label{sec:intro}

In this paper, we study a diverse collection of knot invariants which arise from different Floer theories. Ozsv\'{a}th and Szab\'{o} \cite{OSKnots} defined an important package of knot invariants, usually referred to as \emph{knot Floer homology}. These invariants take the form of a vector space $\widehat{\HFK}(Y,K)$, and an $\bF[U]$-module $\HFK^-(Y,K)$. In this paper, we take $\bF$ to be $\Z/2 \Z$ or $\C$.
 Kronheimer and Mrowka \cite{KMSutures} define monopole and instanton versions of knot Floer homology, which we denote by
\[
\widehat{\KHM}(Y,K) \quad \text{and} \quad \widehat{\KHI}(Y,K).
\]
The above invariants are finitely dimensional vector spaces. More recently, work of Li \cite{LiLimits} has extended instanton and monopole theories to give a minus invariant
\[
\KHI^-(Y,K)\quad \text{and} \quad \KHM^-(Y,K).
\]
These take the form of a finitely generated module over $\C[U]$ or $\cR[U]$, where $\cR$ is the Novikov ring. 

The perspective of Li is modeled on earlier work of Etnyre, Vela-Vick, and Zarev \cite{EVZLimit} and Golla \cite{GollaLimit}, who described the $\bF[U]$-module $\HFK^-(Y,K)$ as a direct limit of the sutured invariants for $Y\setminus \nu(K)$ with various longitudinal slopes. These authors use the foundational work of Honda, Kazez and Mati\'{c} \cite{HKMTQFT} \cite{HKMSutured} on contact invariants in Juh\'{a}sz's Heegaard Floer theory for sutured manifolds \cite{JDisks}.

By mimicking these constructions in the setting of sutured instanton and monopole Floer homology, Li defines the instanton and monopole limit knot Floer homologies $\KHI^-(Y,K)$ and $\KHM^-(Y,K)$, which are modules over $\C[U]$ or $\cR[U]$, where $\cR$ is the Novikov ring.

There is a well-known conjecture that
\[
\widehat{\KHI}(Y,K)\iso \widehat{\HFK}(Y,K)\otimes \C\quad\text{and} \quad  \KHI^-(Y,K)\iso \HFK^-(Y,K)\otimes \C.
\]
 This is an analog of the more general conjecture of Kronheimer and Mrowka \cite{KMSutures}*{Conjecture~7.24} that 
\begin{equation} \label{Conj: KM}
 I^{\#}(Y) \cong \HFh(Y) \otimes \mathbb{C}.
 \end{equation}
 
The connection between monopole Floer homology and Heegaard Floer homology is comparably better understood. It was proved by Baldwin and Sivek \cite{BSequivalence} following works of  Taubes \cite{TaubesEquivalence}, Colin, Ghiggini and Honda \cite{CGHHF=ECH0}, \cite{CGHHF=ECH1}, \cite{CGHHF=ECH2}, \cite{CGHHF=ECH3} that there is an isomorphism of $\mathcal{R}$-modules
\begin{equation}
  \label{eqn:khm-hfk}
  \widehat{\KHM} (Y, K; \mathcal{R}) \cong \widehat{\HFK} (Y, K, \mathbb{Z}_2) \otimes \mathcal{R},
\end{equation}
 where $\mathcal{\cR}$ denotes the mod 2 Novikov ring.

In this paper, we prove a variety of novel properties about sutured and direct limit versions of knot Floer homology. Our focus is on proving the connected sum formula in instanton theory, but we will prove several additional results, some of which are novel even in the Heegaard Floer setting. Our goal is to give unified proofs which work in all three settings (instanton, monopole and Heegaard Floer).

\subsection{Results}

The central result of this paper is the following theorem:

\begin{thm}\label{thm: main} Suppose that $K_1,K_2$ are null-homologous knots in 3-manifolds $Y_1$ and $Y_2$, respectively. Then the instanton limit Floer homology $\KHI^-(Y_1\# Y_2, K_1\#K_2)$ is isomorphic to the derived tensor product
\[
\KHI^-(Y_1,K_1)\tildeotimes_{\C[U]} \KHI^-(Y_2,K_2)
\]
as an $\C[U]$-module.
\end{thm}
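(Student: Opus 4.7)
My plan is to exploit the directed-system description of $\KHI^-$ developed by Li (following the Heegaard Floer prototype of Etnyre--Vela-Vick--Zarev and Golla) together with a sutured-level gluing formula. Recall that $\KHI^-(Y_i, K_i) \iso \varinjlim_n \SHI(Y_i\setminus \nu(K_i), \Gamma_{n,i})$, where $\Gamma_{n,i}$ is a longitudinal suture of slope $n$ on the boundary torus and the connecting maps are bypass attachment maps, with a distinguished bypass implementing the $\C[U]$-action. The goal is to show that this directed system for $K_1\#K_2$ is, in a strong sense, the levelwise tensor product of the directed systems for $K_1$ and $K_2$, and then to pass to the direct limit.

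The first step is to prove a sutured connect sum formula. Geometrically, the complement of $K_1\#K_2$ inside $Y_1\#Y_2$ is obtained from the disjoint union of the two knot complements by a $1$-handle attachment along small disks on the two boundary tori. Using a pairing/gluing theorem for sutured instanton Floer homology (in the spirit of Baldwin--Sivek, Li, Kronheimer--Mrowka), I expect to identify
\[
\SHI\bigl((Y_1\#Y_2)\setminus \nu(K_1\#K_2),\Gamma_n\bigr)
\]
with a mapping-cone-type object built from the tensor products $\SHI(Y_1\setminus\nu(K_1),\Gamma_{n,1})\otimes_{\C}\SHI(Y_2\setminus\nu(K_2),\Gamma_{n,2})$. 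The mapping-cone structure, as opposed to a naive tensor product, is what ultimately encodes the derived nature of the final formula.

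The second step is to verify that this identification is natural with respect to the bypass attachment maps that define both the connecting maps of the directed system and the $U$-action. Given this naturality, the directed system for $K_1\#K_2$ is identified with a tensor product of the two individual directed systems, and passing to the direct limit produces the derived tensor product $\KHI^-(Y_1,K_1)\tildeotimes_{\C[U]} \KHI^-(Y_2,K_2)$. Alternatively, one could first invoke the new chain-complex model for $\KHI^-$ announced in the abstract, and then read off the derived tensor product from a chain-level connect sum formula.

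The main obstacle will be establishing the sutured gluing formula in instanton Floer theory, because instanton theory lacks a native chain-level description. The argument must proceed indirectly, via the surgery exact triangle and manipulation of sutures using bypass triangles, rather than via a direct chain-level computation as in Heegaard Floer. A closely related difficulty is verifying naturality with respect to the bypass maps: in the Heegaard Floer setting one would simply track the $U$-action on chain complexes, but here one must work in the absence of such chain complexes and instead use the abstract categorical structure shared by the three sutured Floer theories, in keeping with the paper's stated goal of giving unified proofs that apply in all three settings.
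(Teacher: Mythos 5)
Your overall strategy -- directed systems, a surgery/mapping-cone identification of the sutured homology of the connected-sum complement, naturality with respect to bypass maps, then passage to the limit -- is the same in outline as the paper's. But as written the proposal has genuine gaps at exactly the points where the proof lives. Topologically, a single 1-handle joining the two boundary tori does not produce the complement of $K_1\# K_2$ (it has genus-two boundary); the paper's mechanism is to attach \emph{two} 1-handles joining $R_+$ to $R_+$ and $R_-$ to $R_-$, producing a non-taut manifold $Y$ with $\SHI(Y)=0$, and to realize the connected-sum complement as $0$-surgery on two boundary-parallel curves $L_1,L_2$ (contact 2-handles). One then applies the two-dimensional iterated surgery cube to the link $L$; the vanishing of the $(\infty,\infty)$ vertex is what collapses the cube to a mapping cone and is the actual source of the ``derived'' structure. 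Equally essential, and absent from your sketch, is the identification of the remaining vertices as $C_{n-1}\otimes D_m\oplus C_n\otimes D_{m-1}$ and $C_n\otimes D_m$ (via decomposition along product disks, after contact-handle manipulations) and of the surgery maps with $\phi^{\pm}\otimes\id$ and $\id\otimes\phi^{\pm}$; saying you ``expect to identify'' the glued-up homology with ``a mapping-cone-type object'' defers the entire content. Finally, passing to the limit is not automatic: the transition maps are only defined up to units (projectively transitive systems), and one needs a truncation/boundedness argument in the Alexander grading to identify the limit of these cones with the derived tensor product, plus a separate argument (the paper uses a triple connected sum with an unknot and homological perturbation) to see that the isomorphism respects the $\C[U]$-action rather than only the graded groups.

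The instanton-specific obstacle is also not addressed by your remark that the argument should proceed ``indirectly via the surgery exact triangle and bypass triangles.'' The difficulty is not the absence of chain complexes -- the instanton complex of a closure exists -- but that $\SHI$ is a simultaneous generalized eigenspace of $\mu(R)$ and $\mu(\mathrm{pt})$ acting on the Floer homology of a closure. To iterate the exact triangle over a two-component link one needs the full link-surgery hypercube (families of metrics, as in Scaduto), and then one must show that taking generalized eigenspaces can be performed coherently on hypercubes of complexes and commutes, up to homotopy, with the cube maps. The paper devotes an entire section to this eigenspace-hypercube formalism, and without it (or some substitute) your step ``identify $\SHI$ of the glued manifold with a mapping cone of tensor products'' cannot even be formulated in the instanton setting.
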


Note that original the proof of the connected sum formula in Heegaard Floer theory \cite{OSKnots} uses the existence of a special Heegaard diagram for $K_1\# K_2$. An alternate strategy is to consider the cobordism map for the pair-of-pants cobordism relating $(Y_1,K_1)\sqcup (Y_2,K_2)$ and $(Y_1\# Y_2,K_1\# K_2)$, as in \cite{ZemConnectedSums}. Neither of these strategies is available in sutured instanton theory.

Our proof of Theorem~\ref{thm: main} has two main ingredients. The first is a novel manipulation in terms of decomposing surfaces in sutured Floer theories. The second ingredient is a collection of very general results about generalized eigenspaces and filtered chain complexes which are necessary to perform the iterated mapping cone construction in sutured instanton theory. 

We recall that the iterated mapping cone construction gives a very general set of tools in Floer theory which relate the Floer homologies of different Dehn surgeries of a link $L\subset Y$. The origins of the construction date to Floer's original Dehn surgery exact triangle \cite{FloerExactTriangle} for instanton homology. In Heegaard Floer theory, Ozsv\'{a}th and Szab\'{o} \cite{OSDoubleBranchedCover} proved a link surgery spectral sequence, which is the natural generalization of the exact triangle to links. The construction has been extended to most other Floer theories by a number of authors. See \cite{KMUnkotDetector} \cite{BloomSpectralSequence} \cite{ScadutoOddKh}. 

The aforementioned works establish the iterated mapping cone construction in instanton Floer homology. However they are insufficient for sutured instanton theory, since sutured instanton theory is constructed from the ordinary instanton complex by taking certain generalized eigenspaces of several operators. In Section~\ref{sec:hypercubes-eigenspaces}, we describe a very general framework for eigenspaces and filtered complexes, which establish suitable versions of the iterated mapping cone complex in the context of sutured instanton homology. We expect this to be of independent interest.

We additionally illustrate our proofs in the context of Heegaard Floer theory, using Heegaard diagrams. In the process, we also prove a folklore result of J. Rasmussen relating the Honda-Kazez-Mati\'{c} maps for contact 2-handles with certain 4-dimensional cobordism maps. See Section~\ref{sec:folklore}. This conjecture was used by Golla \cite{GollaLimit} in his construction of the direct limit knot Floer homology.

Our techniques can also be used to study skein exact triangles in knot homology theories. The hat versions of Heegaard, instanton and monopole knot Floer homology are known to satisfy the oriented skein relation by the work of Ozsv\'ath and Szab\'o \cite{OSKnots}*{Theorem~10.2} and Kronheimer and Mrowka \cite{KMskein}*{Theorem~3.1}. In Heegaard Floer homology, Ozsv\'ath and Szab\'o \cite{OSskein}*{Theorem~1.1} showed that the minus version of knot Floer homology, $\HFK^-(Y, K)$ also satisfies the skein exact triangle.
In the instanton setting, it was heretofore an open question whether the minus version satisfied the Skein exact triangle \cite{GhoshLiWongTau}*{Question~1.20}. Using similar techniques to our proof of the connected sum formula, we prove the following:

\begin{thm}
Let $L_{+}$, $L_-$ and $L_0$ be oriented links in $Y$ as above. If the two strands meeting at the distinguished
crossing in $L_+$ belong to the same component, then there is an exact triangle
\[ 
\cdots \KHI^-(L_{+}) \xrightarrow{f} \KHI^-(L_-) \xrightarrow{g} \KHI^-(L_0)\tildeotimes_{\C[U_1,U_2]} \mathbb{C}[U_1,U_2]/(U_1-U_2) \xrightarrow{h} \KHI^-(L_{+})\cdots
.\]
If they belong to different components, then there is an exact triangle
\[
\cdots \KHI^-(L_{+}) \xrightarrow{f} \KHI^-(L_-) \xrightarrow{g} \KHI^-(L_0)\otimes_{\C} W_s \xrightarrow{h} \KHI^-(L_{+})\cdots
\]
where $W_s$ is the graded module given by 
\[
    W_s= 
\begin{cases}
    \mathbb{C},& \text{if s=0, 1 } \\
    0,              & \text{otherwise.}
\end{cases}
\]
Furthermore, $f$, $g$ and $h$ preserve the Alexander grading. Here, we equip $\bC[U_1,U_2]$ with the Alexander grading determined by setting $A(1)=0$ and $A(U_1)=A(U_2)=-1$. 
\end{thm}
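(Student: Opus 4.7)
The plan is to mimic the strategy of Theorem~\ref{thm: main}, realising the skein triangle as an instance of a sutured surgery exact triangle applied in the complement of a small auxiliary unknot $C$ encircling the distinguished crossing, and then promoting it to the minus setting using the hypercube/directed-system machinery of Section~\ref{sec:hypercubes-eigenspaces}. Concretely, the three Dehn fillings of $\nu(C)$ with the standard triple of slopes recover $L_+$, $L_-$, and the oriented resolution $L_0$ equipped with an extra pair of basepoints at the site of the crossing. First I would invoke the sutured surgery exact triangle for $\widehat{\KHI}$ to produce an exact triangle of sutured complexes whose vertices compute the hat invariants of the three filled links; this is the direct analog, in sutured instanton theory, of the Heegaard triple subordinate to a band move used by Ozsv\'ath and Szab\'o in \cite{OSskein}.

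Next I would pass from hat to minus invariants by forming the direct system over longitudinal slopes along $L_+\cup C$ and its Dehn surgeries. The key technical input, which is exactly what Section~\ref{sec:hypercubes-eigenspaces} provides, is that iterated mapping cones are compatible with the generalised eigenspace decomposition that cuts out sutured invariants from ordinary instanton invariants. This lets the triangle survive to a triangle of $\C[U]$-modules identifying two of the vertices with $\KHI^-(L_+)$ and $\KHI^-(L_-)$. The third vertex is some auxiliary module $M$ built from the complement of $L_0$ with the extra pair of basepoints near the crossing retained as part of the sutured data.

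The main obstacle, and the heart of the argument, is identifying $M$ as stated. When the two crossing strands of $L_+$ lie on the same component, the resolution $L_0$ has one extra component compared with $L_\pm$, so the two extra basepoints sit on distinct components of $L_0$; this naturally makes $\KHI^-(L_0)$ a module over $\C[U_1,U_2]$, with $U_1$ and $U_2$ acting through those two components. The geometric effect of the surgery triangle is to force these two basepoints to behave as a single basepoint, as they do in $L_\pm$, which at the level of modules is precisely the \emph{derived} base change along $\C[U_1,U_2]\to \C[U_1,U_2]/(U_1-U_2)$; the fact that this base change is derived rather than ordinary is forced by the direct-system nature of the construction and is again a direct application of Section~\ref{sec:hypercubes-eigenspaces}. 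When the two strands lie on different components, $L_0$ has one fewer component and the extra basepoints both lie on a single component of $L_0$; this is a free-stabilisation situation at the level of sutured invariants, which factors off a $\C$-tensor factor $W_s$ supported in Alexander gradings $0$ and $1$, exactly as in the corresponding Heegaard Floer statement \cite{OSskein}.

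Finally, to verify that $f$, $g$ and $h$ preserve the Alexander grading, I would track the grading through each step of the construction, using that the maps in the sutured surgery triangle are induced by decompositions along surfaces with vanishing relative Euler class, together with the convention $A(U_1)=A(U_2)=-1$. The hardest part of the argument will be the bookkeeping in Step~3: arranging a consistent sutured picture for the third vertex which simultaneously realises the directed system, exhibits the derived tensor decomposition in the same-component case, and specialises to the free-stabilisation picture in the different-component case.
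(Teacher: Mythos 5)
Your overall strategy matches the paper's: an unknot $l$ encircling the crossing with linking number zero, the instanton surgery exact triangle applied in closures, exactness preserved under direct limits, and then an identification of the third vertex using the connected-sum/tube machinery of Sections~\ref{sec:connected-sum-HF}--\ref{sec:7} together with the eigenspace-hypercube formalism. One point of framing to fix: the third vertex is not ``$L_0$ with two extra basepoints'' but rather $L_+$ inside the $0$-surgered manifold $Y_0$; its sutured complement is related to that of $L_0\subset Y$ by gluing an annulus (Kronheimer--Mrowka's observation), and it is this annulus gluing that the paper feeds into the iterated mapping cone construction, producing a square of bypass maps on $\SHI(Y\setminus\nu(L_0),\Gamma_{\ve{n}})$ which, after the staircase/truncation argument, becomes $\Cone(U_1-U_2)$ grading by grading. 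Your ``derived base change'' paragraph asserts this identification rather than deriving it, but it is at least the right statement and the right toolbox.

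The genuine gap is in the case where the two strands of $L_+$ lie on \emph{different} components (so the two strands of $L_0$ lie on the \emph{same} component). There is no ``free stabilisation'' mechanism in sutured instanton theory that lets you factor off the $W$ summand directly: what the construction actually produces is a mapping cone of $U_1-\a U_2$, where $U_1$ and $U_2$ are \emph{both} negative bypass maps on the same component of $L_0$ and hence agree only up to an undetermined unit $\a\in\C^\times$ (bypass maps are only projectively well defined). The conclusion $\KHI^-(L_0)\otimes W$ is precisely the statement $\a=1$, i.e. that the cone map is zero; if $\a\neq 1$ the cone would instead compute $\ker U\oplus\coker U$, a completely different (hat-like) answer, and no grading or rescaling trick analogous to Lemma~\ref{lem:rescale-module} removes this ambiguity because both variables act on the same component. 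The paper closes this by a localisation argument (Lemma~\ref{lem:localization link}): in sufficiently negative Alexander multi-gradings $\KHI^-(Y_0,L_+,\ve{i})\iso I^{\#}(Y\#^{r}S^1\times S^2)\neq 0$, while $U$ acts by an isomorphism in those gradings, so $\a\neq 1$ would force the cone to vanish there, a contradiction. Your proposal needs this step (or a substitute for it); as written, the different-components case does not follow.
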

Note that we may give $\KHI^-(L_0)\tildeotimes_{\C[U_1,U_2]} \C[U_1,U_2]/(U_1-U_2)$ a natural action of $\C[U]$ by having $U$ act by $U_1$ (or $U_2$) on $\C[U_1,U_2]/(U_1-U_2)$. At the moment, we can only prove Theorem~\ref{thm: skein} on the level of graded groups. We make the following conjecture:
\begin{conj}
\label{conj:intro}
There is a natural $\C[U_1,\dots, U_{\ell-1}]$-module structure on $\KHI^-(L_0)\tildeotimes_{\C[U_1,U_2]} \C[U_1,U_2]/(U_1-U_2)$ so that the map $f$, $g$ and $h$ are $\C[U_1,\dots, U_{\ell-1}]$-equivariant.
\end{conj}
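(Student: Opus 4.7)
The $\C[U_1,\ldots,U_{\ell-1}]$-module structure itself arises formally: since $\KHI^-(L_0)$ carries a natural module structure over $\C[U_1,\ldots,U_{\ell}]$, with one variable per component of $L_0$, the derived tensor product $\KHI^-(L_0) \tildeotimes_{\C[U_1,U_2]} \C[U_1,U_2]/(U_1-U_2)$ is automatically a module over
\[
\C[U_1,\ldots,U_{\ell}] \otimes_{\C[U_1,U_2]} \C[U_1,U_2]/(U_1-U_2) \iso \C[U_1,U_3,\ldots,U_{\ell}],
\]
which has $\ell-1$ variables. The substance of Conjecture~\ref{conj:intro} is therefore the equivariance of $f$, $g$, and $h$ with respect to the $U_i$-actions corresponding to link components disjoint from the distinguished crossing, together with the combined action of $U_1 = U_2$ on the component of $L_+$ through the crossing.

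The plan is to track these actions through the iterated mapping cone used to prove Theorem~\ref{thm: skein}. First, for each link component away from the crossing, I would fix a basepoint and pin down a chain-level model for the corresponding $U_i$-action on the sutured instanton complex, in analogy with the basepoint action in Heegaard Floer theory; such a model should be supported in a small neighborhood of the basepoint. Second, every structure map in the iterated mapping cone is built from local modifications near the distinguished crossing, so one expects the $U_i$-action to strictly commute with each structure map whenever $i$ corresponds to a component away from the crossing. Verifying this at the chain level would yield $\C[U_3,\ldots,U_{\ell-1}]$-equivariance of $f$, $g$, and $h$. Third, for the remaining variable one must check that the $U_1$-action (equivalently the $U_2$-action) on the tensor factor, when transported by $f$ and $g$, agrees with the $U$-action on $\KHI^-(L_\pm)$ coming from a basepoint on the merged component of $L_\pm$; this should hold provided the basepoints are chosen coherently on opposite sides of the distinguished crossing.

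The main obstacle is constructing the chain-level $U_i$-action in the sutured instanton setting so that it is both compatible with the generalized eigenspace framework of Section~\ref{sec:hypercubes-eigenspaces} and strictly commutes with the cone structure maps, rather than only commuting up to homotopy. In Heegaard Floer theory basepoint actions are part of the standard naturality package, but in sutured instanton theory no analogous formalism exists a priori; one must therefore either construct a local model via sutured manipulations and verify equivariance by hand, or work in an $A_\infty$-enhancement where equivariance holds up to coherent higher homotopies. A secondary subtlety is the independence of the module structure from the choice of basepoints, which should follow from a moving-basepoint invariance argument analogous to the corresponding statement in link Floer homology.
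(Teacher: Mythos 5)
This statement is a conjecture that the paper deliberately leaves open: the authors state explicitly that they can prove Theorem~\ref{thm: skein} only on the level of graded groups, and that they expect Conjecture~\ref{conj:intro} to follow from an extension of their techniques applied to the free model of Section~\ref{sec:limit-free}, which they do not carry out. So there is no proof in the paper to compare with, and your proposal has to stand on its own. It does not: it is an outline whose central step is exactly the unsolved difficulty. The purely formal part is fine --- since the calibrated $U_i$-actions on $\KHI^-(L_0)$ commute (and Lemma~\ref{lem:rescale-module} absorbs the unit ambiguities in the graded setting), the homology of $\Cone(U_1-U_2)$ does carry an action of $\C[U_1,U_3,\dots,U_\ell]$ --- but that is not the substance of the conjecture; the substance is the equivariance of $f$, $g$ and $h$, i.e.\ the compatibility of this algebraic action with the intrinsic module structure on $\KHI^-(L_\pm)$ under the identifications coming from the iterated mapping cone.

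Your plan for that equivariance rests on two inputs that do not exist in the sutured instanton setting. First, there is no basepoint action to ``pin down'': the $U_i$-action is by definition a negative bypass map in a projectively transitive direct-limit system of homologies, not a chain-level operator on a complex, so a local chain-level model of $U_i$ compatible with the eigenspace formalism of Section~\ref{sec:hypercubes-eigenspaces} is precisely the missing construction rather than an available tool; the paper's proposed remedy is the free, finitely generated model $\mathit{CLI}^-(L,\ve{n})$ of Section~\ref{sec:limit-free}, whose well-definedness is itself left conjectural. Second, strict commutation of the $U_i$-actions with the structure maps of the cone is not to be expected and is not how the paper handles the analogous knot case: in Section~\ref{sec:U-action} the commutations hold only up to homotopies ($h^{[\pm,\pm]}$, permutohedron families of metrics) that must be tracked explicitly, the maps are defined only up to scalars, and the identification of $\SHI$ of the surgered manifold with the cone is obtained through homological perturbation and Alexander-grading truncation, which destroys any naive chain-level locality. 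The paper even remarks there that its $U$-action argument does \emph{not} extend to links precisely because one needs an $A_\infty$-module structure on $\KHI^-(L)$. Your closing paragraph concedes this obstacle and defers it to ``an $A_\infty$-enhancement,'' but that enhancement, together with the coherence of the higher homotopies through the skein cone, is where essentially all of the work lies; as written, the proposal restates the conjecture's difficulty rather than resolving it.
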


Algebraically, one should expect that the module structure on the derived tensor product $\KHI^-(L_0)\tildeotimes_{\C[U_1,U_2]} \C[U_1,U_2]/(U_1-U_2)$ would require a choice of $A_\infty$-module structure on $\KHI^-(L_0)$ over $\C[U_1,U_2]$. We note that defining an $A_\infty$-module structure is essentially equivalent to defining a model of $\KHI(L_0)$ as a free, finitely generated chain complex over $\C[U_1,U_2]$. We propose such a free, finitely generated chain complex in Section~\ref{sec:limit-free}. We expect extensions of our techniques to prove Conjecture~\ref{conj:intro} for this model, however we do not approach it for the sake of brevity.

\subsection{A limit free definition of $\KHI^-(Y,K)$}

The  definition of limit knot Floer homology in terms of sutured manifolds, following \cite{GollaLimit} and \cite{EVZLimit}, is as a direct limit over the positive bypass maps of the knot complement. For some technical arguments, this is sometimes impractical since the resulting group is not the homology of a chain complex and is instead a direct limit of homologies. Our proof of the connected sum formula inspires a new model of $\KHI^-(K)$ which is the homology of a finitely generated, free chain complex over $\C[U]$, and does not involve taking the direct limit. Indeed, if $\Gamma_n$ is an $n$-framed longitude of $K$, we define the mapping cone complex
$\mathit{CKI}^-(K,n)$ to be the mapping cone complex
\[
\Cone\left(\begin{tikzcd}[column sep=2cm] \SFH(S^3\setminus \nu(K),\Gamma_n)\otimes_{\C} \C[U]\ar[r, "\phi^-|\id-\phi^+|U"] & \SFH(S^3\setminus \nu(K),\Gamma_{n+1})\otimes_{\C} \C[U]
\end{tikzcd}\right)
\]
which is a finitely generated free chain complex over $\C[U]$. 

\begin{thm}
\label{thm:free-model} There is an isomorphism of $\C[U]$-modules
\[
H_*\mathit{CKI}^-(K,n)\iso \KHI^-(K)
\]
for all $n\in \Z$. 
\end{thm}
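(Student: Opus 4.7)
The plan is to identify the homology of the finite two-term mapping cone $\mathit{CKI}^-(K,n)$ with the direct limit defining $\KHI^-(K)$ by constructing an explicit $\C[U]$-equivariant comparison map out of the mapping cone. Write $M_k = \SFH(S^3\setminus\nu(K),\Gamma_k)$, and recall from \cite{LiLimits} that $\KHI^-(K) = \varinjlim_k M_k$, with structure maps the positive bypass maps $\phi^+_k \colon M_k \to M_{k+1}$ and $\C[U]$-module structure determined by $U\cdot \iota_k(x) = \iota_{k+1}(\phi^-_k x)$, where $\phi^-_k \colon M_k \to M_{k+1}$ is the negative bypass map and $\iota_k \colon M_k \to \KHI^-(K)$ is the canonical map. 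In particular, $\iota_{k+1}\phi^+_k = \iota_k$ and $\iota_{k+1}\phi^-_k = U\iota_k$.

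I would first construct the comparison map
\[
f\colon M_{n+1}\otimes_{\C}\C[U] \longrightarrow \KHI^-(K), \qquad f(y\otimes U^j) = U^j\iota_{n+1}(y),
\]
and verify that it annihilates the image of $\phi^--U\phi^+$. Indeed, for $x\in M_n$,
\[
f\bigl((\phi^--U\phi^+)(x\otimes U^j)\bigr) = U^j\iota_{n+1}(\phi^- x) - U^{j+1}\iota_{n+1}(\phi^+ x) = U^{j+1}\iota_n(x) - U^{j+1}\iota_n(x) = 0,
\]
so $f$ descends to a $\C[U]$-equivariant map $\bar f \colon H_0\mathit{CKI}^-(K,n) \to \KHI^-(K)$.

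The core task is then to show that $\bar f$ is an isomorphism and that $H_{\geq 1}\mathit{CKI}^-(K,n) = 0$. For surjectivity: given $\iota_m(z) \in \KHI^-(K)$ with $m = n+1+j$, I would use the bypass exact triangle iteratively to rewrite the class as $\iota_m((\phi^-)^j y) = U^j\iota_{n+1}(y)$ for some $y\in M_{n+1}$, after stabilizing by further applications of $\phi^+$; the key algebraic input is that iterated compositions of $\phi^\pm$ effectively invert each other modulo $U$ in the direct limit. For injectivity and the vanishing of the kernel of $\phi^--U\phi^+$, a downward induction on the top $U$-degree of a hypothetical kernel element $\sum_j x_j \otimes U^j$, using the relations $\phi^- x_j = \phi^+ x_{j-1}$ together with the bypass exact triangle, forces each $x_j = 0$. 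The argument works uniformly in $n$, giving the stated framing-independence.

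The main obstacle is controlling the iterated compositions $(\phi^\pm)^j$ precisely enough to execute both the surjectivity and injectivity steps. This requires the bypass exact triangle for sutured instanton Floer homology together with the general framework for eigenspaces and filtered complexes developed in Section~\ref{sec:hypercubes-eigenspaces}, which together guarantee that the direct system becomes suitably stable after tensoring with $\C[U]$. These are exactly the tools exploited in the proof of the connected sum formula (Theorem~\ref{thm: main}), which indeed motivates the finite mapping cone model of $\KHI^-(K)$ introduced here.
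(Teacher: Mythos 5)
Your reduction---construct $\bar f\colon H_*\mathit{CKI}^-(K,n)\to\KHI^-(K)$ from $f(y\otimes U^j)=U^j\iota_{n+1}(y)$, then prove $\bar f$ is an isomorphism and that $\phi^-|\id-\phi^+|U$ has no kernel---is a reasonable setup, and the check that $f$ annihilates the image of the cone differential is correct. The gap lies in the two steps that carry all the content. For injectivity, a kernel element $\sum_{j=0}^J x_j\otimes U^j$ yields the relations $\phi^+(x_J)=0$, $\phi^-(x_j)=\phi^+(x_{j-1})$ for $1\le j\le J$, and $\phi^-(x_0)=0$; your downward induction needs, at minimum, injectivity of $\phi^+$ (or $\phi^-$) to get started, and neither holds in general. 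The maps $\phi^\pm$ sit in bypass exact triangles whose third term is $\widehat{\KHI}(K)$, so $\ker\phi^+$ is the image of the connecting map from $\widehat{\KHI}(K)$; since for $n\gg 0$ the dimension of $M_n$ grows by $\dim I^\#(S^3)=1$ per step while $\dim\widehat{\KHI}(K)$ is usually larger (already $3$ for the trefoil), exactness forces $\ker\phi^+\neq 0$. Thus the relations above do not ``force each $x_j=0$'' by a simple induction; one would need a much finer analysis of how $\ker\phi^+$ and $\ker\phi^-$ interact across Alexander gradings, and nothing in your sketch supplies it. For surjectivity, what you actually need is that $\iota_{n+1}(M_{n+1})$ generates $\KHI^-(K)$ over $\C[U]$ for an \emph{arbitrary fixed} $n$ (including small or negative $n$); this is a statement about the unstable range of the directed system, where $\iota_{n+1}$ is not known to be surjective in a given Alexander grading. ``Stabilizing by further applications of $\phi^+$'' does not help, since post-composing with transition maps never changes the stage from which a class must be pulled back, and the slogan that iterates of $\phi^\pm$ ``invert each other modulo $U$'' is precisely what has to be proved---it does not follow from $\phi^+\phi^-\simeq\phi^-\phi^+$. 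The asserted uniformity in $n$ is likewise unproven.

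For comparison, the paper's proof takes a different route that never requires injectivity of, or generation by, the bypass maps of $K$ at the fixed framing $n$: it specializes the connected sum argument to $K\# u$ with $u$ an unknot, keeps the framing $n$ on $K$ fixed, and lets the unknot framing $\ell\to\infty$. After truncating to Alexander gradings $\ge q$, the only stabilization input is that $\phi^+$ on the unknot complements $\cU_\ell$ is an isomorphism in the truncated range for $\ell\gg 0$; this allows one to cancel an acyclic subcomplex and identify $\mathit{CKI}^-(K,n)_{\ge q}$ with $\SHI(Y\setminus\nu(K),\Gamma_{n+\ell})_{\ge q}$, hence with $\KHI^-(K)$ in gradings above $q$, and the $\C[U]$-module structure is then obtained by the same argument as Theorem~\ref{thm:U-action}. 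If you wish to pursue your direct cone-versus-limit comparison, you must supply exactly the missing structural inputs above concerning $\ker\phi^\pm$ and generation in the unstable range; the paper's argument is engineered so that all the needed stabilization happens on the unknot factor rather than on $K$.
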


We note that Theorem~\ref{thm:free-model} was previously unknown in the Heegaard Floer setting. Using bordered Floer theory \cite{LOTBordered}, we prove in Section~\ref{sec:free-models-Heegaard} that it also holds in Heegaard Floer theory, and compare the above to the description in \cite{EVZLimit}.

We note that the above construction extends in a rather straightforward manner to give a free, finitely generated chain complex for links in a 3-manifold, which we briefly describe in Section~\ref{sec:free-models-for-links}.

\subsection{Organization}
In Section~\ref{sec:sutured-background}, we review the contact gluing map and associate the contact 2-handle map with the 4-dimensional surgery maps. In Section \ref{sec:3}, we briefly review the definition of $\SFH (M, \g)$, bypass attachment and the definition $\HFK^-(Y, K)$ from the direct limit perspective. In Section~\ref{sec:4}, we discuss the algebra related to the direct limit of projectively transitive system and the naturality of direct limit knot Floer homologies. In Section~\ref{sec:5}, we discuss several tensor products which appear in our paper. In Section~\ref{sec:connected-sum-HF}, we describe our proof of the connected sum formula in the context Heegaard Floer theories. In Section~\ref{sec:7}, we prove the connected sum formula for instanton theory. In Section~\ref{sec:8}, consider gradings. In Section~\ref{sec:10}, we prove the skein exact triangle for minus instanton knot Floer homology. In Section~\ref{sec:U-action}, we prove that the connected sum formula respects the $\C[U]$-module structure.  In Section~\ref{sec:bypass-exact-triangle}, we apply our techniques to compute the maps in the bypass exact triangle.

\subsection{Acknowledgments}

The authors are indebted to John Baldwin, Zhenkun Li, Steven Sivek, and Fan Ye for pointing out a number of errors and gaps and for numerous helpful discussions and suggestions for the article. The first author thanks John Baldwin for posing the question that led to this work. The first author is grateful to Zhenkun Li for insightful discussion which helped improve some of the arguments in Section~\ref{sec:8}. The majority of this project was completed when the first author was a graduate student at Louisiana State University and the first author would like to thank his advisors David Shea Vela-Vick and Scott Baldridge for their support and encouragement. The first author was supported by his advisor David Shea Vela-Vick’s NSF Grant 1907654. The second author is supported by NSF grant 2204375. The first author is grateful to the Max Planck Institute for Mathematics for hosting the first author for the bulk of this work.

\section{Background on sutured Floer theories}
\label{sec:sutured-background}

In this section, we recall Juh\'{a}sz's sutured Floer homology \cite{JDisks} and its counterparts in monopole and instanton Floer homologies due to Kronheimer and Mrowka \cite{KMSutures}.

We begin by recalling Gabai's notion of a sutured manifold \cite{Gabai}, adapted to the setting of sutured Floer homology:
\begin{define}\label{define: sutured manifold} 
A \emph{sutured manifold}  $(M,\g)$ consists of a compact, oriented $3$-manifold $M$ with a closed, embedded 1-manifold $\g\subset\partial M$. Write $R(\g)$ for $\d M\setminus \g$. Additionally, we assume the following are satisfied:
\begin{enumerate}
    \item $M$ has no closed components.
    \item $\pi_{0}(\gamma)$ surjects onto $\pi_{0}(\partial M)$.
\item The components of $\d M\setminus \g$ are partitioned into two submanifolds, $R_+(\g)$ and $R_-(\g)$, such that
\[
\d R_+(\g)=\g\quad \text{and} \quad \d R_-(\g)=-\g.
\]
\end{enumerate}
We say that $(M,\g)$ is \emph{balanced} if $\chi(R_+(\gamma))=\chi(R_-(\gamma))$.
\end{define}

\subsection{Sutured Heegaard Floer homology}

We now briefly recall Juh\'{a}sz's construction of sutured Heegaard Floer homology \cite{JDisks}. If $(\Sigma,\as,\bs)$ is a Heegaard diagram for the balanced sutured manifold $(M,\g)$, then Juh\'{a}sz considers the two tori
\[
\bT_{\a}:=\a_1\times \cdots \times \a_n\quad \text{and }\quad \bT_{\b}:=\b_1\times \cdots \times \b_n
\]
inside of the symmetric product $\Sym^n(\Sigma)$. Juh\'{a}sz defines the sutured chain complex $\CF(\Sigma,\as,\bs)$ to be the free $\bF$-vector space generated by intersection points $\xs\in \bT_{\a}\cap \bT_{\b}$. The differential on $\CF(\Sigma,\as,\bs)$ counts pseudo-holomorphic disks of Maslov index 1. The group $\SFH(M,\g)$ is defined to be the homology of $\CF(\Sigma,\as,\bs)$.

\subsection{Sutured instanton Floer homology}\label{subsec:SHI}
In this subsection, we recall the definition of sutured instanton Floer homology, due to Kronheimer and Mrowka \cite{KMSutures}.

Let $(Y, \a)$ consist of a closed, oriented 3-manifold $Y$ and a closed,
oriented 1-manifold $\a \subset Y$ intersecting some embedded surface transversally in an odd number
of points. We associate the following data to this pair:

\begin{itemize}
\item A Hermitian line bundle $w \to Y$ with $c_1(w)$ Poincar\'{e} dual to $\a$;
\item A $U(2)$ bundle $E \to Y$ equipped with an isomorphism $\Theta \colon  \Lambda^2{E} \to w$.
\end{itemize}
The \emph{instanton Floer homology} $I_*(Y )_\a$ is the Morse homology of the Chern-Simons functional
on the space $\cB = \cC/\cG$ of $SO(3)$ connections on $\ad(E)$, modulo determinant 1 gauge transformations. It is a relatively $\mathbb{Z}/8$-graded $\bC$-module.

For each even-dimensional class $\Sigma \in H_d(Y )$, there is an operator
\[\mu(\Sigma) : I_*(Y )_{\a} \to I_{{* + d}- 4}(Y )_\a.\]
This operator is described in \cite{KMSutures}*{Section~7.2}. See \cite{DonaldsonKronheimerBook}*{Section~5.1.2} for a detailed background.

\begin{rem} \label{rem:localize-mu(R)}
The operator $\mu(\Sigma)$ may be localized in the following sense. We may delete a neighborhood of $\Sigma\times \{0\}$ inside of $Y\times \R$ to obtain a cobordism from $Y\sqcup (S^1\times \Sigma)$ to $Y$. This gives a map $F$ from $I_*(Y)_{\alpha}\otimes I_*(S^1\times \Sigma)_{\alpha}$ to $I_*(Y)_{\alpha}$. If $y$ is the image of the cobordism map applied to $D^2\times \Sigma$, then $\mu(\Sigma)\colon I_*(Y)_{\alpha}\to I_*(Y)_{\alpha}$ is equal to the cobordism map $F$ applied to $I_*(Y)_{\alpha}\otimes \mu(\Sigma)(y)$. See the proof of \cite{KMSutures}*{Corollary~7.2}.
\end{rem}

\begin{thm}[\cite{KMSutures}*{Corrollary~7.2}]\label{thm: eigen value}
Suppose $R$ is a closed surface in $Y$ of positive genus with $\#(\a \cap R)$ odd. Then the simultaneous eigenvalues of the operators $\mu(R)$ and $\mu(pt)$ on $I_*(Y )_\a$ belong to a subset of the pairs \[(i^r 2k, (-1)^r 2)\] for $0 \le r \le 3$ and $0 \le k \le g(R) -1$, where $i = \sqrt{-1}$.
\end{thm}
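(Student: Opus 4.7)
The plan is to reduce the claim to the universal case $Y = S^1 \times R$ via the localization principle of Remark~\ref{rem:localize-mu(R)}, and then to invoke Mu\~noz's determination of the joint spectrum of $(\mu(R), \mu(pt))$ on that model space.

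First, I would exploit Remark~\ref{rem:localize-mu(R)}. Removing a tubular neighborhood of $R \times \{0\} \subset Y \times \R$ produces a cobordism inducing a bilinear pairing $F \colon I_*(Y)_\alpha \otimes I_*(S^1 \times R)_\alpha \to I_*(Y)_\alpha$, where on the $S^1 \times R$-factor the loop (still denoted $\alpha$) is $S^1 \times \{pt\}$, ensuring odd intersection with $R$. The Remark identifies $\mu(R)$ on $I_*(Y)_\alpha$ as the operator $x \mapsto F(x, \mu(R)(y))$ for a distinguished class $y \in I_*(S^1 \times R)_\alpha$, obtained as the image of the cup cobordism $D^2 \times R$. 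An analogous localization holds for $\mu(pt)$ by placing the point class in the cylindrical neck rather than in $Y$. Iterating, for every polynomial $P \in \C[x_1, x_2]$ one finds
\[
P(\mu(R), \mu(pt))(x) \;=\; F\bigl(x,\, P(\mu(R), \mu(pt))(y)\bigr),
\]
so any polynomial identity for $(\mu(R), \mu(pt))$ that holds throughout $I_*(S^1 \times R)_\alpha$ descends to the same identity on $I_*(Y)_\alpha$. In particular, the joint spectrum on $I_*(Y)_\alpha$ sits inside the joint spectrum on $I_*(S^1 \times R)_\alpha$.

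Second, I would quote Mu\~noz's theorem, according to which on $I_*(S^1 \times R)_\alpha$ one has $\mu(pt)^2 = 4$, and on each of the $\mu(pt) = \pm 2$ eigenspaces the operator $\mu(R)$ is diagonalizable with eigenvalues $\{\pm 2k : 0 \le k \le g(R) - 1\}$ and $\{\pm 2ik : 0 \le k \le g(R) - 1\}$ respectively. The joint spectrum is therefore exactly $\{(i^r 2k, (-1)^r 2) : 0 \le r \le 3,\; 0 \le k \le g(R) - 1\}$. Combined with the previous paragraph, this yields the claimed containment on $I_*(Y)_\alpha$.

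The main obstacle, and the only non-formal input, is Mu\~noz's universal computation on $I_*(S^1 \times R)_\alpha$. It rests on identifying this Floer group with a twisted cohomology of the moduli space of rank-two odd-degree stable bundles on $R$ and computing the action of the $\mu$-classes there. I would import it as a black box rather than reprove it; the $Y$-side argument is then a formal consequence of the TQFT-style locality encoded in Remark~\ref{rem:localize-mu(R)}.
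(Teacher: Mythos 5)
Your argument is essentially the proof given in the cited source: the paper itself does not reprove this statement but imports it as \cite{KMSutures}*{Corollary~7.2}, and Kronheimer--Mrowka's proof is exactly your localization of $\mu(R)$ and $\mu(pt)$ through the cobordism of Remark~\ref{rem:localize-mu(R)} (i.e.\ the module structure of $I_*(Y)_\a$ over $I_*(S^1\times R)$), combined with Mu\~noz's computation of the joint spectrum on $I_*(S^1\times R)$. The only caveat is that you do not need (and should not assert) diagonalizability of $\mu(R)$ on the universal space; the descent of polynomial relations, hence of the set of generalized joint eigenvalues, suffices.
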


\begin{define}[\cite{KMskein}*{Definition~7.3}] 
Given $Y$, $\a$, and $R$ as in Theorem~\ref{thm: eigen value}, let \[I_*(Y|R)_\a \subset I_*(Y)_\a  \] be the generalized $(2g(R) -2, 2)$-eigenspace of $(\mu(R), \mu(pt))$ on $I_*(Y)_\a$.
\end{define}

\begin{define}[\cite{KMSutures}*{Section~7.4}]\label{define: closure}
 Suppose $(M, \gamma)$ is a balanced sutured manifold. Let $T$ be a connected, compact, and oriented surface of genus at least 2 such that $\d T$ and $\g$ have the same number of components.
  We say a manifold $\tilde{M}$ is a \emph{preclosure} of $(M, \g)$ if it is obtained by gluing $\d T\times [-1,1]\subset T\times [-1,1]$ to a regular neighborhood $A(\g)\subset \d M$ via an orientation reversing map
  \[
  h\colon A(\g)\to \d T\times [-1,1]
  \]
   satisfying $h(\d T\times \{\pm 1\})\subset R_{\pm}(\g)$. The boundary $\d\tilde{M}$ has two diffeomorphic components, which we denote by $\tilde{R}_+$ and $\tilde{R}_-$. Here, $\tilde{R}_+$ (resp. $\tilde{R}_-$) is the component which intersects $R_+(\g)$ (resp. $R_-(\g)$) non-trivially. We glue $\tilde{R}_+$ to $\tilde{R}_-$ via any orientation reversing diffeomorphism which fixes a point $q \in T$ to form a manifold $Y$. We let $R$ be the image of $\tilde{R}_+=\tilde{R}_-$ in $Y$ and $\a$ be the image of the arc $q \times [-1, 1]$. We say that the pair $(Y,R, \a)$ is a \emph{closure} of $(M, \gamma)$. The
genus of $R$ is called the \emph{genus} of the closure $(Y, R)$. Note that $\a$ is a simple closed curve which intersects $R$ transversely at one point and $\a \cap \Int(M) = \emptyset$

\end{define}

\begin{define}[\cite{KMSutures}*{Definition~7.10}]\label{define: Sutured instanton Floer homology}
For a closure $(Y, R, \a)$ with genus at least 2, define \[ \SHI (M, \g) \coloneqq  I_*(Y|R)_\a\]
\end{define}

\subsection{Sutured monopole Floer homology}\label{subsec:SHM}
In this subsection, we briefly recall the definition of sutured monopole homology, also due to Kronheimer and Mrowka \cite{KMSutures}.

\begin{define}[\cite{KMSutures}*{Definition~4.3}]\label{define: Sutured monopole Floer homology}
For a closure $(Y, R)$ with genus at least 2, define
\[\mathit{SHM} (M, \g) := \bigoplus_{\mathfrak{s}\in\mathfrak{S}(Y|R)}\widecheck{\mathit{HM}}(Y,\mathfrak{s}),\]
 where
  \[ \mathfrak{S}(Y|R)=\left\{\mathfrak{s}\in \Spin^c(Y)|~c_1(\mathfrak{s})[R]=2g(R)-2,~\widecheck{\mathit{HM}}(Y,\mathfrak{s})\neq 0\right\}.
   \]
\end{define}

\subsection{Contact handle maps in Floer theories}

Contact handles were introduced by Giroux \cite{Giroux}*{Definition~5.5}. Baldwin and Sivek \cite{BSContact} defined contact handle maps in instanton and monopole Floer theories, and Juh\'{a}sz and the second author defined parallel maps in Heegaard Floer theory \cite{JZContactHandles}. The construction of Juh\'{a}sz and the second author gives an alternate description of the contact gluing map of Honda, Kazez and Mati\'{c} \cite{HKMTQFT}.

\subsubsection{Contact 1-handles}
Suppose $D_+$ and $D_- $ are disjoint embedded disks in $\partial M$ which each intersect $\gamma$ in a single, properly embedded arc. We view $D^2\times [-1,1]$ as being a standard contact 3-ball.  We glue $D^2 \times [-1, 1]$ to $(M, \gamma)$ by diffeomorphisms 
\[
 D^2 \times \{-1\} \rightarrow D_- \quad\text{ and }\quad D^2 \times \{1\} \rightarrow D_+  
 \]
  which preserve and reverse orientations, respectively, and identify the dividing sets with the sutures. Then we round corners as shown in Figure~\ref{fig:58}. Let $(M', \gamma')$ be the resulting sutured manifold.

On the Heegaard Floer side, if $(\Sigma, \as, \bs)$ is a Heegaard diagram for $(M, \gamma)$, then we obtain a Heegaard diagram for $(M', \gamma')$ by gluing a band to $\partial \Sigma$ where the feet of the contact 1-handle are attached. We add no new $\as$ or $\bs$ curves; see Figure~ \ref{fig:60}. The map on chain complexes is the tautological map induced by the inclusion of Heegaard surfaces. We also call this map $C_{h_1}$.  See \cite{JZContactHandles}*{Section~3.3.2} for more details.

On the instanton and monopole Floer side, Baldwin and Sivek \cite{BSContact} constructed a closure $(Y, R)$ of $(M, \gamma)$ which is also a closure of $(M', \gamma')$. See \cite{BSContact}*{Section~4.2} for more details. Baldwin and Sivek define the contact 1-handle map in sutured monopole and instanton theories
\[
C_{h_1} : \SHI(M, \gamma) \rightarrow  \SHI(M', \gamma')
\] 
to be the identity map.

\begin{figure}[ht]
\centering
\begingroup%
  \makeatletter%
  \providecommand\color[2][]{%
    \errmessage{(Inkscape) Color is used for the text in Inkscape, but the package 'color.sty' is not loaded}%
    \renewcommand\color[2][]{}%
  }%
  \providecommand\transparent[1]{%
    \errmessage{(Inkscape) Transparency is used (non-zero) for the text in Inkscape, but the package 'transparent.sty' is not loaded}%
    \renewcommand\transparent[1]{}%
  }%
  \providecommand\rotatebox[2]{#2}%
  \newcommand*\fsize{\dimexpr\f@size pt\relax}%
  \newcommand*\lineheight[1]{\fontsize{\fsize}{#1\fsize}\selectfont}%
  \ifx\svgwidth\undefined%
    \setlength{\unitlength}{258.35635449bp}%
    \ifx\svgscale\undefined%
      \relax%
    \else%
      \setlength{\unitlength}{\unitlength * \real{\svgscale}}%
    \fi%
  \else%
    \setlength{\unitlength}{\svgwidth}%
  \fi%
  \global\let\svgwidth\undefined%
  \global\let\svgscale\undefined%
  \makeatother%
  \begin{picture}(1,0.35958559)%
    \lineheight{1}%
    \setlength\tabcolsep{0pt}%
    \put(0,0){\includegraphics[width=\unitlength,page=1]{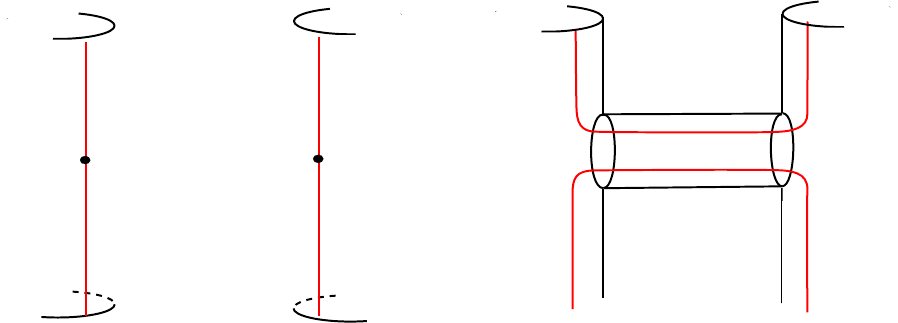}}%
    \put(0.02842279,0.16843161){\color[rgb]{0,0,0}\makebox(0,0)[lt]{\lineheight{1.25}\smash{\begin{tabular}[t]{l}$P$\end{tabular}}}}%
    \put(0.37726681,0.17073529){\color[rgb]{0,0,0}\makebox(0,0)[lt]{\lineheight{1.25}\smash{\begin{tabular}[t]{l}$Q$\end{tabular}}}}%
    \put(0,0){\includegraphics[width=\unitlength,page=2]{fig58.pdf}}%
  \end{picture}%
\endgroup%

\caption{Left: the sutured manifold $(M, \gamma)$ with two points $P$ and $Q$ on the suture. Right: the manifold $(M',\g')$ obtained by attaching a contact 1-handle with feet at $P$ and $Q$.}
\label{fig:58}
\end{figure}

\begin{figure}[ht]
\centering
\begingroup%
  \makeatletter%
  \providecommand\color[2][]{%
    \errmessage{(Inkscape) Color is used for the text in Inkscape, but the package 'color.sty' is not loaded}%
    \renewcommand\color[2][]{}%
  }%
  \providecommand\transparent[1]{%
    \errmessage{(Inkscape) Transparency is used (non-zero) for the text in Inkscape, but the package 'transparent.sty' is not loaded}%
    \renewcommand\transparent[1]{}%
  }%
  \providecommand\rotatebox[2]{#2}%
  \newcommand*\fsize{\dimexpr\f@size pt\relax}%
  \newcommand*\lineheight[1]{\fontsize{\fsize}{#1\fsize}\selectfont}%
  \ifx\svgwidth\undefined%
    \setlength{\unitlength}{260.56465333bp}%
    \ifx\svgscale\undefined%
      \relax%
    \else%
      \setlength{\unitlength}{\unitlength * \real{\svgscale}}%
    \fi%
  \else%
    \setlength{\unitlength}{\svgwidth}%
  \fi%
  \global\let\svgwidth\undefined%
  \global\let\svgscale\undefined%
  \makeatother%
  \begin{picture}(1,0.31023009)%
    \lineheight{1}%
    \setlength\tabcolsep{0pt}%
    \put(0,0){\includegraphics[width=\unitlength,page=1]{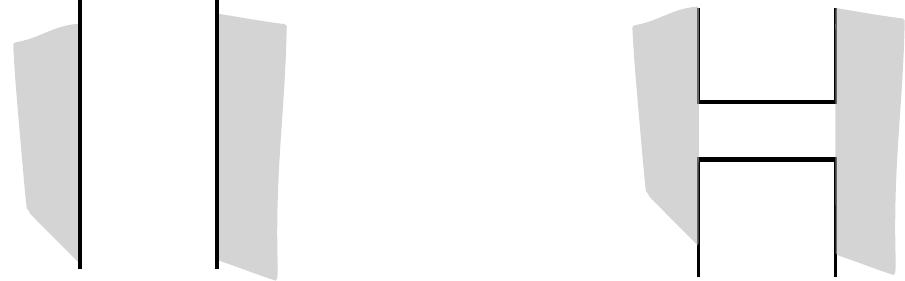}}%
    \put(0.07655228,0.13467898){\makebox(0,0)[rt]{\lineheight{1.25}\smash{\begin{tabular}[t]{r}$\Sigma_1$\end{tabular}}}}%
    \put(0.25042821,0.13467898){\makebox(0,0)[lt]{\lineheight{1.25}\smash{\begin{tabular}[t]{l}$\Sigma_2$\end{tabular}}}}%
    \put(0,0){\includegraphics[width=\unitlength,page=2]{fig60.pdf}}%
    \put(0.88222912,0.0965967){\makebox(0,0)[rt]{\lineheight{1.25}\smash{\begin{tabular}[t]{r}$\Sigma_0$\end{tabular}}}}%
  \end{picture}%
\endgroup%

\caption{The diagrams used in the definition of the contact handle maps}
\label{fig:60}
\end{figure}

\subsubsection{Contact 2-handles} 
Suppose $\mu$ is an embedded curve in $\partial M$ which intersects $\gamma$ transversely in two points. Let
$A(\mu)$ be an annular neighborhood of $\mu$ intersecting $\gamma$ in two arcs. We glue $D^2 \times [-1, 1]$ to $(M, \gamma)$ by an orientation-reversing diffeomorphism
\[
\partial D^2 \times [-1, 1] \to A(\gamma).
\]
We alter the sutures by adding a single arc to both $D^2\times \{-1\}$ and $D^2\times \{1\}$, rounding corners as in Figure~\ref{fig:59}. Let $(M', \gamma')$ be the resulting sutured manifold.

We first discuss the contact 2-handle on the Heegaard Floer side by Juh\'{a}sz and the second author \cite{JZContactHandles}*{Section~3.3.3}. Let $(\Sigma, \as, \bs)$ be an admissible diagram of $(M, \gamma)$. Suppose that $\mu\subset \d M$ is the attaching cycle of the 2-handle. We assume that $\mu$ intersects $\g$ in two points, $p_1$, $p_2$ $\in M$. Write $\lambda_\pm =\mu \cap R_{\pm}(\gamma)$. Since $R_+(\g)$ is obtained by surgering $\Sigma$ on $\bs$, we may view $\lambda_+$ as an embedded arc on $\Sigma$ which avoids $\bs$. Similarly $\lambda_-$ may be viewed as a curve on $\Sigma$ which avoids $\as$.
 We construct a Heegaard diagram $(\Sigma', \as \cup {\alpha_0}, \bs \cup {\beta_0})$ for $(M',\g')$ in Figure \ref{fig:61} . The surface
$\Sigma'$
is obtained by adding a band $B$ to the boundary of $\Sigma$ at $p_1$ and $p_2$. The curve $\alpha_0$ is obtained by
concatenating the curve $\lambda_-$ with an arc in the band. The curve $\beta_0$
is obtained by concatenating the
curve $\lambda_+$ with an arc in the band. We assume that $\alpha_0$ and $\beta_0$ intersect in a single point in the band
(and possibly other places outside the band). We define the contact 2-handle map
\[ C_{h^2}: \SFH (\Sigma, \as, \bs) \to \SFH (\Sigma'
, \as \cup {\alpha_0
}, \bs \cup {\beta_0})\]
 by the formula
\[C_{h_2} (\textbf{x}) := \textbf{x} \times c,\] where $c \in \alpha_0 \cap \beta_0$
is the intersection point in the band. See \cite{JZContactHandles}*{Section~3.3} for more details.

Now now turn our attention to the instanton and monopole Floer side. Baldwin and Sivek \cite{BSContact}*{Section~4.2} define the contact 2-handle map as follows.  Let $\mu'$ be the knot obtained by pushing $\mu$ into $M$ slightly. Suppose $(M_0, \gamma_0)$ be the manifold obtained from $(M, \gamma)$
by 0-surgery along $\mu'$ with respect to the boundary framing.  The sutured
manifold $(M_0, \gamma_0)$ can be obtained from $(M', \gamma')$ by attaching a contact 1-handle. Since $\mu'$ is contained in the interior of $M$, we can construct a cobordism
between the closures of $(M, \gamma)$ and $(M_0, \gamma_0)$ by attaching a 4-dimensional 2-handle along $\mu'$. Baldwin and Sivek define 
\[
C_{h_2} := {C_{h_1}}^{-1} \circ F_W: \SFH (M, \gamma) \to \SFH (M', \gamma'),
\]
 where $F_{W}$ is the cobordism map for attaching a 2-handle along $\mu'$.

\begin{figure}[ht]
\centering
\begingroup%
  \makeatletter%
  \providecommand\color[2][]{%
    \errmessage{(Inkscape) Color is used for the text in Inkscape, but the package 'color.sty' is not loaded}%
    \renewcommand\color[2][]{}%
  }%
  \providecommand\transparent[1]{%
    \errmessage{(Inkscape) Transparency is used (non-zero) for the text in Inkscape, but the package 'transparent.sty' is not loaded}%
    \renewcommand\transparent[1]{}%
  }%
  \providecommand\rotatebox[2]{#2}%
  \newcommand*\fsize{\dimexpr\f@size pt\relax}%
  \newcommand*\lineheight[1]{\fontsize{\fsize}{#1\fsize}\selectfont}%
  \ifx\svgwidth\undefined%
    \setlength{\unitlength}{332.79167055bp}%
    \ifx\svgscale\undefined%
      \relax%
    \else%
      \setlength{\unitlength}{\unitlength * \real{\svgscale}}%
    \fi%
  \else%
    \setlength{\unitlength}{\svgwidth}%
  \fi%
  \global\let\svgwidth\undefined%
  \global\let\svgscale\undefined%
  \makeatother%
  \begin{picture}(1,0.32362225)%
    \lineheight{1}%
    \setlength\tabcolsep{0pt}%
    \put(0,0){\includegraphics[width=\unitlength,page=1]{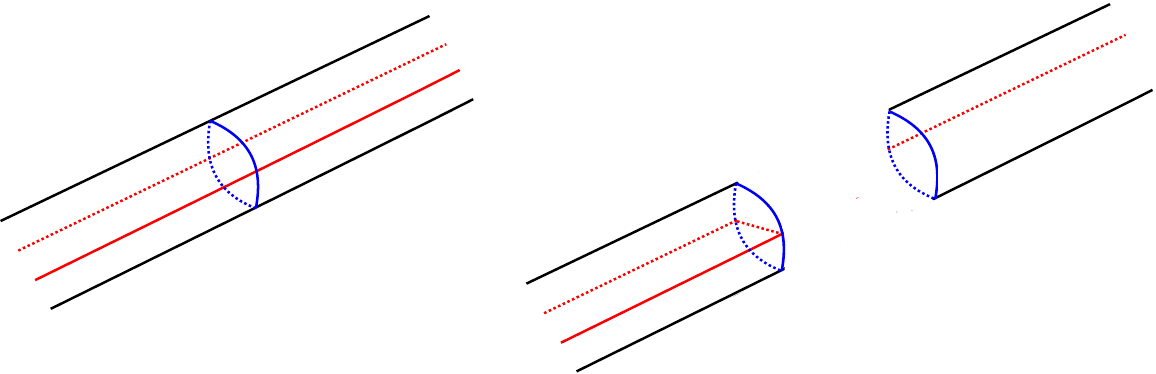}}%
    \put(0.1911689,0.10752936){\color[rgb]{0,0,0}\makebox(0,0)[lt]{\lineheight{1.25}\smash{\begin{tabular}[t]{l}$\mu$\end{tabular}}}}%
    \put(0.04278151,0.18782378){\color[rgb]{0,0,0}\makebox(0,0)[lt]{\lineheight{1.25}\smash{\begin{tabular}[t]{l}$M$\end{tabular}}}}%
    \put(0.69511087,0.14361247){\color[rgb]{0,0,0}\makebox(0,0)[lt]{\lineheight{1.25}\smash{\begin{tabular}[t]{l}$M'$\end{tabular}}}}%
    \put(0,0){\includegraphics[width=\unitlength,page=2]{fig59.pdf}}%
    \put(0.36964155,0.27147302){\color[rgb]{0,0,0}\makebox(0,0)[lt]{\lineheight{1.25}\smash{\begin{tabular}[t]{l}$\gamma$\end{tabular}}}}%
    \put(0.38243758,0.02482837){\color[rgb]{0,0,0}\makebox(0,0)[lt]{\lineheight{1.25}\smash{\begin{tabular}[t]{l}$\gamma'$\end{tabular}}}}%
    \put(0,0){\includegraphics[width=\unitlength,page=3]{fig59.pdf}}%
  \end{picture}%
\endgroup%

\caption{Left: the sutured manifold $(M, \gamma)$ and the curve $\mu \subset \partial M$  that intersects $\gamma$ at two points. Right: the contact 2-handle attachment along the curve $\mu$. In both pictures, we view $M$ and $M'$ as being the exterior of the surfaces shown.
 }
\label{fig:59}
\end{figure}

\begin{figure}[ht]
\centering
\begingroup%
  \makeatletter%
  \providecommand\color[2][]{%
    \errmessage{(Inkscape) Color is used for the text in Inkscape, but the package 'color.sty' is not loaded}%
    \renewcommand\color[2][]{}%
  }%
  \providecommand\transparent[1]{%
    \errmessage{(Inkscape) Transparency is used (non-zero) for the text in Inkscape, but the package 'transparent.sty' is not loaded}%
    \renewcommand\transparent[1]{}%
  }%
  \providecommand\rotatebox[2]{#2}%
  \newcommand*\fsize{\dimexpr\f@size pt\relax}%
  \newcommand*\lineheight[1]{\fontsize{\fsize}{#1\fsize}\selectfont}%
  \ifx\svgwidth\undefined%
    \setlength{\unitlength}{272.30253911bp}%
    \ifx\svgscale\undefined%
      \relax%
    \else%
      \setlength{\unitlength}{\unitlength * \real{\svgscale}}%
    \fi%
  \else%
    \setlength{\unitlength}{\svgwidth}%
  \fi%
  \global\let\svgwidth\undefined%
  \global\let\svgscale\undefined%
  \makeatother%
  \begin{picture}(1,0.3032114)%
    \lineheight{1}%
    \setlength\tabcolsep{0pt}%
    \put(0,0){\includegraphics[width=\unitlength,page=1]{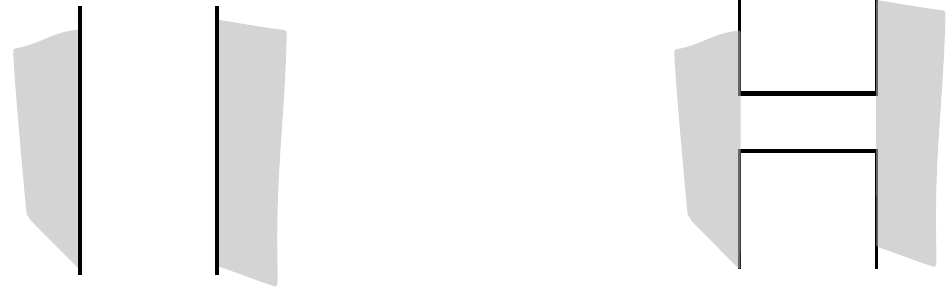}}%
    \put(0.07325242,0.12887348){\makebox(0,0)[rt]{\lineheight{1.25}\smash{\begin{tabular}[t]{r}$\Sigma_1$\end{tabular}}}}%
    \put(0.2396333,0.12887348){\makebox(0,0)[lt]{\lineheight{1.25}\smash{\begin{tabular}[t]{l}$\Sigma_2$\end{tabular}}}}%
    \put(0,0){\includegraphics[width=\unitlength,page=2]{fig61.pdf}}%
  \end{picture}%
\endgroup%

\caption{The Heegaard diagrams used in the definition of the contact 2-handle map. On the left is $(\Sigma,\as,\bs)$ and on the right is $(\Sigma',\as\cup \{\a_0\}, \bs\cup \{\b_0\})$.}
\label{fig:61}
\end{figure}

\subsection{On the Heegaard Floer contact 2-handle map}
\label{sec:folklore}

In this section, we show that the definition of the Heegaard Floer contact 2-handle map from \cite{JZContactHandles} is equivalent to the definition given by Baldwin and Sivek \cite{BSContact}*{Section~4.2} in sutured monopole and instanton Floer homology. 

This verifies a formula for the Honda--Kazez--Mati\'{c} gluing map for a contact bypass attachment which was conjectured by Rasmussen (cf. \cite{GollaLimit}*{Theorem~5.5}). To the best of our knowledge, a proof of Rasmussen's formula has not appeared in the literature.

\begin{lem}\label{lem:ctct-2-handle=triangles}
Let $\mu\subset \d M$ be the attaching cycle of a contact 2-handle and let $\mu'$ be the framed knot obtained by pushing $\mu$ into $M$ slightly, equipped with framing parallel to $\d M$. Write $(M_0,\g_0)$ for the surgered sutured manifold. There is a product disk $D\subset M_0$ which has boundary $\mu$, obtained by capping $\mu$ with the core of the 2-handle attached along $\mu'$. Then
\[
C_{h^2}=(C_{h^1})^{-1}\circ F_{W}
\]
where $F_{W}$ is the 4-dimensional 2-handle map for $\mu'$, as defined in \cite{JCob}, and $(C_{h^{1}})^{-1}$ is the inverse of the contact 1-handle map corresponding to the product disk $D$.
\end{lem}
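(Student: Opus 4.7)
The plan is to compute the $4$-dimensional cobordism map $F_W$ at the Heegaard-diagram level via a surgery triangle, and to compare with the Juh\'{a}sz--Zemke chain-level formulas for $C_{h^1}$ and $C_{h^2}$. The target identity, after composing with $C_{h^1}$, is $F_W = C_{h^1}\circ C_{h^2}$.

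First I would set up compatible Heegaard diagrams. Pick $(\Sigma, \as, \bs)$ for $(M,\gamma)$ so that the arcs $\lambda_+ = \mu\cap R_+(\gamma)$ and $\lambda_-=\mu\cap R_-(\gamma)$ appear as properly embedded arcs on $\Sigma$, disjoint from $\bs$ and $\as$ respectively. Attaching the band $B_2$ at $p_1,p_2\in \mu\cap \gamma$ and closing up $\lambda_\pm$ across $B_2$ produces the Juh\'{a}sz--Zemke diagram $(\Sigma',\as\cup\{\alpha_0\}, \bs\cup \{\beta_0\})$ for $(M',\gamma')$. Attaching the further band $B_1$ that corresponds to the contact $1$-handle with product disk $D$ gives a diagram $(\Sigma'',\as\cup\{\alpha_0\},\bs\cup\{\beta_0\})$ for $(M_0,\gamma_0)$. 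By definition of the contact handle maps, $C_{h^1}\circ C_{h^2}$ is the tautological map $\xs\mapsto \xs\times c$, where $c=\alpha_0\cap \beta_0$ is the unique intersection in $B_2$.

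Next I would realize $\mu'$ on the same Heegaard surface. Choose a bouquet for $\mu'$ running through the region of the bands $B_1,B_2$; since $\mu'$ is a push-off of $\mu$ into $M$ with the boundary framing, a stabilization of $\Sigma$ supported near this region gives exactly the surface $\Sigma''$ with $\mu'$ realized as a simple closed curve on it. After a handleslide and a small Hamiltonian isotopy, one checks that $(\Sigma'', \as\cup\{\alpha_0\}, \bs\cup\{\delta_0\})$ represents $(M,\gamma)$, where $\delta_0$ is a meridional curve in the $B_1$ region that cancels $\alpha_0$ upon destabilization back to $(\Sigma,\as,\bs)$. Then $F_W$ is computed by the triangle map on the Heegaard triple
\[
(\Sigma'', \as\cup\{\alpha_0\},\, \bs\cup\{\delta_0\},\, \bs\cup\{\beta_0\}),
\]
using the canonical top generator of the middle diagram; here one must verify that the middle diagram represents a connected sum of $S^1\times S^2$'s with its standard top generator, which follows from the local model.

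Finally, I would reduce the triangle count to a local computation. By construction, $\delta_0$ and $\beta_0$ differ only across the band $B_2$, and together with $\alpha_0$ they bound a small holomorphic triangle. A nearest-point argument in the style of Ozsv\'{a}th--Szab\'{o} shows that the only triangle classes with nonzero count send the generator $\xs\times x_0$ (where $x_0$ is the intersection created by the stabilization/destabilization of $\delta_0$) to $\xs\times c$, matching the formula for $C_{h^1}\circ C_{h^2}$. The hard part is arranging the triple so that the triangle count is genuinely a small-triangle count: this requires a careful choice of bouquet for $\mu'$, of the framing curve $\delta_0$, and a verification that the destabilization of $(\Sigma'', \as\cup\{\alpha_0\},\bs\cup\{\delta_0\})$ recovers the original diagram for $(M,\gamma)$. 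Once the local model is in place, everything else is a standard application of the sutured $4$-manifold TQFT of \cite{JCob}.
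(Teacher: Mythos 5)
Your proposal follows essentially the same route as the paper: stabilize the diagram so that $\mu'$ lies on the Heegaard surface (the paper's compound stabilization, whose chain-level map is the naturality map $\xs\mapsto\xs\times c_0$), compute $F_W$ by a local small-triangle count, and recognize the surgered diagram as the contact 2-handle diagram with an extra band, which the inverse contact 1-handle map deletes. Your write-up just spells out the local model and bookkeeping (the curves $\delta_0$, $\beta_0$ and the cancelling pair) that the paper leaves to its Figure~\ref{fig:6}.
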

\begin{proof}The 2-handle map $F_{W}$ may be computed by first stabilizing the Heegaard diagram $(\Sigma,\as,\bs)$, and then counting holomorphic triangles as shown in Figure~\ref{fig:6}. In the terminology of \cite{JZContactHandles}*{Section~2.2}, the stabilization in Figure~\ref{fig:6} is a \emph{compound stabilization}.  The natural map $\sigma(\xs)=\xs\times c_0$ coincides with the map from naturality by \cite{JZContactHandles}*{Proposition~2.2}. Here, $c_0$ is the canonical intersection point shown in Figure~\ref{fig:6}. 

 After attaching a 2-handle along $\mu'$, the Heegaard diagram for $(M_0,\g_0)$ coincides with the diagram of a contact 2-handle attachment followed by a contact 1-handle attachment. The inverse of the contact 1-handle map corresponds to deleting the extra band. The equality in the statement $C_{h^2}=(C_{h^1})^{-1}\circ F_{W}$  is proven by an easy model triangle count, shown in Figure~\ref{fig:6}.
\end{proof}

\begin{figure}[ht]
\begingroup%
  \makeatletter%
  \providecommand\color[2][]{%
    \errmessage{(Inkscape) Color is used for the text in Inkscape, but the package 'color.sty' is not loaded}%
    \renewcommand\color[2][]{}%
  }%
  \providecommand\transparent[1]{%
    \errmessage{(Inkscape) Transparency is used (non-zero) for the text in Inkscape, but the package 'transparent.sty' is not loaded}%
    \renewcommand\transparent[1]{}%
  }%
  \providecommand\rotatebox[2]{#2}%
  \newcommand*\fsize{\dimexpr\f@size pt\relax}%
  \newcommand*\lineheight[1]{\fontsize{\fsize}{#1\fsize}\selectfont}%
  \ifx\svgwidth\undefined%
    \setlength{\unitlength}{220.06841904bp}%
    \ifx\svgscale\undefined%
      \relax%
    \else%
      \setlength{\unitlength}{\unitlength * \real{\svgscale}}%
    \fi%
  \else%
    \setlength{\unitlength}{\svgwidth}%
  \fi%
  \global\let\svgwidth\undefined%
  \global\let\svgscale\undefined%
  \makeatother%
  \begin{picture}(1,1.42449909)%
    \lineheight{1}%
    \setlength\tabcolsep{0pt}%
    \put(0,0){\includegraphics[width=\unitlength,page=1]{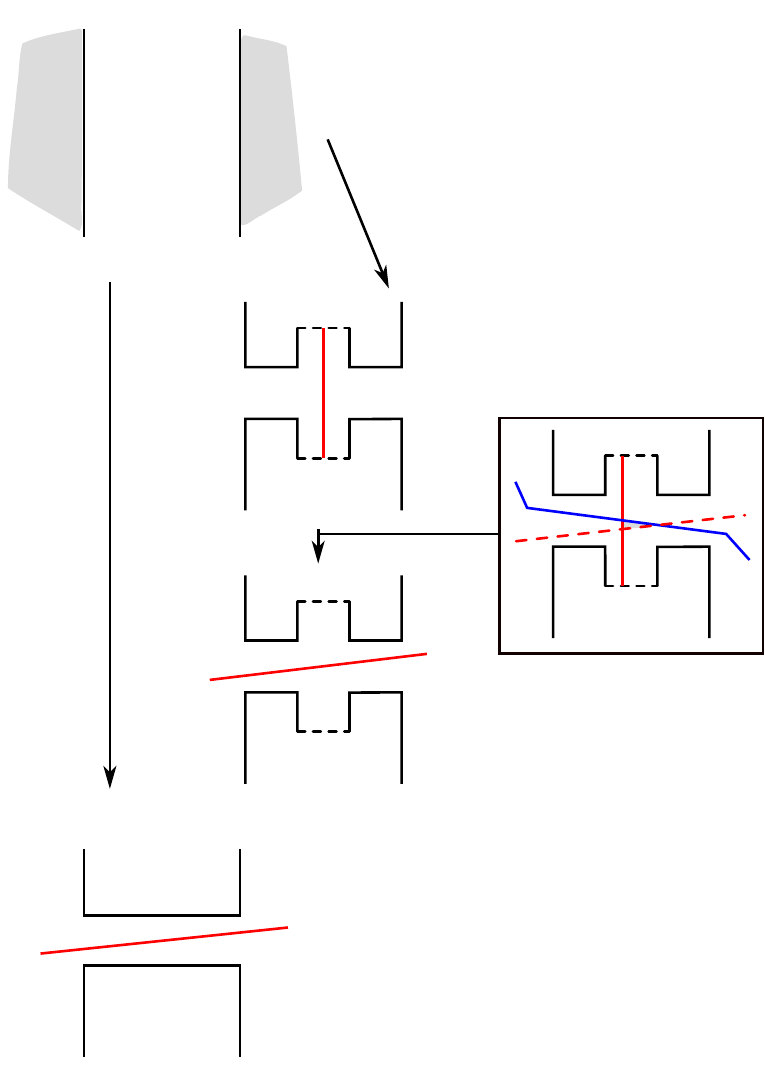}}%
    \put(0.09654184,1.18812589){\makebox(0,0)[rt]{\lineheight{1.25}\smash{\begin{tabular}[t]{r}$\Sigma$\end{tabular}}}}%
    \put(0.32742193,1.18812589){\makebox(0,0)[lt]{\lineheight{1.25}\smash{\begin{tabular}[t]{l}$\Sigma$\end{tabular}}}}%
    \put(0,0){\includegraphics[width=\unitlength,page=2]{fig6.pdf}}%
    \put(0.48830386,1.15134238){\makebox(0,0)[lt]{\lineheight{1.25}\smash{\begin{tabular}[t]{l}$\sigma$\end{tabular}}}}%
    \put(0.48830386,0.24027971){\makebox(0,0)[lt]{\lineheight{1.25}\smash{\begin{tabular}[t]{l}$(C_{h^{1}})^{-1}$\end{tabular}}}}%
    \put(0.8278978,0.58712728){\makebox(0,0)[t]{\lineheight{1.25}\smash{\begin{tabular}[t]{c}$F_{W(L)}$\end{tabular}}}}%
    \put(0.12968656,0.63439174){\makebox(0,0)[rt]{\lineheight{1.25}\smash{\begin{tabular}[t]{r}$C_{h^2}$\end{tabular}}}}%
  \end{picture}%
\endgroup%

\caption{Realizing the contact 2-handle map $C_{h^2}$ as a composition of a compound stabilization $\sigma$, followed by a 4-dimensional 2-handle map $F_{W}$, followed by the inverse of the contact 1-handle map $(C_{h^1})^{-1}$. A holomorphic triangle of the 2-handle map is indicated in the right-most box.}
\label{fig:6}
\end{figure}

\section{Background on directed systems in knot Floer homologies}\label{sec:3}

In this section, we recall the construct of knot Floer homology using direct limits. The construction was first given by Etnyre, Vela-Vick and Zarev \cite{EVZLimit}, and independently Golla \cite{GollaLimit}, using Heegaard Floer theory. Later, Li \cite{LiLimits} gave an analogous construction in Monopole and instanton theory, using contact gluing maps of Baldwin and Sivek \cite{BSContact}.

\subsection{Bypass attachment}
Bypass attachments were introduced by Honda \cite{HondaClassI}. Suppose $\alpha$ is an embedded arc in $\partial M$ which intersects $\g$ in three points. A bypass move along $\a$ replaces $\g$ with a new set of sutures $\g'$ which differ from $\g$ in a neighborhood of $\a$, as shown in Figure~\ref{fig:62}. Let $(M', \g')$ be the resulting sutured manifold. If $\gamma$ is the dividing set of a contact structure $\xi$ on $M$, then a bypass move is achieved by attaching an actual bypass along $\a$, as defined by Honda \cite{HondaClassI}.

\begin{figure}[ht]
\begingroup%
  \makeatletter%
  \providecommand\color[2][]{%
    \errmessage{(Inkscape) Color is used for the text in Inkscape, but the package 'color.sty' is not loaded}%
    \renewcommand\color[2][]{}%
  }%
  \providecommand\transparent[1]{%
    \errmessage{(Inkscape) Transparency is used (non-zero) for the text in Inkscape, but the package 'transparent.sty' is not loaded}%
    \renewcommand\transparent[1]{}%
  }%
  \providecommand\rotatebox[2]{#2}%
  \newcommand*\fsize{\dimexpr\f@size pt\relax}%
  \newcommand*\lineheight[1]{\fontsize{\fsize}{#1\fsize}\selectfont}%
  \ifx\svgwidth\undefined%
    \setlength{\unitlength}{220.01507486bp}%
    \ifx\svgscale\undefined%
      \relax%
    \else%
      \setlength{\unitlength}{\unitlength * \real{\svgscale}}%
    \fi%
  \else%
    \setlength{\unitlength}{\svgwidth}%
  \fi%
  \global\let\svgwidth\undefined%
  \global\let\svgscale\undefined%
  \makeatother%
  \begin{picture}(1,0.37486585)%
    \lineheight{1}%
    \setlength\tabcolsep{0pt}%
    \put(0,0){\includegraphics[width=\unitlength,page=1]{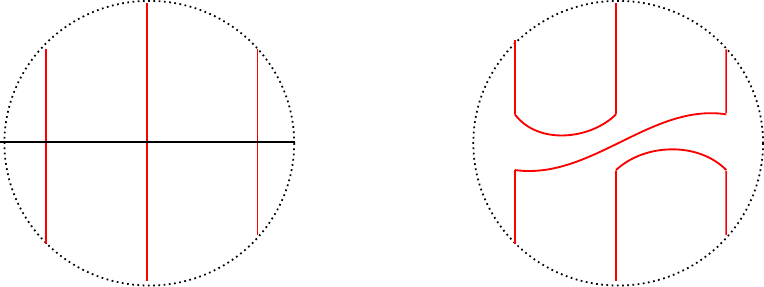}}%
    \put(0.10925886,0.15723093){\color[rgb]{0,0,0}\makebox(0,0)[lt]{\lineheight{1.25}\smash{\begin{tabular}[t]{l}$\alpha$\end{tabular}}}}%
    \put(0,0){\includegraphics[width=\unitlength,page=2]{fig62.pdf}}%
  \end{picture}%
\endgroup%

\caption{Attaching a bypass along an arc $\a \subset \partial M$ where the orientation of $\partial M$ is pointing out.}
\label{fig:62}
\end{figure}

 We note that bypass attachment is equivalent to attaching a contact 1-handle along disks
in $\partial M$ centered at the endpoints of $\a$ and then attaching a contact 2-handle along a circle that is the union of $\a$ and the core arc on the attached 1-handle. See Ozbagci \cite{Ozbagci}*{Section~3} for an exposition. Topologically, the 1-handle and the 2-handle form a canceling pair, so the diffeomorphism type of the 3-manifold does not change. However, the contact structure is changed, and the suture $\g$ is replaced by $\g'$.

 \subsection{Construction of direct limit knot Floer homology}

We now recall the description of knot Floer homology as a direct limit \cite{EVZLimit} \cite{GollaLimit} \cite{LiLimits}. Let $\SFH(S^3\setminus {\nu(K)},\Gamma_{n})$ denote the sutured knot exterior where the suture $\G_n$ denotes two oppositely oriented curves of slope $\lambda + n\mu$ on $\partial {Y(K)}$. Here $\lambda$ is the Seifert longitude and $\mu$ is the meridian of the knot $K$. The bypass attachments described above induce contact gluing maps 
\[
\phi^+_{n},\phi^-_{n}\colon \SFH(S^3\setminus {\nu(K)}, \G_n) \to \SFH(S^3\setminus {\nu(K)}, \G_{n+1}) .
\]
We sometimes abbreviate $\phi^{\pm}_{n}$ by $\phi^{\pm}$.
We view the complexes $\SFH(S^3\setminus \nu(K),\Gamma_n)$ as forming a transitive system over $n\in \N$, where the transition maps are the positive bypass maps $\phi^+$. The minus knot Floer complex $\HFK^-(K)$ is defined to be the direct limit over these maps.

Using functoriality of the contact gluing map, it is straightforward to see that $\phi^+\circ \phi^-\simeq \phi^-\circ \phi^+$. In particular, one may define an action $U$ on the direct limit as $\phi^-$.

Additionally, the group $\HFK^-(Y, K)$ possesses a $\Z$-valued Alexander grading, as described by \cite{LiLimits}. With respect to this grading, $U$ shifts grading by $-1$. See  Section~\ref{sub: grading} for more details.

\section{Background on transitive systems}\label{sec:4}

In this section, we provide some algebraic background  on transitive systems, focusing on constructions which appear in knot Floer homology. 

\subsection{Transitive systems}

Throughout, we let $\ve{k}$ denote a ring.

\begin{define}\label{def:transitive-system}
 Let $(I,\le )$ be a directed, partially ordered set. A \emph{transitive system} of $\ve{k}$-modules over $(I,\le )$ consists of a collection of $\ve{k}$-modules $G_i$, together with a distinguished homomorphism $\phi_{i\to j} \colon G_i\to G_j$ whenever $i\le j$, such that the following are satisfied:
 \begin{enumerate}
 \item $\phi_{i\to i}=\id_{G_i}$
 \item $\phi_{j\to k}\circ \phi_{i\to j}=\phi_{i\to k}$.
 \end{enumerate}
 
 A \emph{projectively transitive system} of $\ve{k}$-modules consists of a collection of $G_i$ and $\phi_{i\to j}$, as above, except we only require $\phi_{i\to i}\doteq \id_{G_i}$ and $\phi_{j\to k}\circ \phi_{i\to j}=\phi_{i\to k}$, where $\doteq$ denotes equality up to multiplication by a unit in $\ve{k}$.
\end{define}

If $(G_i,\phi_{i\to j})$ is a transitive system of $\ve{k}$-modules over $(I,\le)$, the direct limit is defined as the following $\ve{k}$-modules
\begin{equation}
\underrightarrow{\lim} (G_i,\phi_{i\to j}):=\frac{\bigoplus_{i\in I} G_i}{\Span (x_i-\phi_{i\to j}(x_i): x_i\in G_i, i\le j)}.\label{eq:direct-limit-def}
\end{equation}

We will also be interested in \emph{transitive systems of chain complexes}. Definition~\ref{def:transitive-system} adapts to this setting. We require the morphisms $\phi_{i\to j}$ to be chain maps (specified on the chain level, and not only up to chain-homotopy), and the relations $\phi_{i\to i}=\id_{G_i}$ and $\phi_{j\to k}\circ \phi_{i\to j}=\phi_{i\to k}$ must hold at the chain level (not just up to chain homotopy). The limit is itself a chain complex.

We recall the following notion of morphism between transitive systems:

\begin{define}
\label{def:morphism-transitive}
Suppose $(I,\le)$ and $(J,\le)$ are two directed systems, and $\scN=(N_i,\phi_{i\to i'})$ and $\scM=(M_j,\psi_{j\to j'})$ are two transitive systems over $I$ and $J$ (respectively) of $\ve{k}$-modules. A \emph{morphism} $\scF$ from $\scM$ to $\scN$ is the following data:
\begin{enumerate}
\item A monotonically non-decreasing function $f\colon I\to J$.
\item For each $i\in I$, a morphism of $\ve{k}$-modules $F_{i,f(i)}\colon M_i\to N_{f(i)}$ such that if $i\le i'$ then
\[
F_{i',f(i')}\circ \phi_{i\to i'}=\psi_{f(i)\to f(i')}\circ F_{i,f(i)}.
\]
\end{enumerate}
\end{define}

\subsection{Projectively transitive systems}

We need the following generalization of the notion of a transitive system (see \cite{BSNaturality}):

\begin{define} Let $(I,\le)$ be a directed, partially ordered set, and let $\ve{k}$ denote a ring. A \emph{projectively transitive system of $\ve{k}$-modules} over $(I,\le)$ consists of a collection of $\ve{k}$-modules $M_i$, together with distinguished $\ve{k}$-module homomorphisms $\phi_{i\to j}\colon M_i\to M_j$ whenever $i\le j$, satisfying the following:
\begin{enumerate}
\item $\phi_{i\to i}\doteq \id_{M_i}$
\item $\phi_{j\to k}\circ \phi_{i\to j}\doteq \phi_{i\to k}$.
\end{enumerate}
\end{define}
In the above definition, $\doteq$ denotes equality up to multiplication by a unit in $\ve{k}$.

Similarly, we define a \emph{morphism} of projectively transitive systems by replacing `$=$' in Definition~\ref{def:morphism-transitive} with `$\doteq$'.

\subsection{Limits and projectively transitive systems}
\label{sec:callibrated-limits}

Given an projectively transitive system $\scM$ over $(I,\le)$ of $\ve{k}$-modules which is suitably nicely behaved (in the sense of the subsequent definition), we  describe a way of constructing a projectively transitive system $\underline{\scM}$, also over $(I,\le)$ which has the property that each morphism $\phi_{i\to j}$ is an isomorphism. We call $\underline{\scM}$ the direct limit, and we call the procedure to produce $\underline{\scM}$ as \emph{callibrating} the limit.

We restrict to the case that $\ve{k}$ is a field and consider only projectively transitive systems which satisfy the following:

\begin{define} Let $(I,\le)$ be a directed, pre-ordered set, and let $\scM=(M_i,\phi_{i\to j})$ be a projectively transitive system of $\ve{k}$-modules. We say that $\scM$ \emph{weakly non-trivial} if each $M_i$ is not the zero module, and also each map $\phi_{i\to j}\colon M_i\to M_j$ is non-zero.
\end{define}

In practice, it is sufficient also to consider the case when there is a some non-empty subset $J\subset I$, which is closed from above, such that $\scM|_J$ is weakly non-trivial.

If $i_0\in I$ is fixed, let
\[
U(i_0)=\{i\in I: j\ge i_0\}.
\]
We now define a genuine transitive system $\scM_{(i_0)}=(M_i,\psi_{j\to k})$ over the directed set $(U(i_0),\le)$, as follows. The groups are $M_i$ are the same as for $\scM$ (except restricted to for $j\in U(i_0)$). We define $\psi_{i_0\to j}=\phi_{i_0\to j}$ for all $j\in U(i_0)$. For arbitrary $j,k\in U(i_0)$ such that $j\le k$, we define $\psi_{j\to k}$ to be $\alpha\cdot \phi_{j\to k}$ where $\alpha\in \ve{k}^\times$ is the unique element which makes the following diagram commute:
\[
\begin{tikzcd}[labels=description]
&M_{i_0} \ar[dl, "\phi_{i_0\to j}"] \ar[dr, "\phi_{i_0\to k}"]&\\
M_{j}\ar[rr, "\alpha\cdot \phi_{j\to k}"]&& M_k
\end{tikzcd}
\]
Clearly this construction gives a genuine transitive system over $\ve{k}$.  We think of the morphisms as being ``callibrated'' versions of the morphisms from $\scM$.

 We write $\hat{M}_{i_0}$ for the limit (in the ordinary sense). 

If $i_0,j_0\in I$ and $i_0\le j_0$, we now define a map 
\[
\hat{\theta}_{i_0\to j_0}\colon \hat{M}_{i_0}\to \hat{M}_{j_0},
\]
as follows. First,  if $m\in U(i_0)$ and $n\in U(j_0)$, we define a map $\theta_{m\to n}^{i_0,j_0}\colon M_m\to M_n$ by setting $\theta_{m\to n}^{i_0,j_0}=\alpha\cdot \phi_{m\to n}$ where $\alpha\in \ve{k}^\times$ is the unique unit which makes the following diagram commute:
\[
\begin{tikzcd}[labels=description, column sep=1.5cm, row sep=1.5 cm]
 M_{i_0}\ar[r, "\phi_{i_0\to m}"]\ar[d, "\phi_{i_0\to j_0}"]& M_{m} \ar[d,"\alpha\cdot  \phi_{m\to n}"]\\
M_{j_0}\ar[r, "\phi_{j_0\to n}"] &M_n
\end{tikzcd}
\]
The maps $\theta^{i_0,j_0}_{m\to n}$ induce a map on direct limits, for which we write $\hat{\theta}_{i_0\to j_0}$.

\begin{lem} Suppose that $\scM$ is a projectively transitive system which is weakly non-trivial.
\begin{enumerate}
\item The maps $\hat{\theta}_{i_0\to j_0}$ make $(\hat{M}_{i_0})_{i_0\in I}$ into a projectively transitive system.
\item Each  $\hat{\theta}_{i_0\to j_0}$ is an isomorphism of $\ve{k}$-modules.
\end{enumerate}
\end{lem}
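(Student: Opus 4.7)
The plan is to reduce both claims to routine bookkeeping of scalars in a $1$-cocycle on a directed set. Each map $\psi^{i_0}_{m\to n}$ and $\theta^{i_0,j_0}_{m\to n}$ is by construction a scalar multiple of the underlying $\phi_{m\to n}$; weak non-triviality ensures that $\phi_{m\to n}$ and $\phi_{i_0\to n}$ are nonzero, so the defining square for each such map determines its scalar uniquely in $\ve{k}^\times$.

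For part (1), I would first compute $\hat{\theta}_{i_0\to i_0}$ by comparing the defining equation for $\theta^{i_0,i_0}_{m\to n}$ with that for $\psi^{i_0}_{m\to n}$. The two equations differ only by the unit expressing $\phi_{i_0\to i_0}\doteq \id$, which is independent of $m$ and $n$; thus $\hat{\theta}_{i_0\to i_0}$ is a global scalar multiple of the identity, giving $\hat{\theta}_{i_0\to i_0}\doteq \id$. The composition law $\hat{\theta}_{j_0\to k_0}\circ \hat{\theta}_{i_0\to j_0}\doteq \hat{\theta}_{i_0\to k_0}$ is verified similarly: stacking the defining squares for $\theta^{i_0,j_0}_{m\to n}$ and $\theta^{j_0,k_0}_{n\to p}$ produces a rectangle whose defining relation matches that for $\theta^{i_0,k_0}_{m\to p}$ up to the scalar expressing $\phi_{j_0\to p}\circ \phi_{i_0\to j_0}\doteq \phi_{i_0\to p}$, again independent of $m,n,p$ in an appropriate sense.

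For part (2), the crucial structural input is cofinality: since $(I,\le)$ is directed, $U(j_0)\subseteq U(i_0)$ is cofinal, so $\hat{M}_{i_0}$ may equivalently be computed using the restricted system $\scM_{(i_0)}|_{U(j_0)}$. This restricted system and $\scM_{(j_0)}$ share the same objects $\{M_n : n\in U(j_0)\}$, and their transitions differ by a $1$-cocycle of scalars $\lambda_{m,n}\in \ve{k}^\times$ with $\psi^{i_0}_{m\to n}=\lambda_{m,n}\psi^{j_0}_{m\to n}$. Fixing a basepoint $n_0\ge j_0$ and restricting further to the cofinal subset $U(n_0)$, the cocycle is trivialized by setting $\mu_{n_0}=1$ and $\mu_n=\lambda_{n_0,n}^{-1}$, so that $\lambda_{m,n}=\mu_m/\mu_n$ for all $m,n\ge n_0$. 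The rescalings $M_n\to M_n$ by $\mu_n$ then assemble into an isomorphism of transitive systems, inducing an isomorphism $\Phi\colon \hat{M}_{i_0}\to \hat{M}_{j_0}$. A final scalar comparison identifies $\hat{\theta}_{i_0\to j_0}$ as a global scalar multiple of $\Phi$, so $\hat{\theta}_{i_0\to j_0}$ is an isomorphism.

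The main obstacle is conceptual rather than technical: disentangling the several scalars (one for each calibration, one for each composition, one for each defining square) and verifying that the comparison unit in each step is truly independent of the chosen representative. The hypothesis of weak non-triviality is used repeatedly to extract these scalars cleanly, since it guarantees that the relevant compositions $\phi_{m\to n}\circ \phi_{i_0\to m}\doteq \phi_{i_0\to n}$ are nonzero whenever we need to solve the scalar equations; without it, the construction of the $\psi^{i_0}$ and $\theta^{i_0,j_0}$ themselves would be ambiguous.
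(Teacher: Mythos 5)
Your proof is correct, and part (2) is organized differently from the paper's. For part (1) you do essentially what the paper does: a scalar-bookkeeping diagram chase, with weak non-triviality (plus $\ve{k}$ being a field) pinning each scalar uniquely; the only blemish is that the comparison unit for the composition law is the one expressing $\phi_{j_0\to k_0}\circ \phi_{i_0\to j_0}\doteq \phi_{i_0\to k_0}$ (involving only $i_0,j_0,k_0$), not the relation with $p$ that you name, though this does not affect the argument. For part (2) the paper instead writes down an explicit map $\hat{\tau}_{j_0\to i_0}$, given on each $M_n$ ($n\ge j_0$) by a unit times the identity, and checks by a chase that $\hat{\tau}_{j_0\to i_0}\circ \hat{\theta}_{i_0\to j_0}$ is the identity on limits. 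Your route --- cofinality of $U(j_0)$ in $U(i_0)$, viewing the discrepancy between the two calibrated systems as a $\ve{k}^\times$-valued cocycle $\lambda_{m,n}$ on the directed set, trivializing it from a basepoint, and getting an isomorphism of genuine transitive systems hence of limits --- uses in substance the same rescaling data (your maps $\mu_n\cdot\id$ are the paper's $\hat{\tau}$ up to inversion), but packages it more conceptually: the cocycle trivialization explains why the rescalings are mutually consistent, and it hands you a genuine two-sided isomorphism $\Phi$ of limits, whereas the paper's printed argument only verifies one composite. The price you pay is the last step, identifying $\hat{\theta}_{i_0\to j_0}$ as a global unit multiple of $\Phi$: this is true, but it should be argued at the level of maps rather than limit classes (limit classes could a priori be killed by later transitions), using the commutation $\psi^{j_0}_{n\to n'}\circ \theta^{i_0,j_0}_{n\to n}=\theta^{i_0,j_0}_{n'\to n'}\circ \psi^{i_0}_{n\to n'}$ --- the same compatibility that makes $\hat{\theta}$ well defined --- together with $\lambda_{n,n'}=\mu_n/\mu_{n'}$ and nonvanishing of $\psi^{j_0}_{n\to n'}$; with that line added, your proof is complete.
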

\begin{proof}The first claim is obtained by performing a diagram chase to the following diagram:
\[
\begin{tikzcd}[labels=description]
M_{i_0} \ar[dr, "\phi_{i_0\to j_0}"]\ar[rr, "\phi_{i_0\to \ell}"] \ar[dd, "\delta\cdot \phi_{i_0\to k_0}"]  &&M_\ell \ar[dd, "\alpha\cdot \phi_{\ell\to n}",pos=.65]\ar[dr, "\beta \cdot \phi_{\ell\to m}"]\\
&M_{j_0}\ar[dl, "\phi_{j_0\to k_0}"] \ar[rr,crossing over, "\phi_{j_0\to m}"]&&M_m \ar[dl, "\gamma \cdot \phi_{m\to n}"]\\
M_{k_0} \ar[rr, "\phi_{k_0\to n}"]&&M_{n}
\end{tikzcd}
\]
where $\a,\b,\g,\dt\in \ve{k}^\times$.
We assume commutativity on all but the back face. An easy diagram chase shows that the back face commutes, which gives $\hat{\theta}_{j_0\to k_0}\circ \hat{\theta}_{i_0\to j_0}=\delta \cdot \hat{\theta}_{i_0\to k_0}$.

 We consider the second claim. We construct a map $\hat{\tau}_{j_0\to i_0}$ in the opposite direction, as follows. The map $\hat{\tau}_{j_0\to i_0}$ is defined on $M_n$ for $n\ge j_0$ to be $\alpha\cdot \id_{M_n}$ for the unique $\alpha\in \ve{k}^\times$ which makes the following diagram commute:
 \[
 \begin{tikzcd}[labels=description, column sep=1.5cm, row sep=1.5 cm]
 M_{i_0}\ar[r, "\phi_{i_0\to n}"]\ar[d, "\phi_{i_0\to j_0}"]& M_{n} \\
M_{j_0}\ar[r, "\phi_{j_0\to n}"] &M_n \ar[u,"\alpha \cdot \id"]
\end{tikzcd}.
\] 

An easy diagram chase shows that $\hat{\tau}_{j_0\to i_0}\circ \hat{\theta}_{i_0\to j_0}$ is the map on direct limits which sends an $x\in M_n$ to $\phi_{n\to m}(x)$ for some $m\ge n$ and $m\ge j_0$ (the choice of $m$ may depend on $n$; but the induced element $\phi_{n\to m}(x)$ is independent of $m$). In particular $\hat{\tau}_{j_0\to i_0}\circ \hat{\theta}_{i_0\to j_0}$ is the identity map on direct limits.
\end{proof}

\begin{rem}
 The above construction also works in the case that $\scM$ is a projectively transitive system of chain complexes. Forming the limit, in the same manner as for $\ve{k}$-modules, yields a direct limit which is a projectively transitive system of chain complexes where each morphism is a chain isomorphism. 
\end{rem}

Note that the above callibration operation works on the level of morphisms, as well, as long as we restrict to morphisms such that each $F_{i,f(i)}$ is non-zero. Indeed if $\scF=(f,F_{i,f(i)})$ is a morphism of projectively transitive systems, we may define a genuine morphism of transitive systems 
\[
\scF_{(i_0)}=(f, F_{i,f(i)}^{(i_0)})\colon \scM_{(i_0)}\to \scN_{(f(i_0))}
\]
 as follows.  We define $F_{i_0,f(i_0)}^{(i_0)}=F_{i_0,f(i_0)}$. For $j\ge i_0$, we set $F_{j,f(j)}^{(i_0)}=\alpha\cdot F_{j,f(j)}$ where $\alpha\in \ve{k}^\times$ is the unique unit such that the following diagram commutes
 \[
 \begin{tikzcd}[column sep=1.5cm, row sep=1.5cm, labels=description]
 M_{i_0}
 	\ar[r, "F_{i_0,f(i_0)}"]
 	\ar[d, "\psi_{i_0\to j}"]
 &
 N_{f(i_0)}
 	\ar[d, "\psi_{f(i_0)\to f(j)}"]
 \\
 M_{j}
 	\ar[r, "\alpha \cdot F_{j,f(j)}"] & N_{f(j)}
 \end{tikzcd}
 \]

\subsection{Naturality and direct limit knot Floer homologies}

In this section, we apply the formalism from the previous section to sutured and monopole limit homologies. We recall that these theories are defined in terms of projectively transitive systems. The following is based on the naturality result of Baldwin and Sivek \cite{BSNaturality} for sutured monopole and instanton homology:

\begin{prop} 
By taking limits in the sense of Section~\ref{sec:callibrated-limits}, we may view $\KHI^-(K)$ as the limit of a genuine  transitive system over $\N$ of $\C$-vector spaces. On $\KHI^-(K)$, there is an action of $U$ which is well-defined up to multiplication by a unit. The transition maps commute with the action of $U$ up to overall multiplication by a unit.
\end{prop}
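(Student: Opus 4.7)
The plan is to apply the callibration machinery of Section~\ref{sec:callibrated-limits} to the data
\[
\scM=\bigl(\SHI(S^3\setminus \nu(K),\Gamma_n),\ \phi^+_{n}\bigr)_{n\in\N},
\]
where $\phi^+_{i\to j}:=\phi^+_{j-1}\circ\cdots\circ \phi^+_i$ for $i\le j$. First I would verify that $\scM$ is a projectively transitive system of $\C$-vector spaces: the underlying sutured instanton groups are well-defined up to canonical isomorphism by Baldwin and Sivek's naturality theorem for sutured instanton Floer homology, and each bypass map $\phi^+_n$, being a composition of contact handle maps, is well-defined up to multiplication by a unit of $\C$. The composition axiom $\phi^+_{j\to k}\circ\phi^+_{i\to j}\doteq\phi^+_{i\to k}$ is automatic from the definition, and $\phi^+_{i\to i}\doteq \id$ holds trivially.

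Next I would establish weak non-triviality of $\scM$ on a cofinal tail $\{n\ge n_0\}\subset\N$ (which is itself order-isomorphic to $\N$). Each $\SHI(S^3\setminus \nu(K),\Gamma_n)$ is a nonzero finite-dimensional $\C$-vector space, and each $\phi^+_n$ is nonzero for $n$ sufficiently large: if every positive bypass map in some tail vanished, then $\KHI^-(K)$, being the direct limit, would vanish, contradicting the non-vanishing computations of \cite{LiLimits}. Applying the callibration procedure of Section~\ref{sec:callibrated-limits} to this restricted projectively transitive system produces a genuine transitive system whose ordinary direct limit is canonically identified with $\KHI^-(K)$, proving the first claim.

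For the action of $U$, the negative bypass maps $\phi^-_n\colon \SHI(\Gamma_n)\to\SHI(\Gamma_{n+1})$ assemble into a morphism of projectively transitive systems from $\scM$ to itself, in the sense of Definition~\ref{def:morphism-transitive}, with monotone index function $n\mapsto n+1$. The required compatibility $\phi^+_{n+1}\circ\phi^-_n\doteq\phi^-_{n+1}\circ\phi^+_n$ is noted in Section~\ref{sec:3} and follows from functoriality of the contact gluing map. The morphism-level callibration described at the end of Section~\ref{sec:callibrated-limits} then converts $\phi^-$ into a genuine morphism of the callibrated transitive system, defined up to an overall scalar in $\C^{\times}$. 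Passing to direct limits gives the $U$-action on $\KHI^-(K)$ well-defined up to a unit, and the same overall scalar ambiguity is precisely the statement that the callibrated transition maps $\hat{\theta}_{i_0\to j_0}$ commute with $U$ up to an overall unit.

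The main obstacle will be verifying the weak non-triviality hypothesis of Section~\ref{sec:callibrated-limits} for the positive bypass maps on some cofinal tail; once that is in place, everything else is a formal consequence of Baldwin and Sivek's naturality together with the functoriality of contact gluing compiled in Sections~\ref{sec:sutured-background}--\ref{sec:3}.
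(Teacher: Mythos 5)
The paper offers no separate argument for this proposition -- it is stated as a direct application of the calibration formalism of Section~\ref{sec:callibrated-limits} together with Baldwin--Sivek naturality -- and your proposal is exactly that intended unpacking, so in outline you are on the paper's route. Two points, however, need shoring up. First, weak non-triviality of the tail requires that \emph{every composite} $\phi_{i\to j}=\phi^+_{j-1}\circ\cdots\circ\phi^+_{i}$ be nonzero, not merely each one-step map $\phi^+_n$: a composition of nonzero maps can vanish, so the intermediate claim you actually assert is weaker than what the calibration lemma consumes. Your non-vanishing argument can be repaired with the same input: if $\KHI^-(K)\neq 0$ for some (equivalently, any) choice of scalars, then some class in some $\SHI(S^3\setminus\nu(K),\Gamma_{i_0})$ has nonzero image in the limit, and hence $\phi_{i\to j}\neq 0$ for all $i_0\le i\le j$, which is precisely weak non-triviality on the tail $\{n\ge i_0\}$. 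Better still, you can avoid the mild circularity of quoting non-vanishing of the very object being constructed by using Li's grading results directly: the positive bypass maps preserve the Alexander grading and are isomorphisms in all sufficiently high gradings (where the groups are eventually nonzero, since the top grading agrees with the top grading of $\widehat{\KHI}$, nonzero by Kronheimer--Mrowka's genus detection), so every composite is nonzero on the nose.

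Second, the morphism-level calibration you invoke for the $U$-action is stated in Section~\ref{sec:callibrated-limits} only under the hypothesis that each component $F_{i,f(i)}$ is nonzero, and you never verify that $\phi^-_n\neq 0$. Without this, the calibrating scalars are not uniquely determined and the induced $U$ on the limit is not pinned down up to an overall unit. The fact itself is true -- for instance, reversing the orientation of the sutures exchanges positive and negative bypasses, so the negative bypass maps are isomorphisms in sufficiently negative Alexander gradings by the symmetric version of Li's results -- but it must be recorded as part of the argument. With these two additions (non-vanishing of all composites $\phi^+_{i\to j}$ and of the maps $\phi^-_n$ on the chosen tail), your proof is complete and coincides with what the paper intends.
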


We observe the following basic lemma:

\begin{lem}\label{lem:rescale-module} Suppose that $M$ is a module over $\ve{k}[U_1,\dots, U_r]$ for $\ve{k}$ a field, which admits a relative $r$-component Alexander grading, such that $A_i(U_j)=-\delta_{i,j}$, where $\delta_{i,j}$ is the Kronecker delta function. Let, $\a_1,\dots, \a_r\in \ve{k}^{\times}$ and let $\tilde{M}$ be the $\ve{k}[U_1,\dots, U_r]$ module obtained by having $U_i$ act by $\a_i U_i$. Then $M$ and $\tilde{M}$ are isomorphic as $\ve{k}[U_1,\dots, U_r]$-modules.
\end{lem}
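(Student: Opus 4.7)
The plan is to construct an explicit $\ve{k}$-linear isomorphism $\phi\colon M\to \tilde M$ that rescales each Alexander-homogeneous element by an appropriate monomial in $\alpha_1,\dots,\alpha_r$. Since the grading is only relative, I will first pick, on each $\ve{k}[U_1,\dots,U_r]$-indecomposable component of $M$, an arbitrary lift to an absolute multigrading $A=(A_1,\dots,A_r)$; different choices will change $\phi$ only by multiplication by a unit of $\ve{k}$ on each component, so none of the conclusions depend on this choice.

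On a homogeneous element $x\in M$ of multigrading $(a_1,\dots,a_r)$, define
\[
\phi(x)=\Bigl(\prod_{j=1}^r \alpha_j^{-a_j}\Bigr)\cdot x,
\]
and extend $\ve{k}$-linearly. This is well-defined because $M$ is a direct sum of its (relative) multigraded pieces, and each $\alpha_j\in \ve{k}^\times$, so the rescaling factor is a unit. The inverse map $\phi^{-1}(y)=\bigl(\prod_j \alpha_j^{a_j(y)}\bigr)y$ is defined in exactly the same way, so $\phi$ is a $\ve{k}$-linear isomorphism of the underlying vector spaces.

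It remains to check that $\phi$ intertwines the two module structures, i.e. that $\phi(U_i\cdot x)=\alpha_i U_i\cdot \phi(x)$ for every generator $U_i$ and every homogeneous $x$. By the assumption $A_i(U_j)=-\delta_{i,j}$, the element $U_i x$ has multigrading $(a_1,\dots,a_i-1,\dots,a_r)$, so
\[
\phi(U_i x)=\alpha_i^{-(a_i-1)}\prod_{j\ne i}\alpha_j^{-a_j}\cdot U_i x=\alpha_i\cdot\Bigl(\prod_j\alpha_j^{-a_j}\Bigr) U_i x=\alpha_i U_i\cdot \phi(x),
\]
which is precisely the $U_i$-action on $\tilde M$. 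Extending linearly in $\ve{k}[U_1,\dots,U_r]$ shows $\phi$ is a module map, and hence an isomorphism $M\cong \tilde M$.

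There is no real obstacle: the only bookkeeping point is that the Alexander grading is relative rather than absolute, handled by the arbitrary per-component lift above. Alternatively, one may avoid any choice by working homogeneous-component-by-homogeneous-component and simply rescaling the identity map on each graded piece by the required unit; the compatibility check above shows these rescalings assemble into a module isomorphism.
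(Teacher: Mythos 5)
Your proof is correct and is essentially the paper's own argument: both define $\phi(x)=\alpha_1^{-A_1(x)}\cdots\alpha_r^{-A_r(x)}\cdot x$ on homogeneous elements and verify equivariance using $A_i(U_j)=-\delta_{i,j}$. Your extra care in lifting the relative grading to an absolute one component-by-component is a harmless refinement of what the paper does implicitly.
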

\begin{proof} We define a map $\phi$ from $M$ to $\tilde{M}$ via the formula
\[
\phi(\xs)=\a_1^{-A_1(\xs)}\cdots \a_r^{-A_r(\xs)} \cdot\xs.
\]
Since $U_i$ has Alexander grading $-\delta_{i,j}$ it is straightforward to verify that $\phi$ commutes with the action of $\ve{k}[U_1,\dots, U_r]$.
\end{proof}

\begin{rem} The above lemma is false if $M$ is not graded. As an example, consider $M= \C[U]/(U^n-1)$. Then $U^n$ acts by $1$, whereas $(\a U)^n$ acts by $\a^n$, which may be chosen to be not 1. In particular $U'=(\a U)$ determines a different module structure. 
\end{rem}

Since $U$ is given grading $-1$ is sutured instanton and monopole homology, we obtain the following corollary:

\begin{cor}
 The graded $\C[U]$-isomorphism class of $\KHI^-(K)$ is well-defined.
\end{cor}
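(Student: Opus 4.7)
The plan is to combine the preceding proposition with Lemma~\ref{lem:rescale-module}. The proposition tells us that $\KHI^-(K)$ is a well-defined $\C$-vector space and carries an action of $U$, but this action is only well-defined up to multiplication by a unit $\a \in \C^{\times}$. So a priori, two different callibrations of the direct limit might endow $\KHI^-(K)$ with two different $\C[U]$-module structures, related by rescaling $U \mapsto \a U$. Our task is to show these structures are isomorphic as $\C[U]$-modules, once we remember the Alexander grading.

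The key input is that $\KHI^-(K)$ carries a relative $\Z$-valued Alexander grading (as recalled at the end of Section~\ref{sec:3}, following Li \cite{LiLimits}) with respect to which $U$ shifts degree by $-1$. I would first verify that this grading is compatible with the callibration procedure of Section~\ref{sec:callibrated-limits}, i.e.\ that it descends to the direct limit independently of the choice of callibration (this is essentially automatic since the bypass maps $\phi^{\pm}$ and their unit rescalings all preserve the Alexander grading). With this in place, any two $\C[U]$-module structures on the underlying graded $\C$-vector space of $\KHI^-(K)$ that arise from the construction differ only by replacing the action of $U$ by $\a U$ for some $\a \in \C^{\times}$.

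At this point the result is an immediate application of Lemma~\ref{lem:rescale-module} with $r=1$: the module $\tilde{M}$ obtained from $M = \KHI^-(K)$ by rescaling the action of $U$ by $\a$ is isomorphic to $M$ as a graded $\C[U]$-module, via the map $\xs \mapsto \a^{-A(\xs)} \xs$. Hence the graded $\C[U]$-isomorphism class of $\KHI^-(K)$ does not depend on the ambiguity in defining the $U$-action, and is therefore a well-defined invariant.

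The only potential subtlety, which I expect to be routine, is to confirm that the Alexander grading really is preserved by the (unit-rescaled) positive and negative bypass maps $\phi^{\pm}$, so that it makes sense on the limit and so that Lemma~\ref{lem:rescale-module} applies. This is built into the construction of the grading on direct limit knot Floer homology from \cite{LiLimits}, and no additional work is required.
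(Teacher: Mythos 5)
Your proposal is correct and matches the paper's argument: the corollary is deduced exactly by noting that $U$ shifts the Alexander grading by $-1$, so the ambiguity of the $U$-action up to a unit is absorbed by Lemma~\ref{lem:rescale-module} (with $r=1$). The extra check you mention, that the grading is preserved by the bypass maps and descends to the callibrated limit, is indeed built into Li's construction and is implicitly assumed in the paper as well.
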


\section{Several tensor products}\label{sec:5}

In this section, we recall the derived tensor product of $\bF[U]$-modules, and also a tensor product of direct limits which arises in our proof of the connected sum formula.

\subsection{Derived tensor products over \texorpdfstring{$\bF[U]$}{F[U]}}

Suppose $\bF$ is a field and $M_1$ and $M_2$ are two finitely generated modules over the polynomial ring $\bF[U]$. We now discuss the derived tensor product of $M_1$ and $M_2$. To define the derived tensor product, it is helpful to introduce two $U$ variables, and view $M_1$ as being a module over $\bF[U_1]$ and $M_2$ as being a module over $\bF[U_2]$. The \emph{derived tensor product} of $M_1$ and $M_2$ is the homology of the chain complex
\[
M_1\tildeotimes_{\bF[U]} M_2:=H_* \left(\begin{tikzcd}[column sep=2cm] M_1\otimes_{\bF} M_2 [1]\ar[r, "U_1|1-1|U_2"] & M_1\otimes_{\bF} M_2 \end{tikzcd}\right).
\]
Note that $M_1\tildeotimes_{\bF[U]} M_2$ is a module over $\bF[U_1,U_2]$, however it is straightforward to see that both $U_1$ and $U_2$ have the same action on homology, so we view the homology as a $\bF[U]$-module, where $U$ acts by either $U_1$ or $U_2$. Here $[1]$ denotes a shift in the homological grading, if $M_1$ and $M_2$ have a grading (if they are not graded, we ignore the grading shift). Also, we are following the convention that $M[1]$ denotes $M\otimes_{\bF} \bF_1$, where $\bF_1$ is a copy of $\bF$ supported in homological grading $1$.

The following lemma is elementary and well known, though we record an elementary proof for completeness:

\begin{lem}
 Suppose that $C_1$ and $C_2$ are two finitely generated, free chain complexes over $\bF[U]$. Then all of the following modules are isomorphic:
 \begin{enumerate}
\item\label{group:1} $H_*(C_1\otimes_{\bF[U]} C_2)$.
\item\label{group:2} $H_*(C_1\tildeotimes_{\bF[U]} C_2)$.
\item\label{group:3} $H_*(H_*(C_1)\tildeotimes_{\bF[U]} H_*(C_2))$. 
\end{enumerate}
\end{lem}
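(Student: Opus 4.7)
The strategy is to recognize all three modules as models for the derived tensor product $C_1\otimes^L_{\bF[U]}C_2$. The first equivalence (1)$\simeq$(2) compares two models of this derived tensor product directly, while (2)$\simeq$(3) uses that $\bF[U]$ is a PID (global dimension $1$) to replace each $C_i$ by its homology.

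For (1)$\simeq$(2), I would present $\bF[U]=\bF[U_1,U_2]/(U_1-U_2)$ and use the Koszul free resolution
\[
\bF[U_1,U_2]\xrightarrow{U_1-U_2}\bF[U_1,U_2]\twoheadrightarrow \bF[U]
\]
of $\bF[U]$ as a module over $\bF[U]\otimes_{\bF}\bF[U]=\bF[U_1,U_2]$. Tensoring with $M_1\otimes_{\bF}M_2$ exhibits $M_1\tildeotimes_{\bF[U]}M_2$ as a canonical chain-level model for the derived tensor product $M_1\otimes^L_{\bF[U]}M_2$. When $C_1$ (or $C_2$) is free over $\bF[U]$, the ordinary tensor product $C_1\otimes_{\bF[U]}C_2$ also computes $C_1\otimes^L_{\bF[U]}C_2$. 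To compare the two models concretely, I would construct the natural surjection
\[
\pi\colon C_1\tildeotimes_{\bF[U]}C_2 \longrightarrow C_1\otimes_{\bF[U]}C_2
\]
which projects onto the non-shifted summand and then passes to the quotient by $U_1|1-1|U_2$. The kernel of $\pi$ is the mapping cone of $U_1-U_2$ from $C_1\otimes_{\bF}C_2[1]$ onto its own image in $C_1\otimes_{\bF}C_2$, and freeness of $C_1$ ensures that $U_1-U_2$ acts injectively on $C_1\otimes_{\bF}C_2$. Thus this kernel is the cone of an isomorphism, hence acyclic, so $\pi$ is a quasi-isomorphism.

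For (2)$\simeq$(3), I would apply Smith normal form iteratively to the differentials of the bounded, finitely generated, free complex $C_i$ over the PID $\bF[U]$ to obtain a chain-homotopy decomposition into elementary pieces: shifts of $\bF[U]$ with zero differential (contributing free summands of $H_*(C_i)$), two-term resolutions $\bF[U]\xrightarrow{U^k}\bF[U]$ (contributing torsion summands $\bF[U]/(U^k)$), and contractible pairs $\bF[U]\xrightarrow{1}\bF[U]$. On each elementary piece the canonical projection onto its homology is a quasi-isomorphism, so assembling these gives a quasi-isomorphism $C_i\to H_*(C_i)$ of $\bF[U]$-complexes (viewing the target as a complex with zero differential). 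Because $\tildeotimes_{\bF[U]}$ is a mapping cone built after tensoring over the field $\bF$, it preserves quasi-isomorphisms in each variable, so applying $C_i\simeq H_*(C_i)$ in each slot yields $C_1\tildeotimes_{\bF[U]}C_2\simeq H_*(C_1)\tildeotimes_{\bF[U]}H_*(C_2)$.

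The main subtlety is the Smith normal form decomposition into elementary complexes, which is classical but requires some bookkeeping to ensure the chain-homotopy equivalence respects all differentials simultaneously. A shortcut that avoids this is the K\"unneth formula over the PID $\bF[U]$: both $H_*(C_1\tildeotimes_{\bF[U]}C_2)$ and $H_*(H_*(C_1)\tildeotimes_{\bF[U]}H_*(C_2))$ fit into short exact sequences whose middle terms split noncanonically as
\[
\bigoplus_{i+j=n}H_i(C_1)\otimes_{\bF[U]}H_j(C_2)\ \oplus\ \bigoplus_{i+j=n-1}\Tor_1^{\bF[U]}(H_i(C_1),H_j(C_2)),
\]
from which one reads off the isomorphism of $\bF[U]$-modules directly.
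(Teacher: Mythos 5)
Your proposal is correct, and its second half is essentially the paper's argument: for (2)$\iso$(3) the paper also invokes the classification of finitely generated free complexes over the PID $\bF[U]$ (your Smith-normal-form decomposition into one- and two-step pieces) to get $\bF[U]$-equivariant quasi-isomorphisms $C_i\to H_*(C_i)$, and then checks that the induced map of cones is a quasi-isomorphism via the two-step filtration of $\tildeotimes$, whose associated graded is a tensor product over the field $\bF$ — exactly the content of your remark that the cone is ``built after tensoring over $\bF$'' (to make that precise you still need the five-lemma/filtration step for the square with vertical quasi-isomorphisms, which is what the paper writes out). Where you genuinely diverge is the step (1)$\iso$(2): the paper constructs an explicit chain homotopy equivalence, using the splittings $s(U^i\xs|\ys)=U_1^i\xs|\ys$ and $p$ of the short exact sequence $0\to C_1\otimes_{\bF}C_2\to C_1\otimes_{\bF}C_2\to C_1\otimes_{\bF[U]}C_2\to 0$ and the homotopy $H=p$, whereas you argue that the projection $\pi\colon C_1\tildeotimes_{\bF[U]}C_2\to C_1\otimes_{\bF[U]}C_2$ has kernel equal to the cone of the isomorphism $U_1|1-1|U_2$ onto its image (injectivity needing freeness of $C_1$, which you correctly flag), hence acyclic, so $\pi$ is a quasi-isomorphism by the long exact sequence. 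Your route is shorter and buys exactly what the statement needs (note only that $\pi$ intertwines $U_1$, resp.\ $U_2$, with $U$, so the homology isomorphism is one of $\bF[U]$-modules; the paper gets the stronger conclusion of an explicit $\bF[U_1]$-equivariant homotopy equivalence, which is convenient elsewhere in the paper). Your K\"unneth shortcut is a genuinely different and perfectly valid route the paper does not take: since $\bF[U]$ is a PID, both $H_*(C_1\otimes_{\bF[U]}C_2)$ and $H_*(H_*(C_1)\tildeotimes_{\bF[U]}H_*(C_2))$ split (non-canonically, but $\bF[U]$-linearly) as $\bigoplus H_i\otimes_{\bF[U]}H_j\oplus\bigoplus\Tor_1^{\bF[U]}(H_i,H_j)$, the second computation using your Koszul-resolution observation that the cone of $U_1|1-1|U_2$ computes $\Tor_0\oplus\Tor_1$; this avoids the chain-level decomposition entirely, at the cost of producing only an abstract module isomorphism rather than a preferred quasi-isomorphism.
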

\begin{proof}
 We first show that \eqref{group:1} and~\eqref{group:2} are isomorphic. To see this, we note that there is a short exact sequence
 \[
 \begin{tikzcd}[column sep=1.4cm]
 0 
 \ar[r]
 &
  C_1\otimes_{\bF} C_2 
 \ar[r, "U_1|1-1|U_2"]
 &
  C_1\otimes_{\bF} C_2
 \ar[r, "\Pi"]
 &
 C_1\otimes_{\bF[U]} C_2
\ar[r]
&
0
 \end{tikzcd}
\]
Here $\Pi$ is the canonical projection map. One can (non-canonically) define splittings of the above exact sequence, as follows. There is a splitting 
\[
s\colon C_1\otimes_{\bF[U]} C_2\to C_1\otimes_{\bF} C_2
\]
 given by $s(U^i \xs|\ys)=U_1^i \xs|\ys$. There is a splitting $p$ of the map $U_1|1-1|U_2$ given by 
 \[
 p(U_1^i \xs|U_2^j \ys)=U_1^i(U_1^{j-1}+U_1^{j-1} U_2+\cdots +U_1 U_2^{j-2}+U_2^{j-1})(\xs|\ys).
 \]
 It is easy to check that $p\circ s=0$. We can explicitly write down a homotopy equivalence between $C_1\otimes_{\bF[U]} C_2$ and $C_1\tildeotimes_{\bF[U]} C_2$. A map $\Phi\colon C_1\tildeotimes_{\bF[U]} C_2\to  C_1\otimes_{\bF[U]} C_2$ is given by $\Pi$ (applied to the codomain of the mapping cone). A map $\Psi$ in the opposite direction is given by $\Psi=(p \d s, s)$. It is easy to check that $\Phi\circ \Psi=\id$ and $\Psi\circ \Phi=\id+[\d,H]$, where $H$ is the map $p$ (viewed as an endomorphism of the mapping cone). We note that $\Phi$ and $\Psi$ are also maps of $\bF[U_1]$-modules.
 
 We now show that~\eqref{group:2} and~\eqref{group:3} are isomorphic. To see this, we note first that by the classification theorem for finitely generated, free chain complexes over a PID, the complex $C_1$ decomposes as a sum of 1-step and 2-step complexes. In particular $C_1$ decomposes as $C_1\iso (X\xrightarrow{F} X')$, where $X$ and $X'$ are free, finitely generated modules over $\bF[U_1]$ (with vanishing internal differential), and $F$ is some $\bF[U_1]$-equivariant map. In particular, the complex $C_1$ admits a quasi-isomorphism $\phi$ onto its homology (corresponding to projection onto $X'/F(X)$). Entirely analogously, there is a quasi-isomorphism $\psi\colon C_2\to H_*(C_2).$ We thus obtain an $\bF[U_1,U_2]$-equivariant chain map
 \[
 \phi|\psi\colon C_1\tildeotimes_{\bF[U]} C_2\to H_*(C_1)\tildeotimes_{\bF[U]} H_*(C_2),
 \]
 which we claim is a quasi-isomorphism. To establish this, we note that the derived tensor product has a 2-step filtration, corresponding to the $U_1|1-1|U_2$ direction. The associated graded complex to this filtration is two copies of $C_1\otimes_{\bF} C_2$ or two copies of $H_*(C_1)\otimes_{\bF} H_*(C_2)$. The map $\phi|\psi$ clearly induces an isomorphism on the homology of the associated graded complex, and hence an isomorphism on the homology of the original complex, by a spectral sequence argument.
\end{proof}

There is another perspective on the above model for the derived tensor product using the language of type-$D$ and $A$ modules of Lipshitz, Ozsv\'{a}th and Thurston \cite{LOTBordered} \cite{LOTBimodules}. Namely, we may define a type-$DD$ bimodule ${}^{\bF[U]}\Lambda^{\bF[U]}$, where $\Lambda=\Span_{\bF}(1,\theta)$ and
\[
\delta^{1,1}(1)=U\otimes \theta\otimes 1+1\otimes \theta\otimes U \quad \text{and} \quad \delta^{1,1}(\theta)=0.
\]
Then
\begin{equation}
M_{F[U]}\tildeotimes {}_{\bF[U]} N= M_{\bF[U]}\boxtimes {}^{\bF[U]}\Lambda^{\bF[U]}\boxtimes{}_{\bF[U]} N.\label{eq:derived-tensor-bimodules}
\end{equation}

\subsection{Tensor products and transitive systems}
\label{sec:tensor-products-transitive-systems}
In this section, we prove a result about tensor products and transitive systems which is used in our proof of the main theorem. Firstly, suppose that $\scG=(G_n,\phi_n)$ and $\scH=(H_n,\psi_n)$ are transitive systems of groups. Given $n$ and $m$, we will consider the chain complex given by the diagram
\[
\scS_{n,m}:=
\left(\begin{tikzcd}[labels=description, column sep=1cm]
G_{n-1}\otimes H_m\ar[dr, "\phi_{n-1}|\id"]&& G_n\otimes H_{m-1}\ar[dl, "\id|\psi_{m-1}"]\\
& G_n\otimes H_m
\end{tikzcd}\right)
\]

If $G$ is a $\Z$-graded group and $\delta\in \Z$, write $G^{>\delta}$ for the subgroup in gradings greater than $\delta$.

\begin{lem}\label{lem:staircase-lemma} Suppose that $(G_n,\phi_n)$ and $(H_n,\psi_n)$ are transitive systems such that $G_n$ and $H_n$ are equipped with $\Z$-gradings with respect to which $\phi_n$ and $\psi_n$ are grading preserving, and such that $G_n$ and $H_n$ are bounded above for each $n$. Furthermore, suppose that for each $\delta\in \Z$, there is an $N$ such that if $n,m>N$, maps $\psi_m\colon H_m^{>\delta}\to H_{m+1}^{>\delta}$ and $\phi_n\colon G_n^{>\delta}\to G_{n+1}^{>\delta}$ are isomorphisms, Then 
\[
H_*(\scS_{n,m})^{>\delta}\iso (G_n\otimes H_m)^{>\delta}
\]
for all sufficiently large $n$ and $m$.
\end{lem}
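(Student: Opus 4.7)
The plan is to view $\scS_{n,m}$ as an iterated mapping cone. Let
\[
D := \Cone\bigl(\phi_{n-1}\otimes \id_{H_m}\colon G_{n-1}\otimes H_m \to G_n \otimes H_m\bigr),
\]
so that $\scS_{n,m}$ is identified with the mapping cone of the chain map $G_n\otimes H_{m-1}\to D$ given by $\id\otimes \psi_{m-1}$ landing in the degree-zero part $G_n\otimes H_m$ of $D$. Because all maps are grading-preserving, the long exact sequences attached to both cones split into a LES in each internal grading; in particular we may restrict throughout to gradings $>\delta$.

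The key step is to show that for $n,m$ sufficiently large, $H_*(D)^{>\delta}=0$, i.e.\ that $\phi_{n-1}\otimes \id$ restricts to an isomorphism $(G_{n-1}\otimes H_m)^{>\delta}\to (G_n\otimes H_m)^{>\delta}$. First invoke the hypothesis at threshold $\delta$ itself: for $m>N_0$, the piece $H_m^{>\delta}$ stabilizes, so its top grading takes some common value $T(\delta)$; combined with the obvious bound $\le \delta$ on $H_m^{\le \delta}$, the module $H_m$ is bounded above by $M:=\max(T(\delta),\delta)$, uniformly in $m>N_0$. Now invoke the hypothesis a second time at the smaller threshold $\delta':=\delta-M$ to find $N'$ beyond which $\phi_n$ is an isomorphism on $G_n^{>\delta'}$. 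For any homogeneous pure tensor $g\otimes h\in (G_{n-1}\otimes H_m)^{>\delta}$ we have $\deg h\le M$ and $\deg g+\deg h>\delta$, hence $\deg g>\delta'$, so that $g\in G_{n-1}^{>\delta'}$. Thus $(G_{n-1}\otimes H_m)^{>\delta}\subset G_{n-1}^{>\delta'}\otimes H_m$, where $\phi_{n-1}\otimes \id$ is an isomorphism, and $D^{>\delta}$ is acyclic.

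Plug this into the LES of $\scS_{n,m}=\Cone(G_n\otimes H_{m-1}\to D)$: since $G_n\otimes H_{m-1}$ has trivial differential and $H_*(D)^{>\delta}=0$, the LES collapses in gradings $>\delta$ to give $H_1(\scS_{n,m})^{>\delta}\iso (G_n\otimes H_{m-1})^{>\delta}$ and $H_0(\scS_{n,m})^{>\delta}=0$. A symmetric argument with $\psi_{m-1}$ in place of $\phi_{n-1}$ (and $G$ in place of $H$) shows that $\id\otimes \psi_{m-1}$ restricts to an isomorphism $(G_n\otimes H_{m-1})^{>\delta}\to (G_n\otimes H_m)^{>\delta}$. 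Composing the two yields $H_*(\scS_{n,m})^{>\delta}\iso (G_n\otimes H_m)^{>\delta}$, as desired.

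The main obstacle, which is essentially bookkeeping, is juggling the grading thresholds. The hypothesis must be applied twice: once at $\delta$ itself to extract a \emph{uniform} upper bound $M$ on $H_m$, and then again at the smaller threshold $\delta-M$ to make $\phi\otimes \id$ an isomorphism in the relevant range of the tensor product. The subtlety is that ``bounded above for each $n$'' gives only per-$n$ bounds, and the tensor-product argument demands a bound that does not grow with $m$; the stabilization hypothesis is precisely what allows us to replace the per-$m$ bounds by a single $M$.
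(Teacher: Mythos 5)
Your argument is correct, but it reaches the conclusion by a different route than the paper. The paper's proof constructs explicit ``staircase'' maps $\Psi_{n,m}$, $\Phi_{n,m}$ and a homotopy $J_{n,m}$ (Figure~\ref{fig:transitive-systems}) satisfying $\Phi_{n+1,m+1}\circ\Psi_{n,m}=\phi_n\otimes\psi_m$ and $\Psi_{n+1,m+1}\circ\Phi_{n,m}=\phi\otimes\psi+[\d,J_{n,m}]$, and then observes that $\phi_n\otimes\psi_m$ and $\phi\otimes\psi$ are isomorphisms on the $>\delta$ truncations, so that $\Phi$ and $\Psi$ realize a homotopy equivalence between $\scS_{n,m}^{>\delta}$ and $(G_n\otimes H_m)^{>\delta}$ with vanishing differential. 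You instead present $\scS_{n,m}$ as an iterated mapping cone, show that the inner cone $\Cone(\phi_{n-1}\otimes\id)$ is acyclic in internal gradings $>\delta$, and extract the isomorphism from the long exact sequence together with the isomorphism induced by $\id\otimes\psi_{m-1}$ on truncations. Both proofs rest on the same key fact -- that the transition maps tensored with the identity become isomorphisms on $(G\otimes H)^{>\delta}$ once $n,m$ are large -- and your two-threshold bookkeeping (stabilization at $\delta$ gives a uniform upper bound $M$ on the $H_m$, after which the hypothesis at $\delta-M$ covers the relevant range of $G$-gradings) is exactly the justification the paper leaves implicit when it asserts this; making it explicit is a virtue of your write-up. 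What the paper's construction buys in exchange is a pair of explicit grading-preserving chain maps $\Phi_{n,m}$, $\Psi_{n,m}$, which are invoked again later (in the grading arguments of Section~\ref{sec:8} and the skein triangle of Section~\ref{sec:10}), whereas your LES argument produces only an abstract isomorphism of graded groups via a connecting homomorphism; for the lemma as stated, that is all that is required.
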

\begin{proof} The assumptions that $\phi_n$ and $\psi_n$ are isomorphisms in higher gradings, and that $G_n$ and $H_m$ are bounded above implies that the map $\phi|\psi\colon \scS_{n,m}^{>\delta}\to \scS_{n+1,m+1}^{>\delta}$ is also an isomorphisms, provided $n$ and $m$ are sufficiently large.

In this case, we can write down a chain homotopy equivalence between $\scS_{n,m}^{>\delta}$ and $(G_m\otimes H_n)^{>\delta}$ (where we give the latter complex vanishing differential). To this end, we define the following maps:
\[
  \begin{split}
 \Psi_{n, m}&\colon G_{n}\otimes H_{m}\to \scS_{n,m}\\
  \Phi_{n,m}&\colon \scS_{n,m}\to G_{n}\otimes H_m\\
  J_{n,m}&\colon \scS_{n,m}\to \scS_{n+1,m+1}
 \end{split}
 \]
 such that
 \begin{equation}
 \begin{split}
 \Phi_{n+1,m+1}\circ \Psi_{n,m}&=\phi_n|\psi_m\\
 \Psi_{n+1,m+1}\circ \Phi_{n,m}&=\phi|\psi+[\d, J_{n,m}].
 \end{split}
 \label{eq:relations-Phi-Psi-J}
 \end{equation}
 The maps $\Phi_{n,m}$, $\Psi_{n,m}$ and $J_{n, m}$ are shown in Figure~\ref{fig:transitive-systems}. The relations in~\eqref{eq:relations-Phi-Psi-J} are easily verified.
as indicated in Figure~\ref{fig:transitive-systems}.

 Since $\phi_m|\psi_n$ and $\phi|\psi$ are isomorphisms of vector spaces on the $\delta$-truncations of the complexes,  the proof is complete.
\end{proof}

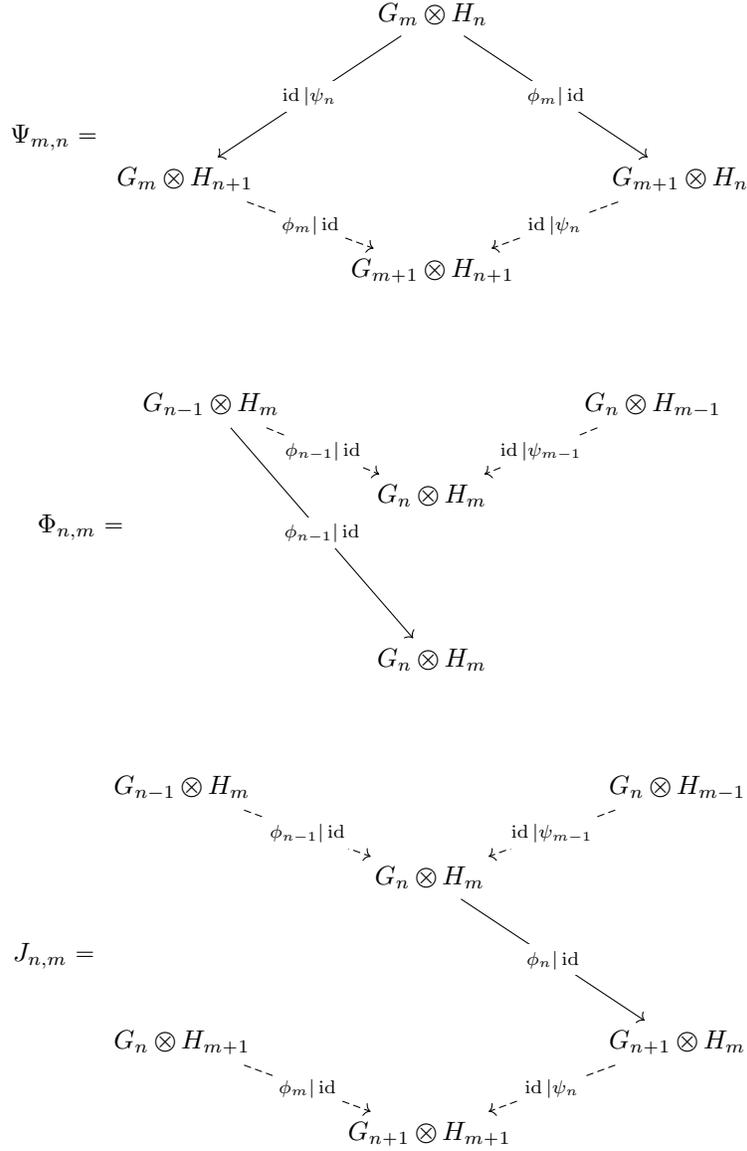
\begin{figure}[ht!]
\[
\Psi_{m,n}=\begin{tikzcd}[labels=description, column sep=1cm]
&G_{m}\otimes H_{n} 
	\ar[dl,"\id|\psi_n"]
	\ar[dr, "\phi_m|\id"]\\[1cm]
G_{m}\otimes H_{n+1}
	\ar[dr, dashed,"\phi_m|\id"]
	&&
G_{m+1}\otimes H_{n}
	\ar[dl,dashed, "\id|\psi_n"]\\
& G_{m+1}\otimes H_{n+1}
\end{tikzcd}
\]
\vspace{1cm}
\[
\Phi_{n,m}=\begin{tikzcd}[labels=description, column sep=1cm]
G_{n-1}\otimes H_m
	\ar[ddr,"\phi_{n-1}|\id"]
	\ar[dr, dashed,"\phi_{n-1}|\id"]
&&
G_n\otimes H_{m-1}
	\ar[dl,dashed, "\id|\psi_{m-1}"]
\\
& G_n\otimes H_m\\[1cm]
&G_n\otimes H_m
\end{tikzcd}
\]
\vspace{1cm}
\[
J_{n,m}=\begin{tikzcd}[labels=description, column sep=1cm]
G_{n-1}\otimes H_m
	\ar[dr, dashed,"\phi_{n-1}|\id"]
	&&
G_n\otimes H_{m-1}
	\ar[dl,dashed, "\id|\psi_{m-1}"]\\
& G_n\otimes H_m \ar[dr, "\phi_{n}|\id"]
\\[1cm]
G_{n}\otimes H_{m+1}
	\ar[dr, dashed,"\phi_{m}|\id"]
	&&
G_{n+1}\otimes H_{m}
	\ar[dl,dashed, "\id|\psi_{n}"]\\
& 
G_{n+1}\otimes H_{m+1}
\end{tikzcd}
\]
\caption{The morphisms $\Phi_{n,m}$, $\Psi_{n,m}$ and $J_{n,m}$ (denoted with solid arrows). Dashed arrows denote internal differentials of $\scK$.}
\label{fig:transitive-systems}
\end{figure}

\section{Hypercubes and generalized eigenspaces}
\label{sec:hypercubes-eigenspaces}

The sutured instanton theory is defined as a generalized eigenspace of the homology group $I_*(Y)$ of an operator $\mu(R)$. Since $\mu(R)$ commutes with the differential, the chain groups $\mathit{CI}_*(Y)$ admit a generalized eigenspace decomposition which is preserved by the differential. In this section, we prove several algebraic results about generalized eigenspaces and hypercubes. The work in this section lays the algebraic foundation of iterating mapping cones in the sutured instanton setting (since these are defined using eigenspaces).

\subsection{Hypercubes}

We will phrase our results in terms of Manolescu and Ozsv\'{a}th’s algebraic formalism of hypercubes of chain complexes \cite{MOIntegerSurgery}*{Section~5}. Define 
\[
\bE_n \coloneqq \{0, 1\}^n \subset \bR^n
\]
 with the convention that $\bE_0 = \{0\}$. 

If $\epsilon, \epsilon' \in \bE_n$, we write $\epsilon \leq \epsilon'$ if the inequality holds for each coordinate of $\epsilon$ and $\epsilon'$. We write $\epsilon < \epsilon'$ if $\epsilon \leq \epsilon'$ and strict inequality holds at one or more coordinates.

\begin{define}\label{def: hypercube}
An \emph{n-dimensional hypercube of chain complexes} $({C}_\epsilon, {D}_{\epsilon, \epsilon'})_{\epsilon \in \bE_n}$ consists of the following:
\begin{enumerate}
    \item A group ${C}_\epsilon$ for each $\epsilon \in \bE_n$.
    \item For each pair of indices $\epsilon, \epsilon' \in \bE_n$ such that $\epsilon \leq \epsilon'$, a linear map \[{D}_{\epsilon, \epsilon'} \colon C_\epsilon \to C_\epsilon'.\]
\end{enumerate}
Furthermore, we assume that the following compatibility condition holds whenever $\epsilon \leq \epsilon'$:

\begin{equation}\label{Eq : hypercube differential}
  \sum_{\substack{\veps' \in \bE_n \\ \veps \leq \veps' \leq \veps''}} {D_{\veps', \veps''} \circ {D}_{\veps, \veps'}} = 0
  \end{equation}
\end{define}

Of course, if we equip $\bigoplus_{\veps\in \bE_n} C_\veps$ with the differential $D:=\sum_{\veps\le \veps'} D_{\veps,\veps'}$, then Equation~\eqref{Eq : hypercube differential} is equivalent to $D^2=0$.

If $\cC$ and $\cC'$ are $n$-dimensional hypercubes, a morphism of hypercubes $F\colon \cC\to \cC'$ consists of a collection of maps $F_{\veps,\veps'}\colon C_\veps\to C_{\veps'}$ ranging over $\veps\le \veps'$. The space $\Hom(\cC,\cC')$ is a chain complex, with differential
\[
\d(F)=F\circ D+(-1)^{|F|+1}D'\circ F,
:\]
where we are conflating $F$ with its total map $\sum_{\veps\le \veps'} F_{\veps,\veps'}$. Also $|F|$ denotes the grading of $F$ (note that only a $\Z_2$ grading is necessary to define the above formula). 

 We may form the mapping cone diagram $\Cone(F\colon \cC\to \cC')$, viewed as a cube of dimension $n+1$ whose total space is $\cC\oplus T^{|F|+1}\cC'$, where $T$ denotes translation functor for chain complexes, which shifts grading by 1 and changes the differential $D'$ to $-D'$, and $T^{|F|+1}$ denotes its $(|F|+1)$-fold iterate. We equip this complex with the differential  
\[
\begin{pmatrix}D&0\\
F& (-1)^{|F|+1}D'
\end{pmatrix}.
\]

Here is an important and well-known lemma about homotopy equivalences of hypercubes:

\begin{lem}
\label{lem:hypercubes-quasi-iso} Suppose that $\cA=(A_{\veps}, d_{\veps,\veps'})_{\veps\in \bE_n}$ and $\cB=(B_{\veps},\delta_{\veps,\veps'})_{\veps\in \bE_n}$ are hypercubes of chain complexes such that each $A_\veps$ and $B_\veps$ are each finite dimensional $\C$ vector spaces. If $F\colon \cA\to \cB$ is a morphism of hypercubes which satisfies $\d(F)=0$ and such that each $F_{\veps,\veps}$ is a quasi-isomorphism for each $\veps$, then $F$ is a homotopy equivalence of hypercubes.
\end{lem}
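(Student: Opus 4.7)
The plan is to construct a hypercube morphism $G\colon \cB\to\cA$ with $\d(G)=0$ together with hypercube homotopies $H^A\in\Hom(\cA,\cA)$ and $H^B\in\Hom(\cB,\cB)$ satisfying $G\circ F=\id_\cA+\d(H^A)$ and $F\circ G=\id_\cB+\d(H^B)$, building the components of $G,H^A,H^B$ by induction on the length $|\veps'-\veps|$ of the cube vertices involved. The output will be the hypercube-level homotopy equivalence asserted in the statement.

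For the base case $\veps=\veps'$, I use that over the field $\C$, every quasi-isomorphism between bounded finite-dimensional chain complexes of vector spaces is a chain homotopy equivalence. Hence each $F_{\veps,\veps}$ admits a chain homotopy inverse $G_{\veps,\veps}$ together with homotopies $H^A_{\veps,\veps}$ and $H^B_{\veps,\veps}$ providing the required identities at length zero.

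For the inductive step, assume the components of $G,H^A,H^B$ of length less than $k$ have been built. For each pair $\veps\leq\veps'$ with $|\veps'-\veps|=k$, the relation $\d(G)_{\veps,\veps'}=0$ rearranges to $[d_{\veps,\veps},\,G_{\veps,\veps'}]=W_{\veps,\veps'}$, where $W_{\veps,\veps'}\in\Hom(B_{\veps'},A_\veps)$ is an explicit expression in $d,\delta,F$ and previously-constructed components of $G$. Using the hypercube relations $D_\cA^2=0$ and $D_\cB^2=0$, the identity $\d(F)=0$, and the inductive hypothesis, one verifies that $W_{\veps,\veps'}$ is a cocycle in the two-sided Hom-complex with differential $[d_{\veps,\veps},-]\pm(-)\circ \delta_{\veps',\veps'}$. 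The hypothesis that $F_{\veps,\veps}$ and $F_{\veps',\veps'}$ are quasi-isomorphisms (combined with the already-chosen $G_{\veps,\veps}, G_{\veps',\veps'}$ and $H^A,H^B$ of smaller length) ensures that the cohomology class of $W_{\veps,\veps'}$ vanishes, so we may select $G_{\veps,\veps'}$ as a primitive. Entirely analogous inductive arguments then build $H^A_{\veps,\veps'}$ and $H^B_{\veps,\veps'}$ to enforce the homotopy identities at length $k$.

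I expect the main obstacle to be combinatorial bookkeeping: writing $W_{\veps,\veps'}$ explicitly and verifying both the cocycle condition and the vanishing of its cohomology class requires summing over all intermediate cube vertices with careful sign tracking. A cleaner conceptual alternative is to filter the total complex of $\Cone(F)$ by the cube coordinate $|\veps|$; the associated graded is $\bigoplus_\veps \Cone(F_{\veps,\veps})$, which is contractible term by term since each $F_{\veps,\veps}$ is a quasi-iso over $\C$. Standard facts about bounded filtered complexes of vector spaces over a field then produce a filtered null-homotopy of the total complex of $\Cone(F)$, whose components by filtration degree unpack to the desired hypercube-level data $G,H^A,H^B$ simultaneously.
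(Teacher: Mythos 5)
Your proposal is correct, but it takes a genuinely different route from the paper. The paper's proof first invokes the homological perturbation lemma for hypercubes to replace each vertex complex $A_\veps$, $B_\veps$ by its homology (possible because these are finite-dimensional complexes over $\C$), so that all internal differentials vanish; then each $F_{\veps,\veps}$ is an isomorphism of vector spaces, the total map $F$ is upper triangular with invertible diagonal and hence a chain isomorphism, and its inverse is automatically $\bE_n$-filtered, i.e.\ a morphism of hypercubes. Your cone-filtration argument is the more hands-on alternative: filtering $\Cone(F)$ by the cube weight $|\veps|$, the associated graded is $\bigoplus_\veps \Cone(F_{\veps,\veps})$, which is acyclic and hence (over the field $\C$, with finite-dimensional vertices) contractible summand by summand; a \emph{filtered} contraction of $\Cone(F)$ then exists by the usual bounded-filtration induction (again a perturbation-type argument), and its block components unpack precisely to a filtered homotopy inverse $G$ together with filtered homotopies for $G\circ F$ and $F\circ G$, plus one compatibility term. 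So both proofs ultimately rest on homological perturbation over a field: the paper spends it on trivializing the internal differentials so that inversion is literal, while you spend it on contracting the cone. Your version does not change the models $\cA$, $\cB$ and makes the homotopy data explicit; the paper's is shorter given that the perturbation lemma for hypercubes is quoted elsewhere in the paper anyway.

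One caution about your primary inductive construction of $G$, $H^A$, $H^B$: the assertion that the obstruction cocycle $W_{\veps,\veps'}$ has vanishing class in the homology of the relevant Hom-complex ``because the diagonal components of $F$ are quasi-isomorphisms'' is not automatic if you attempt to build $G$ on its own with the lower-length components already frozen. The class can be nonzero for a bad choice of lower-order data, and killing it in general requires either revising previously chosen components or constructing $G$, the two homotopies, and the compatibility term simultaneously. That simultaneous construction is exactly what a filtered contraction of $\Cone(F)$ packages, so your ``cleaner conceptual alternative'' is not merely cleaner: it is the version of the argument that actually closes this step, and it is the one I would keep.
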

\begin{proof} Since each $A_\veps$ and $B_\veps$ is a finite dimensional chain complex over $\C$, by using the homological perturbation lemma for hypercubes (cf. \cite{HHSZDual}*{Section~2.7}),  we may assume that each $A_{\veps}$ and $B_{\veps}$ has vanishing internal differential $d_{\veps,\veps}$ and $\delta_{\veps,\veps}$. In particular, we may assume that the maps $F_{\veps,\veps}$ are isomorphisms. This implies that the total map $F=\sum_{\veps\le \veps'} F_{\veps,\veps'}$ is also a chain isomorphism.  The inverse map $F^{-1}$ is $\bE_n$-filtered, and hence a morphism of hypercubes. The proof is complete. 
\end{proof}

\subsection{Generalized eigenspaces and hypercubes}

In this section, we describe a very important result about eigenspaces of hypercube operators. This result concerns a hypercube $\cC=(C_{\veps}, d_{\veps,\veps'})$ equipped with an operator $\mu\colon C\to C$ (i.e. a morphism of hypercubes). We will write $e^\lambda(\cC, \mu)$ for the generalized $\lambda$-eigenspace of the operator $\mu$ on the total space $\bigoplus_{\veps\in \bE_n} C_\veps$ of $\cC$.

In general, the space $e^{\lambda}(\cC,\mu)$ will not be a hypercube of chain complexes, and will instead only be a chain complex which is filtered by the cube $\bE_n$ (where we write $\veps\le \veps'$ for $\veps,\veps'\in \bE_n$ if inequality holds for all coordinates). In this section we will show that $e^{\lambda}(\cC,\mu)$ is always chain isomorphic to a hypercube of chain complexes, which we denote by 
\[
E^{\lambda}(\cC,\mu)=(E_{\veps}^\lambda, \delta_{\veps,\veps'})_{\veps\in \bE_n}, \quad \text{where} \quad 
E_{\veps}^{\lambda}=e^{\lambda}(C_\veps, \mu_{\veps,\veps}). 
\]
The complex $E^{\lambda}(\cC,\mu)$ will be well-defined up to chain isomorphism. Additionally, we analyze some further properties of the construction.

 First, recall that a \emph{filtered vector space} over $\bE_n$ consists of a vector space $Y$ equipped with subspaces $Y_{\ge \veps}\subset Y$ such that if $\veps'\ge \veps$ then 
\[
Y_{\ge \veps'}\subset Y_{\ge \veps}.
\]
We will write $Y_{>\veps}$ for $\sum_{\veps'>\veps} Y_{\veps'}$. Given a vector space $Y$ which is filtered by $\bE_n$, recall that the \emph{associated graded} space is defined to be
\[
\gr(Y):=\bigoplus_{\veps\in \bE_n} Y_{\ge \veps}/ Y_{>\veps}.
\]

A hypercube of chain complexes $\cC=(C_\veps,D_{\veps,\veps'})$ naturally determines a filtered chain complex $\cC$ over $\bE_n$, where we set \[C_{\ge \veps}:=
\bigoplus_{\veps'\ge \veps} C_{\veps'}.
\]
\begin{lem}\label{lem:eigenvales-cube} Suppose that $\cC=(C_{\veps})_{\veps\in \bE_n}$ is a cube of finite dimensional $\bC$-vector spaces, and $\mu\colon C\to C$ is a map which is filtered with respect to the cube filtration. Write  $e^{\lambda}(\cC,\mu)$ for the generalized $\lambda$-eigenspace of the total space of $\cC$. Then there is a canonical isomorphism
\[
\gr(e^{\lambda}(\cC,\mu))\iso \bigoplus_{\veps\in \bE_n} E^{\lambda}_\veps.
\]
Furthermore, there is an isomorphism
\[
\Phi\colon \bigoplus_{\veps\in \bE_n} E^{\lambda}_{\veps}\to e^{\lambda}(\cC,\mu)
\]
which is filtered and admits a filtered inverse.
\end{lem}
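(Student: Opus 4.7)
My approach is to first establish the canonical identification of the associated graded, then lift it to a filtered isomorphism by choosing splittings. The key observation enabling the first part is that if we extend the partial order on $\bE_n$ to a compatible linear order, then with respect to an adapted ordered basis the operator $\mu$ becomes block upper-triangular with diagonal blocks $\mu_{\veps,\veps}$. Consequently, the characteristic polynomial of $\mu$ factors as $\prod_{\veps \in \bE_n} \det(xI - \mu_{\veps,\veps})$, which gives the dimension equality
\[
\dim_{\bC} e^\lambda(\cC,\mu) = \sum_{\veps \in \bE_n} \dim_{\bC} E^\lambda_\veps.
\]

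Next, I would define the canonical map $\gr_\veps(e^\lambda(\cC,\mu)) \to E^\lambda_\veps$ by sending the class of $x \in e^\lambda(\cC,\mu) \cap C_{\geq \veps}$ to its image $\bar x \in C_\veps = C_{\geq \veps}/C_{>\veps}$. Since $\mu$ is filtered, the endomorphism it induces on $C_\veps$ is exactly $\mu_{\veps,\veps}$, so $(\mu_{\veps,\veps} - \lambda)^N \bar x = \overline{(\mu - \lambda)^N x} = 0$ for $N$ large, placing $\bar x \in E^\lambda_\veps$. The map is manifestly well-defined on the quotient and injective (an element mapping to $0$ lies in $e^\lambda \cap C_{>\veps}$ by construction). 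The total dimension count from the previous step then forces it to be an isomorphism at each $\veps$, yielding the canonical isomorphism $\gr(e^\lambda(\cC,\mu)) \iso \bigoplus_{\veps} E^\lambda_\veps$.

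To produce the filtered isomorphism $\Phi$, I would choose for each $\veps$ a linear splitting $s_\veps \colon E^\lambda_\veps \to e^\lambda(\cC,\mu) \cap C_{\geq \veps}$ of the surjection $e^\lambda(\cC,\mu) \cap C_{\geq \veps} \to \gr_\veps(e^\lambda) \iso E^\lambda_\veps$ established above, and set $\Phi := \sum_\veps s_\veps$. By construction $\Phi$ is filtered and induces the canonical isomorphism on associated graded. A standard downward induction along a linear refinement of the partial order (or equivalently iterated five-lemma) then shows that any filtered linear map between finite-dimensional $\bE_n$-filtered vector spaces which induces an isomorphism on associated graded is itself a filtered isomorphism with filtered inverse; applied to $\Phi$ this yields the second claim.

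The main subtlety is in the associated-graded statement: establishing surjectivity of the canonical map $\gr_\veps(e^\lambda) \to E^\lambda_\veps$ by a direct construction would require, for each generalized eigenvector $y \in E^\lambda_\veps \subset C_\veps$, an explicit correction of a naive lift $\tilde y \in C_{\geq \veps}$ by some element of $C_{>\veps}$ so that the result is annihilated by $(\mu - \lambda)^N$. Such a cancellation is possible because $\mu$ preserves $C_{>\veps}$ and hence decomposes it into generalized eigenspaces, but the argument is cleaner and more conceptual when bypassed by the block upper-triangular dimension count, which is why I begin there.
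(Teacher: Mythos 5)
Your block-triangularity observation and the canonical maps $\gr_\veps(e^{\lambda})\to E^{\lambda}_{\veps}$ are fine, but the step ``the total dimension count then forces it to be an isomorphism at each $\veps$'' is a genuine gap. That inference tacitly assumes $\sum_{\veps}\dim \gr_\veps(e^{\lambda})\ge \dim e^{\lambda}(\cC,\mu)$, i.e.\ that the associated graded of the \emph{induced} $\bE_n$-filtration on a subspace has total dimension controlled by the dimension of the subspace. For filtrations indexed by the cube poset this is false in general: take $n=3$, $C_{(1,1,0)}=\bC u$, $C_{(1,0,1)}=\bC v$, $C_{(0,1,1)}=\bC w$, all other $C_\veps=0$, and $Y=\Span(u+v,\,v+w)$. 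Then $Y_{\ge(1,0,0)}=\bC(u+v)$, $Y_{\ge(0,1,0)}=\bC(u-w)$, $Y_{\ge(0,0,1)}=\bC(v+w)$, all strictly higher pieces vanish, and one finds $\dim\gr(Y)=3>2=\dim Y$. So the compatibility of $\dim\gr$ with $\dim$ for the induced filtration on $e^{\lambda}$ is precisely part of what the lemma asserts (it follows from the filtered isomorphism $\Phi$) and cannot be fed into the proof as generic bookkeeping; a single global dimension identity plus vertexwise injectivity does not force vertexwise surjectivity. Relatedly, your injectivity claim is incomplete as stated: the kernel of $\gr_\veps(e^{\lambda})\to C_\veps$ is $(e^{\lambda}\cap C_{>\veps})/e^{\lambda}_{>\veps}$, where $e^{\lambda}_{>\veps}=\sum_{\veps'>\veps}(e^{\lambda}\cap C_{\ge\veps'})$, so you must prove the strictness identity $e^{\lambda}\cap C_{>\veps}=\sum_{\veps'>\veps}(e^{\lambda}\cap C_{\ge\veps'})$; this is exactly the point you defer to your final paragraph, and it is where the $\mu$-invariance of the subspaces $C_{\ge\veps'}$ and $C_{>\veps}$ and their eigenspace decompositions must actually be used. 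The same subtlety infects your closing appeal to the ``standard'' fact that a filtered map inducing an isomorphism on $\gr$ is a filtered isomorphism: for $\bE_n$-filtered spaces this is false in general (the split model of $\gr(Y)$ in the example above maps filtered-ly onto $Y$ inducing an isomorphism on $\gr$, yet has larger dimension), so that step too needs input specific to the situation.

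The repair stays entirely within your toolkit: apply your block-triangular characteristic-polynomial count not only to $C$ but to each $\mu$-invariant subspace $C_{\ge\veps}$ and $C_{>\veps}$, giving $\dim\bigl(e^{\lambda}\cap C_{\ge\veps}\bigr)=\sum_{\veps'\ge\veps}\dim E^{\lambda}_{\veps'}$ and $\dim\bigl(e^{\lambda}\cap C_{>\veps}\bigr)=\sum_{\veps'>\veps}\dim E^{\lambda}_{\veps'}$; together with the strictness identity this yields $\dim\gr_\veps(e^{\lambda})=\dim E^{\lambda}_{\veps}$ for every $\veps$, so your injective canonical map is an isomorphism vertex by vertex with no global count, and the same localized identities are what show that $\Phi$ carries $\bigoplus_{\veps'\ge\veps}E^{\lambda}_{\veps'}$ onto $e^{\lambda}_{\ge\veps}$, hence has a filtered inverse. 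With that localization your route is a legitimate alternative to the paper's, which instead argues by induction on the cube dimension using the short exact sequence of generalized eigenspaces obtained by splitting the cube into two faces (the exactness there is the paper's version of the strictness you need); the block-triangular count is arguably cleaner than that induction, but only once it is applied to every $C_{\ge\veps}$ and $C_{>\veps}$ rather than to the total space alone.
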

\begin{proof} Let us abbreviate $e^{\lambda}(\cC,\mu)$ by $e^{\lambda}$. We first claim that for each $\veps$, the canonical projection map
\[
e_{\ge \veps}^{\lambda}/e_{>\veps}^{\lambda}\to C_{\veps}
\]
is an isomorphism onto its image, which is $E^{\lambda}_{\veps}$. This may be proven by induction on the dimension of the cube. In general, we may view an $n$-dimensional cube $X$ as a pair of $n-1$ dimensional cubes $X_0$ and $X_1$. We have an exact sequence
\[
0\to X_0\to X\to X_1\to 0.
\]
If $X$ is equipped with a filtered endomorphism $\mu$, the map $\mu$ restricts to $X_0$ and $X_1$ and the maps in the above exact sequence commute with $\mu$. We obtain a sequence
\begin{equation}
0\to X_1^{\lambda}\to X^\lambda\to X_0^\lambda\to 0, \label{eq:short-exact-eigenvalues}
\end{equation}
which is trivially exact except at $X_0^{\lambda}.$ Exactness at $X_0^{\lambda}$ may be proven by observing that $X$ and $X_\veps$ decompose as direct sums of generalized eigenspaces, and then using dimension counts to establish surjectivity of the final map. By induction, $X_1^{\lambda}$ and $X_0^{\lambda}$ have associated graded spaces as in the statement. Together with the above exact sequence in Equation~\eqref{eq:short-exact-eigenvalues}, the claim about $\gr(e^\lambda)$ follows.

We now consider the claim that $e^{\lambda}\iso \bigoplus_{\veps\in \bE_n} E_\veps^{\lambda}$ as a filtered vector space. We construct a filtered map
\[
\Phi\colon \bigoplus_{\veps\in \bE_n} E_{\veps}^{\lambda}\to e^{\lambda}
\label{eq:splitting-maps}
\]
as follows. On $E_\veps^{\lambda}$, we define $\Phi$ to be any splitting of the map
\[
e^{\lambda}_{\ge \veps}\twoheadrightarrow e^{\lambda}_{\ge \veps}/e^{\lambda}_{>\veps}\iso E_{\veps}^{\lambda}.
\]
The map $\Phi$ is clearly filtered. The map $\Phi$ is an isomorphism. The inverse of the map $\Phi$ may be seen to be filtered by considering the following general and easily verified fact: a filtered map $F$ between two $\bE_n$-filtered spaces is an isomorphism with filtered inverse if and only if $\gr(F)$ is an isomorphism. Since $\Phi$ induces an identity on the associated graded complexes by construction, the proof is complete.
\end{proof}

 Write $C_{\Tot}$ for the chain complex $(\bigoplus_{\veps\in \bE_n} C_{\veps}, \sum_{\veps\le \veps'} D_{\veps,\veps'})$. Write $D_{\Tot}$ for the differential $D_{\Tot}=\sum_{\veps\le \veps'} D_{\veps,\veps'}$.  We therefore construct the hypercube $E^{\lambda}(\cC,\mu)$ as follows. Given a choice of filtered map
\[
\Phi\colon \bigoplus_{\veps\in \bE_n} E_\veps^\lambda\to e^{\lambda}(\cC,\mu)
\]
which is an isomorphism whose inverse is also filtered, we define  $E^{\lambda}(\cC,\mu)$ to be the hypercube whose total space is $\bigoplus_{\veps\in \bE_n}$ and whose total structure map is $\Phi^{-1}\circ D_{\Tot}\circ \Phi$. 

Note that the hypercube $E^{\lambda}(\cC,\mu)$ depends on the choice of $\Phi$, as in Equation~\eqref{eq:splitting-maps}. We refer to the map $\Phi$ as the \emph{splitting map}. If we wish to emphasize the choice of $\Phi$, we write
\[
E^{\lambda}(\cC,\mu;\Phi).
\]

\begin{lem}
\label{lem:hypercube-eigenspaces}
Let $\mu$ be an operator on the hypercube $\cC$, and write $E^\lambda(\cC,\mu;\Phi)=(E_{\veps}^{\lambda}, \delta_{\veps,\veps'})$ for the eigenspace hypercube. Then:
\begin{enumerate}
\item The internal differentials $\delta_{\veps,\veps}$ are the restrictions of the maps $D_{\veps,\veps}$ to the generalized $\lambda$-eigenspaces $C^{\lambda}_{\veps}$.
\item If $|\veps'-\veps|_{L^1}=1$, then
 \[
\delta_{\veps,\veps'}=D_{\veps,\veps'}+\delta_{\veps,'\veps'}\circ \Phi_{\veps,\veps'}-\Phi_{\veps,\veps'}\circ \delta_{\veps,\veps},
\]
where $\Phi_{\veps,\veps'}\colon E_{\veps}^{\lambda}\to C_{\veps'}$ is the component of $\Phi$ going from $E_{\veps}^{\lambda}$ to $C_{\veps'}$ 
\end{enumerate}
\end{lem}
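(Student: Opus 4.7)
The plan is to unfold the definition of the structure map $\delta = \Phi^{-1} \circ D_{\Tot} \circ \Phi$ and track cube-filtration levels carefully. The crucial preliminary observation is that $\Phi|_{E_\veps^\lambda}$ has image in $e^\lambda_{\ge \veps}$ with \emph{leading term} the tautological inclusion $E_\veps^\lambda \hookrightarrow C_\veps$: the $C_\veps$-component of $\Phi(x)$ equals $x$, while the $C_{\veps''}$-component vanishes for $\veps'' \not\ge \veps$. Since $\Phi^{-1}$ is also $\bE_n$-filtered, the dual statement holds: for any $y \in e^\lambda_{\ge \veps}$, the $E_{\veps}^\lambda$-component of $\Phi^{-1}(y)$ equals the $C_\veps$-component of $y$. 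With this in hand, both parts reduce to elementary component calculations.

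For part~(1), take $x \in E_\veps^\lambda$. Since $\mu$ is a morphism of hypercubes, it commutes with $D_{\Tot}$, so $D_{\Tot}$ preserves $e^\lambda$; filteredness then gives $D_{\Tot}\Phi(x) \in e^\lambda_{\ge \veps}$. Its $C_\veps$-component receives contributions only from $D_{\veps'',\veps}$ with $\veps \le \veps'' \le \veps$, forcing $\veps'' = \veps$ and giving $D_{\veps,\veps}(x)$. The preliminary observation identifies this with $\delta_{\veps,\veps}(x)$. Moreover, the diagonal block of the chain-map relation $[\mu, D_{\Tot}]=0$ forces $[\mu_{\veps,\veps}, D_{\veps,\veps}] = 0$, so $D_{\veps,\veps}$ preserves generalized $\mu_{\veps,\veps}$-eigenspaces; in particular $D_{\veps,\veps}(x) \in E_\veps^\lambda$, as required.

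For part~(2), fix $\veps' > \veps$ with $|\veps'-\veps|_{L^1} = 1$. The $C_{\veps'}$-component of $D_{\Tot}\Phi(x)$ is a sum over $\veps''$ with $\veps \le \veps'' \le \veps'$, which by the unit-distance hypothesis collapses to $\veps'' \in \{\veps, \veps'\}$, yielding
\[
D_{\veps,\veps'}(x) + D_{\veps',\veps'}(\Phi_{\veps,\veps'}(x)).
\]
On the other hand, writing $\Phi^{-1}(D_{\Tot}\Phi(x)) = \sum_{\veps''} y_{\veps''}$ with $y_{\veps''} \in E_{\veps''}^\lambda$, the same analysis applied to $\Phi\bigl(\sum y_{\veps''}\bigr)$ identifies its $C_{\veps'}$-component with $\Phi_{\veps,\veps'}(y_\veps) + y_{\veps'}$. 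Substituting $y_\veps = \delta_{\veps,\veps}(x)$ from part~(1) and solving for $y_{\veps'} = \delta_{\veps,\veps'}(x)$ yields the stated formula (where $\delta_{\veps',\veps'}$ is to be read as the restriction of $D_{\veps',\veps'}$, since $\Phi_{\veps,\veps'}(x) \in C_{\veps'}$ need not lie in $E_{\veps'}^\lambda$). The only real obstacle is bookkeeping of which filtration levels contribute to each projection, which the unit-distance hypothesis keeps under tight control.
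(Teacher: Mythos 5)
Your proof is correct and follows essentially the same route as the paper: unfold $\delta=\Phi^{-1}\circ D_{\Tot}\circ \Phi$, use that the leading ($C_\veps$) component of $\Phi|_{E_\veps^\lambda}$ is the canonical inclusion (and dually for $\Phi^{-1}$), and track which filtration levels can contribute to each component. Your extra remarks---that $[\mu_{\veps,\veps},D_{\veps,\veps}]=0$ guarantees $D_{\veps,\veps}$ preserves $E_\veps^\lambda$, and that $\delta_{\veps',\veps'}\circ\Phi_{\veps,\veps'}$ in the statement should be read as $D_{\veps',\veps'}\circ\Phi_{\veps,\veps'}$---are accurate and consistent with the paper's computation.
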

\begin{proof}
 The claim about length 0 components is immediate and follows from the fact that the component of $\Phi$ from $E_{\veps}^{\lambda}$ to $C_{\veps}$ is the canonical inclusion.

 Suppose now that $|\veps'-\veps|_{L^1}=1$. It is helpful to view $\Phi$ as decomposing as a sum of maps $\Phi_{\veps,\veps'}$ ranging over $\veps'\ge \veps$. Write $(x,y)$ for elements of $ E_{\veps}^{\lambda}\oplus E_{\veps'}^{\lambda}$. By construction, $\Phi$ maps $(x,0)$ to $(x, \Phi_{\veps,\veps'}(x))$. The map $D^{\Tot}$ maps this to
\[
(D_{\veps,\veps}(x),D_{\veps,\veps'}(x)+D_{\veps',\veps'}( \Phi_{\veps,\veps'}(x))).
\]
The map $\Phi^{-1}$ sends the above to
\[
\left(D_{\veps}(x),D_{\veps,\veps'}(x) +D_{\veps'}(\Phi_{\veps,\veps'}(x))-\Phi_{\veps,\veps'}( D_{\veps}(x))\right).
\]
The component mapping from $E_{\veps}^{\lambda}$ to $E_{\veps'}^{\lambda}$ sends $x$ to
\[
D_{\veps,\veps'}(x) +D_{\veps'}(\Phi_{\veps,\veps'}(x))-\Phi_{\veps,\veps'}( D_{\veps}(x)).
\]
as claimed.
\end{proof}

We now consider the splitting map $\Phi$. Clearly the choice of $\Phi$ does not affect the homotopy type of the hypercube $E^{\lambda}(\cC,\mu)$. We now consider the problem of extending a given splitting map defined on a subcube.

 Note that if $\bE\subset \bE_n$ is a subcube  (i.e. $\bE\subset \bE_n$ is a cartesian product of copies of $\{0\}$, $\{1\}$ or $\{0,1\}$) and $\cC$ is an $n$-dimensional hypercube of chain complexes, then there is an induced hypercube $\cC|_{\bE}$ of dimension $\dim (\bE)$. This hypercube map be defined as a quotient of a subcomplex as follows. Consider the subcomplexes
\[
\cC_{< \bE}\subset \cC_{\le \bE} \subset \cC.
\]
The subcomplex $\cC_{\ge \bE}$ is the filtered subspace generated by $C_{\veps}$ where $\veps\ge \veps_0$ for some $\veps_0\in \bE$. The space $\cC_{>\bE}$ is spanned by $C_{\veps}$ such that $\veps\ge \veps_0$ for some $\veps_0\in \bE$, but $\veps\not\in \bE$. Then
\[
\cC_{\bE}:= \cC_{\ge \bE}/ \cC_{>\bE}. 
\]
An operator $\mu$ on $\cC$ induces an operator $\mu_{\bE}$ on $\cC_{\bE}$.

Note that a splitting map $\Phi$ for $(\cC,\mu)$ induces a splitting map $\Phi|_{\bE}$ on $(\cC_{\bE},\mu_{\bE})$. The following will be a helpful technical lemma later on, and shows that we can construct the splitting map $\Phi$ inductively, starting with the $0$-cells of the cube, then moving to the 1-cells, and so forth.

\begin{lem}
\label{lem:extend-splittings}
 Consider an $n$-dimensional hypercube $\cC$ equipped with an operator $\mu$. Suppose that $\phi_{\bE}$ is a collection of splitting maps  for each codimension 1 subcube $\bE\subset \bE_n$, which are compatible in the sense that if $\bE_0$ is a subcube of $\bE_n$ which is contained in two codimension 1 cubes $\bE'$ and $\bE''$, then
\[
\phi_{\bE'}|_{\bE_0}=\phi_{\bE''}|_{\bE_0}.
\]
Then there is a splitting map $\Phi$ for $(\cC,\mu)$ such that
\[
\Phi|_{\bE}=\phi_{\bE}
\]
for each codimension 1 subcube $\bE\subset \bE_n$. 
\end{lem}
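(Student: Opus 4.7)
The plan is to build $\Phi$ vertex-by-vertex by specifying a lift $\tilde\Phi_\veps\colon E_\veps^\lambda\to e^\lambda_{\ge\veps}$ of the canonical projection $e^\lambda_{\ge\veps}\twoheadrightarrow E_\veps^\lambda$ for each $\veps\in\bE_n$, then summing. Every codimension-$1$ subcube of $\bE_n$ has the form $\bE_i^+=\{\veps':\veps'_i=1\}$ or $\bE_i^-=\{\veps':\veps'_i=0\}$ for some $i$. For $\bE_i^+$ (``Case 1''), $\cC|_{\bE_i^+}=\bigoplus_{\veps_i=1}C_\veps$ sits in $\cC$ as a subcomplex, so $\phi_{\bE_i^+}$ provides an honest lift into $e^\lambda(\cC)$; for $\bE_i^-$ (``Case 2''), $\cC|_{\bE_i^-}=\cC/\bigoplus_{\veps_i=1}C_\veps$ is a quotient, so $\phi_{\bE_i^-}$ only specifies a lift modulo the subspace $e^\lambda_{>\bE_i^-}:=e^\lambda(\bigoplus_{\veps_i=1}C_\veps)$.

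For $\veps\ne(0,\dots,0)$, I would fix any $i$ with $\veps_i=1$ and set $\tilde\Phi_\veps:=\tilde\phi_{\bE_i^+,\veps}$. Independence of this choice follows from applying the compatibility hypothesis to the codimension-$2$ subcube $\bE_i^+\cap\bE_k^+$ whenever $\veps_i=\veps_k=1$. The main difficulty is the minimum vertex $\veps=(0,\dots,0)$, where every codimension-$1$ subcube containing $\veps$ is Case~2, so one is given lifts $\tilde\phi_{\bE_i^-,(0)}\in e^\lambda(\cC)/e^\lambda_{>\bE_i^-}$ agreeing pairwise modulo $e^\lambda_{>\bE_i^-}+e^\lambda_{>\bE_j^-}$, and one must produce an actual simultaneous lift $\tilde\Phi_{(0)}\in e^\lambda(\cC)$.

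For this last step I would use a Mayer--Vietoris/\v{C}ech-type argument. Writing $V=e^\lambda(\cC)$ and $A_i=e^\lambda_{>\bE_i^-}$, the existence of a simultaneous lift is equivalent to surjectivity of the canonical map $V\to\bigoplus_iV/A_i$ onto the equalizer of the pairwise projections. Proceeding by induction on the number of subspaces (at each step, having lifted to some $x'$, one adjusts by a $\delta\in\bigcap_{i<n}A_i$ to correct the value in $V/A_n$), this reduces to the distributive identity
\[\bigcap_{i<n}(A_i+A_n)=\Bigl(\bigcap_{i<n}A_i\Bigr)+A_n.\]
This identity holds in our setting for two reasons: (i) the subcomplexes $\cC_{>\bE_i^-}=\bigoplus_{\veps_i=1}C_\veps$ are literally coordinate subspaces of the direct sum decomposition $\cC=\bigoplus_\veps C_\veps$, so the analogous Boolean identity on these subcomplexes is elementary set theory; and (ii) the generalized $\lambda$-eigenspace functor commutes with sums and intersections of $\mu$-invariant subspaces of a finite-dimensional $\bC$-vector space, since every such subspace decomposes canonically into its generalized eigenspaces.

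Finally, consistency must be checked: for $\veps\ne(0,\dots,0)$ with some $\veps_j=0$, the lift $\tilde\Phi_\veps$ coming from a Case~1 subcube $\bE_i^+$ must also project correctly in $e^\lambda(\cC|_{\bE_j^-})$ to equal $\tilde\phi_{\bE_j^-,\veps}$; this follows directly by applying the compatibility hypothesis to $\bE_i^+\cap\bE_j^-$, noting that since $\veps_i=1$ the relevant filtration level at $\veps$ inside $e^\lambda(\cC|_{\bE_j^-})$ coincides with the one inside $e^\lambda(\cC|_{\bE_i^+\cap\bE_j^-})$. I expect the main obstacle to be the Mayer--Vietoris lifting at the minimum vertex, and specifically isolating the correct distributive identity and deducing it from the coordinate structure together with exactness of the generalized eigenspace functor; the remainder of the argument is essentially bookkeeping.
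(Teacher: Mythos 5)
Your proof is correct and follows essentially the same route as the paper: the splittings at every vertex other than $(0,\dots,0)$ are forced by the case-1 (subcomplex) faces, and the whole content is producing a simultaneous lift at the minimal vertex from the compatible family of splittings taken modulo the subspaces $A_i=e^{\lambda}(\cC_{>\bE_i^-})$. Your \v{C}ech-style induction with the distributive identity $\bigcap_{i<n}(A_i+A_n)=\bigl(\bigcap_{i<n}A_i\bigr)+A_n$ --- justified by the coordinate structure of the subcomplexes $\cC_{>\bE}$ together with the fact that taking generalized eigenspaces commutes with sums and intersections of $\mu$-invariant subspaces --- is a valid and more explicit account of exactly the gluing step that the paper records via its pairwise commutative diagrams.
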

\begin{proof}We can view a choice of splitting $\Phi$ as a collection of splittings $\Phi_{\veps}\colon E^{\lambda}(C_{\veps},\mu)\to e^{\lambda}_{\ge \veps}(\cC,\mu)$ and we note that the definition requires no relation between the different $\Phi_{\veps}$ maps. The maps $\Phi_{\veps}$ are uniquely determined by the $\phi_{\bE}$ maps for $\veps\neq (0,\dots,0)$, so it suffices to define $\Phi_{(0,\dots, 0)}$.

Observe that $\cC_{\ge \bE} =\cC$ if $(0,\dots, 0)\in \bE$.  Therefore
\[
e^{\lambda}(\cC_{\bE})=e^{\lambda}(\cC/\cC_{>\bE})\iso e^{\lambda}(\cC)/e^{\lambda}_{>\bE}(\cC).
\]
The first equality is the definition, and the second isomorphism is by exactness of $e^{\lambda}$, proven in Lemma~\ref{lem:eigenvales-cube} (see Equation~\eqref{eq:short-exact-eigenvalues}).

For each pair of codimension 1 subcubes $\bE,\bE'\subset \bE_n$, we have a commutative diagram of epimorphisms
\[
\begin{tikzcd} & e^{\lambda}(\cC)\ar[dl, twoheadrightarrow] \ar[dr,twoheadrightarrow] \\
e^{\lambda}(\cC)/e^{\lambda}(\cC_{>\bE})
	\ar[dr, twoheadrightarrow]
&& e^{\lambda}(\cC)/e^{\lambda}(\cC_{>\bE'})
	\ar[dl, twoheadrightarrow]
\\
& E_{(0,\dots,0)}^{\lambda}.
\end{tikzcd}
\]

The splittings in the statement fit into into commutative diagrams:
\[
\begin{tikzcd}[labels=description]
e^{\lambda}(\cC)/e^{\lambda}(\cC_{>\bE})
	\ar[r, twoheadrightarrow]
 & e^{\lambda}(\cC)/(e^{\lambda}(\cC_{>\bE})+e^{\lambda}(\cC_{>\bE'})) 
& e^{\lambda}(\cC)/e^{\lambda}(\cC_{>\bE'})
	 \ar[l, twoheadrightarrow] 
\\
& E^{\lambda}_{(0,\dots,0)}
	\ar[ul, "\phi_{\bE}"] 
	\ar[ur, "\phi_{\bE'}"]
\end{tikzcd}
\]
Such a diagram uniquely determines a splitting
\[
\phi_{\bE,\bE'}\colon E^{\lambda}_{(0,\dots, 0)}\to e^{\lambda}(\cC)/(e^{\lambda}(\cC_{>\bE})\cap e^{\lambda}(\cC_{>\bE'})).
\]
Together, the collection of all $\phi_{\bE}$ (where $\codim(\bE)=1$) uniquely specifies a splitting
\[
\Psi\colon E_{(0,\dots, 0)}^{\lambda}\to  e^{\lambda}(\cC)/\bigcap_{\substack{\codim(\bE)=1\\(0,\dots, 0)\in \bE}} e^{\lambda}(\cC_{>\bE})=e^{\lambda}(\cC)/e^{\lambda}(\cC_{(1,\dots, 1)}).
\]
Composing this with an arbitrary splitting of the map
\[
e^{\lambda}(\cC)\twoheadrightarrow e^{\lambda}(\cC_{(1,\dots, 1)})
\]
gives a choice of $\Phi_{(0,\dots, 0)}$ in the statement.
\end{proof}

\subsection{The eigenspace functor}

We now consider the operation of taking eigenspaces on chain complexes, viewed functoriality. Firstly, there are several categories relevant for us. The first is the category of chain complexes with operators  $\mu$-$\Kom$. Objects are pairs $(C,\mu)$ where $C$ is a finitely generated chain complex over $\C$ and $\mu\colon C\to C$ is a chain map. Morphisms consist of pairs $(f,h)\colon (C_1,\mu_1)\to (C_2,\mu_2)$ where $f\colon C_1\to C_2$ is a chain map and $h\colon C_1\to C_2$ is a chain homotopy
\[
f\circ \mu_1-\mu_2\circ f=\d(h). 
\]
Equivalently, $f$ and $h$ fit into a hypercube of chain complexes
\[
\begin{tikzcd}C_0
	\ar[r, "f"]
	\ar[d, "\mu_0"]
	\ar[dr,"h"]
& TC_1
	\ar[d, "\mu_1"]
\\
TC_0 \ar[r, "-f"] & T^2C_1
\end{tikzcd}
\]
Two morphisms $(f,h)$ and $(g,j)$ are \emph{homotopic} if they are homotopic as morphisms of hypercubes from $\Cone(\mu_1)\to \Cone(\mu_2)$. (Here $T$ denotes the translation functor on chain complexes, which shifts gradings by 1 and changes the differential $\d$ to $-\d$). 

 There is a relatived category $\mu$-$\Kom_{\bE}$ consisting of hypercubes $\cC$ equipped with operators $\mu$ (filtered chain maps satisfying $\d(\mu)=0$). Morphisms are defined similarly.

There are also homotopy versions of the above categories, denoted $\mu$-$\ve{K}$ and $\mu$-$\ve{K}_{\bE}$. The objects consist of pairs $(\cC,\mu)$ where $\cC$ is a chain complex (resp. hypercube) and $\mu$ is an operator (\emph{not} a homotopy class of operators). Morphisms in these categories consist of morphisms $(f,h)$ in the chain complex category which are cycles, modulo homotopy equivalence.

In this section, we prove the following:

\begin{prop} The generalized eigenspace construction gives a functor from $\mu$-$\ve{K}$ to $\ve{K}$. It also gives a functor from $\mu$-$\ve{K}_{\bE}$ to $\ve{K}_{\bE}$. 
\end{prop}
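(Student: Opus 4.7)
The plan is to reduce everything to the hypercube formalism developed in this section, treating a morphism $(F,H)\colon (\cC_1,\mu_1)\to(\cC_2,\mu_2)$ in $\mu$-$\Kom_{\bE}$ (with $\bE=\bE_0$ recovering $\mu$-$\Kom$) as equivalent data to a hypercube of one higher dimension equipped with a compatible operator. Concretely, $(F,H)$ is the same data as a hypercube $\tilde\cC$ of dimension $\dim\bE+1$ whose two parallel $\bE$-faces are $\cC_1$ and $\cC_2$, with length-$1$ structure map $F$ in the new coordinate, together with an operator $\tilde\mu$ on $\tilde\cC$ restricting to $\mu_i$ on the faces and having $H$ as its length-$1$ component. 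The hypercube identity~\eqref{Eq : hypercube differential} for $\tilde\cC$ and $\tilde\mu$ reproduces exactly $\d F=0$ and $F\mu_1-\mu_2 F=\d(H)$.

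On objects, the functor sends $(\cC,\mu)$ to $E^{\lambda}(\cC,\mu;\Phi)$ for a choice of splitting $\Phi$. By Lemma~\ref{lem:hypercubes-quasi-iso}, any two splittings give hypercubes related by a genuine isomorphism (induced by the filtered composition $(\Phi')^{-1}\circ\Phi$, which restricts to the identity on each generalized eigenspace), so the assignment descends to $\ve{K}_{\bE}$. On morphisms, fix splittings $\Phi_i$ on $\cC_i$ and apply Lemma~\ref{lem:extend-splittings} to extend them to a splitting $\tilde\Phi$ on $\tilde\cC$; define $E^{\lambda}(F,H)$ to be the length-$1$ component of $E^{\lambda}(\tilde\cC,\tilde\mu;\tilde\Phi)$ in the new coordinate. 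The relation $\d\bigl(E^{\lambda}(F,H)\bigr)=0$ is the length-$1$ instance of the hypercube identity for $E^{\lambda}(\tilde\cC,\tilde\mu)$, and identity morphisms correspond to trivial extensions and are sent to identities on each eigenspace by Lemma~\ref{lem:hypercube-eigenspaces}(1).

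For well-definedness at the level of homotopy, a homotopy between $(F,H)$ and $(F',H')$ in $\mu$-$\Kom_{\bE}$ is encoded as a hypercube of dimension $\dim\bE+2$ with operator, and running it through Lemmas~\ref{lem:extend-splittings} and~\ref{lem:hypercube-eigenspaces} yields a hypercube homotopy between $E^{\lambda}(F,H)$ and $E^{\lambda}(F',H')$. For composition of $(F_1,H_1)\colon(\cC_0,\mu_0)\to(\cC_1,\mu_1)$ and $(F_2,H_2)\colon(\cC_1,\mu_1)\to(\cC_2,\mu_2)$, concatenate them into a hypercube $\cD$ of dimension $\dim\bE+2$ with operator $\mu_\cD$ arranged so that one pair of opposite codimension-$1$ faces recovers the two individual morphisms while the other pair realizes the identity morphism on $\cC_0$ and the composite morphism $(F_2\circ F_1,\,F_2 H_1+H_2 F_1)$; the length-$2$ structure and operator components are taken to vanish or are determined by~\eqref{Eq : hypercube differential}. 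Lemma~\ref{lem:extend-splittings} supplies a splitting on $\cD$ extending the previously chosen splittings on all codimension-$1$ faces, and the length-$2$ component of $E^{\lambda}(\cD,\mu_\cD)$ is then the chain homotopy exhibiting
\[
E^{\lambda}(F_2,H_2)\circ E^{\lambda}(F_1,H_1) \homotopic E^{\lambda}\bigl(F_2\circ F_1,\,F_2 H_1+H_2 F_1\bigr),
\]
which is compositional functoriality in $\ve{K}_{\bE}$.

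The principal obstacle is the splitting bookkeeping on faces of various dimensions: without Lemma~\ref{lem:extend-splittings} there would be no reason for the splittings used to construct the individual morphisms $E^{\lambda}(F_i,H_i)$ to assemble into a splitting on the larger compositional hypercube $\cD$. This compatibility is precisely what lets the formal hypercube identity~\eqref{Eq : hypercube differential} do all of the work of proving the functoriality axioms.
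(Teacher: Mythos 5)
Your proposal is correct and takes essentially the same route as the paper: the paper also defines the functor by applying the eigenspace-hypercube construction to the mapping cone of a morphism (viewed as a one-higher-dimensional hypercube with operator), uses Lemma~\ref{lem:extend-splittings} to choose splittings compatibly with those fixed on the faces, and proves homotopy invariance and functoriality up to homotopy (Lemma~\ref{lem:eigenspace-functor}) by feeding square-shaped hypercubes encoding homotopies and compositions through the same machinery. The only cosmetic differences are your composition square (identity edge plus composite edge, versus the paper's square with a $-\id$ edge) and your conjugation argument for splitting-independence at the object level, where the paper instead verifies splitting-independence of the induced morphism by a direct computation.
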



To an object in $\Kom_{\bE}$-$\mu$, (i.e. a hypercube of chain complexes equipped with an operator, $(\cC,\mu)$) we use Lemma~\ref{lem:hypercube-eigenspaces} to construct a model of $E^{\lambda}(\cC,\mu)$. (To define the functors values concretely, we use the axiom of choice to pick arbitrarily one model for each hypercube).  To a morphism, $(f,h)\colon (\cC_0,\mu_0)\to (\cC_1,\mu_1)$ in $\mu$-$\Kom_{\bE}$, we apply the eigenspace hypercube construction to $\Cone(f,h)$. By Lemma~\ref{lem:extend-splittings}, the splitting map of the mapping cone can be chosen compatibly with arbitrary splitting maps used to construct $E^{\lambda}(\cC_0,\mu_0)$ and $E^{\lambda}(\cC_1,\mu_1)$.

We illustrate the construction in the case of a morphism $(f,h)\colon (C_0,\mu_0)\to (C_1,\mu_1)$ in $\mu$-$\Kom$. Examining the proof of Lemma~\ref{lem:hypercube-eigenspaces}, the map
\[
E^{\lambda}(f,h)\colon E^{\lambda}(C_0,\mu_0)\to E^{\lambda}(C_1,\mu_1)
\]
is equal to the restriction of $f+\d(\Phi_{0,1})$, where  $\Phi_{0,1}\colon C_0^{\lambda}\to C_1$ is any linear map (of grading $+1$) so that $(i, \Phi_{0,1})$ maps $E^{\lambda}(C_0)$ into $E^{\lambda}( C_0\xrightarrow{f} C_1)$. Here $i$ denotes inclusion.  

We now show that the construction is well-defined on level of chain homotopy classes of maps:

\begin{lem}
\label{lem:eigenspace-functor}
On $\mu$-$\Kom$, the map $E^{\lambda}$ satisfies the following:
\begin{enumerate}
\item For a fixed morphism $(f,h)\colon (C_0,\mu_0)\to (C_1,\mu_1)$, the morphism $E^{\lambda}(f,h)$ is a chain map and is well-defined up to chain homotopy.
\item If $(f,h)\simeq (g,j)$ then $E^{\lambda}(f,h)\simeq E^{\lambda}(g,j)$.
\item Given a sequence of composable morphisms
\[
\begin{tikzcd} (C_0,\mu_0)
	\ar[r, "{(f,h)}"] & 
(C_1, \mu_1)
	\ar[r, "{(g,j)}"] & 
(C_2,\mu_2)
\end{tikzcd}
\]
we have
\[
E^\lambda(g,j)\circ E^{\lambda}(f,h)\simeq E^{\lambda}( (g,j)\circ  (f,h)). 
\]
\end{enumerate} 
The same claims hold when $E^{\lambda}$ is applied to hypercubes.
\end{lem}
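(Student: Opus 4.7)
The plan is to realize each claim as a consequence of the observation that the generalized eigenspace functor $E^{\lambda}$ turns a hypercube of chain complexes equipped with an operator into a hypercube of chain complexes of the same dimension. The key device: a morphism $(f,h)\colon (C_0,\mu_0)\to (C_1,\mu_1)$ in $\mu$-$\Kom$ is the same data as a $1$-dimensional hypercube $\cD$ equipped with an operator $\mu_\cD$ (vertices $C_0,C_1$, length-$1$ structure map $f$; operator with vertex components $\mu_0,\mu_1$ and length-$1$ component $h$). By Lemma~\ref{lem:hypercube-eigenspaces}, the hypercube $E^{\lambda}(\cD,\mu_\cD)$ has vertices $E^{\lambda}(C_i,\mu_i)$ and length-$1$ structure map precisely $E^{\lambda}(f,h)$.

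\textbf{Part (1).} The chain map property is just the $1$-dim hypercube cocycle condition $\delta_{1,1}\circ \delta_{0,1}+\delta_{0,1}\circ \delta_{0,0}=0$ for $E^{\lambda}(\cD,\mu_\cD)$. For well-definedness up to chain homotopy, after fixing vertex splittings which define $E^{\lambda}(C_i,\mu_i)$, the only freedom in the splitting $\Phi$ for $\cD$ is the length-$1$ component $\Phi_{0,1}\colon E^{\lambda}(C_0)\to C_1$. Any two admissible choices $\Phi_{0,1}$ and $\Phi'_{0,1}$ differ by a map which factors through $E^{\lambda}(C_1)\hookrightarrow C_1$, because $(0,\Phi_{0,1}(x)-\Phi'_{0,1}(x))$ lies in $e^{\lambda}(\cD,\mu_\cD)$ for every $x$. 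By the explicit formula of Lemma~\ref{lem:hypercube-eigenspaces}, the two resulting maps $E^{\lambda}(f,h)$ then differ by $\d(\Phi_{0,1}-\Phi'_{0,1})$, which is a chain homotopy of $E^{\lambda}(C_0,\mu_0)\to E^{\lambda}(C_1,\mu_1)$.

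\textbf{Parts (2) and (3).} Both reduce to applying $E^{\lambda}$ to an appropriately assembled $2$-dimensional hypercube with operator, and then reading off its length-$2$ structure map as the desired chain homotopy. For part (2), a homotopy $(f,h)\simeq(g,j)$ in $\mu$-$\Kom$ is by construction a $2$-dim hypercube whose parallel edges are $(f,h)$ and $(g,j)$ and whose length-$2$ diagonal encodes the homotopy data. Choose splittings via Lemma~\ref{lem:extend-splittings} extending the already-fixed vertex and edge splittings used to define $E^{\lambda}(f,h)$ and $E^{\lambda}(g,j)$; applying $E^{\lambda}$ produces a $2$-dim eigenspace hypercube whose length-$2$ diagonal is exactly the required chain homotopy from $E^{\lambda}(f,h)$ to $E^{\lambda}(g,j)$. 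Part (3) is analogous: the composition $(g,j)\circ (f,h)$ sits in a $2$-dim hypercube with edges $(f,h)$, $(g,j)$ and diagonal the composite, and the length-$2$ map of its image under $E^{\lambda}$ provides the chain homotopy between $E^{\lambda}(g,j)\circ E^{\lambda}(f,h)$ and $E^{\lambda}((g,j)\circ (f,h))$.

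\textbf{Main obstacle.} The principal technical issue is the coherent bookkeeping of splittings: each instance of $E^{\lambda}$ on a subcube must coincide with the restriction of the splitting chosen on the containing cube, and these compatibility constraints compound for compositions and homotopies. Lemma~\ref{lem:extend-splittings} is the crucial tool, since it permits any coherent family of codim-$1$ subcube splittings to be extended to the full cube; repeated invocation of it handles all required compatibilities in parts (2) and (3). The hypercube version (starting from $n$-dim hypercubes rather than chain complexes) requires no essential change, since Lemmas~\ref{lem:hypercube-eigenspaces} and~\ref{lem:extend-splittings} are stated in arbitrary dimension; one simply carries out the same argument with cubes of dimension $n+1$ and $n+2$ in place of $1$ and $2$.
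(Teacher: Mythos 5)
Your proposal is correct and follows essentially the same route as the paper: part (1)'s comparison of two splittings (the difference lands in $E^{\lambda}(C_1)$, so the two versions of $E^{\lambda}(f,h)$ differ by $\d(\Phi_{0,1}-\Phi_{0,1}')$) is the paper's argument verbatim, and parts (2)--(3) are obtained, as in the paper, by applying the eigenspace-hypercube construction to a square equipped with an operator (the hidden $\mu$-direction) and reading off the length-2 component as the required chain homotopy, with splitting compatibility handled via Lemma~\ref{lem:extend-splittings}. The one slip is in part (3): the composite $(g,j)\circ(f,h)$ must appear as an \emph{edge} of the auxiliary square, closed off by $-\id$ on the remaining edge with the diagonal the (here zero) homotopy witnessing strict commutativity -- as literally stated, with the composite on the diagonal and only two edges specified, the square does not satisfy the hypercube relation, but the corrected square is exactly the paper's diagram and the rest of your argument goes through unchanged.
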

\begin{proof} 
We focus on the case of chain complexes with operators,  since the claim for hypercubes is not appreciably different. We consider the first claim. The only ambiguity is the choice of the splitting map $\Phi_{0}\colon E^{\lambda}(C_0)\to e^{\lambda}(\Cone(f))$. Let $\Phi$ and $\Phi'$ be two choices of linear maps. We view $e^{\lambda}(\Cone(f))$ as being in $C_0\oplus C_1$. 

Note that $\Phi-\Phi'$ maps $E^\lambda(C_0)$ into $E^{\lambda}(C_1)$ since if $x\in E^{\lambda}(C_0)$, we have
\[
\Phi_{0}(x)=(x,\Phi_{0,1}(x)) \quad \text{and} \quad \Phi_0'(x)=(x,\Phi_{0,1}'(x))
\] 
for some linear maps $\Phi_{0,1},\Phi_{0,1}'\colon E^{\lambda}(C_0)\to C_1$. Since
$e^{\lambda}(\Cone(f)) \cap (0\oplus C_1)=0\oplus E^{\lambda}(C_1)$, we conclude as in the second part of Lemma~\ref{lem:hypercube-eigenspaces} that
\[
E^\lambda(f,h;\Phi)-E^\lambda(f,h;\Phi')=\d(\Phi_{0,1}+\Phi_{0,1}'). 
\]

The second claim again follows from the hypercube eigenspace theorem, applied to the 3-dimensional hypercube corresponding to the diagram
\[
\begin{tikzcd} (C_0,\mu_0)\ar[d, "{\id }"]
	\ar[r, "{(f,h)}"] 
	\ar[dr, dashed]
&(TC_1,\mu_1)
	\ar[d, "{\id }"]
\\
(TC_0,\mu_0) \ar[r, "{-(g,h)}"] & (T^2C_1,\mu_1)
\end{tikzcd}
\]
(We are omitting one direction of the cube, which goes ``into'' the page. This is the direction of the $\mu_i$ operators). The hypercube eigenspace theorem gives a hypercube of the form
\[
\begin{tikzcd}[column sep=1.5cm]
 E^{\lambda}(C_0,\mu_0)
	\ar[d, "\id"]
	\ar[r, "{E^{\lambda}(f,h)}"] 
	\ar[dr, dashed]
&  E^{\lambda}(TC_1,\mu_1)
	\ar[d, "\id"]
\\
E^{\lambda}(TC_0,\mu_0) \ar[r, "{-E^{\lambda}(g,h)}"] &  E^{\lambda}(T^2 C_1,\mu_1)
\end{tikzcd}
\]
which proves the claim.

The third claim is similar: we apply the hypercube eigenspace theorem to the following diagram:
\[
\begin{tikzcd} (C_0,\mu_0)
	\ar[d,swap, "{(g,j)\circ (f,h)}"]
	\ar[r, "{(f,h)}"] 
	\ar[dr, dashed]
&(TC_1,\mu_1)
	\ar[d, "{(g,j)}"]
\\
(TC_2,\mu_2)
 	\ar[r, "{-\id}"] 
&(T^2 C_2,\mu_2)
\end{tikzcd}
\]
\end{proof}

\subsection{Simultaneous eigenspaces}
\label{sec:simultaneous-eigenspaces}

We now consider the situation of simultaneous eigenspaces. We focus on the case of two operators $\mu$ and $\mu'$ on a chain complex $C$ such that $\mu\circ \mu'\simeq \mu'\circ \mu$.

We define the simultaneous generalized eigenspace as
\[
E^{\lambda'}( E^{\lambda}(C,\mu),E(\mu')).
\]
This eigenspace depends on the choice of homotopy between $\mu\circ \mu'$ and $\mu'\circ \mu$. The construction also works if $\cC$ is a hypercube and $\mu$ and $\mu'$ are homotopy commuting hypercube operators.

\begin{lem}\label{lem:simultaneous-eigenvalues}
 Suppose that $\mu$ and $\mu'$ are two homotopy commuting operators on a hypercube $\cC$ (and a null-homotopy of $[\mu,\mu']$ is chosen). Then there is a homotopy equivalence of chain complexes
\[
E_{\lambda'}(E^{\lambda}(\cC,\mu), E(\mu'))\simeq E^{\lambda}(E_{\lambda'}(\cC,\mu'),E(\mu)). 
\]
\end{lem}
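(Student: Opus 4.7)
The strategy is to exhibit both iterated eigenspaces as homotopy equivalent to a common ``joint generalized $(\lambda,\lambda')$-eigenspace'' $E^{\lambda,\lambda'}(\cC,\mu,\mu')$ of the pair $(\mu,\mu')$, whose construction relies on the given null-homotopy of $[\mu,\mu']$, and then to conclude via Lemma~\ref{lem:hypercubes-quasi-iso}.

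First, Lemma~\ref{lem:eigenspace-functor} ensures that both iterated eigenspaces in the statement are well-defined hypercubes up to homotopy. Indeed, the pair $(\mu',H)$, where $H$ is the chosen null-homotopy of $[\mu,\mu']$, determines a morphism $(\cC,\mu)\to(\cC,\mu)$ in $\mu$-$\ve{K}_{\bE}$. Applying the functor $E^{\lambda}$ yields an operator $E(\mu')$ on $E^{\lambda}(\cC,\mu)$, whose $\lambda'$-generalized eigenspace is the left-hand side. The right-hand side is defined symmetrically using $(\mu,-H)$.

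To build the joint model, I reduce via the hypercube homological perturbation lemma (as invoked in the proof of Lemma~\ref{lem:hypercubes-quasi-iso}) to the case where all internal differentials $d_{\veps,\veps}$ vanish. After this reduction, the length-zero pieces of the two operators satisfy $[\mu_{\veps,\veps},\mu'_{\veps,\veps}]=[d_{\veps,\veps},H_{\veps,\veps}]=0$, so on each vertex $C_{\veps}$ they are strictly commuting endomorphisms of a finite-dimensional $\C$-vector space and admit a literal joint generalized eigenspace decomposition $C_{\veps}=\bigoplus_{\rho,\rho'}C_{\veps}^{\rho,\rho'}$. Choosing a splitting of the filtration on $\cC$ that simultaneously respects both eigenspace filtrations, and propagating it compatibly across the cube using the inductive argument of Lemma~\ref{lem:extend-splittings}, produces the hypercube $E^{\lambda,\lambda'}(\cC,\mu,\mu')$ with vertex spaces $C_{\veps}^{\lambda,\lambda'}$ and structure maps determined by the splitting in the manner of Lemma~\ref{lem:hypercube-eigenspaces}.

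To finish, I produce hypercube morphisms from both iterated eigenspaces to $E^{\lambda,\lambda'}(\cC,\mu,\mu')$ that are the identity on the vertex spaces $C_{\veps}^{\lambda,\lambda'}$; these are then vertex-wise quasi-isomorphisms (since for two strictly commuting operators on a finite-dimensional vector space, iterating the generalized eigenspace construction in either order recovers the joint generalized eigenspace), and hence homotopy equivalences of hypercubes by Lemma~\ref{lem:hypercubes-quasi-iso}. The main technical obstacle is arranging the splitting maps used inside the iterated construction to be compatible with the joint splitting. I expect this to require an inductive argument on cube dimension closely modeled on Lemma~\ref{lem:extend-splittings}: given splittings for all codimension-one faces compatible with the joint decomposition on those faces, one extends to the full cube by picking a splitting of the $(0,\dots,0)$-vertex inclusion that lies in the joint generalized eigenspace, which is possible because after perturbation the joint eigenspace surjects onto each sub-quotient appearing in the diagram chase of Lemma~\ref{lem:extend-splittings}.
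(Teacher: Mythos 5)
Your proposal correctly identifies that, vertex by vertex, both iterated eigenspaces should be the simultaneous generalized eigenspace, and it correctly reserves Lemma~\ref{lem:hypercubes-quasi-iso} for the final step; this much agrees with the paper. The gap is in the middle: the central object you rely on, the ``joint generalized eigenspace hypercube'' $E^{\lambda,\lambda'}(\cC,\mu,\mu')$, is not actually constructed by the recipe you give. The single-operator construction of $E^{\lambda}(\cC,\mu)$ works because the total operator $\mu$ is a chain map on the total complex, so $e^{\lambda}(\cC,\mu)$ is a genuine subcomplex whose associated graded is $\bigoplus_{\veps}E^{\lambda}_{\veps}$ (Lemma~\ref{lem:eigenvales-cube}), and the splitting $\Phi$ maps into that subcomplex. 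For the pair $(\mu,\mu')$ there is no analogous sub-object: the total operators only commute up to the homotopy $H$, so they have no simultaneous generalized eigenspace on the total complex, and the intersection $e^{\lambda}(\cC,\mu)\cap e^{\lambda'}(\cC,\mu')$, while a subcomplex, has no reason to have associated graded $\bigoplus_{\veps}C_{\veps}^{\lambda,\lambda'}$. So ``a splitting of the filtration on $\cC$ that simultaneously respects both eigenspace filtrations'' need not exist, and your inductive extension modeled on Lemma~\ref{lem:extend-splittings} has nothing to split into. Reducing to vanishing vertex differentials makes the vertex operators strictly commute, but it does not make the total operators commute, so it does not repair this. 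Consequently the final step --- producing hypercube morphisms from both iterated eigenspaces to the joint model that are vertex-wise quasi-isomorphisms --- which is really the entire content of the lemma, is left at the level of ``I expect,'' with no mechanism for writing down even one such morphism.

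The fix, which is what the paper does, is to build the comparison map directly rather than through a joint model: the generalized eigenspace comes with natural $\bE_n$-filtered inclusion and projection maps $i_{\lambda}\colon e^{\lambda}(\cC,\mu)\hookrightarrow\cC$ and $\pi_{\lambda'}\colon\cC\twoheadrightarrow e^{\lambda'}(\cC,\mu')$, and composing (after identifying $e$ with $E$ via the filtered isomorphisms) gives a chain of filtered maps $E_{\lambda'}(E^{\lambda}(\cC,\mu),E(\mu'))\to E^{\lambda}(\cC,\mu)\to\cC\to E_{\lambda'}(\cC,\mu')\to E^{\lambda}(E_{\lambda'}(\cC,\mu'),E(\mu))$. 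One then checks that at each vertex $\veps$ both homologies are the simultaneous eigenspace of the commuting induced operators $\mu_*,\mu'_*$ on $H_*(C_{\veps})$ and that the composite induces an isomorphism there, so Lemma~\ref{lem:hypercubes-quasi-iso} applies. If you want to salvage your outline, replace the joint model by this composite: your vertex-wise identification (which is correct, and does not even require the perturbation step) is exactly what is needed to verify its hypotheses.
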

\begin{proof}
We can define a map from one complex to the other. We note that given an operator on $\mu$, there are inclusion and projection maps
\[
i_\lambda\colon e^{\lambda}(\cC,\mu)\hookrightarrow \cC\quad \text{and} \quad \pi_\lambda\colon \cC\twoheadrightarrow e^{\lambda}(\cC,\mu).
\]
When $\mu$ is a hypercube operator, it is straightforward to check that $i_\lambda$ and $\pi_\lambda$ are both $\bE_n$-filtered maps. Composing these maps with the filtered isomorphism $e^{\lambda}(\cC,\mu)\iso E^{\lambda}(\cC,\mu)$, we obtain inclusion and projection maps
\[
E^{\lambda}(\cC,\mu)\to \cC\quad \text{and} \quad  \cC\to E^{\lambda}(\cC,\mu)
\]

%


 Therefore we have chain maps
\[
E_{\lambda'}(E^{\lambda}(\cC,\mu), E(\mu'))\to E^{\lambda}(\cC,\mu)\to \cC\to E_{\lambda'}(\cC,\mu')\to E^{\lambda}(E_{\lambda'}(\cC,\mu'), E(\mu)). 
\]
We claim that the composition of the above sequence of hypercube maps induces a homotopy equivalence at the complexes at each point $\veps\in \bE_n$. If we write $\cA_{\veps}$ and $\cB_{\veps}$ for the complexes at $\veps\in \bE_n$ of $E_{\lambda'}(E^{\lambda}(\cC))$ and $E^{\lambda}(E_{\lambda'}(\cC))$, respectively, we observe that $H_*(\cA_{\veps})$ and $H_*(\cB_{\veps})$ both coincide with the ordinary simultaneous eigenspace of $H_*(\cC_{\veps})$ with respect to the commuting operators $\mu_*$ and $\mu_*'$ on homology. Furthermore, the induced map from $H_*(\cA_\veps)$ to $H_*(\cB_{\veps})$ by the above sequence of maps is an isomorphism.  Therefore Lemma~\ref{lem:hypercubes-quasi-iso} implies the claim.
%
\end{proof}

If instead we have a sequence of operators $\mu_1,\dots, \mu_n$, our analysis will hold as hold as we have an $n$-dimensional hypercube where where the length 1 maps are the $\mu_i$ maps. Lemma~\ref{lem:simultaneous-eigenvalues} has an obvious generalization.

\subsection{Comparing eigenspaces}

We now investigate a technical result which will be useful when considering grading shifts on the instanton groups. 

\begin{prop}
\label{prop:grading-shift} Suppose that $\cC=(C_{\veps}, D_{\veps,\veps'})_{\veps\in \bE_n}$ is a hypercube and $\mu$ and $\mu'$ are two homotopy commuting operators on $\cC$. Suppose that additionally that on each $H_*(C_\veps)$, we have that $\mu'=\mu+\a \cdot \id$ (equivalently, $H_*(E^{\lambda}(C_\veps,\mu))=H_*(E^{\lambda+\a}(C_\veps,\mu'))$ for all $\lambda$). Then
\[
E^{\lambda}(\cC,\mu)\simeq E^{\lambda+r}(\cC,\mu'). 
\] 
\end{prop}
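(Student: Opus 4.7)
The plan is to use the simultaneous eigenspace machinery of Section~\ref{sec:simultaneous-eigenspaces} as a bridge between the $\mu$-eigenspace and the $\mu'$-eigenspace. Since $\mu$ and $\mu'$ homotopy commute, we may form the two-step generalized simultaneous eigenspace
\[
E^{\lambda+\a}\bigl(E^{\lambda}(\cC,\mu),\, E(\mu')\bigr),
\]
and Lemma~\ref{lem:simultaneous-eigenvalues} provides a homotopy equivalence
\[
E^{\lambda+\a}\bigl(E^{\lambda}(\cC,\mu),\, E(\mu')\bigr)\;\simeq\; E^{\lambda}\bigl(E^{\lambda+\a}(\cC,\mu'),\, E(\mu)\bigr).
\]
Thus it suffices to show that each of these hypercubes is homotopy equivalent (via the natural inclusion) to $E^{\lambda}(\cC,\mu)$ on the one side and $E^{\lambda+\a}(\cC,\mu')$ on the other.

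The main step is the following claim. The natural inclusion map
\[
\iota\colon E^{\lambda+\a}\bigl(E^{\lambda}(\cC,\mu),\, E(\mu')\bigr)\hookrightarrow E^{\lambda}(\cC,\mu),
\]
constructed as in the proof of Lemma~\ref{lem:simultaneous-eigenvalues}, is a vertex-wise quasi-isomorphism of hypercubes. To verify this at a vertex $\veps\in\bE_n$, pass to $H_*(C_\veps)$. By hypothesis, $\mu$ and $\mu'$ differ on $H_*(C_\veps)$ by $\a\cdot\id$, so $\mu'$ acts on $H_*(E^{\lambda}(C_\veps,\mu))$ with a single generalized eigenvalue $\lambda+\a$. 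Hence the $(\lambda+\a)$-generalized eigenspace of $\mu'$ on $H_*(E^{\lambda}(C_\veps,\mu))$ is the whole space, and $\iota$ is a quasi-isomorphism at every vertex. Lemma~\ref{lem:hypercubes-quasi-iso} then upgrades $\iota$ to a homotopy equivalence of hypercubes. By a symmetric argument, the natural inclusion
\[
\iota'\colon E^{\lambda}\bigl(E^{\lambda+\a}(\cC,\mu'),\, E(\mu)\bigr)\hookrightarrow E^{\lambda+\a}(\cC,\mu')
\]
is also a homotopy equivalence.

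Concatenating these three homotopy equivalences gives
\[
E^{\lambda}(\cC,\mu)\;\simeq\; E^{\lambda+\a}\bigl(E^{\lambda}(\cC,\mu),\, E(\mu')\bigr)\;\simeq\; E^{\lambda}\bigl(E^{\lambda+\a}(\cC,\mu'),\, E(\mu)\bigr)\;\simeq\; E^{\lambda+\a}(\cC,\mu'),
\]
as claimed. The main obstacle is the bookkeeping needed to check that the splitting maps (in the sense of Lemma~\ref{lem:extend-splittings}) used to model the iterated eigenspace hypercubes can be chosen so that the inclusions $\iota$ and $\iota'$ are genuine morphisms of hypercubes; this is handled by invoking Lemma~\ref{lem:extend-splittings} to construct compatible splittings inductively on subcubes, exactly as in the proof of Lemma~\ref{lem:simultaneous-eigenvalues}.
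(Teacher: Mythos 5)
Your proposal is correct and follows essentially the same route as the paper: it combines Lemma~\ref{lem:simultaneous-eigenvalues} with the observation that taking the $(\lambda+\a)$-eigenspace of $\mu'$ on $E^{\lambda}(\cC,\mu)$ (and symmetrically) changes nothing up to homotopy, the inclusion being a vertexwise quasi-isomorphism upgraded by Lemma~\ref{lem:hypercubes-quasi-iso}. The only cosmetic difference is that the paper packages this last step as a separate statement (Lemma~\ref{lem:shift-lemma}) rather than proving it inline as you do.
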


We begin with a straightforward lemma:
\begin{lem}\label{lem:shift-lemma} Suppose that $\mu$ is an operator on a hypercube $\cC=(C_{\veps}, D_{\veps,\veps'})_{\veps\in \bE_n}$ of finite dimensional vector spaces and there is a $\lambda$ such that $E^{\lambda}(H_*(C_{\veps}), \mu)=H_*(C_{\veps})$ for all $\veps$. Then
\[
E^{r}(\cC,\mu)\simeq \begin{cases}\cC& \text{ if } r=\lambda\\
0& \text{ otherwise}.
\end{cases}
\]
\end{lem}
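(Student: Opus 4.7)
The plan is to build an explicit hypercube morphism relating $E^\lambda(\cC,\mu)$ and $\cC$ at each vertex and then invoke Lemma~\ref{lem:hypercubes-quasi-iso} to upgrade this to a hypercube homotopy equivalence.

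First I would isolate the key vertex-level fact. Since $\mu$ is a morphism of hypercubes with $\d(\mu)=0$, the diagonal component $\mu_{\veps,\veps}\colon C_\veps\to C_\veps$ is a genuine chain map for every $\veps$. Working over $\bC$, the generalized-eigenspace decomposition of $\mu_{\veps,\veps}$ gives a direct-sum decomposition of chain complexes
\[
C_\veps \;=\; \bigoplus_{r\in \bC} e^{r}(C_\veps,\mu_{\veps,\veps}),
\]
so that $H_\ast(e^{r}(C_\veps,\mu_{\veps,\veps}))\iso e^{r}(H_\ast(C_\veps),\mu_\ast)$. The hypothesis $E^{\lambda}(H_\ast(C_\veps),\mu)=H_\ast(C_\veps)$ therefore implies that the canonical inclusion $e^{\lambda}(C_\veps,\mu_{\veps,\veps})\hookrightarrow C_\veps$ is a quasi-isomorphism, while $e^{r}(C_\veps,\mu_{\veps,\veps})$ is acyclic for $r\neq \lambda$.

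Next I would promote this vertex-level statement to a statement about hypercubes. Let $\Phi\colon \bigoplus_\veps E^{\lambda}_\veps\to e^{\lambda}(\cC,\mu)$ be the filtered isomorphism from Lemma~\ref{lem:eigenvales-cube} used to define $E^{\lambda}(\cC,\mu)$, and let $\iota\colon e^{\lambda}(\cC,\mu)\hookrightarrow \cC_{\Tot}$ denote the canonical inclusion, which is a filtered chain map. The composition
\[
F\;:=\;\iota\circ \Phi\colon E^{\lambda}(\cC,\mu)\longrightarrow \cC
\]
is a filtered chain map, hence a morphism of hypercubes with $\d(F)=0$. By the definition of $\Phi$, the length-zero component $F_{\veps,\veps}$ is precisely the inclusion $e^{\lambda}(C_\veps,\mu_{\veps,\veps})\hookrightarrow C_\veps$, which is a quasi-isomorphism by the preceding paragraph. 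Lemma~\ref{lem:hypercubes-quasi-iso} therefore implies that $F$ is a homotopy equivalence, giving $E^{\lambda}(\cC,\mu)\simeq \cC$.

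For $r\neq \lambda$ I would argue similarly: the zero morphism $0\to E^{r}(\cC,\mu)$ is trivially a cycle in the hypercube Hom-complex, and at each vertex it compares two acyclic complexes (since $E^{r}_\veps=e^{r}(C_\veps,\mu_{\veps,\veps})$ is acyclic by the first step). Lemma~\ref{lem:hypercubes-quasi-iso} again yields $E^{r}(\cC,\mu)\simeq 0$. The only subtle point, which I would verify briefly, is the identification $F_{\veps,\veps}=\iota$ on the diagonal; this follows directly from the construction of $\Phi$, whose diagonal component on $E^{\lambda}_\veps$ is by definition the canonical inclusion into $e^{\lambda}_{\ge\veps}/e^{\lambda}_{>\veps}\hookrightarrow C_\veps$. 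I do not foresee a genuine obstacle — the main content is already packaged in Lemmas~\ref{lem:eigenvales-cube} and~\ref{lem:hypercubes-quasi-iso}.
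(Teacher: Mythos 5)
Your proof is correct and follows essentially the same route as the paper: an inclusion map $E^{r}(\cC,\mu)\to \cC$ (constructed as in Lemma~\ref{lem:simultaneous-eigenvalues}) that is a vertex-wise quasi-isomorphism when $r=\lambda$, combined with Lemma~\ref{lem:hypercubes-quasi-iso}, and the acyclicity of each vertex for $r\neq\lambda$. You merely spell out the filtered map $\iota\circ\Phi$ and the eigenspace-versus-homology identification more explicitly than the paper does.
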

\begin{proof} There is an inclusion map $E^r(\cC,\mu)\to \cC$, similar to the proof of Lemma~\ref{lem:simultaneous-eigenvalues}. This map restricts to a quasi-isomorphism at each vertex, so Lemma~\ref{lem:hypercubes-quasi-iso} implies that it is a homotopy equivalence if $r=\lambda$. If $r\neq \lambda$ then each $E^r(C_\veps,\mu)$ has trivial homology, so a similar argument shows that $E^r(\cC,\mu)$ is homotopy equivalent to the 0 complex.
\end{proof}

\begin{proof}[Proof of Proposition~\ref{prop:grading-shift}] By Lemma~\ref{lem:shift-lemma}, we see that
\begin{equation}
E^{\lambda+\a}(E^{\lambda}(\cC,\mu),E(\mu'))\simeq 
E^{\lambda}(\cC,\mu)
\quad \text{and} \quad 
E^{\lambda}(E^{\lambda+\a}(\cC,\mu'),E(\mu))\simeq E^{\lambda+\a}(\cC,\mu').
\label{eq:repeated-eigenspace}
\end{equation}
 Combining Equation~\eqref{eq:repeated-eigenspace} with Lemma~\ref{lem:simultaneous-eigenvalues}, we obtain
\[
E^{\lambda}(\cC,\mu)\simeq E^{\lambda+r}(\cC,\mu')
\]
completing the proof. 
\end{proof}

\section{Connected sums in direct limit Heegaard knot Floer homology}
\label{sec:connected-sum-HF}

In this section, we describe our proof of the connected sum formula in the context of the direct limit  Heegaard knot Floer homology. Although the connected sum formula has long been established for Heegaard knot homology \cite{OSKnots}, the argument we present here will be a useful model for the instanton and monopole settings. In this section, we work over characteristic 2.

\subsection{Overview}

We let $K_1\subset Y_1$ and $K_2\subset Y_2$ be null-homologous knots. Let $M_1$ and $M_2$ denote $Y_1\setminus \nu(K_1)$ and $Y_2\setminus \nu(K_2)$ respectively. Let $\Gamma_m$ and $\Gamma_n$ denote longitudinal sutures  on $M_1$ and $M_2$ with slope $m$ and $n$, respectively. We begin by attaching a 3-dimensional 1-handle to $\d M_1$ and $\d M_2$ which connects $R_+$ of the two sutured manifolds. We also attach a 1-handle which connects $R_-$. We assume these 1-handles are disjoint from the sutures. See Figure~\ref{fig:11}. We call the resulting manifold $Y$, and write $\Gamma_{m,n}$ for the sutures.

The manifold $Y$ is not taut since it may be compressed along the cocores of the attached 1-handles. In particular,
\begin{equation}
\SFH(Y,\Gamma_{n, m})=0.\label{eq:not-taut}
\end{equation}

Consider the closed curves $L_1$ and $L_2$ in $\d Y$ which are shown in Figure~\ref{fig:11}. We assume that the portions of $L_1$ and $L_2$ which run along $\d M_i$ run parallel to a meridian of $K_1$ and $K_2$.

\begin{figure}[ht]
\begingroup%
  \makeatletter%
  \providecommand\color[2][]{%
    \errmessage{(Inkscape) Color is used for the text in Inkscape, but the package 'color.sty' is not loaded}%
    \renewcommand\color[2][]{}%
  }%
  \providecommand\transparent[1]{%
    \errmessage{(Inkscape) Transparency is used (non-zero) for the text in Inkscape, but the package 'transparent.sty' is not loaded}%
    \renewcommand\transparent[1]{}%
  }%
  \providecommand\rotatebox[2]{#2}%
  \newcommand*\fsize{\dimexpr\f@size pt\relax}%
  \newcommand*\lineheight[1]{\fontsize{\fsize}{#1\fsize}\selectfont}%
  \ifx\svgwidth\undefined%
    \setlength{\unitlength}{237.15934312bp}%
    \ifx\svgscale\undefined%
      \relax%
    \else%
      \setlength{\unitlength}{\unitlength * \real{\svgscale}}%
    \fi%
  \else%
    \setlength{\unitlength}{\svgwidth}%
  \fi%
  \global\let\svgwidth\undefined%
  \global\let\svgscale\undefined%
  \makeatother%
  \begin{picture}(1,0.79551907)%
    \lineheight{1}%
    \setlength\tabcolsep{0pt}%
    \put(0,0){\includegraphics[width=\unitlength,page=1]{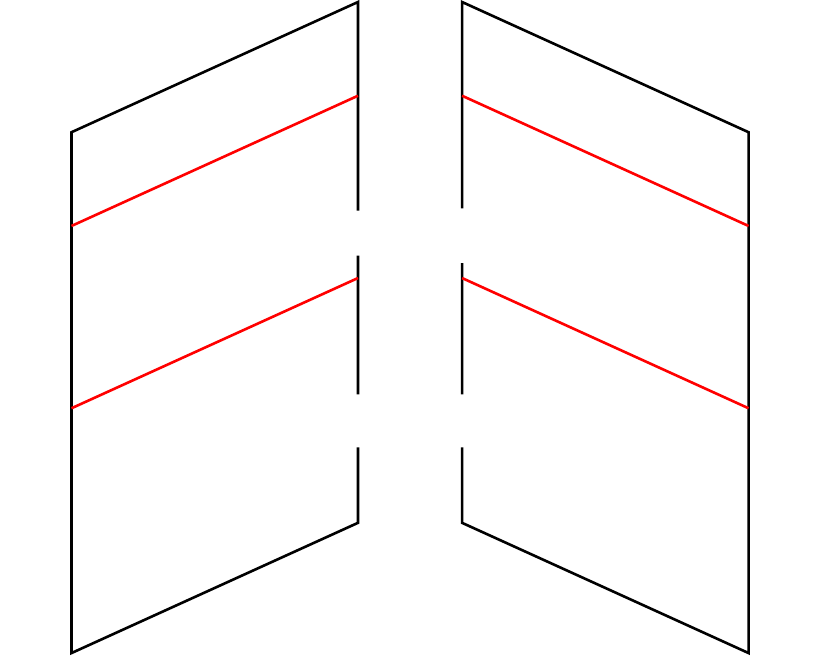}}%
    \put(-0.0012765,0.35935851){\makebox(0,0)[lt]{\lineheight{1.25}\smash{\begin{tabular}[t]{l}$M_1$\end{tabular}}}}%
    \put(0.94406455,0.35935851){\makebox(0,0)[lt]{\lineheight{1.25}\smash{\begin{tabular}[t]{l}$M_2$\end{tabular}}}}%
    \put(0,0){\includegraphics[width=\unitlength,page=2]{fig11.pdf}}%
    \put(0.75318913,0.41808953){\makebox(0,0)[lt]{\lineheight{1.25}\smash{\begin{tabular}[t]{l}$L_1$\end{tabular}}}}%
    \put(0.75318913,0.15380063){\makebox(0,0)[lt]{\lineheight{1.25}\smash{\begin{tabular}[t]{l}$L_2$\end{tabular}}}}%
    \put(0.75318913,0.64849512){\makebox(0,0)[lt]{\lineheight{1.25}\smash{\begin{tabular}[t]{l}$L_2$\end{tabular}}}}%
    \put(0,0){\includegraphics[width=\unitlength,page=3]{fig11.pdf}}%
  \end{picture}%
\endgroup%

\caption{The manifold $Y=Y_{\infty,\infty}(L)$ and the components $L_1$ and $L_2$ of $L$.}
\label{fig:11}
\end{figure}

Note that if we attach contact 2-handles along $L_1$ and $L_2$, then we obtain the sutured manifold complementary to $K_1\# K_2$, with longitudinal sutures of slope $n+m$. Let us write $(M^\#, \Gamma_{m+n}^{\#})$. Using the topological description from Lemma~\ref{lem:ctct-2-handle=triangles}, we may instead view a contact handle attachment as a 4-dimensional 2-handle attachment, followed by cutting along a product disk. Abusing notation, we write $L_1$ and $L_2$ also for the framed knots obtained by pushing $L_1$ and $L_2$ into the interior of $Y$, and using the framing which is parallel to the boundary. Writing $Y_{0,0}(L)$ for the surgered manifold, we conclude that
\[
\SFH(Y_{0,0}(L),\Gamma_{n, m})\iso \SFH(M^{\#}, \Gamma_{n+m}^{\#}),
\]
where the isomorphism is given by cutting along two product disks (i.e. using the inverse of the contact 1-handle map).

We consider the iterated mapping cone construction applied to the framed link $L$. This implies that there is a quasi-isomorphism taking the following form:
\begin{equation}
\CF(Y_{0,0}(L))\simeq
\begin{tikzcd}[labels=description, column sep=1.5cm, row sep=1.5cm]
\CF(Y_{1,1}(L))
	\ar[r, "F_{W(L_1^*)}"]
	\ar[d,"F_{W(L_2^*)}"]
	\ar[dr,dashed]
&
\CF(Y_{\infty, 1}(L))
	\ar[d,"F_{W(L_2^*)}"]
\\
\CF(Y_{1,\infty}(L))
	\ar[r, "F_{W(L_1^*)}"]
&
 \CF(Y_{\infty,\infty}(L))
\end{tikzcd}
\label{eq:iterated-mapping-cone}
\end{equation}
Here $L_i^*$ denotes a $-1$-framed dual of $L_i$. Since each chain complex is of $\bF$-vector spaces, we may replace the complexes with their homologies. Furthermore, the length 1 arrows (i.e. the solid arrows) are replaced by the induced maps on homology. The cube may be simplified even further by using~\eqref{eq:not-taut}. From these considerations, we obtain the following:
\begin{prop}\label{prop:link-surgery}
There is a homotopy equivalence
\[
\SFH(Y_{0,0}(L))\simeq
\begin{tikzcd}[labels=description, column sep=1.5cm, row sep=1.5cm]
\SFH(Y_{1,1}(L))
	\ar[r, "F_{W(L_1^*)}"]
	\ar[d,"F_{W(L_2^*)}"]
&
\SFH(Y_{\infty, 1}(L))
\\
\SFH(Y_{1,\infty}(L))
&
\end{tikzcd}
\]
Here $L_i^*$ denotes a $(-1)$-framed dual of $L_i$.
\end{prop}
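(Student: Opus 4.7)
The plan is to combine the link surgery (iterated mapping cone) construction with the vanishing \eqref{eq:not-taut} to reduce a $2$-dimensional hypercube to the L-shaped diagram in the statement.

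First, I would invoke the link surgery hypercube for the two-component framed link $L \subset Y$. For each component $L_i$ there is a standard surgery exact triangle relating $\CF$ of the integer framings $0$, $1$, and $\infty$; iterating for a two-component link produces precisely the $2$-dimensional hypercube \eqref{eq:iterated-mapping-cone}, whose total complex is chain homotopy equivalent to $\CF(Y_{0,0}(L))$. The length-$1$ edges are the $2$-handle cobordism maps $F_{W(L_i^*)}$ of Juh\'{a}sz \cite{JCob}, applied to the $(-1)$-framed duals, and the length-$2$ diagonal is an appropriate null-homotopy of the square of length-$1$ maps.

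Second, since each chain complex is one of $\bF$-vector spaces, I would apply the homological perturbation lemma in the hypercube form used in the proof of Lemma~\ref{lem:hypercubes-quasi-iso} to replace $\CF(Y_{\epsilon_1,\epsilon_2}(L))$ with its homology $\SFH(Y_{\epsilon_1,\epsilon_2}(L))$ at each vertex. The resulting hypercube has vanishing internal differentials; the length-$1$ edge maps are the induced maps of $F_{W(L_i^*)}$ on homology, and there is some length-$2$ diagonal which I need not identify. The total complex of this reduced hypercube is chain homotopy equivalent to the total complex of \eqref{eq:iterated-mapping-cone}, hence to $\CF(Y_{0,0}(L))$.

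Third, I would use \eqref{eq:not-taut}: the $(1,1)$ vertex $\SFH(Y_{\infty,\infty}(L)) = \SFH(Y,\Gamma_{m,n})$ vanishes because $Y$ is not taut (it compresses along the cocores of the two $1$-handles). Consequently every edge map incident to this vertex is automatically zero, and the hypercube collapses to the L-shaped diagram in the statement. The principal technical point is the first step --- ensuring that the iterated surgery hypercube is available in the sutured Heegaard Floer setting --- but this is a direct adaptation of the link surgery spectral sequence of Ozsv\'{a}th--Szab\'{o} \cite{OSDoubleBranchedCover} to the sutured theory, using Juh\'{a}sz's cobordism maps for interior $2$-handles, and requires no new geometric input.
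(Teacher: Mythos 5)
Your proposal is correct and follows essentially the same route as the paper: invoke the iterated mapping cone (link surgery) construction for the $(-1)$-framed duals to get the square \eqref{eq:iterated-mapping-cone}, pass to homology at each vertex using that everything is a chain complex of $\bF$-vector spaces, and then collapse the cube using the vanishing \eqref{eq:not-taut} of $\SFH(Y_{\infty,\infty}(L))=\SFH(Y,\Gamma_{n,m})$. No meaningful differences from the paper's argument.
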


In the subsequent section, we will show the following:

\begin{lem}\label{lem:reorganize-cone-Heegaard}
There is a homotopy equivalence of hypercubes between the diagram
\begin{equation}
\begin{tikzcd}[labels=description, column sep=1.5cm, row sep=1.5cm]
\SFH(Y_{1,1}(L))
	\ar[r, "F_{W(L_1^{*})}"]
	\ar[d,"F_{W(L_2^{*})}"]
&
\SFH(Y_{\infty, 1}(L))
\\
\SFH(Y_{1,\infty}(L))
&
\,
\end{tikzcd}
\label{eq:iterated-mapping-cone-1}
\end{equation}
and the diagram
\begin{equation}
\begin{tikzcd}[labels=description, column sep=3cm, row sep=1.5cm]
C_n\otimes D_{m-1}\oplus C_{n-1}\otimes D_m
	\ar[r, "\id| \phi_m^-\oplus\phi_n^-| \id"]
	\ar[d,"\id| \phi_m^+\oplus \phi_n^+| \id"]
&
C_n\otimes D_m
\\
C_n\otimes D_m
&
\,
\end{tikzcd}
\label{eq:expanded-derived-tensor-product}
\end{equation}
In the above, $C_n$ denotes $\SFH(M_1,\Gamma_n)$ and $D_m$ denotes $\SFH(M_2,\Gamma_m)$. 
\end{lem}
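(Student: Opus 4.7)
The strategy is to identify each vertex of the cube in~\eqref{eq:iterated-mapping-cone-1} with a tensor product (or direct sum thereof) of the groups $C_\ast = \SFH(M_1,\Gamma_\ast)$ and $D_\ast = \SFH(M_2,\Gamma_\ast)$, and then to recognize the length-one cobordism maps $F_{W(L_i^*)}$ as bypass attachment maps $\phi^\pm$. The two crucial tools are Lemma~\ref{lem:ctct-2-handle=triangles}, which equates the $4$-dimensional $2$-handle cobordism map $F_W$ with a composition of a contact $2$-handle map and the inverse of a contact $1$-handle map, and the standard observation that a bypass attachment equals a contact $1$-handle attachment followed by a contact $2$-handle attachment.

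First I would analyze the topology of the sutured manifolds $Y_{a,b}(L)$ for $(a,b)\in\{(\infty,1),(1,\infty),(1,1)\}$. Because $L_1$ and $L_2$ originally sit on $\partial Y$ and are pushed into the interior with boundary framing, Lemma~\ref{lem:ctct-2-handle=triangles} allows us to interpret the $4$-dimensional $2$-handle attachments along $L_i$ as contact $2$-handle attachments on the boundary of $Y$ (up to the inverse of a contact $1$-handle attachment along the resulting product disk). A $1$-surgery on $L_i$ corresponds to the same operation with an extra boundary twist, which should be interpretable as a bypass attachment. The goal is to show that after cutting along product disks (coming from the connecting $1$-handles of $Y$), the intermediate vertices decompose as
\[
\SFH(Y_{\infty,1}(L))\;\cong\;\SFH(Y_{1,\infty}(L))\;\cong\;C_n\otimes D_m,
\]
by the K\"unneth formula for $\SFH$ of disjoint unions.

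The identification
\[
\SFH(Y_{1,1}(L))\;\cong\;C_n\otimes D_{m-1}\oplus C_{n-1}\otimes D_m
\]
is the most delicate step, because performing $1$-surgery on both $L_1$ and $L_2$ twists both of the connecting $1$-handles simultaneously. I would approach this by applying the bypass exact triangle to one of the two surgeries to express $\SFH(Y_{1,1}(L))$ as the mapping cone of a bypass map between two simpler sutured manifolds. After simplifying with the observation that $\SFH(Y_{\infty,\infty}(L))=\SFH(Y)=0$, the two non-vanishing terms should become exactly $C_n\otimes D_{m-1}$ and $C_{n-1}\otimes D_m$, corresponding to the two ways the twist may be absorbed into either the $M_1$-side or the $M_2$-side.

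Finally, the length-one maps $F_{W(L_i^*)}$ would be identified with the bypass maps $\phi^\pm$ by tracing the contact-handle description of Lemma~\ref{lem:ctct-2-handle=triangles} through the product disk decomposition and matching it with the bypass-as-contact-handle composition. Under these identifications, $F_{W(L_1^*)}$ restricts to $\id\otimes\phi^-_{m-1}$ and $\phi^-_{n-1}\otimes\id$ on the two summands of $\SFH(Y_{1,1}(L))$, while $F_{W(L_2^*)}$ restricts to $\id\otimes\phi^+_{m-1}$ and $\phi^+_{n-1}\otimes\id$, yielding precisely~\eqref{eq:expanded-derived-tensor-product}. The main obstacle is the direct-sum identification of $\SFH(Y_{1,1}(L))$ together with the verification that the induced maps take the specific form $\id\otimes \phi^\pm$ and $\phi^\pm \otimes \id$; this requires carefully tracking how the two simultaneous $1$-surgeries interact with both $1$-handles and how the resulting contact $2$-handle maps combine with the Honda--Kazez--Mati\'{c} gluing maps.
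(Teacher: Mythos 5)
Your identification of the $(\infty,1)$ and $(1,\infty)$ vertices with $C_n\otimes D_m$ (contact 1-handles and product-disk decompositions plus K\"unneth) and your plan to read the maps $F_{W(L_i^*)}$ through Lemma~\ref{lem:ctct-2-handle=triangles} are exactly the paper's strategy, and the $\pm$ labels you assign differ from the paper's only by a choice of orientation of the knots, which is immaterial. The genuine gap is at the step you yourself flag as delicate: the direct sum decomposition of $\SFH(Y_{1,1}(L))$. The mechanism you propose --- write $\SFH(Y_{1,1}(L))$ as the mapping cone appearing in a bypass/surgery exact triangle and then ``simplify'' using $\SFH(Y_{\infty,\infty}(L))=0$ --- does not deliver a direct sum. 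An exact triangle only exhibits $\SFH(Y_{1,1}(L))$ as a cone (an extension), and to split it you would need the relevant connecting map to vanish, which you do not justify; moreover the third vertex of any such triangle (for instance $Y_{1,0}(L)$, a sutured manifold with genus-two boundary) is not $C_{n-1}\otimes D_m$ or $C_n\otimes D_{m-1}$, so even the identification of the other two terms is unclear. Finally, the vanishing \eqref{eq:not-taut} is used in the paper only to delete the terminal vertex $Y_{\infty,\infty}(L)$ from the square (Proposition~\ref{prop:link-surgery}); it plays no role in computing $\SFH(Y_{1,1}(L))$.

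What is actually needed is a local, geometric splitting argument. In the paper's Heegaard Floer proof this is Lemma~\ref{lem:diagramatic-isomorphisms}\eqref{claim:diagram-isomorphism-3}: on the explicit diagram for $Y_{1,1}(L)$ the distinguished components of any generator must lie either both on the left or both on the right of the diagram, the diagram can be cut along $\d\Sigma$ as in Figure~\ref{fig:18}, and holomorphic disks cannot cross the boundary, so the chain complex splits as $\CF(M_1,\Gamma_{n-1})\otimes\CF(M_2,\Gamma_m)\oplus\CF(M_1,\Gamma_n)\otimes\CF(M_2,\Gamma_{m-1})$. The same cut localizes the triangle counts, so that $F_{W(L_i^*)}$ restricts to $\phi^{\pm}|\id$ and $\id|\phi^{\pm}$ on the two summands (Lemma~\ref{lem:decomposition-of-maps}); this model triangle count is also what is required to make your phrase ``tracing the contact-handle description through the product disk decomposition'' precise. (In the instanton analogue, Lemma~\ref{lem:reorganize-cone}, the splitting instead comes from decomposing along the two cocore disks, each meeting the sutures in four points, where two of the four orientation-summands are non-taut and hence have vanishing Floer homology.) Without one of these inputs, or an equivalent substitute, your argument does not close.
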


As a final step, we will show in Section~\ref{sec:reorganize-obtain-derived} the following:

\begin{lem}
\label{lem:obtain-direct-limit} The direct limit of the complex in Equation~\eqref{eq:expanded-derived-tensor-product} has homology which coincides with that of the direct limit of the transitive system
\[
\begin{tikzcd}[column sep=3cm]
C_n\otimes D_m\ar[r, "\phi_n^-|\phi_m^+ +\phi_n^+|\phi_m^-"]& C_{n+1}|D_{m+1}.
\end{tikzcd}
\]
Furthermore, the direct limit of this system coincides with the derived tensor product $\HFK^-(K_1)\tildeotimes_{\bF[U]} \HFK^-(K_2)$, which we identify with
\[
\begin{split}
\Cone\left(U_1+U_2\colon \HFK^-(K_1)\otimes_{\bF} \HFK^-(K_2)\to \HFK^-(K_1)\otimes_{\bF} \HFK^-(K_2)\right).
\end{split}
\]
\end{lem}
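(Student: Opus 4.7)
My plan is to prove Lemma~\ref{lem:obtain-direct-limit} in two stages. The first stage identifies the direct limit of the hypercube in Equation~\eqref{eq:expanded-derived-tensor-product} with that of the stated 2-term transitive system, while the second stage identifies the latter with the derived tensor product.

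For the first stage, I would recognize the $\phi^+$-face of the hypercube, namely
\[
C_n\otimes D_{m-1}\oplus C_{n-1}\otimes D_m\xrightarrow{\,\id|\phi^+_m\,+\,\phi^+_n|\id\,}C_n\otimes D_m,
\]
as the staircase complex $\scS_{n,m}$ of Lemma~\ref{lem:staircase-lemma}, applied to the transitive systems $(C_n,\phi^+_n)$ and $(D_m,\phi^+_m)$. That lemma provides explicit chain homotopy equivalences $\Phi_{n,m}$ and $\Psi_{n,m}$ between $\scS_{n,m}$ and $C_n\otimes D_m$ in sufficiently high Alexander grading. Pulling the $\phi^-$-face arrow back along $\Psi_{n-1,m-1}$ collapses the full hypercube to a two-term complex whose remaining arrow is the composition of $\Psi_{n-1,m-1}$ with the $\phi^-$-face arrow. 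Plugging in the explicit form of $\Psi$ from Figure~\ref{fig:transitive-systems}, a direct calculation yields $\phi^-_{n-1}|\phi^+_{m-1}+\phi^+_{n-1}|\phi^-_{m-1}$, which after relabeling indices is precisely the arrow of the 2-term system in the statement. Since each $C_n$ and $D_m$ is bounded above in the Alexander grading and we work over a field, direct limits commute with homology, so the direct limit of the hypercube has the same homology as the direct limit of the 2-term system.

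For the second stage, I would take the direct limit of the 2-term system under the transition maps $\phi^+|\phi^+$. Both $C_n\otimes D_m$ and $C_{n+1}\otimes D_{m+1}$ are canonically identified with $H:=\HFK^-(K_1)\otimes_{\bF}\HFK^-(K_2)$. Since $\phi^+$ is a transition map of the directed system, it becomes the identity in the limit, while $\phi^-$ descends to multiplication by $U$. Consequently $\phi^-|\phi^+$ limits to $U_1=U|\id$ and $\phi^+|\phi^-$ limits to $U_2=\id|U$, so the direct limit of the 2-term system is the complex $H\xrightarrow{\,U_1+U_2\,}H$, which is $\Cone(U_1+U_2)$. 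By the discussion in Section~\ref{sec:5}, this is precisely $\HFK^-(K_1)\tildeotimes_{\bF[U]}\HFK^-(K_2)$.

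The main technical obstacle I anticipate is careful bookkeeping of the Alexander gradings and the index shift built into the map $\Psi_{n,m}$ (which lands in $\scS_{n+1,m+1}$ rather than $\scS_{n,m}$). However, since each $C_n$ and $D_m$ is bounded above in the Alexander grading and $\phi^+$ is eventually an isomorphism in each fixed grading, these shifts disappear in the direct limit, and the argument goes through cleanly.
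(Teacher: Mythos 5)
Your proposal is correct and follows essentially the same route as the paper: the paper likewise uses the maps $\Psi$ (and $\Phi$) from Lemma~\ref{lem:staircase-lemma} to build comparison chain maps between the two-term complexes $C_{n-1}\otimes D_{m-1}\to C_n\otimes D_m$ and the hypercube of Equation~\eqref{eq:expanded-derived-tensor-product}, and then invokes the boundedness of the Alexander grading together with the eventual isomorphism $(C_n)_{>q}\iso \HFK^-(K_1)_{>q}$ to identify the limit with $\Cone(U_1+U_2)$. The only cosmetic difference is that the paper records maps in both directions (into the hypercube via $\Psi$ and out of it via $\Phi$) and phrases the final identification via grading truncations rather than via ``$\phi^+$ becomes the identity in the limit,'' but these amount to the same argument.
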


\subsection{Decomposing the iterated mapping cone}

In this section, we prove Lemma~\ref{lem:reorganize-cone-Heegaard}. We interpret the maps and complexes appearing in Proposition~\ref{prop:link-surgery} in terms of Heegaard diagrams. Although logically redundant with some of the more formal arguments we consider later in the setting of instantons and monopoles, we feel that this analysis is illuminating.

There is an alternate way to construct the complement of $K_1\# K_2$ in $Y_{1}\# Y_2$, with longitudinal sutures of slope $\Gamma_{m+n}$. Namely, we can attach a contact 1-handle connecting $M_1$ to $M_2$ along their sutures. Then we attach a single contact 2-handle which crosses the contact 1-handle twice. This construction is considered in \cite{GhoshLiWongTau}*{Figure~3}. The effect on the level of Heegaard diagrams is shown in Figure~\ref{fig:1}. Note that to arrive at $Y_{0,0}(L)$, we must attach two additional contact 1-handles.

\begin{figure}[ht]
\begingroup%
  \makeatletter%
  \providecommand\color[2][]{%
    \errmessage{(Inkscape) Color is used for the text in Inkscape, but the package 'color.sty' is not loaded}%
    \renewcommand\color[2][]{}%
  }%
  \providecommand\transparent[1]{%
    \errmessage{(Inkscape) Transparency is used (non-zero) for the text in Inkscape, but the package 'transparent.sty' is not loaded}%
    \renewcommand\transparent[1]{}%
  }%
  \providecommand\rotatebox[2]{#2}%
  \newcommand*\fsize{\dimexpr\f@size pt\relax}%
  \newcommand*\lineheight[1]{\fontsize{\fsize}{#1\fsize}\selectfont}%
  \ifx\svgwidth\undefined%
    \setlength{\unitlength}{262.29851979bp}%
    \ifx\svgscale\undefined%
      \relax%
    \else%
      \setlength{\unitlength}{\unitlength * \real{\svgscale}}%
    \fi%
  \else%
    \setlength{\unitlength}{\svgwidth}%
  \fi%
  \global\let\svgwidth\undefined%
  \global\let\svgscale\undefined%
  \makeatother%
  \begin{picture}(1,0.84425422)%
    \lineheight{1}%
    \setlength\tabcolsep{0pt}%
    \put(0,0){\includegraphics[width=\unitlength,page=1]{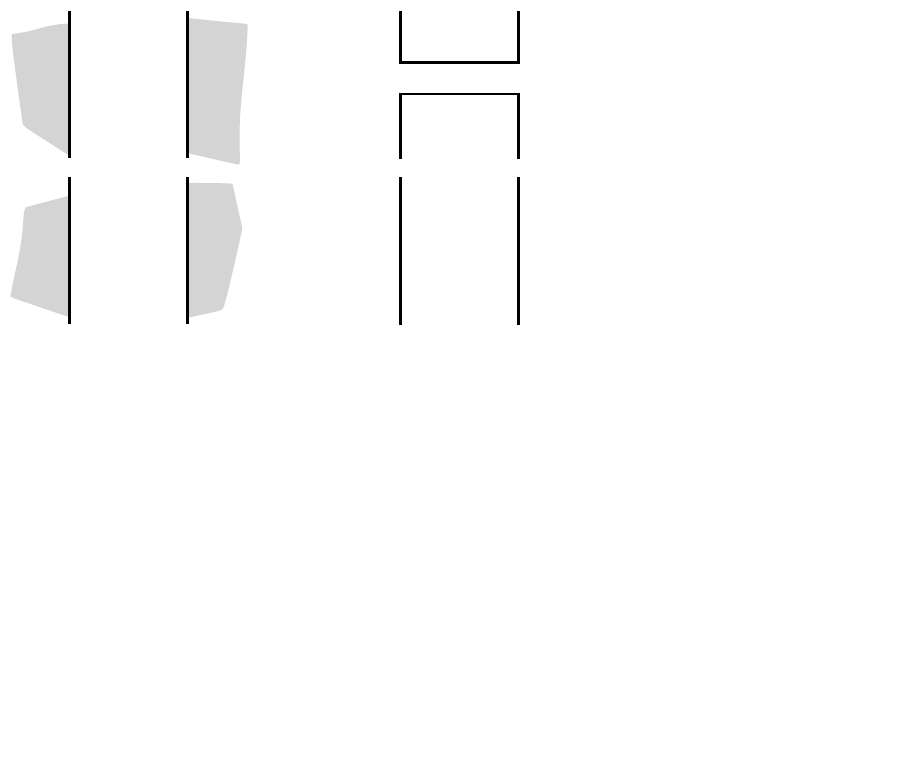}}%
    \put(0.06570388,0.73697142){\makebox(0,0)[rt]{\lineheight{1.25}\smash{\begin{tabular}[t]{r}$\Sigma_1$\end{tabular}}}}%
    \put(0.06570388,0.56187778){\makebox(0,0)[rt]{\lineheight{1.25}\smash{\begin{tabular}[t]{r}$\Sigma_1$\end{tabular}}}}%
    \put(0.21493921,0.73697142){\makebox(0,0)[lt]{\lineheight{1.25}\smash{\begin{tabular}[t]{l}$\Sigma_2$\end{tabular}}}}%
    \put(0.21493921,0.56187778){\makebox(0,0)[lt]{\lineheight{1.25}\smash{\begin{tabular}[t]{l}$\Sigma_2$\end{tabular}}}}%
    \put(0,0){\includegraphics[width=\unitlength,page=2]{fig1.pdf}}%
    \put(0.68446531,0.01795649){\makebox(0,0)[t]{\lineheight{1.25}\smash{\begin{tabular}[t]{c}$Y_{0,0}(L)$\end{tabular}}}}%
  \end{picture}%
\endgroup%

\caption{Constructing $Y_{0,0}(L)$ from diagrams of $M_1$ and $M_2$. The first arrow is a contact 1-handle. The second arrow is a contact 2-handle. The third arrow is an isotopy. The fourth arrow is two contact 1-handles. Note that in each frame, the two alpha arcs are part of the same alpha curve, and similarly for the beta arcs.}
\label{fig:1}
\end{figure}

 With this Heegaard diagram in hand, the diagrams appearing in the iterated mapping cone complex in ~\eqref{eq:iterated-mapping-cone} are shown in Figure~\ref{fig:10}. Note that the fact that $Y_{\infty,\infty}(L)$ is not taut is reflected in the fact that the diagram has no intersection points (and may chosen to be admissible, by picking admissible diagrams of $(M_1,\Gamma_m)$ and $(M_2,\Gamma_n)$). Note that the alpha arc on the top of each diagram connects to the alpha arc on the bottom by passing through other portions of the diagram, and similarly for the beta arcs shown. In particular, there is a single alpha curve shown, not two alpha curves.

\begin{figure}[ht]
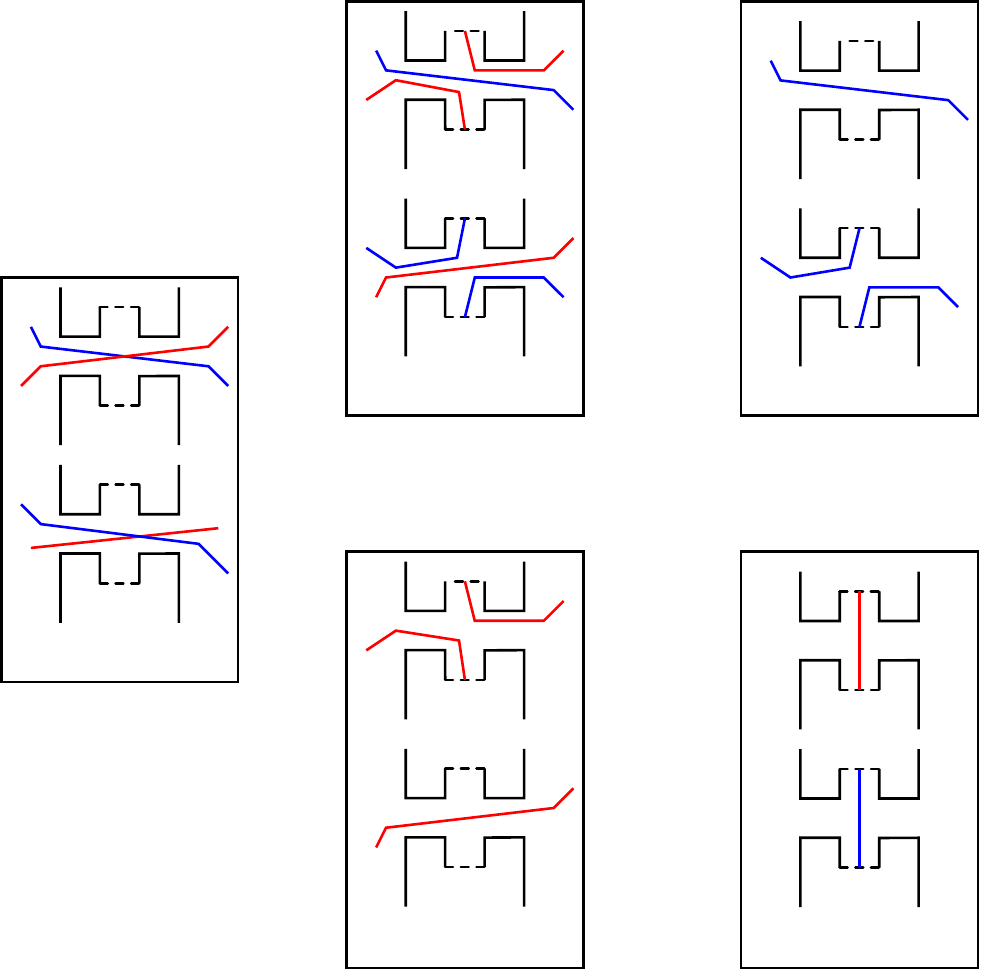
\caption{Heegaard diagrams for the manifolds appearing in~\eqref{eq:iterated-mapping-cone}.}
\label{fig:10}
\end{figure}

\begin{lem}\label{lem:diagramatic-isomorphisms} There are isomorphisms:
\begin{enumerate}
\item\label{claim:diagram-isomorphism-1} $\SFH(Y_{\infty,1}(L))\iso \SFH(M_1,\Gamma_n)\otimes \SFH(M_2,\Gamma_m)$
\item\label{claim:diagram-isomorphism-2} $\SFH(Y_{1,\infty}(L))\iso \SFH(M_1,\Gamma_n)\otimes \SFH(M_2,\Gamma_m)$.
\item\label{claim:diagram-isomorphism-3} $\SFH(Y_{1,1}(L))\iso \SFH(M_1,\Gamma_{n-1})\otimes \SFH(M_2,\Gamma_m)\oplus \SFH(M_1,\Gamma_n)\otimes \SFH(M_2,\Gamma_{m-1})$.
\end{enumerate}
\end{lem}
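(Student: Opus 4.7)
The plan is to produce explicit admissible Heegaard diagrams for each of $Y_{\infty,1}(L)$, $Y_{1,\infty}(L)$, and $Y_{1,1}(L)$ and read off the generators of $\SFH$ combinatorially, following the schematic in Figure~\ref{fig:10}. Let $(\Sigma_1,\as_1,\bs_1)$ and $(\Sigma_2,\as_2,\bs_2)$ be admissible sutured Heegaard diagrams for $(M_1,\Gamma_n)$ and $(M_2,\Gamma_m)$. The diagram for $Y = Y_{\infty,\infty}(L)$ is obtained from $\Sigma_1\sqcup\Sigma_2$ by attaching two bands realizing the two contact $1$-handles connecting $R_\pm$; no new $\alpha$ or $\beta$ curves are added, so the diagram has no intersection points at all. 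This is consistent with $\SFH(Y,\Gamma_{n,m})=0$.

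To realize each $1$-framed surgery, I would invoke Lemma~\ref{lem:ctct-2-handle=triangles}: the map $F_{W(L_i^*)}$ appearing in Proposition~\ref{prop:link-surgery} is topologically the same as attaching a contact $2$-handle along an appropriate curve and then inverting a contact $1$-handle. At the Heegaard diagram level, this means adding a single new $\alpha$ curve and a single new $\beta$ curve that intersect in one canonical point inside a newly added band, then destabilizing via a product disk. For $Y_{\infty,1}(L)$, performing this construction only for the component $L_2$ causes the new curve pair to cancel one of the two bridges joining $\Sigma_1$ and $\Sigma_2$. What remains is the disjoint union of $(\Sigma_1,\as_1,\bs_1)$ and $(\Sigma_2,\as_2,\bs_2)$ joined by a single trivial bridge supporting one canonical intersection point $c$. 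Since the bridge is a standard stabilization region, every holomorphic disk factors through $\Sigma_1$ or $\Sigma_2$, so
\[
\CF(Y_{\infty,1}(L))\iso \CF(\Sigma_1,\as_1,\bs_1)\otimes_{\bF}\CF(\Sigma_2,\as_2,\bs_2),
\]
giving Claim~\eqref{claim:diagram-isomorphism-1} on homology. Claim~\eqref{claim:diagram-isomorphism-2} is symmetric.

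For $Y_{1,1}(L)$ both surgeries are performed, so both bridges retain their new $\alpha_0,\beta_0$ pairs. The attaching curves $\mu_1,\mu_2$ of the two contact $2$-handles run parallel to meridians of $K_1$ and $K_2$ respectively, and they cross opposite bridges, so the two new $\alpha$ arcs and the two new $\beta$ arcs interact along a common bridge, intersecting in two rather than one canonical point there. The intersection set therefore splits as a disjoint union of two families, indexed by which of the two canonical points is chosen in the shared bridge. A direct analysis of each family recovers the generators of a diagram for $(M_1,\Gamma_{n-1})\sqcup(M_2,\Gamma_m)$ in one case and $(M_1,\Gamma_n)\sqcup(M_2,\Gamma_{m-1})$ in the other, the shifted sutures being produced by the two canonical points realizing two inequivalent bypass moves on $\partial M_1$ and $\partial M_2$. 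Since the two families live in disjoint regions of the diagram, separated by the two canonical points, the chain complex splits as a direct sum, giving Claim~\eqref{claim:diagram-isomorphism-3}. The hard part will be exactly this last step: one must carefully draw the diagram for $Y_{1,1}(L)$ and verify both that the extra curves interact as claimed and that the intersection-point partition genuinely matches the bypass identifications of the sutures. Once the picture is in hand, the tensor product identification on each summand follows from the same argument used for Claims~\eqref{claim:diagram-isomorphism-1} and~\eqref{claim:diagram-isomorphism-2}.
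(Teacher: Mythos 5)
Your overall strategy is the same as the paper's: build the Heegaard diagrams of Figure~\ref{fig:10}, handle $Y_{\infty,1}(L)$ and $Y_{1,\infty}(L)$ by a stabilization-type argument (the paper phrases this as: these diagrams are compound stabilizations, in the sense of \cite{JZContactHandles}*{Section~2.2}, of the diagram obtained by attaching two contact 1-handles to the disjoint union, and neither operation changes homology), and handle $Y_{1,1}(L)$ by splitting the generator set into two families and cutting the diagram. For claims~\eqref{claim:diagram-isomorphism-1} and~\eqref{claim:diagram-isomorphism-2} your argument is essentially equivalent to the paper's and is fine.

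For claim~\eqref{claim:diagram-isomorphism-3}, however, there is a genuine gap, and you in effect concede it: the entire content of the claim is the combinatorial analysis you defer as ``the hard part.'' Two specific problems. First, your description of the splitting mechanism does not match the actual diagram: in the paper there is one distinguished pair $\a_0,\b_0$ in the $Y_{1,1}(L)$ diagram, and an intersection point $\xs$ is constrained so that its components $x_0\in\a_0$ and $x_0'\in\b_0$ must lie either both on the left or both on the right portion of the diagram; the two summands are indexed by this left/right dichotomy, not by ``which of two canonical points is chosen in a shared bridge.'' Without drawing the diagram and verifying the constraint, your claimed partition of generators (and its identification, via the cutting of Figure~\ref{fig:18}, with generators of $(M_1,\Gamma_{n-1})\sqcup(M_2,\Gamma_m)$ and $(M_1,\Gamma_n)\sqcup(M_2,\Gamma_{m-1})$) is unsubstantiated. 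Second, your justification that the decomposition is a direct sum of \emph{chain complexes} --- ``the two families live in disjoint regions of the diagram, separated by the two canonical points'' --- is not a valid argument: generators in the two families share components throughout $\Sigma_1$ and $\Sigma_2$, so disjointness of the special components alone does not prevent differentials between the summands. The paper's argument is that the boundary $\d\Sigma$ present in the surgery region of the diagram blocks holomorphic disks from entering that region, which is what forces the differential to respect the splitting. You need this (or an equivalent positivity-of-domains argument near $\d\Sigma$) to conclude
\[
\CF(Y_{1,1}(L))\iso \CF(M_1,\Gamma_{n-1})\otimes \CF(M_2,\Gamma_m)\oplus \CF(M_1,\Gamma_n)\otimes \CF(M_2,\Gamma_{m-1})
\]
as complexes rather than merely as vector spaces of generators.
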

\begin{proof} Consider claims~\eqref{claim:diagram-isomorphism-1} and~\eqref{claim:diagram-isomorphism-2} first.  We consider the diagrams shown in Figure~\ref{fig:10}. These diagrams are both compound stabilizations (in the sense of \cite{JZContactHandles}*{Section~2.2}) of a diagram obtained by attaching two contact 1-handles to $(M_1,\Gamma_m)\sqcup(M_2,\Gamma_n)$. Since compound stabilizations and contact 1-handles have no effect on homology, the claim follows.

We now consider claim~\eqref{claim:diagram-isomorphism-3}. In this case, let $\a_0$ and $\b_0$ denote the alpha and beta curves which are shown in Figure~\ref{fig:10} for the manifold $Y_{1,1}(L)$. Suppose $\xs\in \bT_{\a}\cap \bT_{\b}$ on the diagram for $Y_{1,1}(L)$, and let $x_0$ denote the component in $\a_0$, and let $x_0'$ denote the component in $\b_0$. We claim that there are exactly two possible cases:
\begin{enumerate}
\item\label{item:x-case-1} $x_0$ and $x_0'$ are both on the left side of the Heegaard diagram.
\item\label{item:x-case-2} $x_0$ and $x_0'$ are both on the right side of the Heegaard diagram.
\end{enumerate}
In particular, it is not possible to form an intersection point where $x_0$ and $x_0'$ are on opposite sides of the diagram (due to the combinatorial constraints of forming a valid intersection point). Given an intersection point $\xs$ satisfying case~\eqref{item:x-case-1}, by cutting the diagram as in Figure~\ref{fig:18}, we may naturally view $\xs$ as an intersection point on a diagram for $(M_1,\Gamma_{m-1})\sqcup (M_2,\Gamma_n)$. Similarly any intersection point satisfying \eqref{item:x-case-2} may be viewed as an intersection point of $(M_1,\Gamma_n)\sqcup (M_2,\Gamma_{m-1})$. Furthermore, we obtain a direct sum decomposition at the chain level
\[
\CF(Y_{1,1}(L))\iso \CF(M_1,\Gamma_{n-1})\otimes \CF(M_2,\Gamma_m)\oplus \CF(M_1,\Gamma_n)\otimes \CF(M_2,\Gamma_{m-1}).
\]
Because of the presence of the boundary $\d \Sigma$ in the diagrams in Figure~\ref{fig:10}, holomorphic disks cannot pass into the region of $Y_{1,1}(L)$ shown therein. In particular, the above chain level decomposition is respected by the differential, and the proof is complete.
\end{proof}

\begin{figure}[ht]
\begingroup%
  \makeatletter%
  \providecommand\color[2][]{%
    \errmessage{(Inkscape) Color is used for the text in Inkscape, but the package 'color.sty' is not loaded}%
    \renewcommand\color[2][]{}%
  }%
  \providecommand\transparent[1]{%
    \errmessage{(Inkscape) Transparency is used (non-zero) for the text in Inkscape, but the package 'transparent.sty' is not loaded}%
    \renewcommand\transparent[1]{}%
  }%
  \providecommand\rotatebox[2]{#2}%
  \newcommand*\fsize{\dimexpr\f@size pt\relax}%
  \newcommand*\lineheight[1]{\fontsize{\fsize}{#1\fsize}\selectfont}%
  \ifx\svgwidth\undefined%
    \setlength{\unitlength}{228.96551441bp}%
    \ifx\svgscale\undefined%
      \relax%
    \else%
      \setlength{\unitlength}{\unitlength * \real{\svgscale}}%
    \fi%
  \else%
    \setlength{\unitlength}{\svgwidth}%
  \fi%
  \global\let\svgwidth\undefined%
  \global\let\svgscale\undefined%
  \makeatother%
  \begin{picture}(1,1.09858515)%
    \lineheight{1}%
    \setlength\tabcolsep{0pt}%
    \put(0,0){\includegraphics[width=\unitlength,page=1]{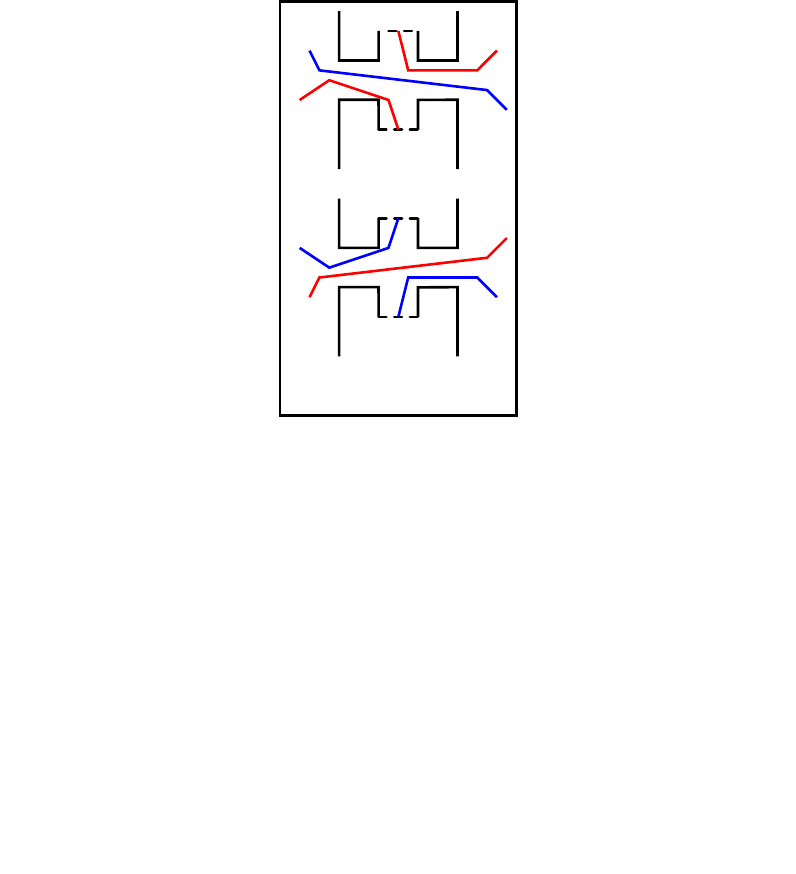}}%
    \put(0.49086152,0.60525078){\makebox(0,0)[t]{\lineheight{1.25}\smash{\begin{tabular}[t]{c}$Y_{1,1}(L)$\end{tabular}}}}%
    \put(0,0){\includegraphics[width=\unitlength,page=2]{fig18.pdf}}%
    \put(0.49995285,0.21948508){\makebox(0,0)[t]{\lineheight{1.25}\smash{\begin{tabular}[t]{c}$\oplus$\end{tabular}}}}%
  \end{picture}%
\endgroup%

\caption{Cutting the diagram in part~\eqref{claim:diagram-isomorphism-3} of Lemma~\ref{lem:diagramatic-isomorphisms}.}
\label{fig:18}
\end{figure}

\begin{lem}\label{lem:decomposition-of-maps} We may choose an orientation on $K$ so that the following holds with respect to the isomorphisms from Lemma~\ref{lem:diagramatic-isomorphisms}:
\[
F_{W(L_1^*)}=\phi_{m-1}^+|\id\oplus \id|\phi_{n-1}^+\qquad \text{and} \qquad F_{W(L_2^*)}=\phi_{m-1}^-|\id\oplus \id|\phi_{n-1}^-.
\]
\end{lem}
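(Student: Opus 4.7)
The strategy is to reduce each $F_{W(L_i^*)}$ from a $4$-dimensional cobordism map to a composition of contact handle maps, and then to recognize that composition as a bypass attachment map acting on a single tensor factor.

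First, I would apply Lemma~\ref{lem:ctct-2-handle=triangles} to rewrite $F_{W(L_i^*)}$ as $C_{h^1} \circ C_{h^2}$, where $C_{h^2}$ is the contact $2$-handle map for the attaching curve $L_i \subset \partial Y$ and $C_{h^1}$ is the contact $1$-handle map corresponding to the product disk cut out by the surgery on $L_i^*$. This replaces the $4$-dimensional map by a map built entirely from contact handle data on the boundary of $Y$, making it amenable to direct topological analysis.

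Next, I would observe that the topology of $Y$ identifies this contact handle composition with a bypass attachment map. The manifold $Y$ was built from $M_1 \sqcup M_2$ by attaching a $3$-dimensional $1$-handle across $R_+$ and another across $R_-$. Combining one of these connecting $1$-handles with the contact $2$-handle attached along $L_i$ produces a topologically canceling pair, but the sutures change precisely by a bypass move along an arc supported in a neighborhood of $\partial M_j$ for exactly one $j \in \{1,2\}$. The gluing map for such a bypass is, by definition, one of the maps $\phi^{\pm}$ on $\SFH(M_j, \Gamma_{\cdot})$, tensored with the identity on the other factor. The direct sum decomposition of $\SFH(Y_{1,1}(L))$ from Lemma~\ref{lem:diagramatic-isomorphisms}\eqref{claim:diagram-isomorphism-3} reflects the two possible locations for this bypass arc: on one summand the bypass advances the $M_1$ slope from $n-1$ to $n$, and on the other it advances the $M_2$ slope from $m-1$ to $m$. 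This produces the decomposition $F_{W(L_i^*)} = \phi^{\pm}|\id \oplus \id|\phi^{\pm}$ as claimed.

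Finally, I would verify the signs. The curves $L_1$ and $L_2$ lie on opposite sides of the suture in $\partial Y$, so the resulting bypass arcs are attached from opposite sides, producing $\phi^+$ for one index and $\phi^-$ for the other. The ambiguity in which curve produces which sign is absorbed by a choice of orientation of $K = K_1 \# K_2$, which is exactly the freedom the lemma allows.

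The main obstacle I anticipate is the bookkeeping: tracking the bypass arcs and their signs consistently through the identifications from Lemma~\ref{lem:diagramatic-isomorphisms}, and verifying at the Heegaard-diagrammatic level that the distinguished intersection point $c$ introduced by the contact $2$-handle map sits on the correct side of the surface to match each summand of the decomposition. In particular, one must be careful that isotoping $L_i$ to either side of the attached $1$-handle does not accidentally alter the bypass sign, and that the product-disk inverse $(C_{h^1})^{-1}$ arising from Lemma~\ref{lem:ctct-2-handle=triangles} is absorbed cleanly into the tautological diagrammatic isomorphisms of Lemma~\ref{lem:diagramatic-isomorphisms}.
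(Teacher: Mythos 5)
Your overall strategy---trading the $4$-dimensional $2$-handle map for contact-handle maps and then recognizing bypasses---is closer in spirit to the paper's instanton-side argument (Lemma~\ref{lem:reorganize-cone}, with the manipulations of Figures~\ref{fig:42} and~\ref{fig:43}) than to the paper's actual proof of this lemma, which is a localized holomorphic triangle count: the paper notes that the distinguished intersection point $c$ must be the outgoing corner of every triangle counted by $F_{W(L_1^*)}$, that the domain of such a triangle is supported either entirely to the left or entirely to the right of $c$ (Figure~\ref{fig:19}), and that cutting the Heegaard triple as in Figure~\ref{fig:18} identifies the two cases with $\phi^{\pm}_{m-1}|\id$ and $\id|\phi^{\pm}_{n-1}$, using the triangle description of the contact $2$-handle map from Lemma~\ref{lem:ctct-2-handle=triangles}.

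The gap in your version is the middle step. The curve $L_i$ (Figure~\ref{fig:11}) runs over \emph{both} connecting $1$-handles and along meridional directions on \emph{both} $\partial M_1$ and $\partial M_2$, meeting the suture once on each side; cancelling its contact $2$-handle against one of the $1$-handles therefore leaves an attaching arc that still crosses the other $1$-handle and both $\partial M_j$'s and meets the suture only twice---it is not a bypass arc supported near a single $\partial M_j$, so the claimed ``canceling pair $=$ localized bypass'' identification fails as stated. More fundamentally, the statement to be proved is summand-dependent: the \emph{same} map $F_{W(L_1^*)}$ must act as $\phi^+_{m-1}|\id$ on one summand of Lemma~\ref{lem:diagramatic-isomorphisms}\eqref{claim:diagram-isomorphism-3} and as $\id|\phi^+_{n-1}$ on the other, so no single global topological cancellation can produce the answer; one has to relate the diagrammatic summand decomposition to a geometric decomposition and only then isotope or handleslide the attaching curve into bypass position on one side. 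That is exactly the content the paper supplies, here by the localized triangle count together with the diagram cutting, and in the instanton setting by the auxiliary contact $1$-handles, disk decompositions, handleslides, and framing correction of Figures~\ref{fig:42}--\ref{fig:43}. Your proposal asserts this identification rather than proving it, and it also glosses the preliminary check needed before Lemma~\ref{lem:ctct-2-handle=triangles} applies at all, namely that $L_i^*$, a $(-1)$-framed meridian, is isotopic in $Y_{1,1}(L)$ to a boundary-parallel push-in carrying the surface framing. The sign discussion via a choice of orientation of $K$ is fine and matches the paper.
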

\begin{proof} We focus first on the claim about the map $F_{W(L_1^*)}$. Consider the special intersection point $c$ along the top of the diagram for $Y_{1,\infty}(L)$. Since $c$ is the only intersection of the alpha curve of a stabilization with any beta curve, $c$ must be an outgoing intersection point of any holomorphic triangle counted by $F_{W(L_1^*)}$. There are exactly two configurations the domain of a holomorphic triangle map have near $c$:
\begin{enumerate}[label=($\Delta$-\arabic*), ref=$\Delta$-\arabic*]
\item\label{case:triangle:left} The domain is supported to the left of $c$, but not to the right.
\item\label{case:triangle:right} The domain is supported to the right of $c$, but not to the left.
\end{enumerate}
See Figure~\ref{fig:19} for these two configurations.
With respect to the isomorphisms from Lemma~\ref{lem:diagramatic-isomorphisms}, the incoming intersection point of a holomorphic triangle satisfying case~\eqref{case:triangle:left} must lie in the summand $\CF(M_1,\Gamma_{m-1})\otimes \CF(M_2,\Gamma_n)$ of $\CF(Y_{1,1}(L))$. Analogously, the incoming intersection point of a holomorphic triangle satisfying case~\eqref{case:triangle:right} must lie in the summand $\CF(M_1,\Gamma_m)\otimes \CF(M_2,\Gamma_{n-1})$.  Furthermore, the diagram cutting operation of Figure~\ref{fig:18} may also be performed on the Heegaard triple for the map $F_{W(L_1^*)}$.  Using the description of the contact 2-handle map in terms of holomorphic triangles from Lemma~\ref{lem:ctct-2-handle=triangles}, we obtain the identifications
\[
F_{W(L_1^*)}|_{\CF(M_1,\Gamma_{m-1})\otimes \CF(M_2,\Gamma_n)}=\phi_{m-1}^{\pm}|\id\quad \text{and} \quad F_{W(L_1^*)}|_{\CF(M_1,\Gamma_{m})\otimes \CF(M_2,\Gamma_{n-1})}=\id|\phi_{n-1}^{\pm},
\]
where $\pm$ is consistent between the expressions.

An entirely analogous argument may be performed for $F_{W(L_2^*)}$, and we obtain
\[
F_{W(L_2^*)}|_{\CF(M_1,\Gamma_{m-1})\otimes \CF(M_2,\Gamma_n)}=\phi_{m-1}^{\mp}|\id\quad \text{and} \quad F_{W(L_2^*)}|_{\CF(M_1,\Gamma_{m})\otimes \CF(M_2,\Gamma_{n-1})}=\id|\phi_{n-1}^{\mp},
\]
where the $\mp$ above are consistent with the $\pm$ in the expression for $F_{W(L_1^*)}$.

The distinction between a positive bypass and a negative bypass is equivalent to an orientation of $K$. After making a choice, we obtain the lemma statement.
\end{proof}

\begin{figure}[ht]
\begingroup%
  \makeatletter%
  \providecommand\color[2][]{%
    \errmessage{(Inkscape) Color is used for the text in Inkscape, but the package 'color.sty' is not loaded}%
    \renewcommand\color[2][]{}%
  }%
  \providecommand\transparent[1]{%
    \errmessage{(Inkscape) Transparency is used (non-zero) for the text in Inkscape, but the package 'transparent.sty' is not loaded}%
    \renewcommand\transparent[1]{}%
  }%
  \providecommand\rotatebox[2]{#2}%
  \newcommand*\fsize{\dimexpr\f@size pt\relax}%
  \newcommand*\lineheight[1]{\fontsize{\fsize}{#1\fsize}\selectfont}%
  \ifx\svgwidth\undefined%
    \setlength{\unitlength}{170.83742169bp}%
    \ifx\svgscale\undefined%
      \relax%
    \else%
      \setlength{\unitlength}{\unitlength * \real{\svgscale}}%
    \fi%
  \else%
    \setlength{\unitlength}{\svgwidth}%
  \fi%
  \global\let\svgwidth\undefined%
  \global\let\svgscale\undefined%
  \makeatother%
  \begin{picture}(1,0.79967671)%
    \lineheight{1}%
    \setlength\tabcolsep{0pt}%
    \put(0,0){\includegraphics[width=\unitlength,page=1]{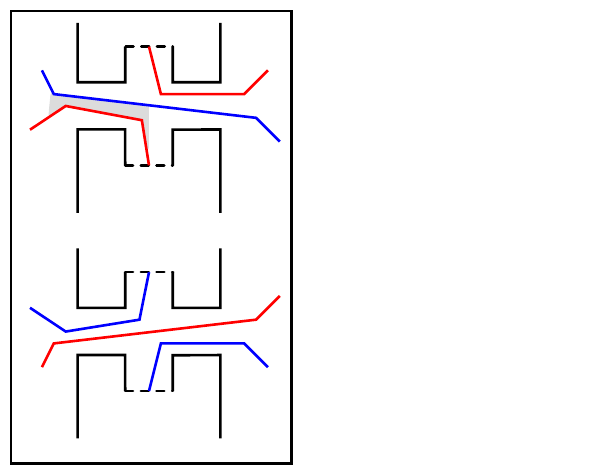}}%
    \put(0.23475382,0.05798639){\makebox(0,0)[t]{\lineheight{1.25}\smash{\begin{tabular}[t]{c}\eqref{case:triangle:left}\end{tabular}}}}%
    \put(0,0){\includegraphics[width=\unitlength,page=2]{fig19.pdf}}%
    \put(0.73240323,0.05798639){\makebox(0,0)[t]{\lineheight{1.25}\smash{\begin{tabular}[t]{c}\eqref{case:triangle:right}\end{tabular}}}}%
    \put(0,0){\includegraphics[width=\unitlength,page=3]{fig19.pdf}}%
  \end{picture}%
\endgroup%

\caption{The two possible configurations of a holomorphic triangle in Lemma~\ref{lem:decomposition-of-maps}.}
\label{fig:19}
\end{figure}

\subsection{Obtaining the derived tensor product}
\label{sec:reorganize-obtain-derived}

In this section, we prove Lemma~\ref{lem:obtain-direct-limit}, and show that the direct limit of the complexes
\begin{equation}
\begin{tikzcd}[labels=description, column sep=3cm, row sep=1.5cm]
C_n\otimes D_{m-1}\oplus C_{n-1}\otimes D_m
	\ar[r, "\id| \phi_m^-\oplus\phi_n^-| \id"]
	\ar[d,"\id| \phi_m^+\oplus \phi_n^+| \id"]
&
C_n\otimes D_m
\\
C_n\otimes D_m
&
\,
\end{tikzcd}
\label{eq:expanded-tensor-product}
\end{equation}
has the same homology as the direct limit of the complexes
\[
\begin{tikzcd}[column sep=3cm]
C_n\otimes D_m\ar[r, "\phi_n^-|\phi_m^+ +\phi_n^+| \phi_m^-"]& C_{n+1}|D_{m+1}.
\end{tikzcd}
\]
The proof follows by viewing the 2-dimensional complex as a mapping cone from the left column to the right column. Lemma~\ref{lem:staircase-lemma}, provides a map from $C_{n-1}\otimes D_{m-1}$ to the left column. This gives us a chain map from
\[
\begin{tikzcd}[column sep=4cm]
C_{n-1}\otimes D_{m-1}\ar[r, "\phi_{n-1}^-| \phi_{m-1}^+ +\phi_{n-1}^+| \phi_{m-1}^-"]& C_{n}|D_{m}
\end{tikzcd}
\]
to the complex in Equation~\eqref{eq:expanded-tensor-product}.
Similarly, there is also a map from the 2-dimensional complex above to
\begin{equation}
\begin{tikzcd}[column sep=3cm]
C_n\otimes D_m\ar[r, "\phi_n^-| \phi_m^+ +\phi_n^+| \phi_m^-"]& C_{n+1}\otimes D_{m+1}.
\end{tikzcd}
\label{eq:derived-tensor}
\end{equation}
It is straightforward to see that these maps induce isomorphisms on direct limits.

Finally, to identify these complexes with the derived tensor product, observe that the limiting groups $\HFK^-(K_1)$ and $\HFK^-(K_2)$ have an Alexander grading which is bounded from above. Furthermore, if $q\in \Z$ is fixed, then for sufficiently large $n\gg 0$, $(C_n)_{>q}\iso \HFK^-(K_1)_{>q}$. In particular, if $q$ is fixed and $n\gg 0$ and $m\gg 0$, then the complex in Equation~\eqref{eq:derived-tensor} is isomorphic to the derived tensor product complex in gradings above $q$. This completes the proof.

\section{Connected sums in the setting of instantons and monopoles}\label{sec:7}
In this section, we adapt our proof of the connected sum formula from Section~\ref{sec:connected-sum-HF} to setting the instanton and monopole Floer homologies. We work exclusively in the setting of instantons for concreteness, but our proofs work also in the setting of Heegaard and monopole Floer homology.

\subsection{Constructing the iterated mapping cone}

We now reformulate the argument from Section~\ref{sec:connected-sum-HF} in a way which avoids the use of Heegaard diagrams, and uses only formal properties about sutured Floer homology.

 In analogy to Equation~\eqref{eq:iterated-mapping-cone}, we have the following

\begin{thm}\label{thm:link-surgery-instanton}
There is a homotopy equivalence
\[
\SHI(Y_{0,0}(L))\simeq
\begin{tikzcd}[labels=description, column sep=1.5cm, row sep=1.5cm]
\SHI(Y_{1,1}(L))
	\ar[r, "F_{W(L_1^{*})}"]
	\ar[d,"F_{W(L_2^{*})}"]
	\ar[dr,dashed]
&
\SHI(Y_{\infty, 1}(L))
	\ar[d,"F_{W(L_2^{*})}"]
\\
\SHI(Y_{1,\infty}(L))
	\ar[r, "F_{W(L_1^{*})}"]
&
 \SHI(Y_{\infty,\infty}(L))
\end{tikzcd}
\]
Here $L_i^{*}$ is a $0$-framed dual of $L_i$. (Here we view $\SHI(Y_{9,0}(L))$ as being a chain complex with vanishing derivative).
\end{thm}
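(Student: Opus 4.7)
The strategy combines the iterated mapping cone (link surgery) construction for ordinary instanton Floer homology of closed three-manifolds, due to Kronheimer--Mrowka \cite{KMUnkotDetector}, Bloom \cite{BloomSpectralSequence}, and Scaduto \cite{ScadutoOddKh}, with the hypercube eigenspace machinery developed in Section~\ref{sec:hypercubes-eigenspaces}. The four sutured manifolds $Y_{i,j}(L)$ share the same sutured boundary with sutures $\Gamma_{m,n}$ and differ only by integer Dehn surgeries on the two components of $L$ in the interior of $Y$, so I fix a single preclosure $(\tilde M,\tilde R_+,\tilde R_-)$ for $(Y,\Gamma_{m,n})$ together with closing data and an arc $\a$ as in Definition~\ref{define: closure}, producing a closed three-manifold $\bar Y$ with a distinguished surface $\bar R$ and containing $L$ disjoint from $\bar R$. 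Integer Dehn surgery on $L\subset \bar Y$ then produces closed manifolds $\bar Y_{i,j}(L)$, each of which is simultaneously a closure of $(Y_{i,j}(L),\Gamma_{m,n})$, all sharing the same surface $\bar R$ and the same arc $\a$.

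Applying the iterated mapping cone hypercube to the framed link $L^{*}=L_1^{*}\sqcup L_2^{*}$ in ordinary instanton Floer homology of $\bar Y_{0,0}(L)$ yields a two-dimensional hypercube of chain complexes
$$\cC=\bigl(\CI_*(\bar Y_{i,j}(L))_{\a},\,D_{\veps,\veps'}\bigr)_{\veps\in\bE_{2}},$$
whose length-one maps are the 2-handle cobordism maps for $L_1^{*}$ and $L_2^{*}$, whose length-two (dashed) map is the homotopy witnessing commutativity of the two length-one compositions, and whose total complex is chain homotopy equivalent to $\CI_*(\bar Y_{0,0}(L))_{\a}$. Since $\bar R$ and the marked point lie in the closure region, disjoint from the attaching regions of the 2-handles, Remark~\ref{rem:localize-mu(R)} lets the operators $\mu(\bar R)$ and $\mu(pt)$ extend canonically across each cobordism. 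They commute on the nose at each vertex and commute with each length-one structure map up to preferred chain homotopies, and higher coherences can be arranged so that $\mu(\bar R)$ and $\mu(pt)$ become homotopy commuting endomorphisms of the hypercube $\cC$ in the sense of Section~\ref{sec:simultaneous-eigenspaces}.

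Invoking Lemmas~\ref{lem:hypercube-eigenspaces} and~\ref{lem:simultaneous-eigenvalues} then produces the simultaneous generalized $(2g(\bar R)-2,2)$-eigenspace hypercube $E^{2}(E^{2g(\bar R)-2}(\cC,\mu(\bar R)),\mu(pt))$, whose vertex complexes have homology $I_*(\bar Y_{i,j}(L)\vert\bar R)_{\a}=\SHI(Y_{i,j}(L))$. Applying Lemma~\ref{lem:eigenspace-functor} to the chain homotopy equivalence from the total complex of $\cC$ to $\CI_*(\bar Y_{0,0}(L))_{\a}$ identifies the total complex of this eigenspace hypercube with a chain complex computing $\SHI(Y_{0,0}(L))$, which is the claimed statement. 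The main obstacle will be the last compatibility: lifting $\mu(\bar R)$ and $\mu(pt)$ from vertex-level operators to coherent hypercube endomorphisms, so that all the higher homotopies required to feed into Lemmas~\ref{lem:hypercube-eigenspaces} and~\ref{lem:simultaneous-eigenvalues} can be chosen simultaneously. This should follow from the localization principle of Remark~\ref{rem:localize-mu(R)} together with standard TQFT-type naturality, since the cobordism data defining the length-one and length-two structure maps of $\cC$ can be arranged disjoint from a tubular neighborhood of $\bar R$ and the marked point, so that all relevant homotopies are supported in a region where $\mu(\bar R)$ and $\mu(pt)$ act trivially on the cobordism maps.
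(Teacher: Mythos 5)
Your proposal is correct and follows essentially the same route as the paper: the iterated mapping cone construction of Scaduto applied to a common closure (possible since $L$ lies in the interior, away from the closure region), extension of $\mu(R)$ and $\mu(pt)$ to hypercube operators via the localization of Remark~\ref{rem:localize-mu(R)} and disjointness from the 2-handle attaching regions, and then the simultaneous generalized eigenspace machinery of Section~\ref{sec:hypercubes-eigenspaces}. The only cosmetic difference is that the paper additionally passes to homology at each vertex using homological perturbation for hypercubes, a step you could append verbatim.
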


We now sketch the main ideas of the proof of Theorem~\ref{thm:link-surgery-instanton}. The first step is to use the iterated mapping cone construction applied to instanton Floer homology of the closures.  This gives an iterated mapping cone formula involving the instanton chain groups $C_*$. The iterated mapping cone construction in the context of instanton homology is described by Scaduto \cite{ScadutoOddKh}*{Section~6}. Next, we observe that the $\mu(R)$ and $\mu(p)$ operations on instanton Floer homology may be intertwined with the iterated mapping cone construction. This is because these operators may be interpreted in terms of ordinary cobordism maps by Remark~\ref{rem:localize-mu(R)}. Since $R$ is disjoint from the 2-handle attaching spheres used to construct the iterated mapping cone, the construction of metrics used to build the iterated mapping cone extends to yield hypercube morphisms $\mu(R)$ and $\mu(p)$ on the iterated mapping cone. Furthermore, the isomorphism from $\SFH(Y_{0,0}(L))$ with the iterated mapping cone intertwines $\mu(R)$ and $\mu(p)$ with these hypercube operators. Next, we take generalized eigenspaces with respect to these hypercube morphisms using the construction from Section~\ref{sec:simultaneous-eigenspaces}. This leaves the hypercube in the statement.

For convenience, we will typically also opt to take homology at each vertex. We can do this by the following argument:  Since each instanton chain chain group is finite dimensional over $\C$, it is homotopy equivalent to its homology. (In fact, it is easy to see that if $C$ is a finite dimensional chain complex over $\C$, then there is a strong deformation retraction of $C$ onto its homology). We may use homological perturbation theory for hypercubes (cf. \cite{HHSZDual}*{Section~2.7}) to find a homotopy equivalent hypercube with homology taken at each vertex.

The hypercube in Theorem~\ref{thm:link-surgery-instanton} may be further simplified, because $Y_{\infty,\infty}(L)=Y$ is not taut, and hence has trivial $\SHI$. See Figure~\ref{fig:10} for the Heegaard diagrams of the manifolds involved.

An important observation is that $Y_{1,1}(L)$, $Y_{1,\infty}(L)$ and $Y_{\infty,1}(L)$ are canonically diffeomorphic to $Y$ as 3-manifolds, but with sutures that differ by a Dehn twist along each component of $L$ with framing $+1$.
This is because performing Dehn surgery on a boundary parallel knot with $+1$ framing relative to the boundary framing corresponds to a boundary Dehn twist parallel to the knot. See Figure~\ref{fig:12} for a schematic.

\begin{figure}[ht]
\begingroup%
  \makeatletter%
  \providecommand\color[2][]{%
    \errmessage{(Inkscape) Color is used for the text in Inkscape, but the package 'color.sty' is not loaded}%
    \renewcommand\color[2][]{}%
  }%
  \providecommand\transparent[1]{%
    \errmessage{(Inkscape) Transparency is used (non-zero) for the text in Inkscape, but the package 'transparent.sty' is not loaded}%
    \renewcommand\transparent[1]{}%
  }%
  \providecommand\rotatebox[2]{#2}%
  \newcommand*\fsize{\dimexpr\f@size pt\relax}%
  \newcommand*\lineheight[1]{\fontsize{\fsize}{#1\fsize}\selectfont}%
  \ifx\svgwidth\undefined%
    \setlength{\unitlength}{197.67885248bp}%
    \ifx\svgscale\undefined%
      \relax%
    \else%
      \setlength{\unitlength}{\unitlength * \real{\svgscale}}%
    \fi%
  \else%
    \setlength{\unitlength}{\svgwidth}%
  \fi%
  \global\let\svgwidth\undefined%
  \global\let\svgscale\undefined%
  \makeatother%
  \begin{picture}(1,1.91964888)%
    \lineheight{1}%
    \setlength\tabcolsep{0pt}%
    \put(0,0){\includegraphics[width=\unitlength,page=1]{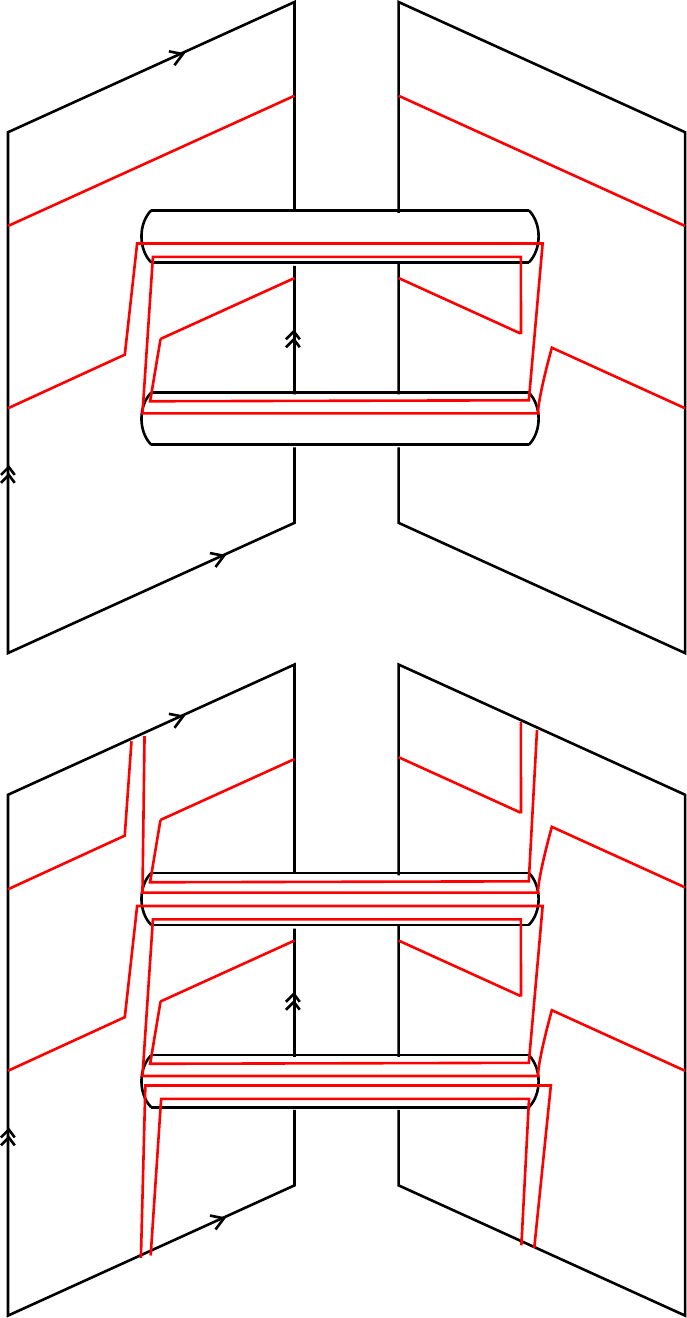}}%
    \put(0.49540884,1.09360873){\makebox(0,0)[t]{\lineheight{1.25}\smash{\begin{tabular}[t]{c}$Y_{1,\infty}(L)$\end{tabular}}}}%
    \put(0.49540884,0.08663942){\makebox(0,0)[t]{\lineheight{1.25}\smash{\begin{tabular}[t]{c}$Y_{1,1}(L)$\end{tabular}}}}%
    \put(0,0){\includegraphics[width=\unitlength,page=2]{fig12.pdf}}%
  \end{picture}%
\endgroup%

\caption{The manifolds $Y_{\infty,1}(L)$ and $Y_{1,1}(L)$. Also shown are the decomposing disks $D_1$ and $D_2$ (in the bottom figure).}
\label{fig:12}
\end{figure}

\subsection{Decomposing disks}

We now compute the groups and maps appearing in the iterated mapping cone of Theorem~\ref{thm:link-surgery-instanton}.  In analogy to Lemma~\ref{lem:reorganize-cone-Heegaard}, we have the following:

\begin{lem}\label{lem:reorganize-cone}
There is a homotopy equivalence of hypercubes between the diagram
\begin{equation}
\begin{tikzcd}[labels=description, column sep=1.5cm, row sep=1.5cm]
\SHI(Y_{1,1}(L))
	\ar[r, "F_{W(L_1^{*})}"]
	\ar[d,"F_{W(L_2^{*})}"]
&
\SHI(Y_{\infty, 1}(L))
\\
\SHI(Y_{1,\infty}(L))
&
\,
\end{tikzcd}
\label{eq:iterated-mapping-cone-1}
\end{equation}
and the diagram
\begin{equation}
\begin{tikzcd}[labels=description, column sep=3cm, row sep=1.5cm]
C_n\otimes D_{m-1}\oplus C_{n-1}\otimes D_m
	\ar[r, "\id| \phi_m^-\oplus\phi_n^-| \id"]
	\ar[d,"\id| \phi_m^+\oplus \phi_n^+| \id"]
&
C_n\otimes D_m
\\
C_n\otimes D_m
&
\,
\end{tikzcd}
\end{equation}
In the above, $C_n$ denotes $\SHI(M_1,\Gamma_n)$ and $D_m$ denotes $\SHI(M_2,\Gamma_m)$. 
\end{lem}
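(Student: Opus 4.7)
My plan is to mirror the proof of Lemma~\ref{lem:reorganize-cone-Heegaard}, replacing every Heegaard-diagrammatic argument with a purely sutured-topological one. The main tools will be: invariance of $\SHI$ under product disk decomposition, the disjoint union formula $\SHI(M_1 \sqcup M_2) \cong \SHI(M_1) \otimes_{\C} \SHI(M_2)$, and the identification of contact $2$-handle maps with $4$-dimensional $2$-handle cobordism maps from Lemma~\ref{lem:ctct-2-handle=triangles}.

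For the vertex identifications, I would first handle $Y_{\infty, 1}(L)$ and $Y_{1, \infty}(L)$. Each of these is diffeomorphic to $Y$ with a boundary Dehn twist applied along one of the $L_i$; after the Dehn twist, the resulting sutured manifold admits a product disk $D$ that separates $M_1$ from $M_2$. Decomposing along $D$ yields the disjoint union $(M_1, \Gamma_n) \sqcup (M_2, \Gamma_m)$, so combining product disk invariance with the disjoint union formula gives $\SHI(Y_{\infty, 1}(L)) \cong \SHI(Y_{1, \infty}(L)) \cong C_n \otimes D_m$. For $Y_{1, 1}(L)$, the two-sided Dehn twist produces two disjoint product disks $D_1, D_2$ as indicated in Figure~\ref{fig:12}; decomposing along $D_1$ yields $(M_1, \Gamma_n) \sqcup (M_2, \Gamma_{m-1})$ and decomposing along $D_2$ yields $(M_1, \Gamma_{n-1}) \sqcup (M_2, \Gamma_m)$. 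I would then establish the direct sum decomposition $\SHI(Y_{1,1}(L)) \cong C_n \otimes D_{m-1} \oplus C_{n-1} \otimes D_m$, with each summand realized by one of the two product disk decompositions.

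For the edge map identifications, I would apply Lemma~\ref{lem:ctct-2-handle=triangles} to rewrite each $F_{W(L_i^*)}$ as $(C_{h^1})^{-1} \circ C_{h^2}$ for the corresponding contact handle pair, where the contact $2$-handle map $C_{h^2}$ is by definition a bypass attachment map $\phi^{\pm}$. Tracing which summand of $\SHI(Y_{1,1}(L))$ the bypass arc sits above would then identify $F_{W(L_1^*)}$ with $\id | \phi_m^- \oplus \phi_n^- | \id$ and $F_{W(L_2^*)}$ with $\id | \phi_m^+ \oplus \phi_n^+ | \id$, after choosing an orientation on $K$ that distinguishes $\phi^+$ from $\phi^-$.

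The hardest step will be the direct sum decomposition of $\SHI(Y_{1,1}(L))$. In the Heegaard setting, this followed immediately from a combinatorial inspection of the intersection points on the Heegaard diagram. In the instanton setting, no such generator-level splitting is directly available. My plan is to lift to a closure of $Y_{1,1}(L)$ in which the two product disks $D_1, D_2$ appear as disjoint separating surfaces so that the closed instanton chain complex splits along this decomposition; then taking the generalized $(2g(R)-2, 2)$-eigenspace of $(\mu(R), \mu(\textrm{pt}))$ via the framework of Section~\ref{sec:hypercubes-eigenspaces} should recover the two summands. Compatibility of this splitting with the edge maps would then follow from naturality of the eigenspace functor (Lemma~\ref{lem:eigenspace-functor}) together with the identification of the edges as bypass maps.
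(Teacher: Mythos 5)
There is a genuine gap, and it is concentrated exactly where you predicted the difficulty would be. The two disks $D_1,D_2$ in $Y_{1,1}(L)$ of Figure~\ref{fig:12} are \emph{not} product disks: each meets the dividing set in four points. This matters for two reasons. First, decomposing along a product disk is an isomorphism on $\SHI$, so your claim that decomposing along $D_1$ gives $(M_1,\Gamma_n)\sqcup(M_2,\Gamma_{m-1})$ while decomposing along $D_2$ gives $(M_1,\Gamma_{n-1})\sqcup(M_2,\Gamma_m)$ is internally inconsistent -- it would force $C_n\otimes D_{m-1}\iso \SHI(Y_{1,1}(L))\iso C_{n-1}\otimes D_m$ rather than produce the direct sum. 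In the paper the splitting comes instead from decomposing along $D_1\sqcup D_2$ as surfaces meeting $\gamma$ in four points, using the decomposition theorems of \cite{GhoshLiDecomposing}*{Corollary~4.3} and \cite{BaldwinLiYeHeegaard}*{Proposition~3.3}: the four choices of orientation of $D_1\sqcup D_2$ give four summands, two of which are non-taut and vanish (Figure~\ref{fig:13}), and Kronheimer--Mrowka's common-closure inclusions \cite{KMSutures}*{Theorem~6.8} realize the two surviving summands. Your fallback plan of splitting a closure by hand and taking eigenspaces is essentially re-proving these decomposition theorems, which is not needed, but it still does not repair the product-disk misstatement.

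Second, and more seriously for the edge maps, $D_1$ and $D_2$ intersect the surgery curves $L_1^{*}$ and $L_2^{*}$, so one cannot decompose first and then ``trace which summand the bypass arc sits above'' -- in the instanton setting there are no generators to trace, and the cobordism maps do not obviously interact with a decomposition along surfaces that the attaching circles pass through. The paper's proof handles this by first attaching four auxiliary contact $1$-handles to form $Y_{1,1}'(L)$ (Figure~\ref{fig:42}), then handlesliding $L_1^{*},L_2^{*}$ across canceling contact $2$-handles to obtain curves $J_1,J_2$ disjoint from $D_1,D_2$, and verifying that the relevant squares commute because the corresponding cobordisms are diffeomorphic; after decomposition, $J_1$ and $J_2$ become the attaching circles of negative and positive bypasses (Figure~\ref{fig:43}). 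Your proposal contains no mechanism playing the role of this contact-handle manipulation, and Lemma~\ref{lem:ctct-2-handle=triangles} (a Heegaard Floer triangle-count statement; in the instanton theory the identity $C_{h_2}=C_{h_1}^{-1}\circ F_W$ is Baldwin--Sivek's definition) does not by itself identify $F_{W(L_i^{*})}$ restricted to each summand with $\phi^{\pm}$. The vertex identifications for $Y_{\infty,1}(L)$ and $Y_{1,\infty}(L)$, and the overall strategy of mirroring Lemma~\ref{lem:reorganize-cone-Heegaard}, are fine.
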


\begin{proof}
As a first step, we observe that the sutured manifolds $Y_{\infty,1}(L)$ and $Y_{1,\infty}$ are obtained by attaching contact 1-handles to $(M_1,\Gamma_n)\sqcup (M_2,\Gamma_m)$. In particular, their homologies are isomorphic to $\SHI(M_1,\Gamma_n)\otimes \SHI(M_2,\Gamma_m)$. We define the the component of the hypercube map in the statement from $\SHI(Y_{1,\infty})$ to $C_n\otimes D_m$ will be the isomorphism map for cutting along a contact 1-handle. The map from $\SHI(Y_{\infty,1})$ is similarly the inverse of a contact 1-handle map.

Defining the morphism from $\SHI(Y_{11}(L))$ to $C_n\otimes D_{m-1}\oplus C_{n-1}\otimes D_m$ is more complicated. In $Y_{1,1}$, there are two disks, which each intersect the dividing set in four points. See Figure~\ref{fig:12}. It is possible to decompose along these two disks, however they intersect $L_1^{*}$ and $L_2^{*}$, which makes analyzing hypercube differentials challenging.

Instead, we argue by first modifying the 3-manifold before decomposing. Firstly, attach four contact 1-handles to $Y_{1,1}(L)$, as in Figure~\ref{fig:42}, and call the resulting manifold $Y_{1,1}'(L)$. Similarly, let $Y_{1,\infty}'(L)$ and $Y_{\infty,1}'(L)$ be obtained by attaching four contact 1-handles to $Y_{1,\infty}(L)$ and $Y_{\infty,1}(L)$, respectively.  For the purposes of placing the contact 1-handles, it is convenient to think of $Y_{\infty,1}'(L)$ as $Y_{1,1}'(L)(L_1^{*})$. 

\begin{figure}[ht]
\begingroup%
  \makeatletter%
  \providecommand\color[2][]{%
    \errmessage{(Inkscape) Color is used for the text in Inkscape, but the package 'color.sty' is not loaded}%
    \renewcommand\color[2][]{}%
  }%
  \providecommand\transparent[1]{%
    \errmessage{(Inkscape) Transparency is used (non-zero) for the text in Inkscape, but the package 'transparent.sty' is not loaded}%
    \renewcommand\transparent[1]{}%
  }%
  \providecommand\rotatebox[2]{#2}%
  \newcommand*\fsize{\dimexpr\f@size pt\relax}%
  \newcommand*\lineheight[1]{\fontsize{\fsize}{#1\fsize}\selectfont}%
  \ifx\svgwidth\undefined%
    \setlength{\unitlength}{396.75117241bp}%
    \ifx\svgscale\undefined%
      \relax%
    \else%
      \setlength{\unitlength}{\unitlength * \real{\svgscale}}%
    \fi%
  \else%
    \setlength{\unitlength}{\svgwidth}%
  \fi%
  \global\let\svgwidth\undefined%
  \global\let\svgscale\undefined%
  \makeatother%
  \begin{picture}(1,0.65560482)%
    \lineheight{1}%
    \setlength\tabcolsep{0pt}%
    \put(0,0){\includegraphics[width=\unitlength,page=1]{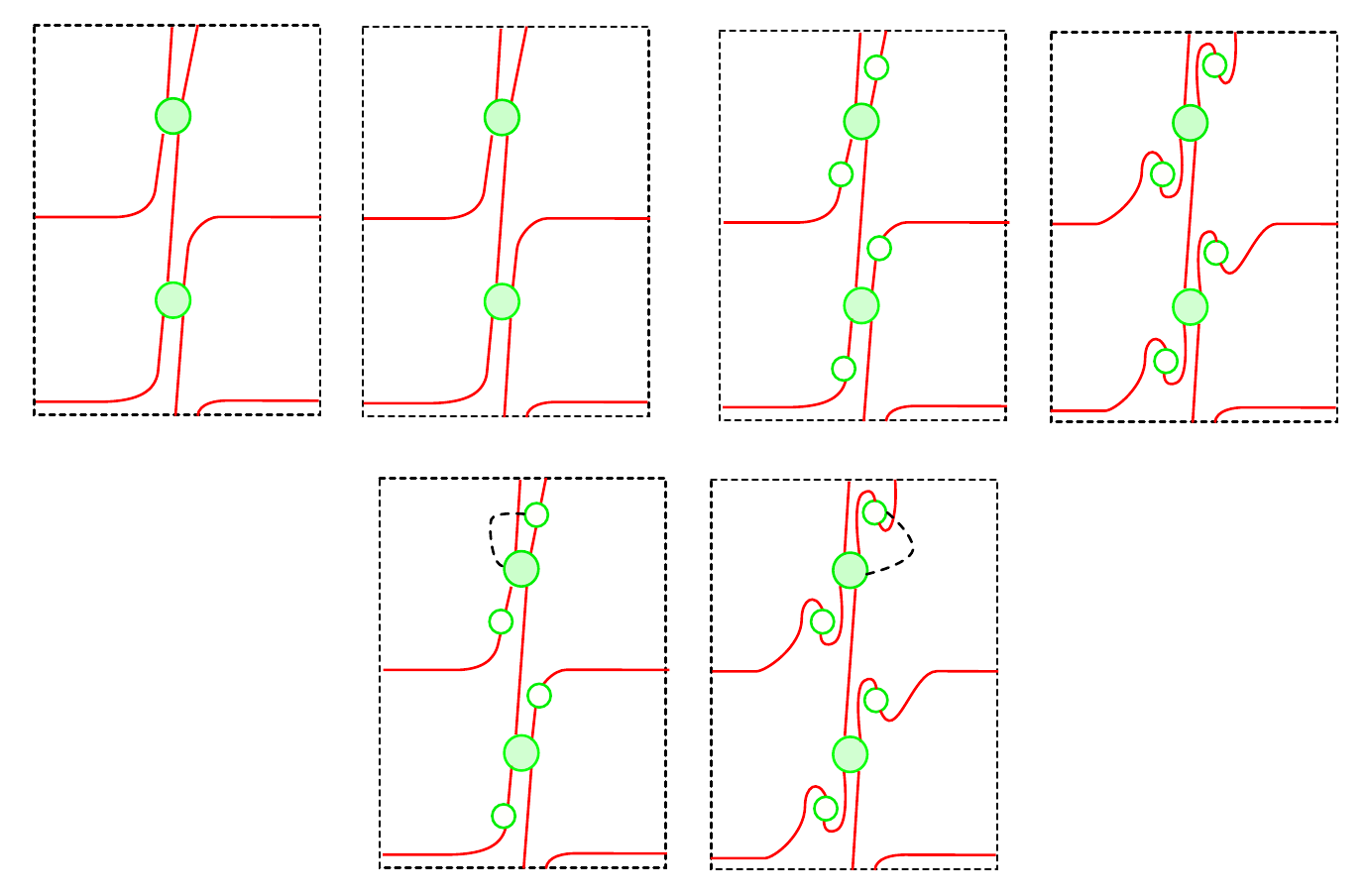}}%
    \put(0.6745797,0.25444289){\makebox(0,0)[lt]{\lineheight{1.25}\smash{\begin{tabular}[t]{l}$c_2^1$\end{tabular}}}}%
    \put(0.4181192,0.21487415){\makebox(0,0)[lt]{\lineheight{1.25}\smash{\begin{tabular}[t]{l}$c_2^2$\end{tabular}}}}%
    \put(0.35351238,0.26166041){\makebox(0,0)[rt]{\lineheight{1.25}\smash{\begin{tabular}[t]{r}$c_2^1$\end{tabular}}}}%
    \put(0.57047467,0.21487421){\makebox(0,0)[rt]{\lineheight{1.25}\smash{\begin{tabular}[t]{r}$c_2^2$\end{tabular}}}}%
    \put(0,0){\includegraphics[width=\unitlength,page=2]{fig42.pdf}}%
  \end{picture}%
\endgroup%

\caption{Top left: $Y_{1,1}(L)$. Top right: $Y_{1,1}'(L)$. Bottom: A contact 2-handle which cancels one of the contact 1-handles. The handles are glued via mirror reflection across the vertical axis.}
\label{fig:42}
\end{figure}

Since contact 1-handles induce isomorphisms on Floer homology which commute with the cobordism maps, the chain complex in Equation~\eqref{eq:iterated-mapping-cone-1} is chain isomorphic to the following chain complex
\[
\begin{tikzcd}[labels=description, column sep=1.5cm, row sep=1.5cm]
\SHI(Y_{1,1}'(L))
	\ar[r, "F_{W(L_1^{*})}"]
	\ar[d,"F_{W(L_2^{*})}"]
&
\SHI(Y_{\infty, 1}'(L))
\\
\SHI(Y_{1,\infty}'(L))
&
\,
\end{tikzcd}
\]
Next, we note that we may cancel contact 1-handles with contact 2-handles. The contact 2-handle map is defined as the composition of a 4-dimensional 2-handle map, followed by the inverse of a contact 1-handle map. Since the map for a contact 2-handle which cancels a contact 1-handle is an isomorphism, we may handleslide $L$ across the contact 2-handle maps without changing the homotopy type of the hypercube. In more detail, $c_2^1,c_2^2,c_2^3,c_2^4$ be the contact 2-handles which cancel the contact 1-handles from earlier. Let $F_{C_2}$ be the cobordism map for surgering on all four 2-handles, composed with the inverse of 4 contact 1-handle maps. Hence, the following diagram commutes
\[
\begin{tikzcd}\SHI(Y'_{1,1})\ar[r, "F_{C_2}"] \ar[d, "F_{W(J_2)}"]&\SHI(Y_{1,1}) \ar[d, "F_{W(L_2^{*})}"]\\
\SHI(Y_{1,\infty}')\ar[r, "F_{C_2}"]& \SHI(Y_{1,\infty})
\end{tikzcd}.
\]
In the above, $J_2$ is obtained by handlesliding $L_2^{*}$ across two contact 2-handles, as in Figure~\ref{fig:43}. The above diagram commutes because the compositions $F_{W(L_2^{*})}\circ F_{C_2}$ and $F_{C_2}\circ F_{W(J_2)}$ represent topologically equivalent cobordisms. A similar diagram holds for $W(L_2^{*})$ and a similar map $F_{W(J_1)}$.

Inside of $Y_{1,1}'$, there are two disks $D_1$ and $D_2$, which are disjoint from $J_1$ and $J_2$, and furthermore each intersect the dividing set in four points. We may decompose along these two disks using \cite{GhoshLiDecomposing}*{Corollary~4.3}, \cite{BaldwinLiYeHeegaard}*{Proposition~3.3}. When we do this, the sutured Floer homology of $Y_{1,1}'$ decomposes into four summands, corresponding to the four choices of orientation on $D_1\sqcup D_2$. For two orientations, the resulting manifold is not taut, and hence the sutured Floer homology vanishes. It is clear that after deleting contact 1 handles, the remaining two sutured manifolds are $(M_1,\Gamma_{n-1})\sqcup (M_2,\Gamma_{m})$ and $(M_1,\Gamma_n)\sqcup (M_2,\Gamma_{m-1})$.  See Figure~\ref{fig:13}.
\begin{figure}[ht]
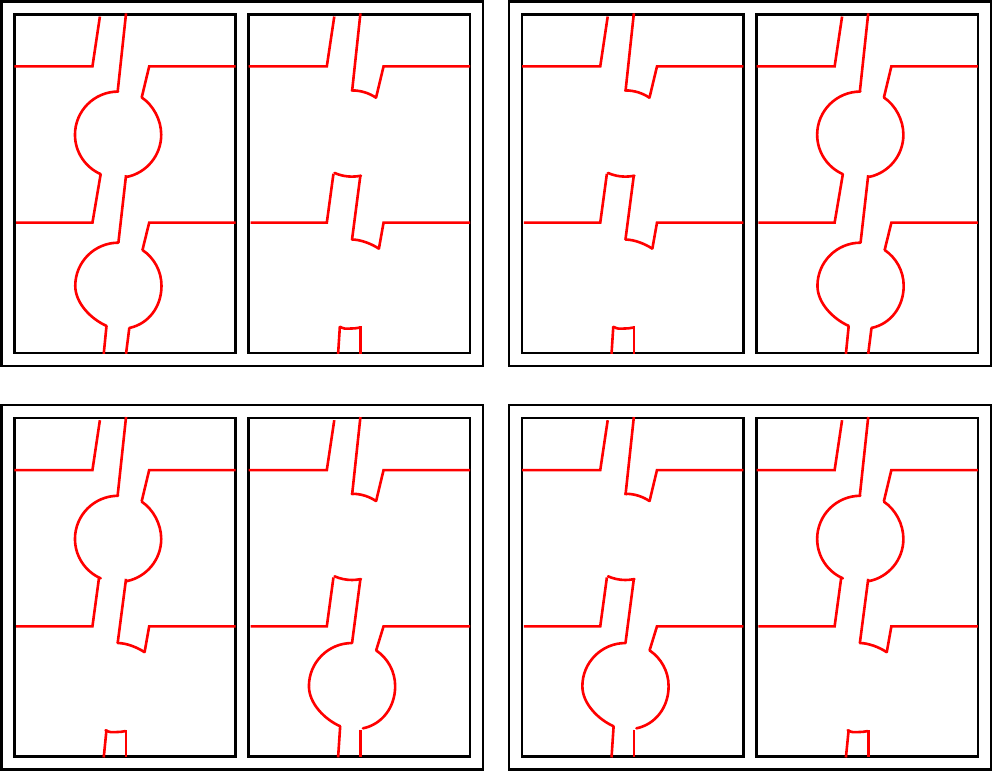
\caption{The possible dividing sets  obtained by decomposing $Y_{11}(L)$ on $D_1$ and $D_2$. The bottom two dividing sets contain null-homotopic closed curves and hence are not taut.}
\label{fig:13}
\end{figure}

In fact, Kronheimer and Mrowka constructed an inclusion map \cite[Theorem~6.8]{KMSutures}
\[
C_n\otimes D_{m-1}\hookrightarrow \SHI(Y_{1,1}'(L)).
\]
Their work shows that the sutured manifolds $(M_1,\Gamma_n)\sqcup (M_2,\Gamma_{m-1})$ and $\SHI(Y_{1,1}'(L))$ have a common closure, and the inclusion is a restriction of $\Spin^c$ structures. We claim that the following diagrams commute:
\[
\begin{tikzcd}[labels=description] C_n\otimes D_{m-1}\ar[r,hookrightarrow]\ar[dr, "\id| \phi_m^+"] & \SHI(Y_{1,1}')\ar[d, "F_{W(J_2)}"]\\
& \SHI(Y_{\infty,1})\iso C_n\otimes D_m
\end{tikzcd}
\quad
\begin{tikzcd}[labels=description] C_{n-1}\otimes D_{m}\ar[r,hookrightarrow]\ar[dr, "\phi_n^+| \id"] & \SHI(Y_{1,1}')\ar[d, "F_{W(J_2)}"]\\
& \SHI(Y_{\infty,1})\iso C_n\otimes D_m
\end{tikzcd},
\]
and that analogous diagrams commute with $F_{W(J_1)}$ (but with negative bypass maps instead of positive ones). Commutativity of the above diagrams is proven by the manipulation shown in Figure~\ref{fig:43}. Therein, we show the knot components $J_1$ and $J_2$ after decomposing along $D_1$ and $D_2$. We handleslide the contact 1-handles as indicated with the arrows.  After we do this, $J_1$ becomes the attaching cycle of the 2-handle for a negative  bypass while $J_2$ becomes the attaching cycle for a positive bypass.
\end{proof}

\begin{figure}[ht]
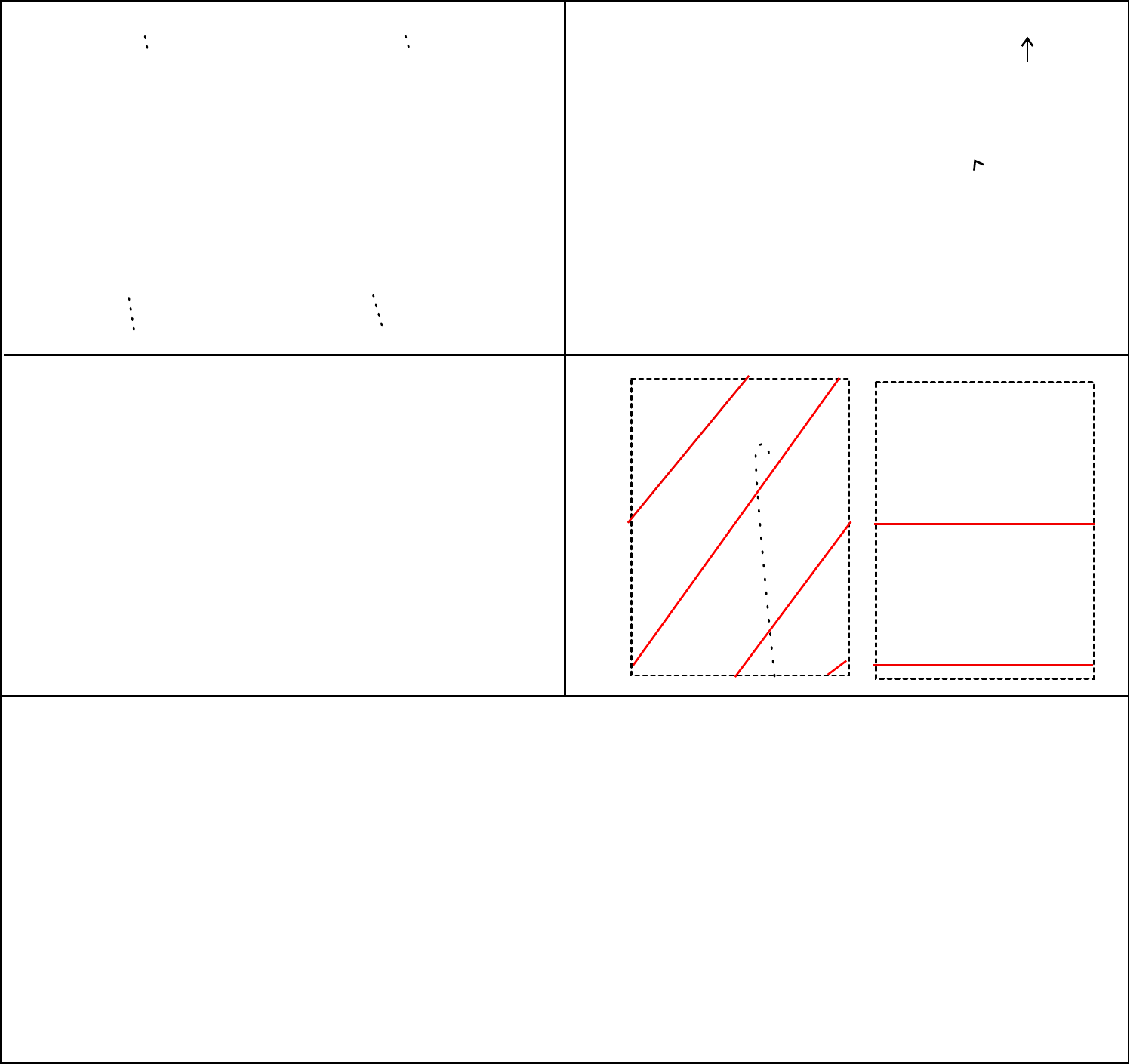
\caption{Circles denote 1-handles. Open circles are always glued to the mirrow circle. A: The manifold $Y_{1,1}'(L)$ and the knots $J_1$ and $J_2$. B: The manifold obtained by decomposing $Y_{1,1}'(L)$ along $D_1$ and $D_2$. The arrows indicate directions to handleslide 1-handles. C: The result of handlesliding 1-handles. D: The presentation obtained by isotopy and deletion of contact 1-handles. E: Changing from surface framing $-1$ to surface framing $0$ by performing a Dehn twist along the meridians of the contact 1-handles. }
\label{fig:43}
\end{figure}

\begin{rem}Identifying a particular 2-handle map with the positive or negative bypass requires a choice of orientation of the knot $K$. Note that the complex in Lemma~\ref{lem:reorganize-cone} is independent of a choice of orientation of $K_i$, since the positive and negative bypasses appear in a symmetric manner.
\end{rem}

\section{Gradings}\label{sec:8}
The main theorem of this section is the following:
\begin{thm}
\label{thm:main-gradings}
The (instanton, monopole, Heegaard Floer) limit Floer homology $\HFK^-(K_1\#K_2)$ coincides with the derived tensor product
\[
\HFK^-(K_1)\tildeotimes_{\bF[U]} \HFK^-(K_2)
\]
as a graded group.
\end{thm}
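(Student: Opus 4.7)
The plan is to upgrade the chain-level arguments of Sections~\ref{sec:connected-sum-HF} and~\ref{sec:7} by tracking the Alexander grading through every step, using the eigenspace formalism of Section~\ref{sec:hypercubes-eigenspaces} to handle the instanton case. In each of the three theories, the Alexander grading on $C_n=\SFH(M_1,\Gamma_n)$ and $D_m=\SFH(M_2,\Gamma_m)$ is induced by a Seifert surface (via $\Spin^c$ decomposition in Heegaard Floer; via generalized $\mu(\Sigma)$-eigenspaces in the instanton/monopole case, with an $n$-dependent normalization so that the bypass maps behave uniformly). With the standard conventions, $\phi^+$ preserves the Alexander grading and $\phi^-$ shifts by $-1$, so that $U$ acts with Alexander degree $-1$ on the limit.

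The first step is to equip the iterated mapping cone of Theorem~\ref{thm:link-surgery-instanton} with an Alexander grading. In the Heegaard setting this is immediate, since the cobordism $W(L_i^*)$ is along a meridian-parallel knot so the first Chern class computation forces the map $F_{W(L_i^*)}$ to preserve the $\Spin^c$-grading refining the Alexander grading. In the instanton setting, the relevant Alexander grading is defined by a generalized eigenspace of $\mu(\Sigma)$ for a Seifert-type surface $\Sigma \subset Y_{0,0}(L)$ that caps off $K_1\#K_2$. The operator $\mu(\Sigma)$ commutes up to homotopy with the 2-handle cobordism maps, so by Remark~\ref{rem:localize-mu(R)} it extends to a hypercube operator on the iterated mapping cone. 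Applying the eigenspace hypercube construction (Lemmas~\ref{lem:hypercube-eigenspaces} and~\ref{lem:simultaneous-eigenvalues}) produces a hypercube of graded vector spaces whose vertices are the Alexander-graded pieces of the $\SHI$ groups appearing in Theorem~\ref{thm:link-surgery-instanton}. Along the way, Proposition~\ref{prop:grading-shift} is used to match the generalized eigenspace for $\mu(\Sigma)$ in $Y_{\infty,1}(L)$ (where the Seifert surface acquires an extra boundary component from the surgery) with the product Alexander grading on $C_n\otimes D_m$.

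The second step is to verify that the hypercube isomorphism of Lemma~\ref{lem:reorganize-cone} is grading-preserving. The disk decomposition along $D_1, D_2$ in $Y_{1,1}'(L)$ respects the restriction of the Alexander grading on each summand, and the identification of $F_{W(J_1)}$ and $F_{W(J_2)}$ with $\phi^\pm\!\otimes\!\id$ and $\id\!\otimes\!\phi^\pm$ preserves the grading by the normalization chosen above. Finally, in Lemma~\ref{lem:obtain-direct-limit}, the two collapsing maps are grading-preserving with respect to the total Alexander grading, and the resulting mapping cone is exactly the chain complex computing $\KHI^-(K_1)\tildeotimes_{\C[U]}\KHI^-(K_2)$, where the tensor product carries the sum-of-Alexander-gradings and the $U_1+U_2$ differential has degree $-1$, accounting for the homological shift $[1]$.

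The main obstacle is the eigenspace bookkeeping in the instanton setting: the Alexander grading on each vertex of the iterated mapping cone is defined as a $\mu(\Sigma)$-eigenspace, but the Seifert surfaces used to define these eigenspaces differ (a Seifert surface for $K_1\#K_2$ at $Y_{0,0}(L)$ versus disjoint Seifert surfaces for $K_1$ and $K_2$ at $Y_{\infty,1}(L)$ and $Y_{1,\infty}(L)$, and a hybrid at $Y_{1,1}(L)$ reflecting the direct sum in Lemma~\ref{lem:reorganize-cone}). Reconciling these requires applying Proposition~\ref{prop:grading-shift} to the operators $\mu(\Sigma)$ and $\mu(\Sigma')$ for the different surfaces, using the fact that they differ by $\alpha\cdot\id$ on homology by a standard computation with the localized $\mu$-operator. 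Once this bookkeeping is in place, the graded refinement of Theorem~\ref{thm: main} follows from running the proof in the category of graded complexes.
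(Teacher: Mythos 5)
Your overall architecture matches the paper's: grade the iterated mapping cone, show that the quasi-isomorphism onto the reorganized cube (the map the paper calls $X_{n,m}=I_{n,m}\circ F_{n,m}$) is grading preserving, and then pass to the derived tensor product by the truncation argument of Lemma~\ref{lem:staircase-lemma}. The easy half of your plan is fine: the cobordism maps in the cube are $2$-handle attachments along curves disjoint from the grading surface, so grading preservation of $F_{n,m}$ goes through essentially as you say (the paper argues via survival of the grading surface in the cobordism rather than by carrying $\mu(\Sigma)$ as a hypercube operator, but these are close in spirit).

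The genuine gap is in your ``Step 2'' and in the paragraph where you propose to reconcile the gradings at different vertices. You assert that the disk decomposition along $D_1,D_2$ and the identification of $F_{W(J_i)}$ with bypass maps ``preserve the grading by the normalization chosen above,'' and that the different Seifert-type surfaces give operators differing by $\alpha\cdot\id$ on homology, so that Proposition~\ref{prop:grading-shift} applies. Neither claim is justified, and the second is not true as stated: the grading at $Y_{0,0}(L)$ (or $Y_{1,1}'(L)$) is defined by a connected Seifert surface $S_{n+m}^{\tau(n+m)}$ for $K_1\# K_2$ meeting the sutures in $2(n+m+\tau)$ points, while the gradings on $C_n\otimes D_m$, $C_{n-1}\otimes D_m$, $C_n\otimes D_{m-1}$ use split surfaces $S_n^{\tau}\sqcup S_m^{\tau}$ with different boundary intersection numbers and different admissibility stabilizations; these are not two commuting operators on one complex differing by a scalar, so Proposition~\ref{prop:grading-shift} does not close the gap. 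The comparison is exactly the content of the paper's Proposition~\ref{prop: I_{m, n}}: one must isotope the grading surface, adjoin a subsurface $F\subset\partial M$ and invoke Theorem~\ref{thm: Generalized shift} (with the shift $\chi(F\cap R_-)-\chi(F\cap R_+)$), and count positive/negative stabilizations via Theorem~\ref{thm: shift stabilization}, with parity bookkeeping through $\tau$. The resulting raw shifts are nonzero and differ among the components $i_{1,1}$, $i_{\infty,1}$, $i_{1,\infty}$, and only after combining them with the direct-limit shifts $\sigma$ (using identities such as $\sigma(n+m)=\sigma(n)+\sigma(m)-1$ for $n,m$ odd) does one obtain the homological shifts $[-1]$ that identify the homology of the cube with the derived tensor product as a graded group. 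Without this computation your argument cannot produce the correct grading shifts, so the proposal as written does not establish the theorem.
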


In the above, if $M$ and $N$ are Alexander graded modules over $\bF[U]$, we equip the derived tensor product $M\tildeotimes N$ with the Alexander grading
\[
\Cone(U\otimes \id-\id\otimes U'\colon M\otimes N [-1] \to M\otimes N).
\]

The main technical step in our proof of the above theorem is the following result:
\begin{prop}\label{thm:gradings}
With respect to the gradings below and the following shift from Equation \eqref{Eq: shift limit}, the quasi-isomorphism
\begin{equation}
X_{n, m}=\begin{tikzcd}
\SHI \left(Y_{0, 0}, \Gamma_{n+m}, S_{n+m}^{\tau({n+m})}\right)
\arrow[drr, dashed ]
\arrow[ddr, dashed]
\arrow[dr] & & \\
& (C_n \otimes D_{m-1} \oplus C_{n-1} \otimes D_m) [-1]  \arrow[r] \arrow[d]
& C_n \otimes D_m  \\
& C_n \otimes D_m [-1]
\end{tikzcd}
\label{eq:map-X-n-m}
\end{equation}
from the iterated mapping cone construction is grading preserving. In the above, $\tau\colon \Z\to \Z$ is the function $\tau(x)=0$ if $x$ is odd, and $\tau(x)=1$ is $x$ is even.
\end{prop}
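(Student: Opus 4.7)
The plan is to make all three vertices of the hypercube carry \emph{both} a sutured manifold and a distinguished grading surface, and then verify that every morphism in the quasi-isomorphism intertwines the corresponding $\mu(R)$-operators. Recall that the Alexander grading on $\SHI(M,\Gamma)$ is defined via the generalized $\mu(R)$-eigenspace decomposition for a suitable capped-off grading surface in a closure of $(M,\Gamma)$. Thus the statement that $X_{n,m}$ is grading-preserving reduces to showing that each morphism appearing commutes with the $\mu(R)$-action, after identifying the relevant grading surfaces at each vertex.

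First, I would pin down an explicit grading surface on each of $Y_{0,0}(L)$, $Y_{1,1}(L)$, $Y_{\infty,1}(L)$ and $Y_{1,\infty}(L)$. The surface $S_{n+m}^{\tau(n+m)}$ in $(Y_{0,0}(L),\Gamma_{n+m})$ is built from a Seifert surface for $K_1\#K_2$ of slope $n+m$, capped off across the contact 2-handles used to identify $Y_{0,0}(L)$ with the exterior of $K_1\# K_2$ (cf.\ Lemma~\ref{lem:ctct-2-handle=triangles}). On $Y_{\infty,1}(L)$ and $Y_{1,\infty}(L)$, which differ from $(M_1,\Gamma_n)\sqcup (M_2,\Gamma_m)$ only by two contact 1-handles, the grading surface splits as a disjoint union of Seifert surfaces for $K_1$ and $K_2$. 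On $Y_{1,1}(L)$, after the disk decomposition used in the proof of Lemma~\ref{lem:reorganize-cone} along $D_1$ and $D_2$, the grading surface is supported in exactly one of the two summands, producing the split grading on $C_n\otimes D_{m-1}\oplus C_{n-1}\otimes D_m$ with the stated homological shift $[-1]$ on one summand.

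Second, I would verify grading preservation face-by-face. The length-one edge maps $F_{W(L_i^*)}$ are 4-dimensional 2-handle maps along curves $L_i^*$ that may be isotoped into a neighborhood of the sutures and hence made disjoint from the chosen grading surfaces; by the localization principle of Remark~\ref{rem:localize-mu(R)}, such cobordism maps commute with the $\mu(R)$-operator on both sides and therefore preserve the Alexander grading. The contact 1-handle and 2-handle maps appearing in the vertex identifications with the tensor products $C_n\otimes D_m$ are supported near the sutures away from $R$, so they too commute with $\mu(R)$; the K\"unneth-type identification on disjoint unions is grading-preserving by additivity of $\mu(R_1\sqcup R_2)=\mu(R_1)+\mu(R_2)$. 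Once each vertex and edge has been shown to respect the grading, the dashed length-two component of $X_{n,m}$ inherits grading preservation automatically from Lemma~\ref{lem:hypercube-eigenspaces} applied to the $\mu(R)$-hypercube operator, since the splitting maps $\Phi$ in Section~\ref{sec:hypercubes-eigenspaces} can be chosen within fixed $\mu(R)$-eigenspaces.

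The main obstacle will be the bookkeeping of the parity correction $\tau(n+m)$ and the degree shift $[-1]$ on the summand $(C_n\otimes D_{m-1}\oplus C_{n-1}\otimes D_m)[-1]$. These arise by comparing the Euler characteristic of the capped-off Seifert surface $S_{n+m}^{\tau(n+m)}$ on $Y_{0,0}(L)$ with the sum of Euler characteristics of the grading surfaces on the decomposed sutured pieces. Attaching the 2-handles $L_i^*$ and performing the disk decomposition along $D_1,D_2$ each affect the boundary behavior of the Seifert surface near the connect-sum band, and the parity of $n+m$ governs how many extra boundary arcs need to be capped off. Tabulating these contributions carefully and matching them to the algebraic shift is the central combinatorial step; once it is performed, the eigenvalue of $\mu(R)$ on each vertex matches, and the grading-preservation of $X_{n,m}$ follows.
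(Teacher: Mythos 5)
Your reduction of grading preservation to disjointness of the $2$-handle attaching curves from the grading surface is essentially how the paper handles the first half of the map: $X_{n,m}$ factors as $I_{n,m}\circ F_{n,m}$, and for $F_{n,m}$ (the iterated mapping cone equivalence) the paper argues exactly as you do, using that all the cobordisms involved, including the index $-1$ counts over families of metrics in the dashed components, are attached along meridians of $L_1,L_2$ disjoint from $S_{n+m}^{\tau(n+m)}$ (Proposition~\ref{prop: F_{m, n}}, via Lemma~\ref{lem: topology of cobordism}). The gap is in the second half, the map $I_{n,m}$ of Lemma~\ref{lem:reorganize-cone}, which is where all of the shifts in the statement actually come from. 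Two concrete problems: (i) your claim that after decomposing $Y_{1,1}(L)$ along $D_1,D_2$ the grading surface ``is supported in exactly one of the two summands'' is false --- the Seifert surface for $K_1\# K_2$ is a boundary connected sum, so its image after the decomposition splits into pieces on \emph{both} factors of each summand, and these pieces are not the standard surfaces $S_{n-1}^{\tau}$, $S_m^{\tau}$: they acquire extra positive and negative stabilizations from the boundary twisting introduced by the decomposition. (ii) The shifts $[-1]$ are Alexander-grading shifts forced by this discrepancy (together with the normalization $\sigma$ of Equation~\eqref{Eq: shift limit}); they are not homological shifts and they cannot be read off from which summand ``carries'' the surface, nor from a bare Euler characteristic comparison.

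What you defer as ``the central combinatorial step'' is in fact the entire content of the proposition. Carrying it out requires: modifying $S^{+}_{n+m}$ by a subsurface of $\partial Y'_{1,1}(L)$ in the tube region and invoking Theorem~\ref{thm: Generalized shift} to see this costs nothing; tracking the induced surfaces through the contact-handle manipulations and the decomposition along $D_1,D_2$; counting stabilizations and converting them to grading shifts via Theorem~\ref{thm: shift stabilization}; and finally combining with the direct-limit shifts using identities such as $\sigma(n+m)=\sigma(n)+\sigma(m)-1$ for $n,m$ odd, which is where the $\tau$-parity enters. Note also that Theorem~\ref{thm: Generalized shift} applies only to surfaces in the same relative homology class in a \emph{fixed} sutured manifold whose boundaries differ by a subsurface of $\partial M$, so it cannot by itself compare surfaces before and after the disk decomposition; the stabilization analysis is unavoidable. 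Finally, be careful with your framing in terms of ``commuting with $\mu(R)$'': the Alexander grading is cut out by the operator of the capped-off \emph{grading} surface in a closure (not the closure surface $R$ defining $\SHI$), these operators only commute with the hypercube maps up to homotopy, and so the splitting maps of Lemma~\ref{lem:hypercube-eigenspaces} do not ``automatically'' hand you grading preservation of the length-two component; in the paper that component is controlled geometrically, by disjointness from the grading surface, not algebraically.
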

In the above, if $M$ is a vector space, then $M[n]$ denotes the graded vector space $M\otimes \bF_{(n)}$, where $\bF_{(n)}\iso \bF$, concentrated in grading $n$.

We now describe a grading on $\SHI(Y_{0,0}(L),\Gamma_{n+m})$ which is compatible with the isomorphisms in Lemma~\ref{lem:reorganize-cone}. Recall that 
\[
C_n:=\SHI (Y\setminus \nu(K_1), \Gamma_{n})\quad \text{and} \quad D_m:=\SHI (Y\setminus \nu(K_2), \Gamma_{m}).
\]
 Following \cite{LiLimits}, each of $\SHI (Y_{0, 0} (L), \Gamma_{n+m})$, $C_n$, $C_{n-1}$, $D_n$ and $D_{n-1}$ has a natural Alexander grading which is compatible with the direct limit construction, in the sense that the positive bypass maps are grading preserving, and the negative bypass maps shift grading by $-1$. In particular, $\varinjlim C_n \iso \HFK^-(Y, K)$ as a graded $\bF[U]$-module, and similarly for $D_n$ and $\SHI(Y_{0,0}(L),\Gamma_{n+m})$. 
 
 We recall more generally, Li's construction gives an Alexander grading for any choice of Seifert surface.
 If $n\ge 0$, we write $S_n \subset Y \setminus \nu(K)$ for a Seifert surface which intersects $\Gamma_n$ in $2n$ points, each positively. If $\tau\in \Z$, we write $S_n^\tau$ for the surface $S_n$, which is stabilized $\tau$ times algebraically. See Subsection~\ref{sub: grading} for more background.

\subsection{Background on gradings}\label{sub: grading}

Baldwin and Sivek \cite{baldwin2021khovanov}*{Section~3} defined a grading on sutured monopole and instanton homology associated to a properly embedded surface $S\subset M$, such that $\d S$ is connected and intersects the sutures in two points. Their construction was generalized by Li \cite{LiLimits}*{Section~3} to the case where $\d  S$ is connected and intersects the sutures in $2n$ points. Kavi \cite{kavi2019cutting}
generalized the construction further to the case where $\d S$ disconnected. In this section, we review some properties of the grading.

\begin{figure}[ht]
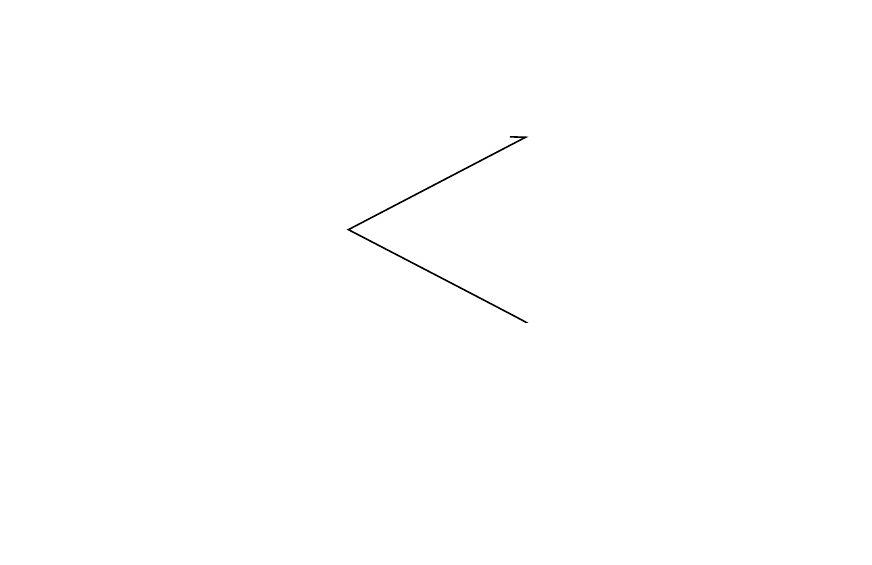
\caption{The positive and negative stabilizations of $S$. Here, $\a$ and $\b$ form the boundary of the shaded bigon.}
\label{fig:22}
\end{figure}

\begin{define}

 Suppose $(M, \gamma)$ is a balanced sutured manifold, and $S$ is an oriented and
properly embedded surface. A \emph{stabilization} of $S$ is another surface $S'$, obtained by a performing an isotopy to $S$ to create a new pair of intersection points, so that
\[
 \partial S' \cap \gamma = ( \partial S \cap \gamma) \cup \{p_+, p_{-}\}.
 \]
We require that there are arcs $\alpha \subset \partial S'$ and $\beta\subset \gamma$, which are oriented consistently with respect to $\partial S'$ and $\gamma$, respectively, so that the following hold:
\begin{enumerate}
\item We have $\partial \alpha= \partial \beta= \{p_+, p_{-}\}$.
\item The curves $\alpha$ and $\beta$ cobound a disk $D$ so that ${\rm int}(D) \cap (\gamma \cup \partial S') = \emptyset$.
\end{enumerate}
   
 The stabilization is called {\it negative} if $D$ can be oriented so that
 $\partial D= \alpha \cup \beta$ as oriented curves. It is called {\it positive} if $\partial D= (-\alpha) \cup \beta$.
\end{define}

Denote by $S^{\pm k}$ the result of performing $k$ many positive or negative stabilizations of $S$.
Sometimes we will write $S^-$ for $S^{-1}$ and $S^+$ for $S^{+1}$. 

\begin{lem}[\cite{LiLimits}*{Lemma~4.2}]\label{lem: decomposition under stabilization}
 Suppose $(M, \gamma)$ is a balanced sutured manifold, and $S$ is a properly embedded and oriented surface. Then the following hold:
 \begin{enumerate}
 \item If we decompose $(M, \gamma)$ along $S$ or $S^+$, then the resulting two balanced sutured manifolds are diffeomorphic.
\item If we decompose $(M, \gamma)$ along $S^-$, then the resulting balanced sutured manifold $(M', \gamma')$ is not taut, because $R_{+}(\gamma')$ and $R_{-}(\gamma')$ are both compressible.
\end{enumerate}
\end{lem}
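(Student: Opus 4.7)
The plan is to argue locally: outside a small regular neighborhood $N(D)$ of the bigon $D$, the surfaces $S$ and $S^{\pm}$ coincide, so the corresponding sutured decompositions agree there. Hence the entire content of the lemma reduces to understanding how the suture structure changes inside $N(D)$, which I would analyze via a standard local model.

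For the local model, take a 3-ball $B \cong D^2 \times [-1,1]$ with $\gamma\cap B$ a single arc (say the equator), $R_+(\gamma)\cap B$ the upper half-annulus and $R_-(\gamma)\cap B$ the lower half-annulus, and with $S \cap B$ a properly embedded rectangle whose boundary crosses $\gamma\cap B$ transversely in a controlled way. A stabilization is obtained by isotoping $\partial S$ across $\gamma$ to create the two new intersection points $p_\pm$ bounding the bigon $D$. The sign of the stabilization corresponds to the orientation of the ``finger'' pushed across $\gamma$: a positive stabilization pushes through $R_+$ in a way compatible with the orientation of $\partial S$ as a piece of $\d R_+$, while a negative stabilization pushes through in the opposite manner.

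For part (1), after performing the sutured decomposition along $S^+$ inside $N(D)$ and applying Gabai's rules for assigning the new sutures (the sides of $S^+$ meeting $R_+(\gamma)$ get absorbed into $R_+(\gamma')$, and likewise for $R_-$), the local picture is a product region $D^2 \times I$ with an $I$-invariant dividing curve; gluing this product back onto the complement of $N(D)$ in the decomposition of $(M,\gamma)\rightsquigarrow S^+$ yields exactly $(M,\gamma)\rightsquigarrow S$. The positive-orientation hypothesis $\d D = (-\a)\cup \b$ is precisely what makes the new pair of intersections cancel in the suture update.

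For part (2), doing the same local analysis but with the negative stabilization, one finds that the bigon $D$ survives inside the decomposed manifold $(M',\g')$, but now because $\d D = \a\cup \b$ as oriented curves, the arc $\a\subset \d S^-$ lies on one of $R_{\pm}(\g')$ after decomposition, while $\b$ is pushed onto the \emph{same} side by the decomposition update (this is the key sign computation). Therefore $\d D$ bounds a disk in $M'$ whose boundary lies entirely in, say, $R_+(\g')$, giving a compressing disk for $R_+(\g')$. A symmetric pushed-off copy of $D$ (on the other side of $S^-$) yields a compressing disk for $R_-(\g')$.

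The main obstacle will be setting up the local model and orientation conventions carefully enough that Gabai's rules for the decomposed sutures can be applied unambiguously, and checking that the signs in $\d D = \pm\a \cup \b$ translate correctly into ``the arc $\b\subset \g$ is reassigned to $R_+$ or $R_-$ as predicted.'' Once that bookkeeping is done, both conclusions follow by direct inspection of the local picture.
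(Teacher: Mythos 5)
This lemma is not proved in the paper at all; it is imported from \cite{LiLimits}*{Lemma~4.2}, and the proof there is exactly the kind of local computation you outline, so your overall strategy --- restrict attention to a neighborhood of the bigon $D$ and apply Gabai's rules for the decomposed sutures --- is the right one, and your sketch of part (1) is a reasonable outline of it.

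The genuine gap is in part (2), and it sits exactly in the bookkeeping you defer to the last paragraph. First, the compressing disks do not yet exist as described. By Gabai's rules $\gamma'$ contains $\gamma\cap M'$, and the new sutures are annular neighborhoods of $S'_+\cap R_-(\gamma)$ and $S'_-\cap R_+(\gamma)$, where $S'_{\pm}$ are the two push-offs of $S^-$; so the arc $\beta$ survives as a suture arc of $(M',\gamma')$, the arc $\alpha$ is replaced by two parallel copies which may themselves run along new sutures, and consequently $\partial D=\alpha\cup\beta$ does \emph{not} lie in $R_+(\gamma')$. Whether a push-off of this curve can be arranged to lie entirely in $R_+(\gamma')$, and separately in $R_-(\gamma')$, is decided by how $\gamma'$ is rerouted at the corners $p_{\pm}$ --- precisely the sign-dependent computation you postpone. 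The phrase ``a symmetric pushed-off copy of $D$ on the other side of $S^-$'' is also not well defined: $D$ lies on one side of $\partial S^-$ in $\partial M$, and when $S$ separates, the two push-offs of $S^-$ lie on boundaries of different components of $M'$, so the second disk needs its own construction (in the standard picture it uses $D$ together with a strip along a copy of $\alpha$, not a mirror copy of $D$). Second, even after producing a properly embedded disk with boundary in $R_{\pm}(\gamma')$, compressibility requires that boundary to be \emph{essential} in $R_{\pm}(\gamma')$, and this check is absent. It cannot be waved through: the same naive construction is available for $S^+$, where the decomposition agrees with the decomposition along $S$ and may perfectly well be taut, so no compressing disk can result there; hence any correct argument for $S^-$ must use the sign through the corner rerouting and the essentiality of the resulting curves. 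Until that local computation (for instance on the model of Figure~\ref{fig:22}) is actually carried out and the rerouted dividing set recorded, your proposal is an outline of the standard argument rather than a proof of part (2).
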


\begin{define}\label{admissibility of surfaces}
Suppose $(M,\gamma)$ is a balanced sutured manifold, and $S\subset M$ is a properly embedded surface. Suppose further that $\d S$ intersects $\gamma$ transversely. We say $S$ is \emph{admissible} if every component of $\partial S$ intersects $\gamma$ and the value $(\frac{1}{2}|S\cap \gamma|-\chi(S))$ is an even integer.
\end{define}
Kavi \cite{kavi2019cutting} and Li \cite{LiLimits}*{Theorem~5.5} proved that for a fixed, oriented, admissible surface $S\subset M$, the grading $\SHI(M,\gamma,S)$ is well-defined.

We now consider the case when $M$ is a knot complement and $\g$ are longitudinal sutures. To construct the Alexander grading on $\HFK^-(Y, K)$, Li \cite{LiLimits} defines the following grading shift:
\[
     \SHI (Y\setminus \nu(K), \Gamma_n, S_n^{\tau (n)}) [\sigma (n)]
   \]
where, \begin{equation}\label{Eq: shift limit}
    \sigma (n) = -\frac{n-1+\tau(n)}{2}.
 \end{equation} Here, $\tau(n)=1$ if $n$ is even and $\tau(n)=0$ if $n$ is odd. Under this shift, the positive bypass maps, $\phi^+_n$, are grading preserving, and the negative bypass maps, $\phi^-_n$, shift the grading by $-1$.  Li \cite{LiLimits} defines $\HFK^-(Y, K)$ as the direct limit of the following directed system:
\[
\cdots \SHI (Y\setminus \nu(K), \Gamma_n, S_n^{\tau (n)}) [\sigma (n)] \xrightarrow{\phi^+_n} \SHI (Y\setminus \nu(K), \Gamma_{n+1}, S_n^{\tau (n+1)}) [\sigma (n+1)] \cdots
.\]
The negative bypass maps, $\phi^-_n$, induce the $U$ action on $\HFK^-(Y, K)$.

We now recall the effect of positive and negative stabilizations on gradings, and proven in  \cite{LiLimits}*{Proposition~4.17} and \cite{GhoshLiDecomposing}*{Proposition~4.1}:

\begin{thm}\label{thm: shift stabilization}
 Let $S$ be an admissible surface in a balanced sutured manifold $(M, \gamma)$. Suppose $T$
is obtained from $S$ by $p$ positive stabilizations and $q$ negative stabilizations with $p-q=2k$. Then for any $i \in \mathbb{Z}$,  \[\SHI( M, \gamma, S, i) = \SHI (M, \gamma, T, i+k).\]
\end{thm}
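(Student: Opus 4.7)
The plan is to reduce the statement to two elementary stabilization moves and then interpret each of these moves in terms of a difference of $\mu$-operators on a common closure. Since $p - q = 2k$ is even, I may group the stabilizations into $\min(p, q)$ canceling pairs of type $(+, -)$ together with $|k|$ pairs of the same sign, since stabilizations performed in disjoint neighborhoods of the sutures commute. Hence it suffices to establish the two special cases
\[
\SHI(M, \g, S^{+-}, i) \iso \SHI(M, \g, S, i) \quad \text{and} \quad \SHI(M, \g, S^{++}, i) \iso \SHI(M, \g, S, i - 1),
\]
the analogous double-negative case following from the double-positive case by reversing the orientation of $S$, which exchanges positive and negative stabilizations and negates the Alexander grading.

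For the canceling-pair identity, I would choose a closure $(\tilde Y, \tilde R, \alpha)$ of $(M, \g)$ in which $S$ and $S^{+-}$ admit compatible extensions $\bar S$ and $\bar S^{+-}$ through the auxiliary piece $T \times [-1, 1]$. The two extra intersections of $\partial S^{+-}$ with $\g$ carry opposite signs, and the local arcs they add in the auxiliary piece can be capped off by a disk, so $\bar S^{+-}$ and $\bar S$ are homologous in $\tilde Y$. Because the action of $\mu(\Sigma)$ on $I_*(\tilde Y)_\alpha$ depends only on the homology class of $\Sigma$ (see Remark~\ref{rem:localize-mu(R)}), the simultaneous generalized eigenspace decompositions for $(\mu(\bar S), \mu(pt))$ and $(\mu(\bar S^{+-}), \mu(pt))$ on $I_*(\tilde Y \vert \tilde R)_\alpha$ coincide, proving the identity in every grading.

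For the double-positive case, one arranges the extensions so that $\bar S^{++}$ differs from $\bar S$ by an additional closed component $\Sigma_0$ in the auxiliary piece on which the simultaneous eigenvalue of $(\mu(\Sigma_0), \mu(pt))$ equals $(+2, 2)$. Granting this, $\mu(\bar S^{++}) = \mu(\bar S) + \mu(\Sigma_0)$, so the generalized $(2i, 2)$-eigenspace of $(\mu(\bar S), \mu(pt))$ matches the generalized $(2i + 2, 2)$-eigenspace of $(\mu(\bar S^{++}), \mu(pt))$, which by definition of the grading is precisely the shift $i \mapsto i - 1$. The main obstacle is verifying that $\Sigma_0$ really contributes eigenvalue exactly $+2$; by Theorem~\ref{thm: eigen value} this forces $\Sigma_0$ to have genus at least two, and it is here that the asymmetry between positive and negative stabilizations enters. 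The required local model in $T \times [-1,1]$ is the technical core of the argument, and is the content of \cite{LiLimits}*{Proposition~4.17} and \cite{GhoshLiDecomposing}*{Proposition~4.1}.
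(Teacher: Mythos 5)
This theorem is not proved in the paper at all: it is recalled as background, with the proof attributed to \cite{LiLimits}*{Proposition~4.17} and \cite{GhoshLiDecomposing}*{Proposition~4.1}. Measured against that, your proposal does not constitute a proof either, because after the (reasonable) reduction to a canceling pair $S^{+-}$ and a double stabilization $S^{++}$, you defer the ``local model in $T\times[-1,1]$'' to exactly those two references --- but those propositions \emph{are} the statement being proved, so the argument is circular: everything you prove yourself is the bookkeeping reduction, and the actual content is assumed.

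Moreover, the step you sketch for the $S^{++}$ case has a concrete gap. You assert that in a suitable common closure the extension $\bar S^{++}$ differs from $\bar S$ by a disjoint closed surface $\Sigma_0$ in the auxiliary region on which $(\mu(\Sigma_0),\mu(pt))$ has simultaneous generalized eigenvalue exactly $(+2,2)$. Neither half of this is justified. First, that the two extensions differ in homology by such a class supported in the auxiliary piece is asserted, not shown; the extension of a stabilized surface changes how the surface is capped through $T\times[-1,1]$, and controlling the resulting homology class (and the existence of a single closure in which both admissible surfaces extend compatibly) is precisely the delicate part of Li's construction. Second, Theorem~\ref{thm: eigen value} only constrains the \emph{possible} simultaneous eigenvalues of $(\mu(\Sigma_0),\mu(pt))$ to the set $\{\pm 2k,\, 0\le k\le g(\Sigma_0)-1\}$ on the $\mu(pt)=2$ part; it does not imply that $\mu(\Sigma_0)-2$ is nilpotent on all of $I_*(Y|R)_\a$, nor does it distinguish $+2$ from $-2$. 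Pinning the action down to the single generalized eigenvalue $+2$ (with the sign recording the asymmetry between positive and negative stabilizations, which is the source of the shift by $k$) is the heart of \cite{LiLimits}*{Proposition~4.17}, and your appeal to it at that point means the essential difficulty has not been addressed. The canceling-pair case has a milder version of the same issue: the claim that $\bar S$ and $\bar S^{+-}$ are homologous in a common closure is plausible but also rests on the closure/extension analysis you have not carried out (homology invariance of $\mu$ itself is fine, since $\mu$ is defined on homology classes).
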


We will also use a generalization of the above result, due to Wang \cite{wang2020cosmetic}*{Lemma~4.42}:

\begin{thm} \label{thm: Generalized shift}
Let $S$ and $T$ be admissible surfaces in a balanced sutured manifold $(M, \gamma)$ which are in the same
relative homology class. Suppose that $\partial T$ and $\partial S$ are disjoint, and that $\partial T - \partial S$  is the boundary of
a subsurface $F$ of $\partial M$, where the orientation of $F$ matches that of $\partial M$. Then $S$ and $T$ determine the
same relative $\mathbb{Z}$-grading on $\SHI (M, \gamma)$. In particular 
\[
\SHI (M, \gamma, S, i) = \SHI (M, \gamma, T, i+k)
\] where $k = \chi(F \cap R_-(\gamma)) - \chi(F \cap R_+(\gamma)).$
\end{thm}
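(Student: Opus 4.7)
The plan is to reduce the comparison of gradings for $S$ and $T$ to the stabilization formula of Theorem~\ref{thm: shift stabilization} by building an explicit cobordism between $S$ and $T$ inside $M$ whose boundary behavior is governed by $F$. First, since $\partial S$ and $\partial T$ are disjoint and $\partial T-\partial S=\partial F$ in $\partial M$, we can push $F$ slightly into the interior of $M$ along a collar of $\partial M$ to obtain an embedded surface $F'\subset M$ with $\partial F'=\partial T\sqcup(-\partial S)$. Then the surface $S\cup F'$ is properly embedded in $M$, relatively homologous to $T$, and has the same boundary as $T$; after a small isotopy (which does not affect the grading, since it keeps the surface in a fixed relative homology class with fixed boundary), one arranges that $S\cup F'$ differs from $T$ only in the interior by the addition of closed components. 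Closed components in a null-homologous class contribute trivially to the grading (one can check this from the closure construction, since they extend to homologically trivial surfaces that do not modify the eigenspace decomposition of $\mu(R)$).

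The next step is to track the effect of attaching $F'$ to $S$. Decompose $F$ near the sutures into elementary pieces: each connected component of $F\cap R_+(\gamma)$ (resp.\ $F\cap R_-(\gamma)$) contributes pairs of crossings of $\gamma$ where the boundary is pushed across a suture. In a collar neighborhood of $\partial M$, each such elementary band corresponds to either a positive or a negative stabilization of $S$, depending on whether it pushes $\partial S$ across a component of $R_+(\gamma)$ or of $R_-(\gamma)$, together with the orientation of the band. A local model (two parallel arcs on $\partial M$ bounding a small disk in $F$ sitting over $R_\pm(\gamma)$) identifies the two cases with the local pictures of positive and negative stabilizations in Figure~\ref{fig:22}.

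With this local identification in hand, I would compute the net number of positive versus negative stabilizations. An elementary piece of $F\cap R_-(\gamma)$ contributes a positive stabilization and one of $F\cap R_+(\gamma)$ contributes a negative stabilization (this is a sign convention that falls out of orientating the disk $D$ relative to $\partial M$). Summing over all elementary pieces and using the fact that $\chi$ is additive under gluing along arcs, one obtains that the difference $p-q$ of positive minus negative stabilizations equals
\[
2k=2\bigl(\chi(F\cap R_-(\gamma))-\chi(F\cap R_+(\gamma))\bigr).
\]
Applying Theorem~\ref{thm: shift stabilization} then yields the claimed grading shift by $k$.

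The main obstacle I expect is the bookkeeping in the elementary-piece decomposition, in particular getting the signs right and verifying that the local stabilization count really does assemble into the global Euler-characteristic formula rather than some boundary-corrected variant. The cleanest way to handle this is probably to first prove the formula for the case when $F$ is a disjoint union of bands in $\partial M$ (where it is a direct application of Theorem~\ref{thm: shift stabilization}), and then reduce the general case to this one by cutting $F$ along a collection of arcs that decompose it into bands, noting that cutting an arc does not change $\chi(F\cap R_\pm(\gamma))$. The remaining subtlety is showing that the choice of band decomposition does not affect the total count, which follows from the additivity of Euler characteristic.
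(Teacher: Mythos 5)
You should first note that the paper does not actually prove this statement: Theorem~\ref{thm: Generalized shift} is imported from Wang \cite{wang2020cosmetic}*{Lemma~4.42}, so there is no in-paper argument to compare against. The known proofs of results of this type (in Li's, Kavi's and Wang's work) are carried out in a closure: one extends $S$ and $T$ to closed surfaces in a common closure and compares the generalized eigenspace decompositions of the corresponding $\mu$-operators, with the shift $k$ emerging from the homological comparison of the capping pieces; your proposal instead tries to reduce everything to Theorem~\ref{thm: shift stabilization} on $\partial M$, and that route has two genuine gaps.

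First, circularity: you assume that modifying the surface within a fixed relative homology class with fixed boundary does not change the grading (``a small isotopy \dots does not affect the grading, since it keeps the surface in a fixed relative homology class with fixed boundary''). That invariance is not among the available inputs --- the quoted results give only well-definedness of the grading for a fixed admissible surface and its behavior under stabilizations --- and it is precisely the $F=\emptyset$ case of the statement you are trying to prove. Relatedly, it is not true that $S\cup F'$ can be arranged by a small isotopy to agree with $T$ up to closed components: properly embedded surfaces with the same boundary and the same relative class need not be isotopic, so comparing $T$ with $S\cup F'$ buys nothing unless the homology-invariance of the grading is already known. Second, the reduction to stabilizations cannot cover the general case, and the count is not established: $T$ need not be obtainable from $S$ by any sequence of stabilizations ($\partial S$ and $\partial T$ may have different numbers of components, and $F$ may contain closed components of $\partial M$ or entire components of $R_{\pm}(\gamma)$), and the asserted identity $p-q=2\bigl(\chi(F\cap R_-(\gamma))-\chi(F\cap R_+(\gamma))\bigr)$ is never derived --- on your own dictionary (one elementary piece of $F\cap R_{\mp}(\gamma)$ per stabilization, each piece a disk) the count would come out to $p-q=\chi(F\cap R_-(\gamma))-\chi(F\cap R_+(\gamma))$, off by exactly the factor of $2$ needed to match Theorem~\ref{thm: shift stabilization}. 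In fact the region of $\partial M$ swept out by a single stabilization is a bigon meeting each of $R_+(\gamma)$ and $R_-(\gamma)$ in a disk, so the local contributions do not sort themselves by sign in the way your dictionary requires. The sign and Euler-characteristic bookkeeping you defer to the end is the entire content of the lemma, and the natural place to carry it out is in the closure, not band-by-band on $\partial M$.
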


\subsection{Overview of the proof of Theorem \ref{thm:gradings}}
 We now provide an overview of the proof of Theorem~\ref{thm:gradings}. We recall that the quasi-isomorphism $X_{n, m}$ appearing in the statement is the composition of two maps, $F_{n,m}$ and $I_{n,m}$. The map $F_{n,m}$ is the homotopy equivalence described in Theorem~\ref{thm:link-surgery-instanton}, and the map $I_{n,m}$ is the homotopy equivalence described in Theorem~\ref{lem:reorganize-cone}. We will define grading shifts on the domains and codomains of these maps, with respect to which they are grading preserving. To define these shifts, recall that $\sigma\colon \Z\to \Z$ is the function defined in Equation~\eqref{Eq: shift limit}. Define $\zeta(n, m) = \sigma(n) + \sigma(m)$.

 \begin{prop}\label{prop: F_{m, n}} The map $F_{n,m}$, shown as the dashed arrows in the diagram below, is grading preserving with respect to the gradings shown below:
 \begin{equation}\label{equation:I}
 F_{n, m}=\begin{tikzcd}[labels=description]
 \SHI(Y_{0,0}(L)) [\sigma(n+m)]
 \arrow[drr, dashed,  "{h}"]
 \arrow[ddr, dashed,"{j}"]
 \arrow[dr, "{F}",dashed, description] & & \\
 & \SHI(Y_{1,1}(L)) [\sigma(n+m)] \arrow[r] \arrow[d]
 & \SHI(Y_{1,\infty}(L)) [\sigma(n+m)] \\
 & \SHI(Y_{\infty,1}(L))  [\sigma(n+m)]
 \end{tikzcd}
 \end{equation}
 On each sutured Floer complex in Equation~\eqref{equation:I}, we define the gradings using a properly embedded surface induced by the Seifert surface $S_{n+m}^{\tau(n+m)}$ of $K_1 \# K_2$, which intersects the sutures $\Gamma_{n+m}$ in $2(n+m+\tau(n+m))$ points.
 \end{prop}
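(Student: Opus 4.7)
My plan is to promote the iterated mapping cone of Theorem \ref{thm:link-surgery-instanton} to an equivariant version with respect to an additional operator $\mu(\hat{S})$ governing the Alexander grading, and then apply the eigenspace framework of Section \ref{sec:hypercubes-eigenspaces}.

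The first step is to construct a properly embedded surface $S\subset Y$ which is disjoint from $L_1,L_2,L_1^{*},L_2^{*}$ and whose image in $Y_{0,0}(L)=M^{\#}$ is isotopic to the Seifert surface $S_{n+m}^{\tau(n+m)}$ of $K_1\#K_2$. Such a surface is built by gluing fixed Seifert surfaces for $K_1$ and $K_2$ along an arc running across the 1-handle connecting $M_1$ and $M_2$ in $Y$, and then stabilizing to obtain the correct intersection count with the sutures. Since $L_1,L_2\subset\partial Y$ and their duals $L_i^{*}$ are obtained by a small push-in, one can arrange $S$ to miss all of them after a small isotopy. Pushing $S$ forward under the various surgeries yields admissible surfaces $S^{\veps}\subset Y_{\veps}(L)$ for each $\veps\in\{(0,0),(1,1),(1,\infty),(\infty,1)\}$; using Theorems \ref{thm: shift stabilization} and \ref{thm: Generalized shift} to compare $S^{\veps}$ with the natural surfaces in each manifold, one verifies that each $S^{\veps}$ meets its sutures in $2(n+m+\tau(n+m))$ points and that the grading shift $\sigma(n+m)$ is the correct normalization at each vertex.

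Because $S$ is disjoint from $L_1^{*},L_2^{*}$, the attaching spheres of the $4$-dimensional $2$-handles used in Theorem \ref{thm:link-surgery-instanton} miss the closure $\hat{S}$, so the cobordisms contain a subcobordism of the form $\hat{S}\times I$. Invoking Remark \ref{rem:localize-mu(R)}, which realizes $\mu(\hat{S})$ as a cobordism map for $\hat{S}$, this implies that the $2$-handle cobordism maps commute up to chain homotopy with $\mu(\hat{S})$, and likewise for the operators $\mu(R)$ and $\mu(p)$ already used to pass to sutured instanton theory (after choosing $R$ and $p$ disjoint from $\hat{S}$). In the language of Section \ref{sec:hypercubes-eigenspaces}, the hypercube of Theorem \ref{thm:link-surgery-instanton} is then upgraded to a hypercube equipped with three pairwise homotopy-commuting operators, and the homotopy equivalence $F_{n,m}$ becomes a morphism in this enriched category.

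Finally, applying the simultaneous eigenspace functor (Lemmas \ref{lem:eigenspace-functor} and \ref{lem:simultaneous-eigenvalues}) with respect to $\mu(\hat{S})$ in addition to $\mu(R)$ and $\mu(p)$ cuts out the Alexander-graded summands at each vertex, and Proposition \ref{prop:grading-shift} identifies the induced hypercube map as grading preserving with the uniform shift $\sigma(n+m)$. The main obstacle will be the careful construction of the surfaces $S^{\veps}$ in the twisted-suture manifolds $Y_{1,1}(L)$, $Y_{1,\infty}(L)$, $Y_{\infty,1}(L)$ and the verification of the intersection count $2(n+m+\tau(n+m))$ with the modified sutures, since the sutures there differ from those of $Y$ by boundary Dehn twists along $L_1$ and $L_2$; this is precisely the bookkeeping for which Theorems \ref{thm: shift stabilization} and \ref{thm: Generalized shift} are tailored.
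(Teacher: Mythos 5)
Your proposal rests on the same geometric input as the paper's proof: the grading surface induced by the Seifert surface of $K_1\# K_2$ can be made disjoint from the attaching circles of all the $2$-handles appearing in the iterated mapping cone, so the surface persists through every cobordism and the maps respect the Alexander grading it defines, with the uniform shift $\sigma(n+m)$ at every vertex. Where you differ is in how that persistence is converted into grading preservation. The paper first decomposes $F_{n,m}$ into its constituent pieces (Lemma~\ref{lem: topology of cobordism}): $F$ counts index~$0$ instantons on the $2$-handle cobordisms over fixed metrics, while $j$ and $h$ are compositions involving index~$-1$ counts over one-parameter families of metrics; it then notes each underlying cobordism is a $2$-handle attachment along a knot disjoint from $S^{\tau(n+m)}_{n+m}$ and quotes the known fact (as in \cite{GhoshLiWongTau}*{Proposition~1.12}) that such maps preserve the surface grading. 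You instead promote the whole hypercube of Theorem~\ref{thm:link-surgery-instanton} to one carrying a homotopy-commuting operator $\mu(\hat S)$ (via Remark~\ref{rem:localize-mu(R)}) alongside $\mu(R)$ and $\mu(pt)$, and apply the simultaneous-eigenspace machinery of Section~\ref{sec:hypercubes-eigenspaces}; this handles the index~$-1$/family-of-metrics components of $j$ and $h$ uniformly rather than case by case, at the cost of redoing internally what the paper cites. One caveat: your step asserting that each induced surface $S^{\veps}$ meets the sutures of $Y_{1,1}(L)$, $Y_{1,\infty}(L)$, $Y_{\infty,1}(L)$ in exactly $2(n+m+\tau(n+m))$ points is neither needed for this proposition nor correct as stated, since those sutures differ from $\Gamma_{n+m}$ by boundary Dehn twists along $L_1,L_2$, which $\d S$ crosses; all that is required here is that the grading at each vertex be defined by the induced surface itself, and the comparison of these induced surfaces with the standard (stabilized) surfaces via Theorems~\ref{thm: shift stabilization} and~\ref{thm: Generalized shift} is exactly the content of the companion Proposition~\ref{prop: I_{m, n}}, not of this one.
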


 Next, we consider the map $I_{n,m}$:

 \begin{prop}\label{prop: I_{m, n}}
 The map $I_{n,m}$, shown as the dashed arrows in the following diagram, is grading preserving with respect to the grading shifts shown below:
 \begin{equation}\label{equation:II}
 \begin{tikzcd}[column sep = {5cm,between origins}, row sep = .7 cm, labels=description]
 \SHI(Y_{1,1}(L)) [\sigma(n+m)]
 	\arrow[rdd, "{i_{1, 1}}", description,dashed]
 	\arrow[rr]
 	\arrow[d]
 &&[-3.7cm]
 \SHI(Y_{1,\infty}(L)) [\sigma(n+m)]
 	\arrow[rdd, "{i_{1, \infty}}", description, dashed]  &   
 \\
 \SHI(Y_{\infty, 1}(L)) [\sigma(n+m)]
 	\arrow[rdd, "{i_{\infty, 1}}", pos=.2,dashed]                     
 &
 \\
 &C_n \otimes D_{m-1} [\zeta(n, m-1)-1] \oplus C_{n-1} \otimes D_m[\zeta(n-1, m)-1] 
 	\arrow[rr]
 	\arrow[d] 
 && C_n \otimes D_m [\zeta(n, m)] \\
  & C_n \otimes D_m [\zeta(n, m)-1]                     &  
 \end{tikzcd}
 \end{equation}
  On the domain of $i_{1, 1}$, $i_{\infty, 1}$ and $i_{1, \infty}$ in Equation~\eqref{equation:II}, we use the gradings induced by the Seifert surface $S_{n+m}^{\tau(n+m)}$. On the codomain of $i_{1, 1}$, we use the grading surface $S_{n-1}^{\tau(n-1)} \sqcup S_{m}^{\tau(m)}$ on the summand $C_{n-1} \otimes D_m$ and  $S_{n}^{\tau(n)} \sqcup S_{m-1}^{\tau(m-1)}$ on the summand $C_n \otimes D_{m-1}$.  On $C_n \otimes D_m$, we use the grading surface $S_n^{\tau(n)} \sqcup S_m^{\tau(m)}$.
 \end{prop}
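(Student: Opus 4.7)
The plan is to analyze each of the three component maps $i_{1,1}$, $i_{\infty,1}$, and $i_{1,\infty}$ separately, tracking how the Seifert surface $S_{n+m}^{\tau(n+m)}$ of $K_1\# K_2$ behaves under the geometric operations used in the proof of Lemma~\ref{lem:reorganize-cone}. The grading shifts in the statement are precisely calibrated to compensate for two effects: the difference $\sigma(n+m)-\sigma(n)-\sigma(m)$ (which depends only on the parities of $n$ and $m$), and the stabilization changes that the Seifert surface undergoes under the various geometric operations. The main inputs are Theorem~\ref{thm: shift stabilization} (and its generalization Theorem~\ref{thm: Generalized shift}), together with the fact that contact 1-handle maps are grading preserving.

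For $i_{\infty,1}$ and $i_{1,\infty}$: these are induced by inverse contact 1-handle maps, which are grading preserving on surface-based gradings. The task reduces to identifying, after deleting the two contact 1-handle regions from $Y_{\infty,1}(L)$ (resp.\ $Y_{1,\infty}(L)$), the surface induced by $S_{n+m}^{\tau(n+m)}$ with a stabilized version of $S_n^{\tau(n)}\sqcup S_m^{\tau(m)}$ inside $(M_1,\Gamma_n)\sqcup(M_2,\Gamma_m)$. Since both surfaces are Seifert-like and lie in the same relative homology class, their algebraic stabilization count is computable from the number of boundary arcs, and one checks that the resulting count equals exactly $\zeta(n,m)-\sigma(n+m)$ in all four parity cases. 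Theorem~\ref{thm: shift stabilization} then delivers the required grading identification.

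For $i_{1,1}$: the construction first attaches four auxiliary contact 1-handles to pass from $Y_{1,1}(L)$ to $Y_{1,1}'(L)$ (Figure~\ref{fig:42}), then decomposes along the two product disks $D_1,D_2$. By \cite{GhoshLiDecomposing}*{Corollary~4.3} and \cite{BaldwinLiYeHeegaard}*{Proposition~3.3}, the Floer homology splits into four summands indexed by orientations of $D_1\sqcup D_2$, two of which vanish by non-tautness, and the remaining two, after deletion of the auxiliary contact 1-handles, coincide with $C_{n-1}\otimes D_m$ and $C_n\otimes D_{m-1}$. I would isotope $S_{n+m}^{\tau(n+m)}$ to be transverse to $D_1\sqcup D_2$ and arrange its orientation so that it lies in the two non-vanishing summands; then apply Theorem~\ref{thm: Generalized shift} to compare the restricted surface with $S_{n-1}^{\tau(n-1)}\sqcup S_m^{\tau(m)}$ and $S_n^{\tau(n)}\sqcup S_{m-1}^{\tau(m-1)}$, respectively.

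The main technical difficulty lies in the surface-level accounting of the $i_{1,1}$ case, specifically in showing that the extra $-1$ in $\zeta(n-1,m)-1$ and $\zeta(n,m-1)-1$ arises naturally from one additional negative stabilization of the restricted Seifert surface relative to the natural disjoint surfaces on each summand. This is forced by the requirement (already established in Lemma~\ref{lem:reorganize-cone} and its proof via Figure~\ref{fig:43}) that the cobordism maps $F_{W(J_1)}$ and $F_{W(J_2)}$ correspond to the negative bypass maps $\phi^-$, which shift grading by $-1$ in the directed system convention. Once this stabilization count is verified geometrically by comparing intersection patterns of the Seifert surface with the decomposing disks, Theorem~\ref{thm: shift stabilization} converts the counts into the claimed shifts, and commutativity of the resulting hypercube of graded maps follows from the compatibility of the bypass maps with the Alexander grading as established by Li \cite{LiLimits}.
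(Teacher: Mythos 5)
Your overall strategy is the same as the paper's (contact $1$-handle maps preserve surface gradings, then track the grading surface through the disk decompositions using Theorem~\ref{thm: shift stabilization} and Theorem~\ref{thm: Generalized shift}, and finally fold in the parity behavior of $\sigma$), but the two places where you wave at "one checks the count" are exactly where the content lies, and as stated your counts are inconsistent with the proposition. For $i_{\infty,1}$ and $i_{1,\infty}$ you claim a single common stabilization count equal to $\zeta(n,m)-\sigma(n+m)$; but the codomains in Equation~\eqref{equation:II} carry shifts $\zeta(n,m)-1$ and $\zeta(n,m)$ respectively, so the two maps must have surface-grading shifts differing by $1$. This asymmetry is real: in the paper's computation (for $n,m$ odd) $i_{\infty,1}$ preserves the grading induced by $S^{+}_{n+m}$ versus $S_n\sqcup S_m$ (one trades a positive stabilization for a negative one, shift $-1$, and later undoes two negative stabilizations, shift $+1$), while $i_{1,\infty}$ drops it by $1$ (one only undoes two positive stabilizations). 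The count is not determined by "the number of boundary arcs" or the relative homology class alone; it depends on how $\partial S_{n+m}^{\tau}$ sits relative to the Dehn-twisted sutures of $Y_{\infty,1}(L)$ versus $Y_{1,\infty}(L)$, which is the geometric input your argument omits.

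Your treatment of $i_{1,1}$ has a second gap. The extra $-1$ in $\zeta(n,m-1)-1$ and $\zeta(n-1,m)-1$ is not produced by "one additional negative stabilization forced by the requirement that $F_{W(J_1)}$ and $F_{W(J_2)}$ correspond to $\phi^-$": only one of $J_1,J_2$ gives a negative bypass (the other gives a positive one), and in the odd--odd case the map $i_{1,1}$ in fact has surface-grading shift $0$; the $-1$ in the statement is accounted for by the identities $\sigma(n+m)=\sigma(n)+\sigma(m)-1$ and $\sigma(n\pm 1)=\sigma(n)$, i.e.\ by the direct-limit shifts, not by an extra stabilization. More importantly, deducing the shift of $i_{1,1}$ from consistency with the known grading behavior of $\phi^{\pm}$ is not a proof, since the grading shift of the decomposition isomorphism is an independent quantity that must be computed; the paper does this by first replacing $S^{+}_{n+m}$ with a surface $T$ obtained by adjoining a subsurface $F\subset\partial Y'_{1,1}(L)$ with $\chi(F\cap R_-)-\chi(F\cap R_+)=0$ (Theorem~\ref{thm: Generalized shift}), so that $T$ is disjoint from the cocore disks of the $1$-handles, and then counting the stabilizations of the restricted surface on each summand (two positive and one negative on the $C_{n-1}$ side, coming from the original stabilization and the extra boundary twist). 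Your alternative of making $S_{n+m}^{\tau}$ transverse to $D_1\sqcup D_2$ and "arranging its orientation so that it lies in the non-vanishing summands" does not by itself yield a grading surface on the decomposed pieces, and Theorem~\ref{thm: Generalized shift} does not apply to a surface meeting the decomposing disks. Until these stabilization counts are carried out (in all parity cases, and separately for the three maps), the proposal does not establish the stated shifts.
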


The proofs of Propositions~\ref{prop: F_{m, n}} and ~\ref{prop: I_{m, n}} will be given in Sections~\ref{sec:Fmn} and ~\ref{sec:Imn}, respectively. Assuming Propositions~\ref{prop: F_{m, n}} and~\ref{prop: I_{m, n}}, we may stack the diagrams in Equations~\eqref{equation:I} and~\eqref{equation:II} and use the shift from Equation~\ref{Eq: shift limit} to obtain that the map $X_{n,m}$ is grading preserving with respect to the stated shifts in Theorem~\ref{thm:gradings}. Our proofs will focus on the case where $m$ and $n$ are odd. The other parities can be dealt with similarly.

\subsection{Gradings and $F_{n, m}$}
\label{sec:Fmn}
In this subsection, we prove Proposition~\ref{prop: F_{m, n}}. In Lemma~\ref{lem: topology of cobordism} we describe the cobordisms used to define the map $F_{n, m}$ in Equation~\eqref{equation:I}. Proposition~\ref{prop: F_{m, n}} will follow quickly from this description. 

\begin{lem}\label{lem: topology of cobordism} Let $F$, $j$ and $h$ be the components of $F_{n,m}$, as labeled in Equation~\eqref{equation:I}.
\begin{enumerate}
    \item The map $F$ counts index $0$ instantons with respect to a fixed metric on the cobordism obtained by attaching two $-1$ framed 2-handles along the meridians $L_1^*$ and $L_2^*$ of $L_1$ and $L_2$.
    \item The map $j$ is a sum of two maps. Both summands are compositions of two maps. Each of these maps counts either index 0 or $-1$ instantons on certain 2-handle cobordisms. In both summands, the composition of the corresponding cobordisms is the natural cobordism from $Y_{0,0}$ to $Y_{1,\infty}$,  obtained by attaching a $-1$ framed 2-handle along a meridian $L_1^*$ of $L_1$, a $-1$ framed 2-handle along a meridian $L_2^*$ of $L_2$, and a $-1$ framed meridian $L_2^{*}$ along $L_2^*$. 
\item The map $h$ is a composition of two maps, each of which counts index 0 or index $-1$ instantons on certain 2-handle cobordisms. The composition of the corresponding cobordisms is the natural cobordism from $Y_{0,0}$ to $Y_{\infty,1}$, obtained by attaching a $-1$ framed 2-handle along a meridian $L_2^*$ of $L_2$, a $-1$ framed 2-handle along a meridian $L_1^*$  of $L_1$ and a $-1$ framed 2-handle along a meridian $L_1^{*}$ of $L_1^*$. 
\end{enumerate}
  
\end{lem}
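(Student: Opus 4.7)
The plan is to unwrap the iterated mapping cone construction underlying Theorem~\ref{thm:link-surgery-instanton} (following Scaduto \cite{ScadutoOddKh}*{Section~6}) into an explicit cobordism-level description. The homotopy equivalence $F_{n,m}$ between $\SHI(Y_{0,0}(L))$ and the hypercube arises from instanton counts on a parameter family of metrics on the natural cobordism from $Y_{0,0}$ to $Y_{\infty,\infty}$ obtained by attaching $-1$-framed 2-handles along the dual links $L_1^{*},L_1^{**}$ and $L_2^{*},L_2^{**}$. Each component of the morphism is identified with the restriction of this family to a specific face of the parameter cube, and the length-$\ell$ component counts index-$(-\ell)$ instantons on that face. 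Since $\SHI(Y_{\infty,\infty})=0$ by Equation~\eqref{eq:not-taut}, the length-$2$ component of $F_{n,m}$ is automatically zero, leaving only the three components $F$, $j$, $h$ to identify.

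For the length-$0$ component $F$, I will observe that passing from framing $0$ to framing $1$ on each component of $L$ is effected by attaching a $-1$-framed 2-handle along the corresponding meridian $L_i^{*}$. Hence $F$ is the standard cobordism map, for a fixed metric, associated to the 4-manifold built by attaching 2-handles along $L_1^{*}$ and $L_2^{*}$ with framings $-1$; it counts index-$0$ instantons. This yields part (1). For parts (2) and (3), the relevant length-$1$ components are instanton counts on 1-parameter families of metrics that stretch along a single intermediate neck in a 3-handle cobordism. The cobordism in part (2) attaches $L_1^{*}$, $L_2^{*}$, and $L_2^{**}$ (changing the framing of $L_1$ from $0$ to $1$ and $L_2$ from $0$ to $\infty$), while that in part (3) attaches $L_2^{*}$, $L_1^{*}$, and $L_1^{**}$. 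In each case the 1-parameter family has two codimension-$1$ ends, corresponding to two distinct factorizations through an intermediate 3-manifold depending on which pair of 2-handles is grouped together as the stretched region. Each factorization contributes a composition of two cobordism maps, one counting index-$0$ instantons and the other counting index-$(-1)$ instantons arising from reducible configurations at the stretched neck. Summing over the two factorizations produces the description of $j$ (and analogously of $h$) as a sum of two two-fold compositions.

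The main obstacle is the framing bookkeeping. Specifically, I need to verify that after identifying the intermediate strata with sutured manifolds of the form $Y_{a,b}$ for $a,b\in\{0,1,\infty\}$, the successive meridians $L_i^{*}$ and $L_i^{**}$ indeed attach with framing $-1$ relative to the boundary. This is a standard Rolfsen-type argument: changing the slope of a longitudinal suture by $\pm 1$ corresponds to performing $(-1)$-framed surgery on a meridian of the previous curve, and iterating gives the three-handle sequences listed above. Once the framings are verified, the three claims follow directly from Scaduto's general description of the iterated mapping cone morphism in \cite{ScadutoOddKh}*{Section~6}, combined with the fact that the length-$2$ piece is trivial by non-tautness of $Y_{\infty,\infty}$.
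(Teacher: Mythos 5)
Your proposal assumes a construction of $F_{n,m}$ that is not the one the lemma is about. In the paper, $F_{n,m}$ is defined as the composition of two one-dimensional surgery-triangle quasi-isomorphisms: first $f\colon \SHI(Y_{0,0}(L))\to \Cone\bigl(\SHI(Y_{1,0}(L))\to\SHI(Y_{\infty,0}(L))\bigr)$, performing the exact triangle in one link component, and then $g$ from that cone into the three-term hypercube, performing it in the other component. The proof of the lemma is nothing more than composing the components of $f$ and $g$: the component $F$ is the composite of the two solid (index $0$, fixed-metric) 2-handle maps, one off-diagonal component is the single composite of a solid map with a dashed (index $-1$, one-parameter family) map, and the other off-diagonal component picks up two paths through the intermediate cone and is therefore a sum of two such composites. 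This ordered, two-step structure is exactly what forces the asymmetry in the statement: part (2) is a sum of two two-fold compositions while part (3) is a single two-fold composition.

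Your route instead builds the equivalence from a single two-parameter family of metrics over a square and reads off the components from faces of the parameter cube; you then conclude that both off-diagonal maps are sums of two two-fold compositions ("and analogously of $h$"). That conclusion contradicts part (3) as stated, and more fundamentally a statement about "the components of $F_{n,m}$ as labeled in Equation~\eqref{equation:I}" cannot be proved without using the actual definition of $F_{n,m}$; a homotopic but differently constructed quasi-isomorphism need not have components admitting the stated descriptions. Two smaller points: the length-$2$ vanishing you invoke is fine (the target $\SHI(Y_{\infty,\infty})$ is zero), but the index $-1$ counts in the dashed maps are simply counts of solutions of formal dimension $-1$ over a one-parameter metric family, not contributions of reducible configurations at a stretched neck. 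To repair the argument, replace the square-family picture by the factorization $F_{n,m}=g\circ f$ and track which paths through the intermediate one-dimensional cone land on each vertex of the final hypercube; the framing bookkeeping you outline (each slope change realized by a $-1$-framed meridian) then identifies the underlying composite cobordisms as claimed.
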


\begin{proof}
The map $F_{n, m}$ in Equation~\eqref{equation:I} is the composition of two quasi-isomorphisms, 

\begin{equation}\label{eq:f}
f \colon \SHI (Y_{0, 0} (L)) \to \Cone \left(\begin{tikzcd}
{\SHI (Y_{1, 0}(L))} \arrow[r] & {\SHI (Y_{\infty, 0}(L))}
\end{tikzcd}\right)
\end{equation}
and 
\begin{equation}\label{eq:g}
g \colon \Cone 
\left(\begin{tikzcd}[column sep=.5cm]
{\SHI (Y_{1, 0} (L))} \arrow[r] & {\SHI (Y_{\infty, 0}(L))}
\end{tikzcd}\right)
\to
\Tot \left(
\begin{tikzcd}[column sep=.5cm, row sep=.5cm]
{\SHI (Y_{1,1} (L))} \arrow[r] \arrow[d] & {\SHI (Y_{1, \infty}(L))} \\
{\SHI (Y_{\infty, 1}(L))}                &                          
\end{tikzcd}\right)
\end{equation}
Here, $\Tot$ means the total space of the diagram, i.e. we view the entire diagram as a single chain complex. Thus, we may view $F_{n,m}$ as a composition, as in the following diagram:
\begin{equation}\label{eq: Proof of F_{n,m}}
F_{n, m} = 
\begin{tikzcd}
{\SHI(Y_{0,0}(L))} 
    \arrow[rd, "f" description]
    \arrow[rrddd, "j" description, dotted, bend right=40]
    \arrow[rrrdd, "h" description, dotted,bend left=40]
    \arrow[rrd, "f" description, dashed] &&&\\
& {\SHI(Y_{0,1}(L))}
    \arrow[rrd, "g" description, dashed]
    \arrow[rdd, "g" description, dashed]
    \arrow[rd, "g" description]
    \arrow[r] 
& {\SHI(Y_{0, \infty}(L))} 
    \arrow[rd, "g" description]&
    \\
    && 
{\SHI(Y_{1,1}(L))}
    \arrow[r]
    \arrow[d]
&{\SHI(Y_{1,\infty}(L))} 
\\
&                                                                          & {\SHI(Y_{\infty,1}(L))}&                        
\end{tikzcd}
\end{equation}
 In Equation~\eqref{eq: Proof of F_{n,m}} the solid arrows denote maps that count monopoles of index 0 (resp. instantons) in the corresponding cobordism over a fixed metric and the dashed arrows denote maps that count monopoles (resp. instantons) of  index $-1$ in the corresponding cobordism over a one-parameter family of metrics.
\end{proof}

\begin{rem}
In the Heegaard Floer setting, $F$ is a composition of holomorphic triangle maps. The maps, $h$ and $j$ are sums of compositions of a holomorphic quadrilateral map and a holomorphic triangle map.
\end{rem}

We are now able to prove Proposition~\ref{prop: F_{m, n}}:

\begin{proof}[Proof of Proposition~\ref{prop: F_{m, n}}]
Consider the map $f$ from Equation~\eqref{eq:f}. The map $f$ is sum of two maps, $f_{(0, 0), (0, 1)}$ and $f_{(0, 0), (0, \infty)}$ as in the following diagram
\begin{equation}\label{equation: f}
f =     
\begin{tikzcd}[column sep=1cm, row sep=1cm]
{\SHI (Y_{0, 0} (L))} \arrow[d, "f_{(0, 0), (0, 1)}" description] \arrow[rd, "f_{(0, 0), (0, \infty)}" description, dashed] &                            \\
{\SHI (Y_{0, 1} (L))} \arrow[r, ]                                      & {\SHI (Y_{0, \infty} (L))}
\end{tikzcd}
\end{equation}
The map $f_{(0, 0), (0, 1)}$ is the cobordism map obtained by attaching a $-1$ framed 2-handle along the meridian $L_2^*$ of $L_2$. The attaching circle of the $2$-handle can be easily made disjoint from the grading surface  $S^{\tau(n+m)}_{n+m}$. Hence the surface survives in the underlying cobordism and as a result the map $f_{(0, 0), (0, 1)}$ is grading preserving. See the proof of \cite{GhoshLiWongTau}*{Proposition~1.12} for more details.

We  now consider the map
\[
f_{(0, 0), (0, \infty)}\colon \SHI(Y_{0,0}(L)) \rightarrow \SHI(Y_{0, \infty}(L))
.\]
  Although the map $f_{(0, 0), (0, \infty)}$ is obtained by counting solutions on several cobordisms, we observe that each cobordism topologically is obtained by a attaching $4$-dimensional $2$-handles along knots which are disjoint from the grading surface $S^{\tau(n+m)}_{n+m}$. Hence, our argument for the ordinary cobordism map $f_{(0, 0), (0, 1)}$ also applies for the map $f_{(0, 0), (0, \infty)}$. We can similarly argue that the map $g$ from Equation~\eqref{eq:g} is also grading preserving. Hence the map $F_{m, n}$ is grading preserving as it is obtained by composing grading preserving maps.
\end{proof}
\begin{figure}[ht]
\centering
\begingroup%
  \makeatletter%
  \providecommand\color[2][]{%
    \errmessage{(Inkscape) Color is used for the text in Inkscape, but the package 'color.sty' is not loaded}%
    \renewcommand\color[2][]{}%
  }%
  \providecommand\transparent[1]{%
    \errmessage{(Inkscape) Transparency is used (non-zero) for the text in Inkscape, but the package 'transparent.sty' is not loaded}%
    \renewcommand\transparent[1]{}%
  }%
  \providecommand\rotatebox[2]{#2}%
  \newcommand*\fsize{\dimexpr\f@size pt\relax}%
  \newcommand*\lineheight[1]{\fontsize{\fsize}{#1\fsize}\selectfont}%
  \ifx\svgwidth\undefined%
    \setlength{\unitlength}{396.69171578bp}%
    \ifx\svgscale\undefined%
      \relax%
    \else%
      \setlength{\unitlength}{\unitlength * \real{\svgscale}}%
    \fi%
  \else%
    \setlength{\unitlength}{\svgwidth}%
  \fi%
  \global\let\svgwidth\undefined%
  \global\let\svgscale\undefined%
  \makeatother%
  \begin{picture}(1,0.5560823)%
    \lineheight{1}%
    \setlength\tabcolsep{0pt}%
    \put(0,0){\includegraphics[width=\unitlength,page=1]{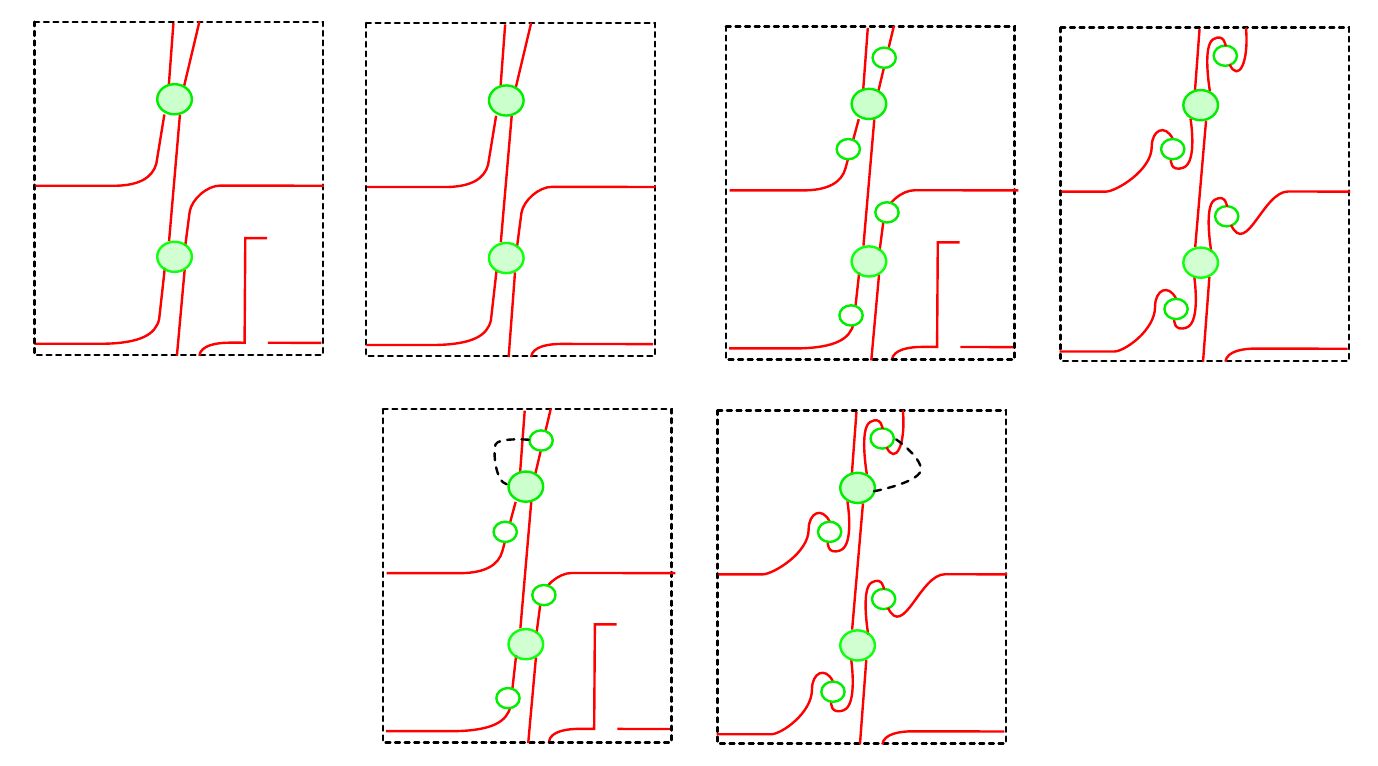}}%
    \put(0.67460593,0.21586003){\makebox(0,0)[lt]{\lineheight{1.25}\smash{\begin{tabular}[t]{l}$c_2^i$\end{tabular}}}}%
    \put(0,0){\includegraphics[width=\unitlength,page=2]{fig47.pdf}}%
    \put(0.28577265,0.05717391){\color[rgb]{0,0,0}\makebox(0,0)[lt]{\lineheight{1.25}\smash{\begin{tabular}[t]{l}$S^{\tau}_{n+m}$\end{tabular}}}}%
    \put(0.03544092,0.34010671){\color[rgb]{0,0,0}\makebox(0,0)[lt]{\lineheight{1.25}\smash{\begin{tabular}[t]{l}$S^{\tau}_{n+m}$\end{tabular}}}}%
    \put(0.53744107,0.3366459){\color[rgb]{0,0,0}\makebox(0,0)[lt]{\lineheight{1.25}\smash{\begin{tabular}[t]{l}$S^{\tau}_{n+m}$\end{tabular}}}}%
    \put(0,0){\includegraphics[width=\unitlength,page=3]{fig47.pdf}}%
  \end{picture}%
\endgroup%

\caption{Grading surfaces in the contact handle manipulation from Figure~\ref{fig:42} used to construct the isomorphism between $\SHI(Y_{1, 1} (L), S^\tau_{n+m})$ and $\SHI(Y'_{1,1}(L), S^\tau_{n+m})$. In the diagram, both $n$ and $m$ are odd, hence $\tau=1$.}
\label{fig:47}
\end{figure} 
\begin{figure}[ht]
\centering
\begingroup%
  \makeatletter%
  \providecommand\color[2][]{%
    \errmessage{(Inkscape) Color is used for the text in Inkscape, but the package 'color.sty' is not loaded}%
    \renewcommand\color[2][]{}%
  }%
  \providecommand\transparent[1]{%
    \errmessage{(Inkscape) Transparency is used (non-zero) for the text in Inkscape, but the package 'transparent.sty' is not loaded}%
    \renewcommand\transparent[1]{}%
  }%
  \providecommand\rotatebox[2]{#2}%
  \newcommand*\fsize{\dimexpr\f@size pt\relax}%
  \newcommand*\lineheight[1]{\fontsize{\fsize}{#1\fsize}\selectfont}%
  \ifx\svgwidth\undefined%
    \setlength{\unitlength}{271.3346447bp}%
    \ifx\svgscale\undefined%
      \relax%
    \else%
      \setlength{\unitlength}{\unitlength * \real{\svgscale}}%
    \fi%
  \else%
    \setlength{\unitlength}{\svgwidth}%
  \fi%
  \global\let\svgwidth\undefined%
  \global\let\svgscale\undefined%
  \makeatother%
  \begin{picture}(1,0.69849092)%
    \lineheight{1}%
    \setlength\tabcolsep{0pt}%
    \put(0.4254581,0.26834909){\color[rgb]{0,0,0}\makebox(0,0)[lt]{\lineheight{1.25}\smash{\begin{tabular}[t]{l}$\d S^+_{n+m}$\end{tabular}}}}%
    \put(0,0){\includegraphics[width=\unitlength,page=1]{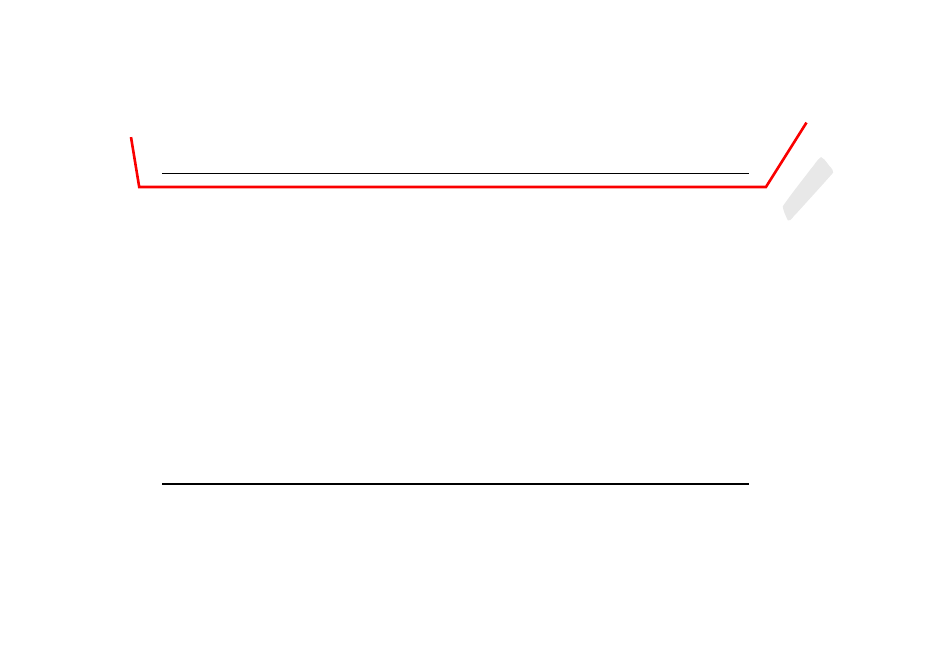}}%
    \put(0.71402569,0.38088432){\color[rgb]{0,0,0}\makebox(0,0)[lt]{\lineheight{1.25}\smash{\begin{tabular}[t]{l}$R_-(\gamma)$\end{tabular}}}}%
    \put(0.71832011,0.29910789){\color[rgb]{0,0,0}\makebox(0,0)[lt]{\lineheight{1.25}\smash{\begin{tabular}[t]{l}$R_+(\gamma)$\end{tabular}}}}%
    \put(0.59312429,0.38191017){\color[rgb]{0,0,0}\makebox(0,0)[lt]{\lineheight{1.25}\smash{\begin{tabular}[t]{l}$\d T$\end{tabular}}}}%
    \put(0,0){\includegraphics[width=\unitlength,page=2]{fig64.pdf}}%
    \put(0.14987937,0.35110149){\color[rgb]{1,0,0}\makebox(0,0)[lt]{\lineheight{1.25}\smash{\begin{tabular}[t]{l}$\Gamma_{n+m}$\end{tabular}}}}%
  \end{picture}%
\endgroup%

\caption{The boundaries of the grading surfaces $S^+_{n+m}$ (solid blue) and $T$ (dashed blue) in the tube region. The shaded region is the subsurface $F$ which has boundary $\d T-\d S_{n+m}^+$.}
\label{fig:50}
\end{figure}

\begin{figure}[ht]
\centering
\begingroup%
  \makeatletter%
  \providecommand\color[2][]{%
    \errmessage{(Inkscape) Color is used for the text in Inkscape, but the package 'color.sty' is not loaded}%
    \renewcommand\color[2][]{}%
  }%
  \providecommand\transparent[1]{%
    \errmessage{(Inkscape) Transparency is used (non-zero) for the text in Inkscape, but the package 'transparent.sty' is not loaded}%
    \renewcommand\transparent[1]{}%
  }%
  \providecommand\rotatebox[2]{#2}%
  \newcommand*\fsize{\dimexpr\f@size pt\relax}%
  \newcommand*\lineheight[1]{\fontsize{\fsize}{#1\fsize}\selectfont}%
  \ifx\svgwidth\undefined%
    \setlength{\unitlength}{412.70075708bp}%
    \ifx\svgscale\undefined%
      \relax%
    \else%
      \setlength{\unitlength}{\unitlength * \real{\svgscale}}%
    \fi%
  \else%
    \setlength{\unitlength}{\svgwidth}%
  \fi%
  \global\let\svgwidth\undefined%
  \global\let\svgscale\undefined%
  \makeatother%
  \begin{picture}(1,0.57039583)%
    \lineheight{1}%
    \setlength\tabcolsep{0pt}%
    \put(0,0){\includegraphics[width=\unitlength,page=1]{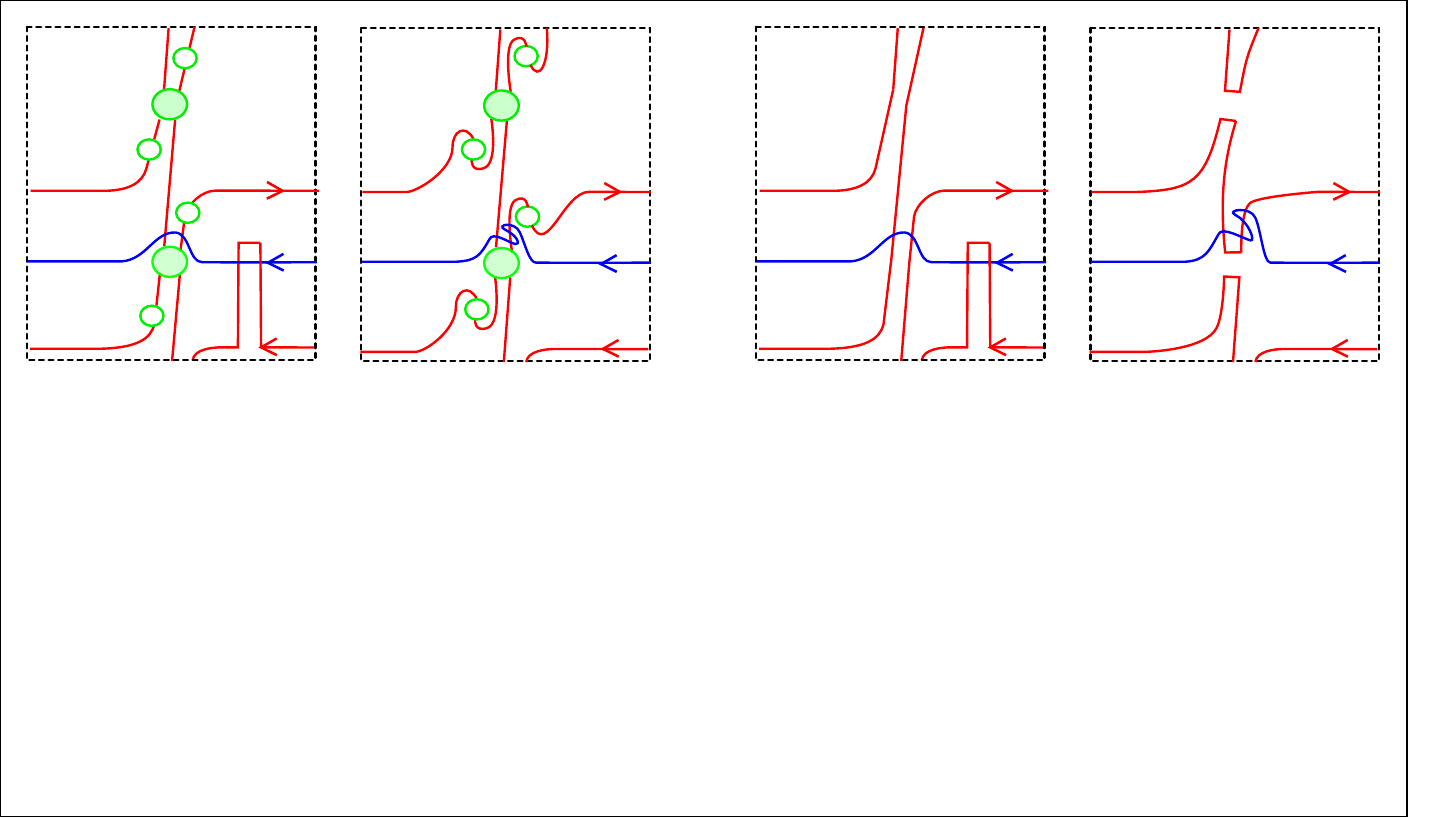}}%
    \put(0.58474385,0.29134804){\color[rgb]{0,0,0}\makebox(0,0)[lt]{\lineheight{1.25}\smash{\begin{tabular}[t]{l}$C_{n-1}$\end{tabular}}}}%
    \put(0.26230746,0.01514691){\color[rgb]{0,0,0}\makebox(0,0)[lt]{\lineheight{1.25}\smash{\begin{tabular}[t]{l}$C_{n-1}$\end{tabular}}}}%
    \put(0.82465436,0.29113366){\color[rgb]{0,0,0}\makebox(0,0)[lt]{\lineheight{1.25}\smash{\begin{tabular}[t]{l}$D_m$\end{tabular}}}}%
    \put(0.72269483,0.01493252){\color[rgb]{0,0,0}\makebox(0,0)[lt]{\lineheight{1.25}\smash{\begin{tabular}[t]{l}$D_m$\end{tabular}}}}%
    \put(0,0){\includegraphics[width=\unitlength,page=2]{fig65.pdf}}%
    \put(0.01372091,0.28681165){\makebox(0,0)[lt]{\lineheight{1.25}\smash{\begin{tabular}[t]{l}$A$\end{tabular}}}}%
    \put(0.50464355,0.28681165){\makebox(0,0)[lt]{\lineheight{1.25}\smash{\begin{tabular}[t]{l}$B$\end{tabular}}}}%
    \put(0.01978169,0.01104648){\makebox(0,0)[lt]{\lineheight{1.25}\smash{\begin{tabular}[t]{l}$C$\end{tabular}}}}%
    \put(0,0){\includegraphics[width=\unitlength,page=3]{fig65.pdf}}%
  \end{picture}%
\endgroup%

\caption{Blue denotes boundary of the surface $T$ from Figure~\ref{fig:50} and red denotes the suture. A: Isotopy of the grading surface $T$ from Figure~\ref{fig:50}. B: Decomposition along the cocore disks of the 1-handles in $Y'_{1,1}(L)$ disjoint from $T$. C: The grading surfaces obtained at $C_{n-1}$ and $D_m$. In the diagram of $C_{n-1}$, the left square shows $n$ positive twists and the right square shows 1 negative twist of the suture $\Gamma_{n-1}$. The figure depicts the case, $n=3$.}
\label{fig:45}
\end{figure}

\begin{figure}[ht]
\centering
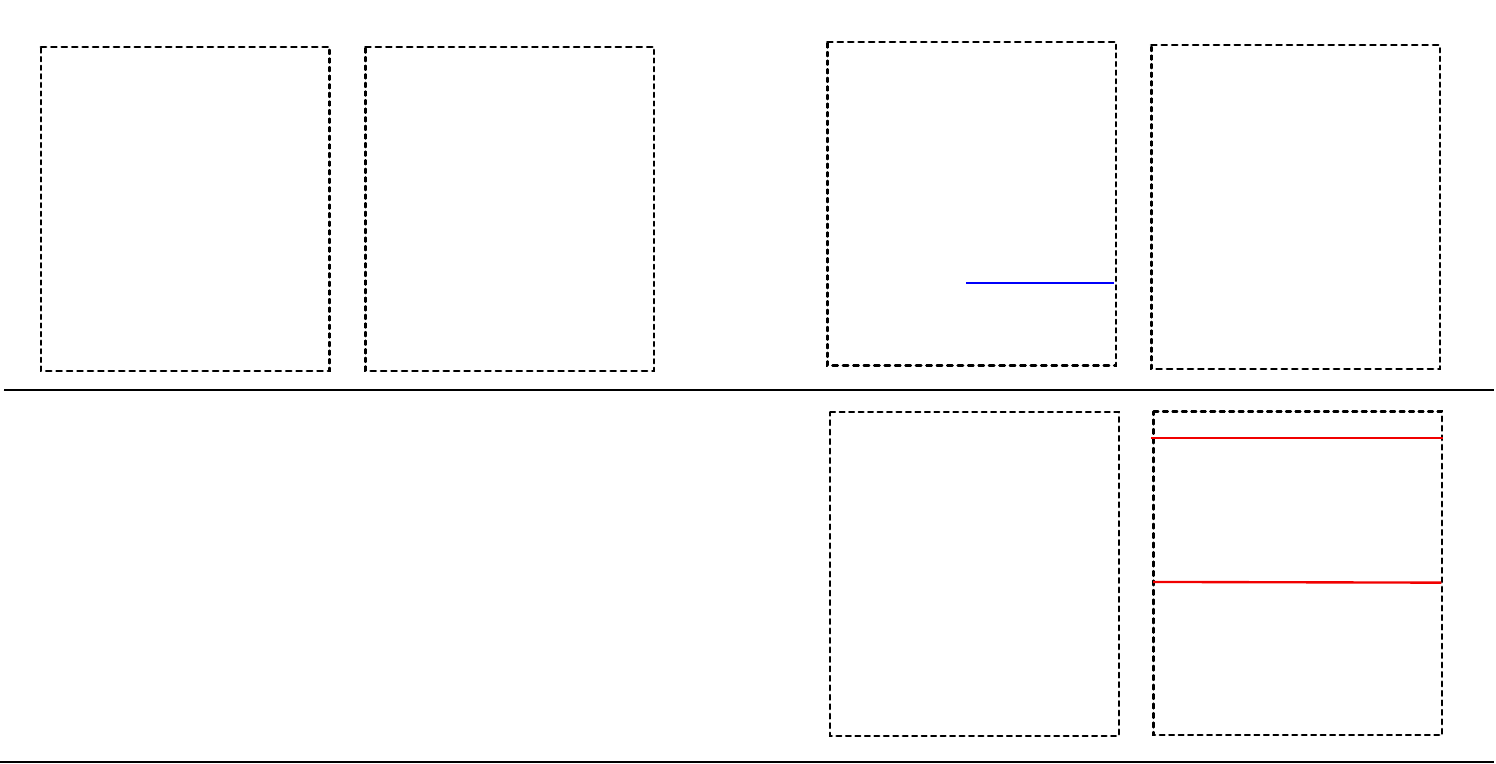
\caption{The grading shift of the map, $i_{\infty, 1}$ may be computed by the manipulation shown above. The grading shift from A to B is $-1$. The grading shift from B to C is $0$. The grading shift from C to D is $+1$.}
\label{fig:39}
\end{figure}

\begin{figure}[ht]
\centering
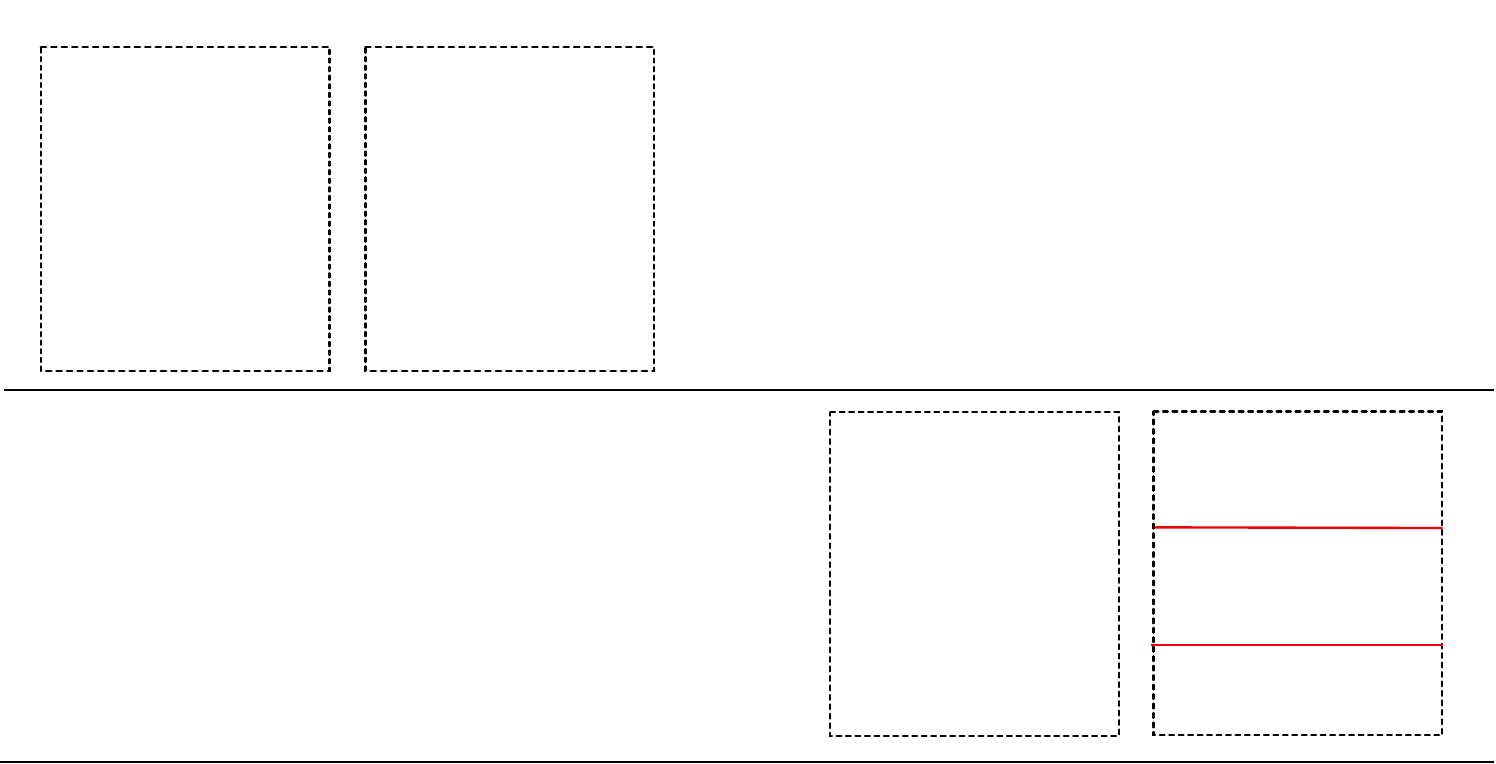
\caption{The grading shift of the map, $i_{1, \infty}$ may be computed by the manipulation shown above. The grading shift from A to B is $0$. The grading shift from B to C is $0$. The grading shift from C to D is $-1$.}
\label{fig:40}
\end{figure}

\subsection{Gradings and $I_{n, m}$}
\label{sec:Imn}
We now compute the grading change of the map $I_{n,m}$ and prove Proposition~\ref{prop: I_{m, n}}.

\begin{proof}[Proof of Proposition~\ref{prop: I_{m, n}}]
     Each component of $I_{n,m}$ in Equation~\eqref{equation:II} is a composition of two maps. The first map in each component corresponds to the contact handle manipulation used to identify $\SHI(Y_\veps(L))$ with $\SHI(Y_\veps'(L))$, where $\veps\in \{(1,1), (\infty,1),(1,\infty)\}$. The second map in each composition is an  isomorphism arising from a disk decomposition.

      We first observe that the isomorphisms from  $\SHI(Y_{\veps}(L),S^+_{n+m})$ to $\SHI(Y'_{\veps}(L), S^+_{n+m})$ are grading preserving because they are obtained by attaching contact 1-handles disjoint from the grading surface $S^+_{n+m}$. See Figure~\ref{fig:47}. 
     
     We now consider the maps arising from disk decompositions in our definition of $I_{n,m}$. We focus first on the map $i_{1,1,}$.   We only discuss the grading of the component mapping to the summand $C_{n-1} \otimes D_m$ of the codomain. The grading of the component mapping to $C_n\otimes D_{m-1}$ is analyzed similarly. Furthermore,  we focus on the case when $n$ and $m$ both are odd and leave the other parities for the reader. Note that if $n$ and $m$ are both odd, then $\tau(n+m)=1$. Hence, the surface $S_{n+m}^{\tau(n+m)}$ is obtained from $S_{n+m}$ by a single positive stabilization. We write $S_{n+m}^+$ for this surface. Furthermore, $\zeta(n,m)=1$.

 We now perform a local modification to the grading surface $S^+_{n+m}$ near $\d Y_{1,1}'(L)$  in the tube region, as shown in Figure~\ref{fig:50}. We call this new surface $T$. We form $T$ by adjoining a surface $F$ to $S^+_{n+m}$. The surface $F$ is a subsurface of $\partial Y'_{1, 1} (L)$ and has boundary $\partial T - \partial S^+_{n+m}$. Since $\chi(F \cap R_-(\gamma)) - \chi(F \cap R_+(\gamma)) = 0$, by Theorem~\ref{thm: Generalized shift} the grading induced by $T$ differs from the grading induced by $S^+_{n+m}$ by $0$.
 
  We now examine the disk decomposition used to define $i_{1,1}$. The original grading surface $S_{n+m}^+$ had a single positive stabilization, which we assume is on the side of the diagram for the complexes $C_n$ and $C_{n-1}$
  
  We cut along cocores of the 1-handles, which are now disjoint from the grading surface $T$. This is shown in frame B of Figure~\ref{fig:45}. The grading surface on diagram for $C_{n-1}$ intersects the sutures homologically $2(n-1)$ times.  If $n>0$, it additionally has two positive and one negative stabilizations. One of the positive stabilizations is from our original stabilization of $S_{n+m}^+$, and the other pair of stabilizations are due to the extra twist in the sutures in the negative direction, after we perform the disk decomposition. See frame C of Figure~\ref{fig:45}.
  
     To obtain $S^+_{n-1}\sqcup S_m$, we perform one positive and one negative stabilization to the surface on the left hand side of frame C of Figure~\ref{fig:45}. Additionally, we perform one positive and one negative stabilization to the $D_m$ side of the diagram. Therefore by Theorem~\ref{thm: shift stabilization}, the total grading shift of $i_{1,1}$ with respect to the grading surface $S_n^+\sqcup S_m$ is 0.

We now consider the map $i_{\infty, 1}$ from Equation~\eqref{equation:II}. We begin with the manifold $Y_{\infty,1}'(L)$ and remove contact 1-handles which are disjoint from the grading surface $S^+_{n+m}$. The result is shown in Frame A of Figure~\ref{fig:39}.  As before, this does not change the grading. Next, we change the grading surface by deleting a positive stabilization and adding a negative stabilization. The resulting surface $S_{n+m}^-$ is shown in Frame B of Figure~\ref{fig:39}. By Theorem~\ref{thm: shift stabilization}, the grading changes by $-1$ from Frame A to Frame B. Next, we perform an isotopy of the boundary of the grading surface to obtain Frame C. The grading shift from frame B to frame C is 0 as they are isotopic. Frame D is obtained by deleting the contact $1$-handle and undoing the two negative stabilizations. Deleting the contact 1-handle has no effect on grading as there is a closure which is simultaneously the closure of the manifold before and after after attaching the 1-handle, where both surfaces, $S^-_{n+m}$ and $S^-_n \sqcup S^-_m$ extend to the same closed surface. Finally by Theorem~\ref{thm: shift stabilization}, undoing the two negative stabilizations shifts the grading by $1$. Hence, the map $i_{\infty, 1}$ is grading preserving. See Figure~\ref{fig:39}.

Consider the map $i_{1, \infty}$ from Equation~\eqref{equation:II}. We now begin with the manifold $Y_{1, \infty}'(L)$ and remove contact 1-handles which are disjoint from the grading surface $S^+_{n+m}$. The result is shown in Frame A of Figure~\ref{fig:40}.  As before, this does not change the grading. Next, we perform an isotopy of the boundary of the grading surface to obtain Frame B. The grading shift from frame A to frame B is 0 as they are isotopic. Frame C is obtained by deleting the contact $1$-handle. Deleting the contact 1-handle has no effect on grading as there is a closure which is simultaneously the closure of the manifold before and after after attaching the 1-handle, where both surfaces, $S^+_{n+m}$ and $S^+_n \sqcup S^+_m$ extend to the same closed surface. Finally by Theorem~\ref{thm: shift stabilization}, undoing the two positive stabilizations shifts the grading by $-1$. Hence, the map, $i_{1, \infty}$ shifts the grading by $-1$. See Figure~\ref{fig:40}.

Finally, to finish the proof we consider the grading shift from the construction of direct limit in both the domain and co-domain of the map $I_{n, m}$. We shift the grading of $\SHI (Y_{1, 1}(L),  \Gamma_{n+m})$, $\SHI (Y_{\infty, 1}(L),  \Gamma_{n+m})$ and $\SHI (Y_{1, \infty}(L),  \Gamma_{n+m})$ by $\sigma (n+m)$. See Equation~\eqref{Eq: shift limit}. In $C_n \otimes D_m$, we shift the grading by $\zeta(n, m) = \sigma(n) + \sigma(m)$. On $C_n \otimes D_{m-1}$, we shift the grading by $\zeta(n, m-1)$ and on $C_{n-1} \otimes D_m$, we shift the grading by $\zeta(n-1, m)$. We observe that when $n$ and $m$ are odd,
\[
\sigma(n+m)=\sigma(n)+\sigma(m)-1,\qquad 
\sigma(m)=\sigma(m-1),\quad \text{and} \quad 
\sigma(n)=\sigma(n-1).
\]
Hence, with respect to the grading shifts from the direct limit, the maps $i_{\infty, 1}$ and $i_{1, 1}$ shift the grading by 1, leading to the shifts in the main statement.

A similar argument to the above argument for $i_{\infty,1}$ shows that $i_{1,\infty}$ drops the grading by $-1$ with respect to the gradings from $S_{n+m}^+$ and $S_n\sqcup S_{m}$. Hence, with respect to the shifts from the direct limit construction of knot Floer homology, the map $i_{1,\infty}$ preserves the grading, leading to the shift in the statement.  This completes the proof.
\end{proof}

\begin{rem}
In this paper, we are taking the direct limit with respect to the positive bypass maps. In \cite{LiLimits}, the author considered the direct limit with respect to the negative bypass maps with the oppositely oriented sutures. When we reverse the orientation of the sutures, the sign of the bypass also switches.
\end{rem}

We are finally ready to prove the main theorem of this section, Theorem~\ref{thm:main-gradings}, which we recall states that the isomorphism 
\[
\HFK^-(K_1\# K_2)\iso \HFK^-(K_1)\tildeotimes_{\bF[U]} \HFK^-(K_2)
\]
holds at the level of graded groups.

\begin{proof}[Proof of Theorem~\ref{thm:main-gradings}] Let $q\in \Z$ be a fixed integer. If $G$ is a graded group, write $G_{>q}$ for the subgroup concentrated in gradings above $q$. Note that the knot Floer groups $\HFK^-$ have Alexander grading which is bounded from above, and $U$ acts by $-1$.  We will show for each $q\in \Z$ that $\HFK^-(K_1\#K_2)_{>q}$ is isomorphic to $(\HFK^-(K_1)\tildeotimes \HFK^-(K_2) )_{>q}$. First, we assume that $N$ is chosen so that if $n>N$, then the positive  bypass map $\phi_{n}^+\colon \SHI(Y_1\setminus \nu(K_1),\Gamma_n)\to \SHI(Y_1\setminus \nu(K_1),\Gamma_{n+1})$ is an isomorphism on gradings above $q$. Similarly, pick $M$ so that if $m>M$, the analog holds for $K_2$. We may assume, additionally, that $N$ and $M$ are large enough that the positive bypass maps for $K_1\#K_2$ are also isomorphisms on gradings above $q$ when the framing is at least $N+M$.

We have proven that there is a graded isomorphism between $\SHI(Y_1\# Y_2\setminus \nu(K_1\# K_2))$ and the diagram
\begin{equation}
\begin{tikzcd}[column sep=4cm, row sep=1.5cm, labels=description] C_{n-1}\otimes D_m\oplus C_n\otimes D_{m-1} \ar[r,"\phi_{n-1}^-| \id\oplus \id| \phi_{m-1}^-"] \ar[d, "\phi_{n-1}^+| \id+\id| \phi_{m-1}^+"]& C_n\otimes D_m\\
C_n\otimes D_m&\,
\end{tikzcd}
\label{eq:expanded-complex}
\end{equation}
where $C_n= \SHI(Y_1\setminus \nu(K_1),\Gamma_n)$ and $D_m= \SHI(Y_2\setminus \nu(K_2)).$

By Lemma~\ref{lem:staircase-lemma}, there is a grading preserving chain map
\[
\Phi\colon C_{n-1}\otimes D_{m-1}\to \Cone\left(\begin{tikzcd}[column sep=3cm]
C_{n-1}\otimes D_m\oplus C_n\otimes D_{m-1} \ar[r, "\phi_{n-1}^+| \id+\id| \phi_{m-1}^+"]& C_n\otimes D_m
\end{tikzcd}\right),
\]
as well as a grading preserving chain map $\Psi$ in the opposite direction (technically, to $C_n\otimes D_m$) which are isomorphisms on homology in degrees above $q$. Since the differential preserves the Alexander grading, we conclude that  the subcomplex of Equation~\eqref{eq:expanded-complex} in grading above $q$ is homotopy equivalent to the complex
\[
\Cone(
\begin{tikzcd}[column sep=3cm]
(\HFK^-(K_1)\otimes_{\bF} \HFK^-(K_2))_{>q+1} \ar[r, "U_1\otimes \id+\id\otimes U_2"]& (\HFK^-(K_1)\otimes_{\bF} \HFK^-(K_2))_{>q})
\end{tikzcd}
\]
which coincides with $(\HFK^-(K_1)\tildeotimes \HFK^-(K_2))_{>q}$.  This completes the proof. 
\end{proof}

\begin{rem} Note that in the above, we used $\Cone(U_1+U_2)$ as a model of the derived tensor product, whereas usually one considers $\Cone(U_1-U_2)$. Note that the bypass maps $\phi_{n}^{\pm}$ are themselves only well-defined up to multiplication by an element of $\C^\times.$ We may view replacing $U_2$ with $-U_2$ as the result of changing the module structure on $\KHI^-(K_2)$ by declaring $U_2$ to instead act by $\a U_2$, where $\a\in \C^\times$. Lemma~\ref{lem:rescale-module} implies that the resulting modules are isomorphic. In particular, $\Cone(U_1+U_2)$ is isomorphic to $\Cone(U_1-U_2)$.
\end{rem}

\section{Oriented skein exact triangle}\label{sec:10}

In this section, we prove the Skein exact triangle for the minus version of instanton link homology. This relation has long been established in the setting of Heegaard knot and link Floer homology \cite{OSKnots}*{Theorem~8.2}. In the setting of instanton Floer homology, Kronheimer and Mrowka \cite{KMskein}*{Theorem~3.1} showed the existence of similar exact triangle in the version of instanton Floer homology which is analogous to $\widehat{\HFK}$ (denoted in our paper by $\widehat{\KHI}$).

In this section, we consider the minus version of instanton knot and link Floer homology, defined in \cite{LiLimits} and \cite{GhoshLiDecomposing}*{Section~6.2}. For an $r$ component link $L$ in a homology sphere $Y$, this takes the form of a module $\mathit{KHL}^-(Y, L)$ over $\C[U_1,\dots, U_r]$. This is the direct limit of the sutured Floer homologies of $Y\setminus \nu(L)$, equipped with longitudinal slopes. The action of $U_i$ is by a negative bypass on the boundary corresponding to the $i$-th link component.

The skein exact triangle focuses on a triple of links $L_+$, $L_-$ and $L_0$ inside of $Y$, which coincide except in a 3-ball, where they take the form shown in Figure~\ref{fig:54}.

\begin{figure}[ht]
\centering
\begingroup%
  \makeatletter%
  \providecommand\color[2][]{%
    \errmessage{(Inkscape) Color is used for the text in Inkscape, but the package 'color.sty' is not loaded}%
    \renewcommand\color[2][]{}%
  }%
  \providecommand\transparent[1]{%
    \errmessage{(Inkscape) Transparency is used (non-zero) for the text in Inkscape, but the package 'transparent.sty' is not loaded}%
    \renewcommand\transparent[1]{}%
  }%
  \providecommand\rotatebox[2]{#2}%
  \newcommand*\fsize{\dimexpr\f@size pt\relax}%
  \newcommand*\lineheight[1]{\fontsize{\fsize}{#1\fsize}\selectfont}%
  \ifx\svgwidth\undefined%
    \setlength{\unitlength}{259.82498169bp}%
    \ifx\svgscale\undefined%
      \relax%
    \else%
      \setlength{\unitlength}{\unitlength * \real{\svgscale}}%
    \fi%
  \else%
    \setlength{\unitlength}{\svgwidth}%
  \fi%
  \global\let\svgwidth\undefined%
  \global\let\svgscale\undefined%
  \makeatother%
  \begin{picture}(1,0.24803236)%
    \lineheight{1}%
    \setlength\tabcolsep{0pt}%
    \put(0,0){\includegraphics[width=\unitlength,page=1]{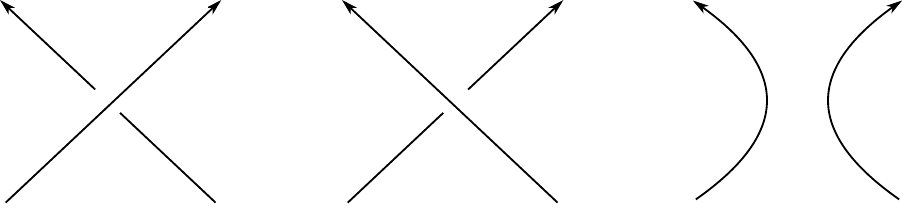}}%
    \put(0.08259549,0.00257908){\color[rgb]{0,0,0}\makebox(0,0)[lt]{\lineheight{1.25}\smash{\begin{tabular}[t]{l}$L_+$\end{tabular}}}}%
    \put(0.48374704,0.00506161){\color[rgb]{0,0,0}\makebox(0,0)[lt]{\lineheight{1.25}\smash{\begin{tabular}[t]{l}$L_-$\end{tabular}}}}%
    \put(0.8545862,0.00486987){\color[rgb]{0,0,0}\makebox(0,0)[lt]{\lineheight{1.25}\smash{\begin{tabular}[t]{l}$L_0$\end{tabular}}}}%
  \end{picture}%
\endgroup%

\caption{The skein triple. The diagram of a positive crossing, negative crossing and their oriented resolution.}
\label{fig:54}
\end{figure}

\begin{thm}\label{thm: skein}
Let $L_{+}$, $L_-$ and $L_0$ be oriented links in $Y$ as above. If the two strands meeting at the distinguished
crossing in $L_+$ belong to the same component, so that in the oriented resolution the two
strands corresponding to two distinct components of $L_0$, then there is an exact triangle
\[ 
\cdots \KHI^-(L_{+}) \xrightarrow{f^-} \KHI^-(L_-) \xrightarrow{g^-} \KHI^-(L_0)\tildeotimes \mathbb{C}[U_1,U_2]/(U_1-U_2) \xrightarrow{h^-} \KHI^-(L_{+})\cdots
.\]
If they belong to different components, then there is an exact triangle
\[
\cdots \KHI^-(L_{+}) \xrightarrow{f^-} \KHI^-(L_-) \xrightarrow{g^-} \KHI^-(L_0)\otimes W \xrightarrow{h^-} \KHI^-(L_{+})\cdots
\]
where $W=\oplus_{s\in \Z} W_s$ is the Alexander graded module given by 
\[
    W_s= 
\begin{cases}
    \C,& \text{if s=0, 1 } \\
    0,              & \text{otherwise.}
\end{cases}
\]
Furthermore, each of $f^-$, $g^-$ and $h^-$ preserves the Alexander grading. Here, we equip $\bC[U_1,U_2]$ with the Alexander grading determined by setting $A(1)=0$ and $A(U_1)=A(U_2)=-1$. 
\end{thm}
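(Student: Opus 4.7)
The strategy is to parallel the approach from the proof of Theorem~\ref{thm: main} (the connected sum formula) in an essential way. Near the distinguished crossing, we introduce a small framed unknot $K$ with the property that surgeries on $K$ with framings $\infty$, $-1$, and $0$ (relative to a standard framing of the pair-of-pants region) realize the three skein resolutions: surgery at $\infty$ gives $L_+$, surgery at $-1$ gives $L_-$, and surgery at $0$ produces a sutured manifold naturally related to $L_0$. One then applies Floer's surgery exact triangle in instanton theory to $K$ inside the longitudinal sutured exterior of $L_\pm$. As in the proof of Theorem~\ref{thm:link-surgery-instanton}, we must work at the level of closures and then take generalized eigenspaces of $\mu(R)$ and $\mu(p)$. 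The general framework developed in Section~\ref{sec:hypercubes-eigenspaces}, in particular Lemma~\ref{lem:simultaneous-eigenvalues} and Lemma~\ref{lem:hypercube-eigenspaces}, is precisely what makes the passage from the closed-manifold surgery triangle to a sutured surgery triangle work.

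Next, we pass to the direct limit. Since $K$ lies inside a small ball disjoint from the bypass arcs used to construct $\phi_n^\pm$, the positive bypass maps commute with all three terms of the sutured exact triangle up to chain homotopy. In particular, the exact triangle of transitive systems induces an exact triangle of direct limits. For the $L_+$ and $L_-$ corners, the direct limits are precisely $\KHI^-(L_+)$ and $\KHI^-(L_-)$, respectively. The real work is in analyzing the $L_0$-corner.

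The $L_0$-corner is the direct limit of a transitive system of sutured Floer homologies of the exterior of $L_0$, but equipped with sutures inherited from the $0$-surgery on $K$ and the longitudinal sutures of $L_\pm$. When the two strands at the crossing belong to different components of $L_+$, the $0$-surgery merges them into a single component of $L_0$, and decomposing along a pair of product disks (as in Figure~\ref{fig:13} and Lemma~\ref{lem:reorganize-cone}) identifies the sutured Floer homology with $\SHI$ of the standard $L_0$-complement tensored with a two-dimensional factor $W$, which reproduces the Alexander graded module $W = W_0 \oplus W_1$. When the two strands belong to the same component, the $0$-surgery splits the component into two; the induced sutures couple the two new boundary tori so that the corresponding $U$-actions are identified up to homotopy, and the staircase argument of Lemma~\ref{lem:staircase-lemma}, applied in the same way as in Section~\ref{sec:reorganize-obtain-derived}, identifies the direct limit with the derived tensor product $\KHI^-(L_0) \tildeotimes_{\C[U_1,U_2]} \C[U_1,U_2]/(U_1-U_2)$.

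The Alexander grading claim is verified exactly as in Section~\ref{sec:8}: the surgery cobordism for $K$ is a $2$-handle attachment along a knot disjoint from any Seifert surface for $L_\pm$, so the maps involved preserve the Alexander grading by the argument of Proposition~\ref{prop: F_{m, n}}, and the disk-decomposition identification of the $L_0$-corner preserves Alexander gradings by the analog of Proposition~\ref{prop: I_{m, n}}. The main obstacle we anticipate is the analysis of the $L_0$-corner in the same-component case: the sutures produced by $0$-surgery are not themselves longitudinal on either new boundary torus, and one must carefully re-route them via a sequence of contact handle manipulations (analogous to Figure~\ref{fig:43}) before the disk decomposition and staircase argument can be applied. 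Tracking the induced $\C[U]$-action through this manipulation, while only keeping data that is well-defined up to units (as enforced by the projectively transitive formalism of Section~\ref{sec:4}), is the source of the restriction of Conjecture~\ref{conj:intro} to the level of graded groups.
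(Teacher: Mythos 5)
Your skeleton is the same as the paper's: an unknot $l$ encircling the two strands with total linking number zero, the surgery exact triangle applied to the sutured exteriors (built through closures and the generalized-eigenspace machinery of Section~\ref{sec:hypercubes-eigenspaces}), exactness of direct limits, and identification of the third corner $\KHI^-(Y_0,L_+)$ via the annulus-gluing/connected-sum machinery of Sections~\ref{sec:connected-sum-HF}--\ref{sec:7}, with gradings handled as in Section~\ref{sec:8} (Theorem~\ref{thm:gradings link 1}). The genuine gap is in the case where the two strands of $L_+$ lie on \emph{different} components (the $\otimes W$ case). There the mapping-cone identification of the third corner produces the cone of a difference of two bypass maps that are both attached to the \emph{same} component of $L_0$; in instanton theory these maps are only well defined up to multiplication by a unit of $\C$, so what one actually obtains is $\Cone\bigl((1-\a)U\bigr)$ for an a priori unknown $\a\in\C^\times$. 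You assert that the disk decomposition directly yields $\KHI^-(L_0)\otimes W$, but that is precisely the conclusion that fails if $\a\neq 1$: since $U$ acts by isomorphisms in sufficiently negative Alexander gradings, the cone would then have vanishing homology there, and the answer would not be $\KHI^-(L_0)\otimes W$. The paper closes this gap with a localization argument (Lemma~\ref{lem:localization link}), showing $\KHI^-(Y_0,L_+,\ve{i})\iso I^{\#}(Y\#^{r-1}S^1\times S^2)\neq 0$ for $\ve{i}$ sufficiently negative, which forces $\a=1$. Nothing in your proposal plays this role, and the unit cannot be fixed by the projectively transitive formalism alone.

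A smaller, related confusion: in the other case (two strands of $L_+$ on the same component, so $L_0$ has two distinct components at the crossing) you say the sutures couple the two new boundary tori ``so that the corresponding $U$-actions are identified up to homotopy.'' That is not what happens, and if it were, this corner would again give $\KHI^-(L_0)\otimes W$ rather than the stated derived tensor product. In this case $U_1$ and $U_2$ are bypasses on genuinely distinct components of $L_0$; the staircase argument (Lemma~\ref{lem:staircase-lemma}) identifies the third corner with $\Cone\bigl(U_1-U_2\colon \KHI^-(L_0)[-1]\to\KHI^-(L_0)\bigr)$, which is the derived tensor product $\KHI^-(L_0)\tildeotimes_{\C[U_1,U_2]}\C[U_1,U_2]/(U_1-U_2)$ essentially by definition; here the unit ambiguity is harmless because rescaling each $U_i$ gives an isomorphic graded module by Lemma~\ref{lem:rescale-module}. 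So the delicate unit calibration belongs to the $\otimes W$ case, not to the derived-tensor-product case, and it is the step your argument is missing.
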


\subsection{Alexander gradings on instanton link homology} \label{Sub: grading for link}

Let $L$ be an oriented null-homologous link of $r$ components in a closed, oriented 3-manifold $Y$. Let $L_1, L_2, \dots ,L_r$ be the components of $L$ and let $T_i$ be the boundary component of $Y \setminus \nu(L)$ corresponding to the component $L_i$ where $i= 1, \dots, r $. The Seifert surface $S$ of the link $L$ induces a framing
on each $T_i$. We write $\lambda_i$ for the longitude induced by $S$, and we write $\mu_i$ for the meridian. Note that $\lambda_i$
is oriented in the same way as $L_i$, and $\mu_i$ is oriented so that $\#(\lambda_i \cap \mu_i) = 1$. If $\ve{n}=(n_1,\dots,n_r)\in \Z^r$, let $\Gamma_{\ve{n}}$ be the suture on $Y \setminus{\nu(L})$ so that $\Gamma_{\ve{n}} \cap T_i$ consists of two parallel simple closed curves of class $\lambda_i - n_i\cdot  \mu_i$. 

Analogous to the knot case, we write $S_{\ve{n}}$ for the Seifert surface that intersects the suture $\Gamma_{\ve{n}}$ in $2(\sum_{i=1}^{r} n_i)$ points. Following Li \cite{LiLimits}*{Section~5}, if $(\sum_{i=1}^{r} n_i) + r -1$ is odd, the surface $S_{\ve{n}}$ is admissible in the sense of Definition~\ref{admissibility of surfaces}. Let $\ve{e}_i$ be the standard unit vector in $\Z^r$. Then, we have the following proposition.

\begin{prop}\label{grading shift links}
Fix any $i\in \{1,\dots,r\}$ and $\ve{n} = (n_1, \dots, n_r) \in \mathbb{Z}^r$. Then if $(\sum_{i=1}^{r} n_i) + r -1$ is even, for any $j\in \mathbb{Z}$, we have
\[\phi^\pm_{\ve{n}, {\ve{n}+ {\ve{e}_i}}}(\SHI (Y \setminus \nu(L), \Gamma_{\ve{n}}, S_{\ve{n}}^{\pm}, j))\subset \SHI (Y\setminus \nu(L), \Gamma_{\ve{n}+ {\ve{e}_i}}, S_{\ve{n}+\ve{e}_i},j)\]
and if $(\sum_{i=1}^{r} n_i) + r -1$ is odd, then for any $j\in \mathbb{Z}$, we have
\[\phi^\pm_{\ve{n}, {\ve{n}+ {\ve{e}_i}}}(\SHI (Y\setminus{\nu(L)}, \Gamma_{\ve{n}}, S_{\ve{n}}^{\pm 2}, j))\subset \SHI (Y \setminus \nu(L), \Gamma_{\ve{n}+ {\ve{e}_i}}, S_{\ve{n}+ \ve{e}_i}^{\pm},j)\]
\end{prop}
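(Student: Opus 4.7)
The plan is to generalize the knot argument \cite{LiLimits}*{Proposition~4.17} by exploiting that the bypass $\phi^\pm_{\ve{n}, \ve{n}+\ve{e}_i}$ is supported in a collar of the single boundary torus $T_i$: all other sutures and the surface structure near the remaining tori remain unchanged, so the analysis is purely local on $T_i$, with the other components contributing only to the parity count of $(\sum_j n_j) + r - 1$.

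First I would verify the admissibility dichotomy by computing $\tfrac{1}{2}|S_{\ve{n}}\cap \Gamma_{\ve{n}}| - \chi(S_{\ve{n}}) = (\sum_j n_j) + r + 2g - 2$, which is even precisely when $(\sum_j n_j) + r - 1$ is odd. This explains the split in the statement: when this quantity is odd, $S_{\ve{n}}$ is already admissible and is paired with the double stabilization $S_{\ve{n}}^{\pm 2}$ (retaining admissibility); when even, we must stabilize once to reach admissibility. Since the bypass shifts $\sum_j n_j$ by one, the codomain side toggles between $S_{\ve{n}+\ve{e}_i}$ and $S_{\ve{n}+\ve{e}_i}^{\pm}$ accordingly.

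Next I would apply Lemma~\ref{lem:ctct-2-handle=triangles} to decompose $\phi^\pm$ into a contact $1$-handle attachment, a $4$-dimensional $2$-handle cobordism, and a product disk decomposition (the inverse of a contact $1$-handle). The first two pieces can be arranged disjoint from the grading surface and are grading preserving by the argument of Proposition~\ref{prop: F_{m, n}}. The grading surface is modified only at the product disk decomposition, and the analysis is exactly that of Figures~\ref{fig:39} and~\ref{fig:40} combined with the proof of Proposition~\ref{prop: I_{m, n}}: one uses Theorem~\ref{thm: Generalized shift} to perform a local boundary modification near $T_i$ with zero net shift (since the relevant subsurface of $\partial(Y\setminus \nu(L))$ has balanced Euler characteristic contributions from $R_\pm(\gamma)$), followed by the cancellation of a positive against a negative stabilization via Theorem~\ref{thm: shift stabilization}. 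The sign of the stabilization is consistent with the sign of the bypass, which is why $\phi^+$ pairs with $S^+_{\ve{n}}$ or $S^{+2}_{\ve{n}}$ and $\phi^-$ with the negative analogues.

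The hard part will be the careful sign bookkeeping: for each choice of $\pm$ and each parity of $(\sum_j n_j)+r-1$, we must check that the stabilization sign changes introduced at the product disk decomposition exactly cancel those required to interpolate between the admissible representatives on the two sides. This is a direct adaptation of the two-torus analysis already carried out in Section~\ref{sec:Imn}, now restricted to the single torus $T_i$; the link case differs from the knot case only through the frozen contribution $(r-1) + \sum_{j\ne i} n_j$ to the parity, which is already encoded in the statement of the proposition.
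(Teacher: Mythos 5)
Your proposal follows essentially the same route as the paper, whose entire proof consists of two observations: the parity computation showing that $S_{\ve{n}}$ is admissible exactly when $(\sum_i n_i)+r-1$ is odd (your computation of $\tfrac12|S_{\ve{n}}\cap\Gamma_{\ve{n}}|-\chi(S_{\ve{n}})$ matches this), and the remark that, since the bypass is supported near the single torus $T_i$, the knot-case argument of Li carries over verbatim --- the paper simply cites \cite{LiLimits}*{Proposition~5.5} for the rest. Two small points of comparison: first, in the instanton setting you do not need Lemma~\ref{lem:ctct-2-handle=triangles} (which is a Heegaard Floer comparison statement); the decomposition of the contact $2$-handle map as a $4$-dimensional $2$-handle map followed by the inverse of a contact $1$-handle map is Baldwin--Sivek's definition, so the decomposition of $\phi^{\pm}$ you want is available by construction. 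Second, the ``careful sign bookkeeping'' you defer --- arranging the attaching circle relative to the stabilized grading surfaces so that $\phi^{+}$ pairs with positive and $\phi^{-}$ with negative stabilizations, and verifying the stated containments on the nose --- is precisely the content of Li's Proposition~5.5 rather than of Section~\ref{sec:Imn} (which treats disk decompositions in the connected-sum configuration, not bypasses on a single torus); so as written your argument still rests on the same external input the paper cites, just reached through the stabilization-shift machinery (Theorems~\ref{thm: shift stabilization} and~\ref{thm: Generalized shift}) instead of a direct citation.
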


\begin{proof}
$S_{\ve{n}}$ is admissible if and only if $(\sum_{i=1}^{r} n_i) + r -1$ is odd. The proof is now exactly the same as \cite{LiLimits}*{Proposition~5.5}. 
\end{proof}
We now extend our definition of $\tau \colon \Z \to \Z$ from Equation~\eqref{Eq: shift limit} to $\Z^r \to \Z$  and define, \begin{equation}\label{tau for link}
  \tau(\ve{n})= 
\begin{cases}
    0,              & \text{if } (\sum_{i=1}^{r} n_i) + r -1 \text{ is odd}\\
        1,& \text{if } (\sum_{i=1}^{r} n_i) + r -1 \text{ is even}.
\end{cases}   
 \end{equation} Following Equation~\eqref{tau for link}, let $S^{\tau}_{\ve{n}}$ be  $S_{\ve{n}}$ if $\tau(\ve{n})=0$ and $S^{\tau}_{\textbf{n}}$ be a positive stabilization of $S_{\ve{n}}$ performed on $T_i$ if $\tau(\ve{n})=1$. We can use $S^{\tau}_{\ve{n}}$ to define an Alexander grading on $\SHI(Y\setminus \nu(L),\Gamma_{\ve{n}})$.
To construct an Alexander grading on $\KHI^-(Y, L)$, we define the following grading shift:
\[
     \SHI (Y\setminus \nu(L), \Gamma_{\ve{n}}, S_{\ve{n}}^{\tau (\ve{n})}, i) [\sigma (\ve{n})]
   \]
where, \begin{equation}\label{Eq: shift limit for link}
    \sigma (\ve{n}) =
    \begin{cases}
    -\frac{(\sum_{i=1}^{r} n_i) + r -2}{2} + r-1, & \text{if } \tau(\ve{n})=0\\
    -\frac{(\sum_{i=1}^{r} n_i) + r -1}{2} + r-1, & \text{if } \tau(\ve{n})=1
    \end{cases}
 \end{equation}
  \begin{lem} \label{lem: positive negative shift}
   Under this shift, the positive bypass maps, $\phi^+_{\ve{n}, \ve{n} + \ve{e}_i}$ are grading preserving, and the negative bypass maps, $\phi^-_{\ve{n}, \ve{n} + \ve{e}_i}$ shift the grading by $-1$. 
  \end{lem}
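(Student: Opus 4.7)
The plan is to combine Proposition~\ref{grading shift links}, which records the raw grading behavior of the bypass maps on admissible representatives, with the stabilization grading shift of Theorem~\ref{thm: shift stabilization}, which translates gradings between different stabilizations of a fixed Seifert surface. The first step is to observe that the parity of $(\sum n_i)+r-1$ flips when $\ve{n}$ is replaced by $\ve{n}+\ve{e}_i$, so $\tau(\ve{n}+\ve{e}_i)=1-\tau(\ve{n})$. This reduces the problem to two cases.

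In the case $\tau(\ve{n})=0$ (and hence $\tau(\ve{n}+\ve{e}_i)=1$), the grading surface used in the direct limit is $S_{\ve{n}}$ on the domain and $S_{\ve{n}+\ve{e}_i}^{+}$ on the codomain. The second half of Proposition~\ref{grading shift links} provides $\phi^{\pm}\colon \SHI(\cdots,S_{\ve{n}}^{\pm 2},j)\to \SHI(\cdots,S_{\ve{n}+\ve{e}_i}^{\pm},j)$, so I would apply Theorem~\ref{thm: shift stabilization} once on the domain (converting $S_{\ve{n}}^{\pm 2}$ to $S_{\ve{n}}$) and, for $\phi^-$ only, once on the codomain (converting $S_{\ve{n}+\ve{e}_i}^{-}$ to $S_{\ve{n}+\ve{e}_i}^{+}$). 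This shows that with respect to the surfaces appearing in the direct limit, $\phi^+$ raises the raw grading by $+1$ while $\phi^-$ preserves it. A short computation from Equation~\eqref{Eq: shift limit for link} gives $\sigma(\ve{n}+\ve{e}_i)-\sigma(\ve{n})=-1$ in this case, so adding the algebraic shift yields a net shift of $0$ for $\phi^+$ and $-1$ for $\phi^-$.

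The case $\tau(\ve{n})=1$ is analogous but requires less translation: the first half of Proposition~\ref{grading shift links} immediately gives $\phi^{\pm}\colon \SHI(\cdots,S_{\ve{n}}^{\pm},j)\to \SHI(\cdots,S_{\ve{n}+\ve{e}_i},j)$ with the codomain surface already correct, and only one application of Theorem~\ref{thm: shift stabilization} (relating $S_{\ve{n}}^{-}$ to $S_{\ve{n}}^{+}$) is needed to rewrite $\phi^-$. In this case $\sigma(\ve{n}+\ve{e}_i)-\sigma(\ve{n})=0$, and the same total shifts are obtained.

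The hard part is purely bookkeeping: one has to keep the sign of the relation $p-q=2k$ from Theorem~\ref{thm: shift stabilization} straight when moving between $S_{\ve{n}}$, $S_{\ve{n}}^{\pm}$, and $S_{\ve{n}}^{\pm 2}$, and then verify that the piecewise formula for $\sigma$ in Equation~\eqref{Eq: shift limit for link} cancels the raw shift in exactly the right way in each parity case. No new conceptual input beyond the knot case treated in Section~\ref{sub: grading} is required, since the admissibility condition of Definition~\ref{admissibility of surfaces} and the proof of Proposition~\ref{grading shift links} both extend to arbitrary component counts.
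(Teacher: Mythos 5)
Your proposal is correct and follows essentially the same route as the paper: a parity case analysis on $(\sum n_i)+r-1$, using Proposition~\ref{grading shift links} for the raw grading behavior and Theorem~\ref{thm: shift stabilization} to translate between stabilizations, then checking that the difference $\sigma(\ve{n}+\ve{e}_i)-\sigma(\ve{n})$ ($-1$ or $0$ depending on parity) cancels the raw shift. The only difference is that you spell out the stabilization bookkeeping and the $\phi^-$ case explicitly, which the paper compresses into ``similarly one can argue.''
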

  \begin{proof}
  Let $\ve{n} \in \Z^r$, then there are two possibilities. Either $(\sum_{i=1}^{r} n_i) + r -1$ is even or odd. Suppose $(\sum_{i=1}^{r} n_i) + r -1$ is even then $\phi^+_{\ve{n}, \ve{n} + \ve{e}_i}\colon \SHI (Y\setminus{\nu(L)}, \Gamma_{\ve{n}}, S_{\ve{n}}^{\tau(\ve{n})}, j) \to \SHI (Y\setminus \nu(L), \Gamma_{\ve{n}+ {\ve{e}_i}} S_{\ve{n}+\ve{e}_i}^{\tau({\ve{n}+\ve{e}_i})},j)$ is grading preserving from Proposition~\ref{grading shift links}. After we consider the shift from Equation~\eqref{Eq: shift limit for link}, each of the domain and codomain of $\phi^+_{\ve{n}, i}$ gets equal shift. Now suppose $(\sum_{i=1}^{r} n_i) + r -1$ is odd then $\phi^+_{\ve{n}, \ve{n} + \ve{e}_i}\colon \SHI (Y\setminus{(\nu(L)}, \Gamma_{\ve{n}}, S_{\ve{n}}^{\tau(n)}, j) \to \SHI (Y\setminus \nu(L), \Gamma_{\ve{n}+ {\ve{e}_i}} S_{\ve{n}+\ve{e}_i}^{\tau (\ve{n}+ {\ve{e}_i})}
,j)$ shifts the grading by 1. After we consider the shift from Equation~\eqref{Eq: shift limit for link}, we observe that the codomain of $\phi^+_{\ve{n}, \ve{n} + \ve{e}_i}$ gets $-1$ shift relative to the domain. Hence the positive bypass maps, $\phi^+_{\ve{n}, \ve{n} + \ve{e}_i}$ are grading preserving under the shift from Equation~\eqref{Eq: shift limit for link}. Similarly one can argue for the negative bypass maps all $\phi^-_{\ve{n}, \ve{n} + \ve{e}_i}$ shift the grading by $-1$ after the shift. Hence the lemma follows.
  \end{proof}
  The first author and Li \cite{GhoshLiDecomposing}*{Section~6.2} showed that the direct limit construction defines a functor from the category $(\N^r, \le)$ to groups. Here $(\N^r,\le )$ denotes the category whose objects are tuples $(n_1,\dots, n_r)\in \N^r$ such that there is a morphism from $\ve{n}=(n_1,\dots, n_r)$ to $\ve{n}'=(n_1',\dots, n_r')$ if and only if $n_i\le n_i'$ for all $i$. The first author and Li define the functors value on the morphism from $\ve{n}=(n_1,\dots, n_r)$ to $\ve{n}'=(n_1',\dots, n_r')$ to be a composition of positive bypass maps. The first author and Li define $\KHI^-(Y,L)$ as the direct limit over this system. The action of $U_i$ is given by the negative bypass attachment $\phi^-_{\ve{n},i}$.

\begin{prop}\label{Prop: Z grading Link}
Using the grading on $\SHI (Y\setminus \nu(L), \Gamma_n, S_n^{\tau (\ve{n})}) [\sigma (\ve{n})]$, we can construct a $\mathbb{Z}$ grading in $\KHI^-(Y, L)$.
\end{prop}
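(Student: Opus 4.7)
The plan is to deduce the $\Z$-grading on $\KHI^-(Y,L)$ directly from the fact that the transition maps in the directed system defining $\KHI^-(Y,L)$ preserve the shifted grading. Recall from the discussion preceding the proposition that $\KHI^-(Y,L)$ is defined via a functor from $(\N^r,\le)$ to $\C$-vector spaces, where the morphism associated to $\ve{n}\le \ve{n}'$ is (projectively) the composition of elementary positive bypass maps $\phi^+_{\ve{m},\ve{m}+\ve{e}_i}$ along any monotone path from $\ve{n}$ to $\ve{n}'$. By Lemma~\ref{lem: positive negative shift}, each elementary positive bypass map
\[
\phi^+_{\ve{n},\ve{n}+\ve{e}_i}\colon \SHI(Y\setminus \nu(L),\Gamma_{\ve{n}},S_{\ve{n}}^{\tau(\ve{n})})[\sigma(\ve{n})] \to \SHI(Y\setminus \nu(L),\Gamma_{\ve{n}+\ve{e}_i},S_{\ve{n}+\ve{e}_i}^{\tau(\ve{n}+\ve{e}_i)})[\sigma(\ve{n}+\ve{e}_i)]
\]
is grading preserving. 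Since a composition of grading-preserving maps is grading preserving, every transition map in the directed system respects the $\Z$-grading; hence, for each fixed $j \in \Z$, the assignment $\ve{n}\mapsto \SHI(Y\setminus \nu(L),\Gamma_{\ve{n}},S_{\ve{n}}^{\tau(\ve{n})},j-\sigma(\ve{n}))$ is a subfunctor of the original directed system.

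Next, since direct limits commute with direct sums (both being filtered colimits of vector spaces), we obtain a canonical decomposition
\[
\KHI^-(Y,L) \iso \bigoplus_{j\in \Z} \varinjlim_{\ve{n}} \SHI(Y\setminus \nu(L),\Gamma_{\ve{n}},S_{\ve{n}}^{\tau(\ve{n})},j-\sigma(\ve{n})),
\]
where the limits are interpreted in the projectively transitive sense of Section~\ref{sec:callibrated-limits}. We define the grading-$j$ subspace of $\KHI^-(Y,L)$ to be the $j$-th summand on the right. Applying Lemma~\ref{lem: positive negative shift} once more, the action of $U_i$ induced by the negative bypass map $\phi^-_{\ve{n},i}$ shifts this grading by $-1$, giving the expected module-theoretic compatibility.

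The only nontrivial point is the interaction between the shifts $\sigma(\ve{n})$ and $\sigma(\ve{n}+\ve{e}_i)$ with the parity flip of $\tau$ along each elementary bypass. When $\tau(\ve{n})=1$, the shifts agree ($\sigma(\ve{n}+\ve{e}_i)=\sigma(\ve{n})$) and Proposition~\ref{grading shift links} says $\phi^+$ is grading preserving, so nothing needs to be done. When $\tau(\ve{n})=0$, one has $\sigma(\ve{n}+\ve{e}_i)=\sigma(\ve{n})-1$ and $\phi^+$ increases the $S^{\tau}$-grading by $+1$; this increase is computed via Theorem~\ref{thm: shift stabilization} applied to the pair $S_{\ve{n}}^{\tau(\ve{n})}=S_{\ve{n}}$ and $S_{\ve{n}}^{+2}$, and the $+1$ shift exactly cancels the $-1$ change in $\sigma$. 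As this verification is already encapsulated in Lemma~\ref{lem: positive negative shift}, no further calculation is required to finish the proof.
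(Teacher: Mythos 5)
Your proposal is correct and follows the same route as the paper: the paper's proof simply observes that the statement is an immediate consequence of Lemma~\ref{lem: positive negative shift}, which is exactly the grading-preservation of the positive bypass transition maps that you invoke before passing to the direct limit grading-by-grading. Your extra verification of the parity/shift bookkeeping and the $U_i$-action is a correct (if redundant) expansion of what that lemma already encapsulates.
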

\begin{proof}
The Proposition is an immediate consequence of Lemma~\ref{lem: positive negative shift}.
\end{proof}

\subsection{Proof of Theorem~\ref{thm: skein}}

\begin{figure}[ht]
\centering
\begingroup%
  \makeatletter%
  \providecommand\color[2][]{%
    \errmessage{(Inkscape) Color is used for the text in Inkscape, but the package 'color.sty' is not loaded}%
    \renewcommand\color[2][]{}%
  }%
  \providecommand\transparent[1]{%
    \errmessage{(Inkscape) Transparency is used (non-zero) for the text in Inkscape, but the package 'transparent.sty' is not loaded}%
    \renewcommand\transparent[1]{}%
  }%
  \providecommand\rotatebox[2]{#2}%
  \newcommand*\fsize{\dimexpr\f@size pt\relax}%
  \newcommand*\lineheight[1]{\fontsize{\fsize}{#1\fsize}\selectfont}%
  \ifx\svgwidth\undefined%
    \setlength{\unitlength}{83.82180174bp}%
    \ifx\svgscale\undefined%
      \relax%
    \else%
      \setlength{\unitlength}{\unitlength * \real{\svgscale}}%
    \fi%
  \else%
    \setlength{\unitlength}{\svgwidth}%
  \fi%
  \global\let\svgwidth\undefined%
  \global\let\svgscale\undefined%
  \makeatother%
  \begin{picture}(1,0.98493885)%
    \lineheight{1}%
    \setlength\tabcolsep{0pt}%
    \put(0,0){\includegraphics[width=\unitlength,page=1]{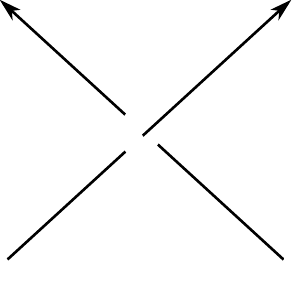}}%
    \put(0.33671223,0.01024251){\color[rgb]{0,0,0}\makebox(0,0)[lt]{\lineheight{1.25}\smash{\begin{tabular}[t]{l}$L_+$\end{tabular}}}}%
    \put(0,0){\includegraphics[width=\unitlength,page=2]{fig55.pdf}}%
    \put(0.42934052,0.67914766){\color[rgb]{0,0,0}\makebox(0,0)[lt]{\lineheight{1.25}\smash{\begin{tabular}[t]{l}$l$\end{tabular}}}}%
  \end{picture}%
\endgroup%

\caption{The link $L_+$, with an unknot $l$ around a crossing, with linking number zero. }
\label{fig:55}
\end{figure}

Let $l$ be an unknot which encircles the two strands of $L_{+}$ with total linking number zero, as shown in Figure~\ref{fig:55}. Let $Y_{-1}$ and $Y_0$ be the $3$-manifolds obtained from $Y$ by $-$1-surgery and 0-surgery on $l$, respectively. We view $L_-$ and $L_0$ as being the knots obtained from $L_+$ after performing surgery on $l$. We observe that $(Y_{-1} \setminus \nu(L_+), \Gamma_{\ve{n}})= (Y \setminus \nu(L_-), \Gamma_{\ve{n}})$ for any $\ve{n} \in \Z^r$. The surgery exact triangle immediately gives us 
\[
\cdots \SHI(Y\setminus \nu(L_{+}), \Gamma_{\ve{n}}) \xrightarrow{} \SHI(Y\setminus \nu(L_{-}), \Gamma_{\ve{n}}) \xrightarrow{} \SHI(Y_0\setminus \nu(L_{+}), \Gamma_{\ve{n}})\xrightarrow{} \SHI(Y\setminus \nu(L_{+}), \Gamma_{\ve{n}}) \cdots
\] Since direct limits preserve exactness, we obtain an exact triangle
\[
\cdots \KHI^-(Y, L_{+}) \xrightarrow{} \KHI^-(Y, L_{-}) \xrightarrow{} \ \KHI^-(Y_0, L_{+})\xrightarrow{} \KHI^-(Y, L_{+}) \cdots 
\]

\begin{prop} Let $L_+,$ $L_-$ and $L_0$ be as in the Skein exact triangle.
\begin{enumerate}
\item If the two strands from $L_0$ belong to different link components say $L_1$ and $L_2$, then
\[
\KHI^-(Y_0,L_+)\iso  \Cone(U_1 - U_2\colon \KHI^-(Y,L_0)[-1]\to \KHI^-(Y,L_0)).
\]
\item If the two strands from $L_0$ belong to the same link component, then
\[
\KHI^-(Y_0,L_+)\iso \KHI^-(L_0)\otimes W  
\] where $W=\oplus_{s\in \Z} W_s$ is the Alexander graded module given by 
\[
    W_s= 
\begin{cases}
    \C,& \text{if s=0, 1 } \\
    0,              & \text{otherwise.}
\end{cases}
\]
\end{enumerate}
\end{prop}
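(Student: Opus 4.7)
The plan is to mirror the proof of the connected sum formula from Section~\ref{sec:7}, adapted to the topology of $Y_0$. First, we fix a disk $D\subset Y$ bounded by the unknot $l$ and meeting $L_+$ transversely in the two strands at the distinguished crossing. Capping $D$ off with the cocore of the $0$-framed $2$-handle yields an embedded sphere $\Sigma\subset Y_0$ intersecting $L_+$ transversely in two points of opposite sign (since the linking number of $l$ with $L_+$ is zero). Thus $Y_0\iso Y\#(S^1\times S^2)$, and because $l$ can be chosen to encircle only the crossing region, the link $L_0$ sits in the $Y$-summand and is disjoint from $\Sigma$, while $L_+\subset Y_0$ is obtained from $L_0$ by a band sum that runs once through the $S^1\times S^2$ factor.

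Next, in parallel with Section~\ref{sec:7}, we would introduce a two-component framed link $J=J_1\cup J_2$ inside $Y_0\setminus\nu(L_+)$, with each $J_i$ parallel to one of the two strands passing through $\Sigma$ and framed by the boundary framing of the torus component of $\partial\nu(L_+)$ containing that strand. Applying the iterated mapping cone of Theorem~\ref{thm:link-surgery-instanton} together with the eigenspace formalism of Section~\ref{sec:hypercubes-eigenspaces}, and then reorganizing the resulting hypercube exactly as in Lemma~\ref{lem:reorganize-cone} via contact $1$-handle cancellations and a disk decomposition (using \cite{GhoshLiDecomposing}*{Corollary~4.3} and \cite{BaldwinLiYeHeegaard}*{Proposition~3.3}), the vertices of the hypercube become tensor products of $\SHI$ groups for $Y\setminus\nu(L_0)$ with longitudinal sutures of varying slope on the tori meeting the crossing, and the edges become the bypass attachment maps $\phi^{\pm}$ on those tori. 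Passing to the direct limit as in Section~\ref{sec:reorganize-obtain-derived} then identifies the limit with the claimed expression in each case: in Case~1, the two strands lie on distinct components of $L_0$, so the two families of bypasses define genuinely independent actions $U_1$, $U_2$ on $\KHI^-(Y,L_0)$, and the cone collapses to $\Cone(U_1-U_2)$, which is the derived tensor product with $\C[U_1,U_2]/(U_1-U_2)$; in Case~2, both strands lie on the same component of $L_0$, so the two bypass actions coincide ($U_1=U_2=U$), and the cone collapses to $\KHI^-(Y,L_0)\oplus\KHI^-(Y,L_0)[1]$, matching $\KHI^-(Y,L_0)\otimes W$ after accounting for the Alexander grading shift induced by $\phi^-$.

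The main technical obstacle will be organizing the hypercube reorganization in the two cases. In Case~1 the disk decomposition separates the relevant portion of $Y\setminus\nu(L_0)$ into pieces touching two distinct boundary tori, so the analysis closely parallels Section~\ref{sec:7} and the vertices split as a tensor product of two independent sutured invariants. Case~2 is genuinely different: the disk decomposition returns pieces meeting the same torus twice, so the hypercube vertices do not split as a tensor product but instead carry iterated bypass attachments on a single torus, and the algebraic identification with $\KHI^-(Y,L_0)\otimes W$ requires a direct telescoping argument in place of the tensor-product telescoping of Section~\ref{sec:reorganize-obtain-derived}. The grading bookkeeping follows the template of Section~\ref{sec:8}, now using the multi-component Alexander grading machinery of Section~\ref{Sub: grading for link}.
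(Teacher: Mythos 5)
Your overall route is the paper's: glue an annulus (equivalently, two boundary-parallel $2$-handles) near the crossing, run the iterated mapping cone plus eigenspace hypercube machinery of Sections~\ref{sec:hypercubes-eigenspaces}--\ref{sec:7}, reorganize by contact-handle cancellation and disk decomposition, and telescope the direct limit as in Section~\ref{sec:reorganize-obtain-derived}; Case~1 then comes out as in the paper. However, there is a genuine gap in your Case~2. You assert that because the two strands lie on the same component of $L_0$, "the two bypass actions coincide ($U_1=U_2=U$)" and hence the cone collapses to $\KHI^-(Y,L_0)\oplus\KHI^-(Y,L_0)[1]$. In the instanton setting the bypass maps, and therefore the induced $U$-actions, are only well defined up to multiplication by a unit in $\C^\times$ (the groups form a projectively transitive system, Section~\ref{sec:4}). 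Since both bypasses are attached along the same boundary torus, all you can conclude a priori is $U_1=\a U_2$ for some $\a\in\C^\times$, so the limit is $\Cone\bigl((1-\a)U\colon \KHI^-(Y,L_0)\to \KHI^-(Y,L_0)\bigr)$. This is not a harmless ambiguity: if $\a\neq 1$ then, after rescaling, this cone is $\Cone(U)$, which is a hat-type group and is genuinely different from $\KHI^-(L_0)\otimes W$; note that Lemma~\ref{lem:rescale-module} cannot absorb the discrepancy, because $\a$ is the \emph{relative} scalar between two actions on the same module, not an overall rescaling.

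The paper closes this gap with a localization argument that your proposal has no substitute for: Lemma~\ref{lem:localization link} shows that in sufficiently negative multi-Alexander gradings $\KHI^-(Y_0,L_+,\ve{i})\iso I^{\#}(Y\#^{r}S^1\times S^2)\neq 0$, while the bypass exact triangle shows $U$ acts as an isomorphism in those gradings; if $\a\neq 1$ the cone $\Cone((1-\a)U)$ would therefore be acyclic there, a contradiction, forcing $\a=1$ and hence the vanishing of the cone map and the identification with $\KHI^-(L_0)\otimes W$. You need to either reproduce this step (or an equivalent argument pinning down the scalar) before concluding Case~2. Your remaining remarks about Case~2 (vertices carrying iterated bypasses on a single torus rather than a tensor product, and the grading bookkeeping via Section~\ref{Sub: grading for link}) are consistent with the paper's Equation~\eqref{eq:expanded-complex skein 2} and Theorem~\ref{thm:gradings link 1}.
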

\begin{proof}

Since $\KHI^-(Y_0, L_+) \simeq \varinjlim{(\SHI(Y_0\setminus \nu(L_{+}), \Gamma_{\ve{n}})}$, we further note that topologically the sutured manifold $(Y\setminus \nu(L_0), \Gamma_{\ve{n}})$ can be obtained by decomposing along an annulus from the sutured manifold $(Y_0\setminus \nu(L_+), \Gamma_{\ve{n}})$. This was initially observed by Kronheimer and Mrowka \cite{KMskein} and is a key ingredient in their proof of \cite{KMskein}*{Theorem 3.1}. See \cite{KMskein}*{Figure~4}. While working with merdional sutures, this annulus becomes a product annulus and hence decomposing along this annulus gives an isomorphism between their corresponding sutured Floer homologies. In our case, we can consider the sutured link complement $(Y\setminus \nu(L_0), \Gamma_{\ve{n}})$ and apply our construction from Section~\ref{sec:connected-sum-HF} to obtain $(Y_0\setminus \nu(L_{+}), \Gamma_{\ve{n}''})$ by gluing this annulus. If the two strands from $L_0$ belong to different link components, by Theorem~\ref{thm:gradings link 1} there is a grading preserving isomophism
\begin{equation}\label{eq:expanded-complex skein 1}
\SHI(Y_0\setminus \nu(L_{+}), \Gamma_{\ve{n''}}) \simeq \Cone \left (
\begin{tikzcd}[column sep=4cm, row sep=1.5cm, labels=description]C_{\ve{n}-\ve{e}_1} \oplus C_{\ve{n}-\ve{e}_2} [-1] \ar[r,"\phi_{\ve{n}-\ve{e}_1, \ve{n}}^-\oplus \phi_{\ve{n}-\ve{e}_2, \ve{n}}^-"] \ar[d, "\phi_{\ve{n}-\ve{e}_1, \ve{n}}^+ \oplus \phi_{\ve{n}-\ve{e}_2, \ve{n}}^+"]& C_{\ve{n}}\\
C_{\ve{n}} [-1]&\,
\end{tikzcd} \right ),
\end{equation} where $\ve{n}= (n_1, n_2, n_3, \dots, n_r)$, $\ve{n''}=(n_1 + n_2, n_3, \dots, n_r)$, $\ve{e_1}=(1, 0, \dots,0)$ and $\ve{e_2}=(0, 1, \dots,0)$.

As in the proof of Theorem~\ref{thm: main}, the complex from Equation~\eqref{eq:expanded-complex skein 1} can be reduced to a simpler complex.  Let $q\in \Z$ be a fixed integer and as before if $G$ is a graded group, write $G_{>q}$ for the subgroup concentrated in gradings above $q$. Note that the link Floer groups $\KHI^-$ has an Alexander grading from Proposition~\ref{Prop: Z grading Link} which is bounded from above by the same argument from the proof of \cite{LiLimits}*{Theorem~1.1}. Furthermore, each $U_i$ shifts the grading by $-1$.  We will show for each $q\in \Z$ that $\KHI^-(Y_0,L_+)_{>q}$ is isomorphic to $(\KHI^-(L_0)\tildeotimes \mathbb{C}[U_1,U_2]/(U_1-U_2))_{>q}$. First, we assume that $\ve{N}$ is chosen so that if $\ve{n}>\ve{N}$, then the positive bypass map $\phi_{\ve{n} - \ve{e}_i, \ve{n}}^+\colon \SHI(Y\setminus \nu(L_+),\Gamma_{\ve{n}- \ve{e}_i}) \to \SHI(Y\setminus \nu(L_+),\Gamma_{\ve{n}})$ is an isomorphism on gradings above $q$ for $i= 1, 2$. By Lemma~\ref{lem:staircase-lemma}, there is a grading preserving chain map
\[
\Phi\colon C_{\ve{n} - \ve{e}_1 - \ve{e}_2}\to \Cone(\begin{tikzcd}[column sep=3cm]
C_{\ve{n}-\ve{e}_2}\oplus C_{\ve{n}-\ve{e}_1} \ar[r, "\phi_{\ve{n}-\ve{e}_1,\ve{n}}^+\oplus \phi_{\ve{n}-\ve{e}_2,\ve{n}}^+"]& C_{\ve{n}}
\end{tikzcd}),
\]
as well as a grading preserving chain map $\Psi$ in the opposite direction which are isomorphisms in degrees above $q$. Since the differential preserves the Alexander grading, we conclude that  the subcomplex of Equation~\eqref{eq:expanded-complex skein 1} in grading above $q$ is homotopy equivalent to the complex
\[
\Cone(
\begin{tikzcd}[column sep=2cm]
(C_{\ve{n}})_{>q+1} \ar[r, "U_1-U_2"]& (C_{\ve{n}})_{>q})
\end{tikzcd}
\]
which is exactly $(\KHI^-(L_0)\tildeotimes \mathbb{C}[U_1,U_2]/(U_1-U_2))_{>q}$.  Thus the proof of Theorem~$\ref{thm: main}$ adapts to show that 
\[
 \KHI^-(Y_0, L_{+}) \iso \Cone(
\begin{tikzcd}[column sep = 2cm]
\KHI^-(Y, L_0)[-1]\ar[r, "U_1 -  U_2"]& \KHI^-(Y, L_0))
\end{tikzcd}, 
\]
where $U_1$ and $U_2$ correspond to the two $U$-actions on the link Floer homology $\KHI^-(Y, L_0)$ corresponding to the two strands of the components $L_1$ and $L_2$ inside of B. But this is exactly $\KHI^-(K_0, s)\tildeotimes \mathbb{C}[U_1,U_2]/(U_1-U_2)$.  We remark, that the $U_i$ actions are only defined up to overall multiplication by elements of $\C^\times$. Nonetheless, by Lemma~\ref{lem:rescale-module}, scaling each $U_i$ by elements of $\C^\times$, results in isomorphic module structure on $\KHI^-(Y,L_0)$, so our argument still identifies the mapping cone with the derived tensor product on the level of graded groups.

If the two strands from $L_0$ belong to the same link component, 
we will show in Theorem~\ref{thm:gradings link 1} (below) that there is a grading preserving isomorphism
\begin{equation}\label{eq:expanded-complex skein 2}
\SHI(Y_0\setminus \nu(L_{+}), \Gamma_{\ve{n}}) \simeq \Cone \left (\begin{tikzcd}[column sep=3cm, row sep=1.5cm, labels=description]C_{\ve{n'}} \oplus C_{\ve{n'}} \ar[r,"\phi_{\ve{n'},\ve{n''}}^-\oplus \phi_{\ve{n'}, \ve{n''}}^-"] \ar[d, "\phi_{\ve{n'},\ve{n''}}^+\oplus \phi_{\ve{n'},\ve{n''}}^+"]& C_{\ve{n''}} [1]\\
C_{\ve{n''}}&\,
\end{tikzcd}\right),
\end{equation}where $\ve{n}= (n_1, n_2, n_3, \dots, n_r)$, $\ve{n'}=(n_1 + n_2 -1, n_3, \dots, n_r)$, $\ve{n''}=(n_1 + n_2, n_3, \dots, n_r)$. 

Similarly to the above case, the proof of Theorem~\ref{thm: main} adapts to show that 
\begin{equation}
 \KHI^-(Y_0, L_{+}) \iso \Cone(
\begin{tikzcd}[column sep = 3 cm]
\KHI^-(Y, L_0)\ar[r, "U_1- U_2"]& \KHI^-(Y, L_0)[+1])
\end{tikzcd}.
\label{eq:derived-tensor-w/U=0}
\end{equation}
We observe now that the $U_i$ actions now correspond to positive bypass attachments on the same link component of $L_0$. Hence $U_1=\a U_2$ for some $\a\in \C^\times$. We claim that $\a$ must be 1.

To show that $\a=1$, we consider the homology of the mapping cone in sufficiently negative Alexander gradings. The above argument shows that
\begin{equation}
\KHI^-(Y_0,L_+, \ve{i})\iso \Cone((1-\a)U_1\colon \KHI^-(Y,L_0, \ve{i})\to \KHI^-(Y,L_0,\ve{i}+\ve{e}))\label{eq:mapping-cone-Alexander-grading}
\end{equation}
where $\ve{e}$ is the unit vector in the component for $L_0$. We will show in Lemma~\ref{lem:localization link}, below, that
\[
\KHI^-(Y_0,L_+,\ve{i})\iso I^{\#}(Y\#^{r} S^1\times S^2)
\]
if all components of $\ve{i}$ are sufficiently negative. On the other hand, using the bypass exact triangle for the $U_i$ map, it is straightforward to see that when $i_1\ll 0$, the map $U_i$ is an isomorphism in each Alexander grading. In particular, if $\a\neq 1$, then Equation~\eqref{eq:mapping-cone-Alexander-grading} will have trivial homology in Alexander grading $\ve{i}$, a contradiction.

Hence, Equation~\eqref{eq:derived-tensor-w/U=0},  reduces to $\KHI^-(L_0)\otimes W$, where $W=\bigoplus_{s\in \Z} W_s$ is the graded module
\[
    W_s= 
\begin{cases}
    \C,& \text{if s=0, 1 } \\
    0,              & \text{otherwise.}
\end{cases}
\]

\end{proof}

\begin{lem}\label{lem:localization link}
 Let $L$ be an $r$ component link in a closed oriented 3-manifold $Y$. Let $\ve{i}=(i_1, \dots, i_r)$ denote the multi-grading in $\KHI^-(Y,L,\ve{i})$. Write $Y(r)$ for the sutured manifold obtained from $Y$ by deleting $r$ 3-balls and putting a simple closed curve on each sphere component as the suture. Then \[\KHI^-(Y,L,\ve{i})\iso \SHI (Y(r))\iso I^\# (Y \#^{r-1} S^1\times S^2)\] if $\ve{i}$ is sufficiently negative. 
\end{lem}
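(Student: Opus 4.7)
The lemma consists of two claimed isomorphisms: (A) $\KHI^-(Y,L,\ve{i}) \iso \SHI(Y(r))$ for $\ve{i}$ sufficiently negative, and (B) $\SHI(Y(r)) \iso I^\#(Y\#^{r-1}(S^1\times S^2))$. I would prove (B) first since it is purely topological, and then use it together with a stabilization argument to establish (A).

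For (B), the plan is to attach $r-1$ contact $1$-handles to $(Y(r), \text{sutures})$, each connecting a disk on one sphere boundary to a disk on a distinct sphere boundary, with the disks chosen to intersect the corresponding suture circle in a single arc. Since the contact $1$-handle map is the identity on $\SHI$ (see Section~\ref{sec:sutured-background}), these attachments do not change $\SHI$. Topologically, attaching a $3$-dimensional $1$-handle between two distinct sphere boundaries of $Y(r)$ merges them into a single sphere while modifying the ambient $3$-manifold by an $S^1\times S^2$-summand (a straightforward Mayer--Vietoris computation, analogous to the observation that $S^2\times I$ with a $1$-handle attached between its two boundary spheres is $(S^1\times S^2)\setminus B^3$). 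After all $r-1$ attachments, the resulting sutured manifold is $(Y\#^{r-1}(S^1\times S^2))(1)$, whose $\SHI$ is by definition $I^\#(Y\#^{r-1}(S^1\times S^2))$.

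For (A), I would use the direct-limit description $\KHI^-(Y,L,\ve{i}) = \varinjlim_{\ve{n}} \SHI(Y\setminus\nu(L), \Gamma_{\ve{n}}, S_{\ve{n}}^{\tau(\ve{n})}, \ve{i})[\sigma(\ve{n})]$ from Proposition~\ref{Prop: Z grading Link}. The strategy is to show that for $\ve{i}$ sufficiently negative, the positive bypass maps $\phi^+_{\ve{n},\ve{n}+\ve{e}_j}$ become isomorphisms on the graded piece corresponding to $\ve{i}$ once $\ve{n}$ is large enough, so that the limit stabilizes. One way to see this is to apply the bypass exact triangle to $\phi^+_{\ve{n},\ve{n}+\ve{e}_j}$, whose third term is $\SHI$ of the sutured manifold obtained by replacing the $j$-th suture with a meridional one; the meridional sutured manifold computes $\widehat{\KHI}(Y,L)$ (tensored with a factor from the extra meridional pairs), whose Alexander grading is bounded below, so the third term vanishes in sufficiently negative grading. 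To identify the stabilized value with $\SHI(Y(r))$, I would use the decomposing-surface theorem of Li--Kavi--Wang (Theorem~\ref{thm: shift stabilization} and Theorem~\ref{thm: Generalized shift}) applied to an oppositely-oriented Seifert surface: the extreme negative piece of $\SHI(Y\setminus\nu(L),\Gamma_{\ve{n}})$ corresponds to decomposing along $-S_{\ve{n}}^{+}$, and after repeatedly removing product regions and cancelling contact $1$-handles, the resulting taut sutured manifold is diffeomorphic to $Y(r)$ (the complement of a tubular neighborhood of $S$ in $Y$ retracts onto $Y(r)$ after gluing back appropriate product pieces).

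The main obstacle will be the geometric identification in the last step: showing that the sutured manifold obtained by iterated $-S_{\ve{n}}^{+}$ decomposition, after normalization by product-disk decompositions and contact $1$-handle cancellation, is exactly $(Y(r), r\text{ sutures})$ and not some other manifold with the same $\SHI$ by accident. A secondary technical issue is coordinating the grading shift $\sigma(\ve{n})$ across levels of the direct system to verify that a single sufficiently negative $\ve{i}$ lies in the stabilization range for all large $\ve{n}$.
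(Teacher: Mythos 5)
Your step (B) — identifying $\SHI(Y(r))$ with $I^\#(Y\#^{r-1}(S^1\times S^2))$ by tubing the sphere boundary components with contact $1$-handles — is fine and is the standard argument; the paper treats this as known. The genuine gap is in the second half of your plan for (A), where you identify the stabilized grading-$\ve{i}$ piece with the result of decomposing along $-S_{\ve{n}}^{+}$. A fixed, sufficiently negative $\ve{i}$ is \emph{not} the extremal Alexander grading of $\SHI(Y\setminus\nu(L),\Gamma_{\ve{n}})$ once $\ve{n}$ is large: since $\dim\SHI(Y\setminus\nu(L),\Gamma_{\ve{n}})$ grows without bound while $\phi^+$ is grading preserving and eventually an isomorphism in each fixed grading, the new material in the bypass triangle must enter in gradings tending to $-\infty$; equivalently, the placement of the meridional (hat-type) term in the triangle moves down with $\ve{n}$. (Your reading that it is ``bounded below'' uniformly in $\ve{n}$ is inconsistent with the lemma itself: for small $\ve{n}$ the group vanishes in grading $\ve{i}$, so if $\phi^+$ were an isomorphism there for all $\ve{n}$ the limit would vanish too.) So the bottom edge of the support is exactly the unstable region, and the surface decomposition along $\pm S_{\ve{n}}$ computes that edge, not the stable value at your fixed $\ve{i}$: by exactness the bottom-edge piece is identified with the extremal Alexander grading of the (partially) meridional group, i.e.\ with an extremal piece of $\widehat{\KHI}$, which has rank one only for fibered links. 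For the same reason your parenthetical claim — that the Seifert-surface complement reduces to $Y(r)$ after product-region removal and contact $1$-handle cancellation — is false in general: product decompositions preserve $\SHI$, so the claim would force the extremal group of every knot in $S^3$ to be one-dimensional, contradicting fibredness detection for non-fibered knots.

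What is needed in the stable (non-extremal) range is a comparison map to the target itself, and this is how the paper argues: it uses the contact $2$-handle map $C\colon\SHI(Y\setminus\nu(L),\Gamma_{\ve{n}})\to\SHI(Y(r))$ obtained by attaching $2$-handles along meridians of the components, and shows, one component at a time, that $C$ restricts to an isomorphism on the grading-$\ve{i}$ summand once $\ve{i}$ is sufficiently negative and $\ve{n}$ is sufficiently large; the grading behavior of the meridional $2$-handle map is \cite{LYEuler}*{Proposition~4.6}, the isomorphism in sufficiently negative gradings for large framing follows the technique of \cite{LYHeegaard}*{Lemma~4.26}, and the one-component case is \cite{GhoshLiWongTau}*{Proposition~1.13}. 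Your stabilization observation can be retained (with the caveat above that vanishing of the third term in grading $\ve{i}$ only holds once $\ve{n}$ is large relative to $|\ve{i}|$), but the identification of the stable value should go through such a comparison map rather than through a $-S$ surface decomposition.
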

\begin{proof}
Note that in the case of knots, the  lemma is proven in \cite{GhoshLiWongTau}*{Proposition~1.13}.  Write $\ve{n}=(n_1,\dots, n_r)$ and $\ve{i}=(i_1,\dots, i_r)$. There is a map
\[
C\colon \SHI(Y\setminus \nu(L), \Gamma_{\ve{n}})\to \SHI(Y(r))
\]
corresponding to attaching $r$ contact 2-handles along meridians of the components of $L$. We claim that if $\ve{i}$ is fixed, and sufficiently negative, and $\ve{n}$ is sufficiently large, then $C$ restricts to an isomorphism from $\SHI(Y\setminus \nu(L), \Gamma_{\ve{n}}, \ve{i})$ onto $\SHI(Y(r))$. 

To see this, one may argue by induction, using the techniques of Li and Ye, as we now sketch. We sketch the proof. Let $\ve{n}'=(n_2,\dots, n_r)$ and $\ve{i}'=(i_2,\dots, i_r)$. There is a contact 2-handle map 
\[
C'\colon \SHI(Y\setminus \nu(L), \Gamma_{\ve{n}})\to \SHI(Y(1)\setminus \nu(L'), \Gamma_{\ve{n}'})
\]
where $L'=L_2\cup \cdots \cup L_r$. It follows from \cite{LYEuler}*{Proposition~4.6} that $C'$ maps grading $\ve{i}$ to grading $\ve{i}'$.

Next, it follows from the same technique as in \cite{LYHeegaard}*{Lemma~4.26} that if $i_1\ll 0$ is fixed, then for all sufficiently large $n_1$, the 2-handle map $C'$ will be an isomorphism from grading $\ve{i}$ to grading $\ve{i}'$. Proceeding by induction, the claim follows.
\end{proof}

\subsection{Gradings}
We now prove Theorem~\ref{thm:gradings link 1} and describe a grading on $\SHI(Y_{0} \setminus \nu(L_+), \Gamma_{\ve{m}})$ which is compatible with the isomorphism from Theorem~\ref{thm:gradings link 1}, where $m=(m_1, m_2, \dots, m_k)$. Let $C_{\ve{n}}:=\SHI (Y\setminus \nu(L_0), \Gamma_{\ve{n}})$, where $\ve{n} = (n_1, n_2, \dots, n_r)$. Note that $k-r=\pm 1$. We then consider the isomorphism arising from the iterated mapping cone construction arising from gluing an annulus from Section~\ref{sec:connected-sum-HF}.
Following \cite{GhoshLiDecomposing} and  Proposition~\ref{Prop: Z grading Link}, each of $\SHI(Y_{0} \setminus \nu(L_+), \Gamma_{\ve{m}})$ and $C_{\ve{n}}$, has  a natural Alexander grading which is compatible with the direct limit construction, in the sense that the positive bypass maps are grading preserving, and the negative bypass maps, $U_i$ shift grading by $-1$. In particular, $\varinjlim C_{\ve{n}} \iso \KHI^-(Y, L_0)$ as a graded $\mathbb{C}[U_1, U_2, \dots, U_r]$-module, and similarly for $\SHI(Y_{0} \setminus \nu(L_+), \Gamma_{\ve{m}})$. 
 
 We recall more generally, the first author and Li's construction gives an Alexander grading for any choice of Seifert surface.
 Again as before, we write $S_{\ve{n}} \subset Y \setminus \nu(K_0)$ for a Seifert surface which intersects $\Gamma_{\ve{n}}$ geometrically in $2(\sum_{i=1}^{r} n_i)$ points. If $\tau \in \Z$, we write $S_{\ve{n}}^{\tau}$ for the surface $S_{\ve{n}}$, which is stabilized $\tau$ times algebraically.  

The main theorem of this section is the following:
\begin{thm}\label{thm:gradings link 1}
Let $L_+,$ $L_-$ and $L_0$ be as in the Skein exact triangle.
\begin{enumerate}
    \item If the two strands from $L_0$ belong to different link components say $L_1$ and $L_2$, with respect to the above gradings and the following shift from Equation \eqref{Eq: shift limit for link}, the quasi-isomorphism
\begin{equation}\label{eq:map-X-n-m oriented 1}
X_{n_1, n_2}=\begin{tikzcd}
\SHI \left(Y_0 \setminus \nu(L_+), \Gamma_{\ve{n''}}, S_{\ve{n''}}^{\tau(\ve{n''})}\right)
\arrow[drr, dashed ]
\arrow[ddr, dashed]
\arrow[dr] & & \\
& C_{\ve{n}-\ve{e_2}} \oplus C_{\ve{n}-\ve{e_1}} [-1]  \arrow[r] \arrow[d]
& C_{\ve{n}} \\
& C_{\ve{n}} [-1]
\end{tikzcd}
\end{equation}
from the iterated mapping cone construction is grading preserving. In the above, $\tau\colon \Z^r\to \Z$ is the function defined in Equation \eqref{Eq: shift limit for link}, $\ve{n}= (n_1, n_2, n_3, \dots, n_r)$, $\ve{n''}=(n_1 + n_2, n_3, \dots, n_r)$, $\ve{e_1}=(1, 0, \dots,0)$ and $\ve{e_2}=(0, 1, \dots,0)$.

\item If the two strands from $L_0$ belong to the same link component, with respect to the above gradings and the following shift from Equation~\eqref{Eq: shift limit for link}, the quasi-isomorphism
\begin{equation}\label{eq:map-X-n-m oriented 2}
X_{n_1, n_2}=\begin{tikzcd}
\SHI \left(Y_0 \setminus \nu(L_+), \Gamma_{\ve{n}}, S_{\ve{n}}^{\tau(\ve{n})}\right)
\arrow[drr, dashed ]
\arrow[ddr, dashed]
\arrow[dr] & & \\
& (C_{\ve{n'}} \oplus C_{\ve{n'}})  \arrow[r] \arrow[d]
& C_{\ve{n''}} [+1] \\
& C_{\ve{n''}}
\end{tikzcd}
\end{equation}
from the iterated mapping cone construction is grading preserving. In the above, $\tau\colon \Z^r\to \Z$ is the function defined in Equation~\eqref{Eq: shift limit for link}, $\ve{n}= (n_1, n_2, n_3, \dots, n_r)$, $\ve{n'}= (n_1+n_2-1, n_3, \dots, n_r)$ and $\ve{n''}=(n_1 + n_2, n_3, \dots, n_r) $. On $C_{\ve{n'}}$, we use the grading surface $S_{\ve{n'}}^{\tau(\ve{n'})}$ and on $C_{\ve{n''}}$ we use the grading surface $S_{\ve{n''}}^{\tau(\ve{n''})}$.
\end{enumerate}

\end{thm}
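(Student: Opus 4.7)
The plan is to mimic closely the argument used to prove Theorem~\ref{thm:gradings} in Section~\ref{sec:8}, adapting it to the annulus-gluing setup that produces $Y_0\setminus\nu(L_+)$ from $Y\setminus\nu(L_0)$. As in that proof, I would factor the quasi-isomorphism $X_{n_1,n_2}$ as a composition $X_{n_1,n_2}=I_{n_1,n_2}\circ F_{n_1,n_2}$, where $F_{n_1,n_2}$ is the homotopy equivalence arising from the iterated mapping cone for instanton Floer homology applied to the two-component unknotted framed link whose $(0,0)$-surgery produces $Y_0\setminus\nu(L_+)$ (playing the role of the $L_1\cup L_2$ of Section~\ref{sec:7}), and $I_{n_1,n_2}$ is the identification at each vertex given by the contact $1$-handle/cancellation manipulations and disk decompositions of Lemma~\ref{lem:reorganize-cone}. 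For each of these two maps I would verify that with the stated shifts it is grading preserving, and then compose.

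For $F_{n_1,n_2}$, the analysis of Lemma~\ref{lem: topology of cobordism} and the proof of Proposition~\ref{prop: F_{m, n}} carry over verbatim: the map is a sum of compositions of index-$0$ and index-$(-1)$ cobordism maps on $4$-manifolds which are $2$-handle attachments along meridians of the framing knots, and these meridians can be isotoped off of the chosen Seifert surface $S^{\tau(\ve{n''})}_{\ve{n''}}$ (respectively $S^{\tau(\ve{n})}_{\ve{n}}$). Thus the grading surface extends into the cobordism and each constituent map preserves the Alexander grading, as in \cite{GhoshLiWongTau}*{Proposition~1.12}.

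For $I_{n_1,n_2}$, the grading bookkeeping is the main work. Here I would split into the two cases of the theorem. In the different-components case, the combinatorics exactly parallels Section~\ref{sec:Imn}: after attaching auxiliary contact $1$-handles (disjoint from the grading surface, hence grading-preserving), performing the disk decomposition along the two decomposing disks $D_1,D_2$ of Lemma~\ref{lem:reorganize-cone}, and keeping track of the stabilizations produced by the twisting of the sutures near the decomposition, Theorem~\ref{thm: shift stabilization} and Theorem~\ref{thm: Generalized shift} convert the extra twists into grading shifts of $\pm1$; together with the shift $\sigma$ from Equation~\eqref{Eq: shift limit for link}, these cancel against the parity shifts $\tau(\ve{n}-\ve{e}_i)$ versus $\tau(\ve{n})$ to give exactly the shifts $[-1]$ on the summands $C_{\ve{n}-\ve{e}_1}\oplus C_{\ve{n}-\ve{e}_2}$ and on the diagonal $C_{\ve{n}}$ claimed in Equation~\eqref{eq:map-X-n-m oriented 1}. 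In the same-component case the sutures live on a single boundary torus: after the annulus gluing the boundary slope changes from $\Gamma_{\ve{n}}$ (total meridional count $n_1+n_2$ on that component) to $\Gamma_{\ve{n''}}$ (count $n_1+n_2$ on the merged component), and the intermediate sutured manifold has a component that corresponds to $\Gamma_{\ve{n'}}$ where $\ve{n'}=(n_1+n_2-1,n_3,\dots)$. The parity of $(\sum n_i)+r-1$ in Equation~\eqref{tau for link} flips as $r$ drops by one, producing a net extra stabilization that by Theorem~\ref{thm: shift stabilization} contributes the shift $[+1]$ on the codomain $C_{\ve{n''}}$ shown in Equation~\eqref{eq:map-X-n-m oriented 2}; the two $C_{\ve{n'}}$ summands, being symmetric under the two resolutions of the crossing, receive equal shifts which after incorporating Equation~\eqref{Eq: shift limit for link} give the unshifted entries.

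The hard part is the same-component case: here there is no direct analog in Section~\ref{sec:8} (where $K_1$ and $K_2$ lived in disjoint manifolds, so $r$ did not change), and one must carefully track the change in the parity function $\tau$ of Equation~\eqref{tau for link} as the number of components changes by one under the skein move, as well as the corresponding jump in $\sigma$ in Equation~\eqref{Eq: shift limit for link}. I would handle this by adapting the surface-isotopy-and-relabeling picture of Figures~\ref{fig:50}--\ref{fig:40}: pick a local grading surface $T$ near the annulus that differs from $S^{\tau(\ve{n})}_{\ve{n}}$ by a subsurface $F$ of $\partial$ with $\chi(F\cap R_+)-\chi(F\cap R_-)=0$ so that by Theorem~\ref{thm: Generalized shift} we introduce no shift; then perform the disk decomposition, read off the induced surfaces on each sutured piece, and apply Theorem~\ref{thm: shift stabilization} to convert the difference in stabilization counts into a grading shift. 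Chasing the arithmetic of Equation~\eqref{Eq: shift limit for link} through both cases should give precisely the shifts stated, completing the proof.
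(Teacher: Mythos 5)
Your proposal is correct and follows essentially the same route as the paper: the authors likewise reduce to the grading analysis of Theorem~\ref{thm:gradings} (the $F$ and $I$ factorization with grading surfaces, stabilizations, and Theorems~\ref{thm: shift stabilization} and~\ref{thm: Generalized shift}), and then observe that the two different shift patterns in the statement come purely from the dependence of $\sigma(\ve{n})$ in Equation~\eqref{Eq: shift limit for link} on the number of link components, exactly as in your $\tau$/$\sigma$ bookkeeping for the same-component case.
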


 \begin{proof}
The proof is analogous to the proof of Theorem~\ref{thm:gradings}. As before, we will only deal with the case when $\tau(\ve{n}) =0$ in Equation~\ref{eq:map-X-n-m oriented 1} and $\tau(\ve{n''})=0$ in Equation~\ref{eq:map-X-n-m oriented 2}. By the same reasoning as in the proof of Theorem~\ref{thm:gradings}, the map $X_{n_1,n_2}$ shifts the grading in the same way as the map $X_{n, m}$ from Theorem~\ref{thm:gradings}. Finally we consider the grading shift from Equation~\eqref{Eq: shift limit for link}. Our formula for the shift depends on the number of link components and hence we get the two different statements based on whether $L_0$ increases or decreases the number of components.
 \end{proof}
 
Finally we prove the graded version of Theorem~\ref{thm: skein}.
\begin{lem}
The maps arising in the skein exact triangle in Theorem~\ref{thm: skein} are all grading preserving with respect to the Alexander grading from Subsection~\ref{Sub: grading for link}.
\end{lem}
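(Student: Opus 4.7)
The plan is to trace through the construction of each of the three maps in the triangle and verify that it respects the Alexander grading defined in Subsection~\ref{Sub: grading for link}. At the sutured level, the underlying surgery exact triangle takes the form
\[
\SHI(Y\setminus\nu(L_+),\Gamma_{\ve n})\to \SHI(Y\setminus\nu(L_-),\Gamma_{\ve n})\to \SHI(Y_0\setminus\nu(L_+),\Gamma_{\ve n})\to \SHI(Y\setminus\nu(L_+),\Gamma_{\ve n}+1),
\]
where each morphism is induced by a $4$-dimensional $2$-handle cobordism attached along the encircling unknot $l$ (or along a parallel of it) of Figure~\ref{fig:55}. Since $l$ bounds a disk in $Y$ disjoint from the strands of $L_+$ (and hence from any properly embedded Seifert surface representing the grading class), the attaching sphere of each $2$-handle can be isotoped to be disjoint from the grading surface $S^{\tau(\ve n)}_{\ve n}$. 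The argument of \cite{GhoshLiWongTau}*{Proposition~1.12}, used already in the proof of Proposition~\ref{prop: F_{m, n}}, then shows that the $2$-handle cobordism maps preserve the Alexander grading induced by $S^{\tau(\ve n)}_{\ve n}$. Taking direct limits over the positive bypass maps (which are themselves grading preserving by Lemma~\ref{lem: positive negative shift}) preserves the grading.

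For the map $f^-\colon \KHI^-(L_+)\to \KHI^-(L_-)$, this is immediately the statement, since both source and target are defined by direct limit over the same sutures $\Gamma_{\ve n}$ with the same grading shift $\sigma(\ve n)$, and $f^-$ is the direct limit of the grading-preserving sutured cobordism maps described above. For the maps $g^-$ and $h^-$, we additionally compose with the identification of $\KHI^-(Y_0,L_+)$ with either $\KHI^-(L_0)\tildeotimes_{\C[U_1,U_2]}\C[U_1,U_2]/(U_1-U_2)$ or $\KHI^-(L_0)\otimes W$, respectively provided by Theorem~\ref{thm:gradings link 1}. Because the latter identification is already grading preserving by Theorem~\ref{thm:gradings link 1}, and the remaining factor of $g^-$ and $h^-$ is a direct limit of sutured cobordism maps for $2$-handles disjoint from the grading surfaces, both composites preserve the Alexander grading.

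The main subtlety lies in reconciling the different direct-limit grading shifts from Equation~\eqref{Eq: shift limit for link} on the two sides of the identifications used for $g^-$ and $h^-$. In the ``same component'' case, $L_+$ and $L_0$ have a different number of components ($r$ vs.\ $r+1$), so the shifts $\sigma(\ve n)$ differ by the constant $r-1$ term together with an arithmetic contribution from $\tau$. In the ``different components'' case, the situation is reversed. In each case, however, the discrepancy is exactly the homological grading shift $[-1]$ (respectively $[+1]$) appearing in the mapping cone descriptions in Equations~\eqref{eq:expanded-complex skein 1} and~\eqref{eq:expanded-complex skein 2}, which is precisely the shift used to identify these cones with the derived tensor product $\KHI^-(L_0)\tildeotimes_{\C[U_1,U_2]}\C[U_1,U_2]/(U_1-U_2)$ or with $\KHI^-(L_0)\otimes W$ (with $W$ supported in Alexander degrees $0$ and $1$). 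This bookkeeping is already folded into Theorem~\ref{thm:gradings link 1}, so once it is invoked the three maps $f^-$, $g^-$, $h^-$ are grading preserving, which completes the proof. The hardest part is the last bookkeeping step of matching the shifts from~\eqref{Eq: shift limit for link} with the homological shift of the mapping cone; but this has already been carried out in Theorem~\ref{thm:gradings link 1} in a parallel form to Proposition~\ref{prop: I_{m, n}}.
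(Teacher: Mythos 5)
Your proposal follows essentially the same route as the paper's proof: $f^-$ is (a direct limit of) 2-handle cobordism maps whose attaching circles miss a grading surface, while $g^-$ and $h^-$ factor as such cobordism maps composed with the quasi-isomorphism $X_{n_1,n_2}$ (respectively its inverse $Y_{n_1,n_2}$), whose grading-preservation is exactly the content of Theorem~\ref{thm:gradings link 1}; and you are right that all of the shift bookkeeping from Equation~\eqref{Eq: shift limit for link} is already absorbed into that theorem rather than into this lemma.

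One justification, however, is wrong as stated: the unknot $l$ does \emph{not} bound a disk in $Y$ disjoint from the strands of $L_+$ — any disk it bounds meets the two strands geometrically (if $l$ were split from $L_+$, surgery on it could not change the crossing). The correct reason, and the one the paper uses, is that $\lk(l,L_+)=0$, so one may choose (e.g.\ by tubing along $l$ to cancel the algebraically cancelling intersection points) a Seifert surface $S$ for $L_+$ that is disjoint from $l$; together with the fact that the Alexander grading is independent of the choice of admissible surface in its relative homology class, this gives the disjointness you need, following \cite{GhoshLiWongTau}*{Proposition~1.12}. A second, smaller imprecision: the 2-handles entering $g^-$ and $h^-$ are attached along meridians (duals) of $l$ in the surgered manifolds, with framings $-1$ and $0$ respectively, not along parallels of $l$; these meridians can likewise be made disjoint from such an $S$, so the argument goes through unchanged once corrected.
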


\begin{proof}
The map $f$ is a cobordism map whereas the maps $g$ and $h$ are more complicated. The map $f$ is given by attaching a $-1$ framed 2-handle along the unknot $l$. Note that $l$ has linking number 0 with the link $L_+$. Hence, we may pick a Seifert surface $S$ for $L_+$ which is disjoint from $l$. Since the grading is independent of the choice of  Seifert surface and $S$ survives in the cobordism, $f$ is grading preserving. See the proof of \cite{GhoshLiWongTau}*{Proposition~1.12} for more details.

 The map $g$ and $h$ are both compositions of two maps. The map $g$ can be written as ${X_{n_1, n_2}} \circ g_0$, where the map $g_0$ is the cobordism map corresponding to attaching a $-1$ framed 2-handle along the meridian of $l$ and $X_{n_1,n_2}$ is the quasi-isomorphism from Section~\ref{sec:connected-sum-HF} corresponding to gluing the meridional annulus in the link exterior. The map $g$ is grading preserving as it is given by attaching a $-1$ framed 2-handle along the meridian of $l$, which can again be made disjoint from the grading surface. The map ${X_{n_1, n_2}}$ is grading preserving by Theorem~\ref{thm:gradings link 1}. Hence, $g$ is grading preserving. The map $h$ can be written as $ h_0 \circ Y_{n_1,n_2}$ where $Y_{n_1,n_2}$ is the inverse map of $X_{n_1,n_2}$ and the map $h_0$ is the cobordism map arising from attaching $0$-framed $2$-handle along the meridian of $l$. By similar reasoning, the map $h$ is grading preserving.
\end{proof}

\section{The $U$-action on connected sums}
\label{sec:U-action}

In this section, we prove the following:

\begin{thm}
\label{thm:U-action} Suppose that $K$ and $K'$ are knots. The $\C[U]$-module $\KHI^-(K\# K')$ is isomorphic to $\KHI^-(K)\tildeotimes \KHI^-(K')$, where $U$ acts by $1\otimes U$ on
\[
\Cone\left(U|1-1|U\colon\KHI^-(K)\otimes_{\C} \KHI^-(K')\to \KHI^-(K)\otimes_{\C} \KHI^-(K')\right). 
\]
\end{thm}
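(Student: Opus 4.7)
The plan is to upgrade the isomorphism of Theorem \ref{thm: main} from an isomorphism of graded vector spaces (established in Theorem \ref{thm:main-gradings}) to an isomorphism of $\C[U]$-modules. Recall that the $U$-action on $\KHI^-(K\# K')$ is defined as the negative bypass map $\phi^-$ applied to a meridional arc on the boundary of the knot complement. In the setup of Section \ref{sec:7}, this boundary torus is formed by gluing a portion of $\d M_1$ to a portion of $\d M_2$ via the contact $1$-handles, so the bypass can be placed on either the $M_1$-portion or the $M_2$-portion of this boundary; since the boundary torus is connected, any two such bypasses are isotopic and therefore induce the same map on $\KHI^-$ up to a unit.

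First, I would track the negative bypass attachment through the iterated mapping cone of Theorem \ref{thm:link-surgery-instanton}. The key observation is that a bypass placed on the $M_1$-portion of the boundary is disjoint from the surgery curves $L_1^*, L_2^*$ and from the decomposing disks $D_1, D_2$ used in the proof of Lemma \ref{lem:reorganize-cone}. Consequently, by functoriality of the bypass map under disjoint cobordisms and disjoint sutured decompositions, $\phi^-$ induces a hypercube morphism on the iterated mapping cone. Under the identifications in Lemma \ref{lem:reorganize-cone}, this morphism restricts on each summand $C_\bullet \otimes D_\bullet$ to $\phi^-| \id$, while the analogous bypass on the $M_2$-portion restricts to $\id| \phi^-$.

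Next, I would pass to the direct limit following the argument of Section \ref{sec:reorganize-obtain-derived}. The direct limits of $\phi^-|\id$ and $\id|\phi^-$ induce the actions of $U_1$ and $U_2$ respectively on $\KHI^-(K_1)\tildeotimes_{\C[U]} \KHI^-(K_2) = \Cone(U_1|1 - 1|U_2)$. By construction of the derived tensor product, these two actions agree on homology and define the single $\C[U]$-module structure in the statement. Since both bypasses on the connected sum side induce the $U$-action there, the isomorphism of Theorem \ref{thm: main} intertwines the two module structures.

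The main obstacle will be verifying that the bypass-induced morphism of hypercubes is compatible with the generalized eigenspace decomposition used to define sutured instanton homology. The bypass map is built from contact handle maps and 4-dimensional cobordism maps, each of which commutes up to chain homotopy with the operators $\mu(R)$ and $\mu(\mathrm{pt})$ (one may arrange the representing surfaces to be disjoint from the relevant cobordisms, using Remark \ref{rem:localize-mu(R)}). Functoriality of the eigenspace hypercube construction up to chain homotopy, established in Lemma \ref{lem:eigenspace-functor}, then promotes these chain-level commutations to the required hypercube compatibilities. A final subtlety is that bypass maps in instanton theory are only defined up to a scalar in $\C^\times$; however, by Lemma \ref{lem:rescale-module}, any rescaling of the $U$-action by a unit yields an isomorphic graded $\C[U]$-module, so the $\C[U]$-module isomorphism class is unambiguously determined.
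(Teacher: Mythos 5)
The key unjustified step is your claim that the negative bypass map induces a hypercube morphism on the iterated mapping cone whose only nonzero components are the vertex maps $\phi^-|\id$, so that after the identifications of Lemma~\ref{lem:reorganize-cone} and Lemma~\ref{lem:staircase-lemma} the $U$-action on the connected sum is literally $U_1\otimes \id$ on $\Cone(U_1|1-1|U_2)$. In the instanton setting the bypass is a $4$-dimensional $2$-handle map on a closure, and promoting it to a morphism of the iterated mapping cone requires families of metrics interpolating between the different orderings of this handle and the handles along $L_1^{*}$, $L_2^{*}$ (and the handles realizing $\phi^{\pm}$ on the \emph{same} boundary torus). Disjointness of the attaching curves in the $3$-manifold does not force these homotopies to vanish, because all handles are attached to one connected closure; only the homotopies between handles attached to \emph{different} components of a disconnected closure can be taken to be zero. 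This is precisely what Section~\ref{sec:U-map-first-version} records: even after choosing a disconnected closure to kill the cross-component homotopies $j_1,j_2$, the same-component homotopies $h^{[-,+]},h^{[-,-]}$ survive, and the candidate $U$-map on the cone is $\phi^-|1$ \emph{plus} the diagonal term $h^{[-,+]}|\phi^-+h^{[-,-]}|\phi^+$; moreover conjugating by the homotopy equivalences you invoke introduces further such terms. Your proposal offers no argument that these diagonal corrections act trivially on homology, and that is exactly the issue the theorem has to confront (this is why the paper says the direct description is ``not sufficient'').

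The paper's proof is built to avoid this: it writes $K\# K'$ as $K\# u\# K'$ with $u$ an unknot and realizes $U$ as a bypass on the unknot factor. Since $H_*(\cU_\ell)$ is supported in a single mod~$2$ homological grading while the homotopy terms shift that grading, homological perturbation kills all homotopy components of the relevant hypercubes, and a cycle-level argument (Lemma~\ref{lem:U-action-A_infty-tensor-product}: every cycle of the cube lies in the bottom two filtration levels, where the map agrees with $1|\phi^-|1$) identifies the induced map with $1\otimes U$ on homology. To repair your approach you would either need to prove directly that the $h^{[-,\pm]}$-terms contribute nothing on homology of the cone, or adopt the unknot trick. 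Your remarks on eigenspace compatibility (Lemma~\ref{lem:eigenspace-functor}) and on rescaling by units (Lemma~\ref{lem:rescale-module}) are correct but do not touch this gap.
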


Our argument is somewhat indirect. Instead of computing the $U$ action on the connected sum directly, we will view $K\# K'$ as a triple tensor product $K\# u\# K'$, where $u$ is an unknot. We will view the $U$ action as corresponding to a positive bypass attached to the unknot $u$. In Section~\ref{sec:algebraic-interpretation-U-action}, we give an algebraic interpretation of our argument in terms of $A_\infty$-modules and bimodules over $\C[U]$. In Sections~\ref{sec:U-map-first-version} and~\ref{sec:thm-U-action}, we prove the above theorem.

\begin{rem} Our proof does not hold if we replace $K$ and $K'$ with links. In this case, the instanton link Floer complexes have a $U_i$ action for each link component. The correct statement should involve an $A_\infty$-module structure on $\KHI^-(L)$.
\end{rem}

\subsection{Algebraic motivation}
\label{sec:algebraic-interpretation-U-action}
In this section, we discuss a purely algebraic result which we use as a model for our proof of the formula for the $U$-action on the tensor product.  Although our proof does not directly invoke this result, it is the conceptual basis of our argument. Here, we assume the reader is familiar with the notions of $A_\infty$-modules and bimodules. See \cite{KellerNotes} for an introduction, or \cite{LOTBimodules} for an exposition using our present notation.

\begin{lem}
\label{lem:U-action-A_infty-tensor-product}
Write $\cA=\bF[U]$. Suppose that $M_{\cA}$ and ${}_{\cA} N$ are $A_\infty$ modules, and ${}_{\cA} P_{\cA}$ is an $A_\infty$-bimodule,
with a striclty unital bimodule morphism 
\[
f_{i,1,j}\colon {}_{\cA} P_{\cA}\to {}_{\cA} P_{\cA}
\]
 Suppose furthermore that the following hold:
\begin{enumerate}
\item $P\iso \bF[U]$ as a vector space. Furthermore, $P$ is equipped with an Alexander grading and the structure maps are Alexander grading preserving.
\item The structure maps $m_{0,1,0}$ and $m_{1,1,1}$ vanish on $P$, and $m_{1,1,0}$ and $m_{0,1,1}$ coincide with ordinary multiplication by $U$.
\item The morphism $f_{i,1,j}$ satisfies $f_{0,1,0}=U$ and $f_{1,1,0}=0$ and $f_{0,1,1}=0$. 
\end{enumerate}
Then
\[
M_\cA \tildeotimes {}_{\cA} P_{\cA} \tildeotimes {}_{\cA} N\simeq M_{\cA}\tildeotimes {}_{\cA} N.
\]
Furthermore, the induced map $\id\tildeotimes f\tildeotimes \id$ coincides with the action of $U$ on homology.
\end{lem}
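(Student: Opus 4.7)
The strategy is to reduce to the algebraic claim that $P$ is $A_\infty$-quasi-isomorphic to the diagonal bimodule ${}_\cA \cA_\cA$, with $f$ corresponding to the bimodule endomorphism ``multiplication by $U$.'' Granting this, the lemma follows from the standard bar-model identification $M_\cA \tildeotimes {}_\cA \cA_\cA \tildeotimes {}_\cA N \simeq M_\cA \tildeotimes {}_\cA N$ together with the observation that, under this identification, the bimodule morphism ``multiplication by $U$'' induces the $U$-action on the tensor product. In particular, the proof contains no new analytic content; it is a purely algebraic statement about $A_\infty$-bimodules over $\bF[U]$.

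To establish $P \simeq {}_\cA \cA_\cA$, the plan is to run a bigrading argument. The algebra $\bF[U]$ is bigraded with $U$ in Alexander degree $-1$ and homological (Maslov) degree $-2$, and $P \iso \bF[U]$ as a bigraded vector space by hypothesis. An $A_\infty$-bimodule structure map $m_{i,1,j}\colon \cA^{\otimes i}\otimes P \otimes \cA^{\otimes j}\to P$ must preserve Alexander grading and shift Maslov degree by $i+j-1$. Evaluating both constraints on a basis element $U^{a_1}\otimes\cdots\otimes U^{a_i}\otimes U^b \otimes U^{c_1}\otimes\cdots\otimes U^{c_j}$ with output $U^d$ yields simultaneous equations $d = a_1+\cdots+b+\cdots+c_j$ (from Alexander) and $d = (a_1+\cdots+b+\cdots+c_j) - (i+j-1)/2$ (from Maslov), which are consistent only when $i+j=1$. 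The remaining nonzero maps $m_{1,1,0}$ and $m_{0,1,1}$ are by hypothesis both multiplication by $U$, matching the diagonal bimodule exactly. The same bookkeeping applied to $f$, which carries Maslov degree $-2$ and Alexander degree $-1$ since $f_{0,1,0}=U\cdot$, forces $f_{i,1,j}=0$ unless $(i,j)=(0,0)$; hence $f$ is strictly the bimodule morphism ``multiplication by $U$.''

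Combining the two identifications gives a quasi-isomorphism $M_\cA \tildeotimes P \tildeotimes N \simeq M_\cA \tildeotimes N$ under which $\id\tildeotimes f\tildeotimes \id$ corresponds to the $U$-action on $M_\cA \tildeotimes N$, completing the proof. The main obstacle is bookkeeping: one must pin down the Maslov-grading conventions for $A_\infty$-structure maps and morphisms (the shift $i+j-1$ for $m_{i,1,j}$, together with the Maslov and Alexander degrees of $f$ as a bimodule morphism) so that the bigrading equations really force $i+j \le 1$. Once these conventions are fixed and the strict unitality hypothesis is confirmed to be compatible with them, the proof is essentially formal.
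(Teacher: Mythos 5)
There is a genuine gap: your bigrading argument uses hypotheses the lemma does not grant. The lemma only equips $P$ with an Alexander grading; it says nothing about a Maslov (homological) grading on $P$, nor that $P\iso \bF[U]$ as a \emph{bigraded} space with $U$ in Maslov degree $-2$, nor that the $m_{i,1,j}$ and the components of $f$ are homogeneous of the degrees you assign them. Without that extra structure, Alexander-grading preservation alone does not kill any higher operation (e.g.\ $m_{2,1,0}(U^{a_1},U^{a_2},U^b)=\lambda U^{a_1+a_2+b}$ is Alexander-compatible), so you cannot conclude that $P$ is the diagonal bimodule or that $f$ is strictly multiplication by $U$. This is not a repairable bookkeeping issue relative to the intended use: the lemma is deliberately stated so that the higher operations $m_{i,1,j}$ with $i+j\ge 2$ and the component $f_{1,1,1}$ are left unconstrained, because in the application (Section~\ref{sec:thm-U-action}) $f_{1,1,1}$ is a genuinely present permutohedron-type homotopy and the instanton groups carry only relative/mod~$2$ homological gradings, so an absolute Maslov grading of the kind you invoke is not available. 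If you instead added ``all higher $m$'s and $f$'s vanish'' as hypotheses, your argument would go through but would prove a weaker statement that no longer covers the situation the paper needs.

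The paper's proof avoids identifying $P$ with ${}_{\cA}\cA_{\cA}$ altogether and works with the concrete model of $\tildeotimes$ via the type-$DD$ bimodule $\Lambda$ of Equation~\eqref{eq:derived-tensor-bimodules}: since each $\Lambda$ factor emits at most one algebra element on each side, the differential of $M\tildeotimes P\tildeotimes N$ only involves $m_{i,1,j}$ with $i,j\le 1$, so by the hypotheses the complex is literally equal to $M\tildeotimes \bF[U]\tildeotimes N\simeq M\tildeotimes N$; and the only way $\id\tildeotimes f\tildeotimes \id$ can differ from $\id\tildeotimes U\tildeotimes\id$ is through $f_{1,1,1}$, which maps the top cube-filtration level $C_2$ to $C_0$. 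Since the differential $C_2\to C_1$ is injective, every homology class has a cycle representative in $C_1\oplus C_0$, on which the two maps agree; hence they agree on homology. If you want to salvage your strategy under the stated hypotheses, you would need an argument that $P$ is quasi-isomorphic to the diagonal bimodule and $f$ homotopic to $U$ \emph{without} grading constraints on the higher terms, which is essentially the content you would have to supply from scratch.
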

\begin{proof} Here, we are using the type-$DD$ module interpretation of the derived tensor product described in Equation~\eqref{eq:derived-tensor-bimodules}, and also we are viewing $\id\tildeotimes f\tildeotimes \id$ correspondingly as a box tensor product of morphisms (cf. \cite{LOTBimodules}*{Section~2}). The statement that about $M\tildeotimes P\tildeotimes N\iso M\tildeotimes N$ is seen by observing that $M\tildeotimes P\tildeotimes N=M\tildeotimes \bF[U]\tildeotimes N$ as groups. The underlying complexes are clearly isomorphic, and our assumption about the structure maps $m_{i,1,j}$ imply that the differentials are identical. However, $M\tildeotimes \bF[U]\simeq M$, as a right module over $\cA$, so the claim follows.

We now consider the claim about $\id\tildeotimes f\tildeotimes \id$. We will show that $\id\tildeotimes f\tildeotimes \id$ has the same action on homology as $\id\tildeotimes U\tildeotimes \id$, which is clearly sufficient. 

We may view $M\tildeotimes P\tildeotimes N$ as the following 2-dimensional cube:
\[
\begin{tikzcd}[row sep=2cm, column sep=2cm]
M\otimes \bF[U]\otimes N
 \ar[r, "U|1|1+1|U|1"] 
 \ar[d, "1|1|U+1|U|1"]
&
 M\otimes \bF[U]\otimes N
 \ar[d,"1|1|U+1|U|1"]
 \\
M\otimes \bF[U]\otimes N
	\ar[r, "U|1|1+1|U|1"]
&
M\otimes \bF[U]\otimes N
\end{tikzcd}
\]
(Tensor products taken over $\bF$).
With respect to this description, the map $\id\tildeotimes f\tildeotimes \id$ takes the following form:
\[
 \begin{tikzcd}[row sep=5cm, labels=description]
 M\tildeotimes P\tildeotimes N
 \ar[d, "\id\tildeotimes f\tildeotimes \id"]
 \\
  M\tildeotimes P\tildeotimes N
 \end{tikzcd}
 \!\!\!\!\!\!
 =
 \!\!\! 
 \begin{tikzcd}[column sep={2.5cm,between origins},row sep=1.2cm,labels=description]
M|F[U]|N
	\ar[rr, "U|1|1+1|U|1"]
	\ar[dr,"1|1|U+1|U|1"]
	\ar[ddd, "1|f_{0,1,0}(-)|1"]
	\ar[ddddrrr,dotted]
	\ar[rrddd,dashed,"1|{f_{1,1,0}(U{,},-)|1}", sloped]
&[.7 cm]\,
&M|F[U]|N
	\ar[ddd, "1|f_{0,1,0}(-)|1"]
	\ar[dr,"1|1|U+1|U|1"]
	\ar[ddddr,dashed,"{1|f_{0,1,1}(-,U)|1}"]
&[.7 cm]\,
\\
&[.7 cm] 
M|F[U]|N
	\ar[rr, crossing over,"U|1|1+1|U|1"]
	\ar[dddrr,dashed, crossing over, "{1|f_{1,1,0}(U,-)|1}",sloped]
&\,
&[.7 cm]M|F[U]|N
	\ar[ddd, "1|f_{0,1,0}(-)|1"]
\\
\\
M|F[U]|N
	\ar[dr,"1|1|U+1|U|1"]	
	\ar[rr,"U|1|1+1|U|1"]
&[.7 cm]\,
&
M|F[U]|N
	\ar[dr, "1|1|U+1|U|1"]
&[.9 cm]\,\\
& [.7 cm]
M|F[U]|N
	\ar[from=uuuul, dashed, crossing over, "{1|f_{0,1,1}(-,U)|1}"]
	\ar[from=uuu,crossing over, "1|f_{0,1,0}(-)|1"]
	\ar[rr, "U|1|1+1|U|1"]&\,
&[.9 cm]
M|F[U]|N
\end{tikzcd}
 \]
 The long dashed arrow is $1|f_{1,1,1}(U,-,U)|1$. Note that our assumptions imply that $f_{0,1,1}(-,U)$ and $f_{1,1,0}(U,-)$ vanish.

We think of the  chain complex $M\tildeotimes P\tildeotimes N$ as being  of the form
\[
M\tildeotimes P\tildeotimes N=(C_2\to C_1\to C_0),
\]
where the index $q$ in $C_q$ denotes the cube grading. However, it is clear that the map $C_2\to C_1$ is injective. Hence, every cycle may be written as a sum of elements in $C_1$ and $C_0$. Our assumptions on $f$ imply that 
\[
(\id\tildeotimes f\tildeotimes \id)|_{C_1\oplus C_0}=(\id\tildeotimes U\tildeotimes \id)|_{C_1\oplus C_0},
\]
so in particular the two maps coincide on all cycles, and hence on homology. 
\end{proof}

\subsection{The $U$-action on connected sums}
\label{sec:U-map-first-version}

In this section, we describe preliminary results towards the $U$ action on the Floer homology of the connected sum $K\# K'$. The description in this setting is not sufficient for our proof of Theorem~\ref{thm:U-action}, but is an important first step.
We attach a positive bypass to a portion of the boundary corresponding to $K$.  On the level of complexes, this may be understand by adding an extra dimension to the iterating mapping cone corresponding to this direction. Write $\phi^-$ for the negative bypass map on the sutured Floer homology of $K\# K'$ (for longitudinal framing $n+m$).  Arguing as in Theorem~\ref{thm:link-surgery-instanton}, one obtains that 
\[
X_{n+1,m}\circ \phi^-\simeq F^-_{n,m}\circ X_{n,m}
\]
where $F_{n,m}^-$ takes the form of the solid arrows in the diagram below:
\[
F_{n,m}^-=
\begin{tikzcd}[labels=description, row sep=.7cm, column sep=2.5cm]
&C_{n}|D_{m-1}
	\ar[ddl, "\id|\phi^+",dashed]
	\ar[ddr, "\id|\phi^-",dashed]
	\ar[ddd,shift left=3, "\phi^-|\id"]
	\ar[dddddl, "j_1"]
	\ar[dddddr, "j_2"]
&\,
\\
&C_{n-1}|D_m
	\ar[dl, "\phi^+|\id",dashed]
	\ar[dr, "\phi^-|\id",dashed]
	\ar[ddd,shift right=3,"\phi^-|\id"]
	\ar[ddddl, "h_1"]
	\ar[ddddr, "h_2"]
\\
C_n|D_m
	\ar[ddd,"\phi^-|\id"]
&&
 C_n|D_m
 	\ar[ddd,"\phi^-|\id"]
\\[1cm]
&C_{n+1}|D_{m-1}
	\ar[ddl, "\id|\phi^+",dashed]
	\ar[ddr, "\id|\phi^-",dashed]
&\,
\\
&C_{n}|D_m
	\ar[dl, "\phi^+|\id",dashed]
	\ar[dr, "\phi^-|\id",dashed]
\\
C_{n+1}|D_m&& C_{n+1}|D_m
\end{tikzcd}
\]
In the above,
\begin{enumerate}
\item $j_1$ is a homotopy between $(1|\phi^+)\circ (\phi^-|1)$ and $(\phi^-|1)\circ (1|\phi^+)$.
\item $j_2$ is a homotopy between $(\phi^-|\id)\circ (\id|\phi^-)$ and $(\id|\phi^-)\circ (\phi^-|\id)$.
\item $h_1$ is a homotopy between $(\phi^-|1)\circ (\phi^+|1)$ and $(\phi^+|1)\circ (\phi^-|1)$.
\item $h_2$ is a homotopy between the two compositions of $(\phi^-|1)\circ (\phi^-|1)$. (Note that since these two copies of $\phi^+$ are realized by two different handle attachments, this homotopy corresponds to a handle rearrangement).
\end{enumerate}
Next, we observe that the above is natural with respect to choices of closures. I.e. by picking different closures of $K$ and $K'$, we obtain a homotopy equivalence between diagrams constructed as above. By picking a disconnected closure of 
\[
(Y\setminus \nu(K))\sqcup (Y'\setminus \nu(K')),
\]
 we may take $j_1$ and $j_2$ to be zero on the nose. Indeed, they both correspond to rearranging 4-dimensional handles which are attached to different components of the sutured 3-manifold. Hence a 1-parameter family of metrics relating the two handle attachments may be taken to be the constant family. In particular, no solutions are counted by transversality considerations.

Further, $h_1$ and $h_2$ correspond to reordering 2-handles on a single component of the underlying 3-manifold. Hence, we may write $h_1=h^{[+,-]}|1$ and $h_2=h^{[+,+]}|1$, where $h^{[+,-]}$ and $h^{[+,+]}$ are the canonical homotopies corresponding to handle rearrangements on $Y\setminus \nu(K)$.

We obtain that $F_{n,m}^-$ may be simplified to the following diagram. 
\[
F_{n,m}^-=
\begin{tikzcd}[labels=description, row sep=.7cm, column sep=2.5cm]
&C_{n}|D_{m-1}
	\ar[ddl, "\id|\phi^+",dashed]
	\ar[ddr, "\id|\phi^-",dashed]
	\ar[ddd,shift left=3, "\phi^-|\id"]
&\,
\\
&C_{n-1}|D_m
	\ar[dl, "\phi^+|\id",dashed]
	\ar[dr, "\phi^-|\id",dashed]
	\ar[ddd,shift right=3,"\phi^-|\id"]
	\ar[ddddl,"h^{[-,+]}|1"]
	\ar[ddddr,"h^{[-,-]}|1"]
\\
C_n|D_m
	\ar[ddd,"\phi^-|\id"]
&&
 C_n|D_m
 	\ar[ddd,"\phi^-|\id"]
\\[1cm]
&C_{n+1}|D_{m-1}
	\ar[ddl, "\id|\phi^+",dashed]
	\ar[ddr, "\id|\phi^-",dashed]
&\,
\\
&C_{n}|D_m
	\ar[dl, "\phi^+|\id",dashed]
	\ar[dr, "\phi^-|\id",dashed]
\\
C_{n+1}|D_m&& C_{n+1}|D_m
\end{tikzcd}
\]

Applying the homotopy equivalence between the top face and the derived tensor product from Section~\ref{sec:tensor-products-transitive-systems}, we obtain that
\[
\Cone(F_{n,m}^-)\simeq 
\begin{tikzcd}[column sep=3cm, row sep=2cm, labels=description,labels=description] C_n|D_m
	\ar[r, "\phi^+|\phi^--\phi^-|\phi^+"]
	 \ar[d, "\phi^-|1"] 
	 \ar[dr, "h^{[-,+]}|\phi^-+h^{[-,-]}|\phi^+",dashed]
	 & C_{n+1}|D_{m+1}
	 	\ar[d, "\phi^-|1"]
	 \\
C_{n+1}|D_m\ar[r, "\phi^+|\phi^--\phi^-|\phi^+"] & C_{n+2}|D_{m+1}
\end{tikzcd}
\]

\subsection{Proof of Theorem~\ref{thm:U-action}}
\label{sec:thm-U-action}

We view $K\# K'$ as a triple tensor product of $K$, and unknot $u$, and $K'$. We apply the $U$ action (i.e. attach a positive bypass) to $u$. We write $\cU_\ell$ for the correspoding sutured complex for $u$ with longitudinal framing $\ell$.

 Performing our argument from Section~\ref{sec:U-map-first-version} twice, we obtain the diagram 
\begin{equation}
\begin{tikzcd}[column sep=3.5cm, row sep=3cm, labels=description,labels=description] C_n|\cU_\ell|D_m
	\ar[r, "\phi^+|\phi^-|1-\phi^-|\phi^+|1"]
	 \ar[d, "1|\phi^+|\phi^--1|\phi^-|\phi^+"] 
	 \ar[dr, "\substack{\phi^+|h^{[-,+]}|\phi^- \\+\phi^+|h^{[-,-]}|\phi^+\\
	 +\phi^-|h^{[-,+]}|\phi^+\\
	 +\phi^-|h^{[+,+]} |\phi^-}",dashed]
	 & C_{n+1}|\cU_{\ell+1}|D_m
	 	\ar[d, "1|\phi^+|\phi^--1|\phi^-|\phi^+"]
	 \\
C_{n}|\cU_{\ell+1}|D_{m+1}\ar[r, "\phi^+|\phi^-|1-\phi^-|\phi^+|1"] & C_{n+1}|\cU_{\ell+2}|D_{m+1}
\end{tikzcd}
\label{eq:iterated-derived-tensor-product}
\end{equation}
We pick linear maps
\[
\pi_n \colon C_n\to H_*(C_n),\quad i_n\colon H_*(C_n)\to C_n\quad \text{and}\quad  h_n\colon C_n\to C_n
\]
so that $i_n$ and $\pi_n$ are chain maps (viewing $H_*(C_n)$ as having vanishing differential),
\begin{equation}
\pi_n\circ i_n=\id\quad i_n\circ \pi_n=\id+[\d, h_n],\quad h_n\circ h_n=0,\quad h_n\circ i_n=0\quad \text{and} \quad \pi_n\circ h_n=0.
\label{eq:strong-deformation-retract}
\end{equation}
We pick similar homotopy equivalences for each $\cU_{\ell}$ and $D_m$. We may do this because the above complexes are finitely generated chain complexes over a field.

We can use these homotopy equivalences to construct similar homotopy equivalences $C_n|\cU_\ell|D_m\simeq H_*(C_n)|H_*(\cU_\ell)|H_*(D_m)$ which satisfy the same relations. For example, for inclusion and projection maps, we may take $i_n|i_\ell|i_m$ and $\pi_n|\pi_\ell|\pi_m$. For the homotopy, we take
\[
h_n| i_\ell \pi_\ell|i_m\pi_m+1|h_\ell|i_m\pi_m+1|1|h_m.
\]
Observe that these still satisfy the relations in Equation~\eqref{eq:strong-deformation-retract}.

Next, we apply homological perturbation of hypercubes to the above diagram. See \cite{HHSZDual}*{Section~2.7}. We observe that $H_*(\cU_{\ell})$ admits a mod 2 homological grading. This is because the instanton complex of the closure admits a mod 8 grading, and the eigenspace operators $\mu(pt)$ and $\mu(R)$ are homogeneously 4 and 2 graded. In particular, their generalized eigenspaces will admit a mod 2 grading. The space $H_*(\cU_\ell)$ is supported only in a single mod 2 grading. Furthermore, the maps $h^{[\circ,\circ']}$ shift the mod 2 homological grading by 1, and similarly we may assume the homotopies $h_{\ell}\colon \cU_\ell\to \cU_\ell$ also shift the mod 2 grading by 1. Furthermore, the maps $\phi^{\pm}$ on $\cU_\ell$ preserve the mod 2 grading. In particular, no homotopies will survive homological perturbation of hypercubes, and we obtain a homotopy equivalence with the diagram
\begin{equation}
\begin{tikzcd}[column sep=3.5cm, row sep=3cm, labels=description,labels=description] H_*C_n|H_*\cU_\ell|H_*D_m
	\ar[r, "\phi^+_*|\phi^-_*|1+\phi^-_*|\phi^+_*|1"]
	 \ar[d, "1|\phi^+_*|\phi^-_*+1|\phi^-_*|\phi^+_*"] 
	 & H_*C_{n+1}|H_*\cU_{\ell+1}|H_*D_m
	 	\ar[d, "1|\phi^+_*|\phi^-_*+1|\phi^-_*|\phi^+_*"]
	 \\
H_*C_{n}|H_*\cU_{\ell+1}|H_*D_{m+1}\ar[r, "\phi^+_*|\phi^-_*|1+\phi^-_*|\phi^+_*|1"] & H_*C_{n+1}|H_*\cU_{\ell+2}|H_*D_{m+1}
\end{tikzcd}
\label{eq:iterated-derived-tensor-product-Hom}
\end{equation}
Note that in Alexander gradings above some fixed $q$, if $n$, $\ell$ and $m$ are sufficiently large, then the above coincides with the triple derived tensor product $\KHI^-(K)\tildeotimes \C[U]\tildeotimes \KHI^-(K')$. Compare the first part of Lemma~\ref{lem:U-action-A_infty-tensor-product}.

We now add an extra dimension for the $U$ map. Performing the natural extension of the above argument,  we obtain a homotopy equivalence between the following complexes:
 \[
 \begin{tikzcd}[row sep=5cm]
 \SHI\left(Y\# Y'\setminus \nu(K\# K'),\Gamma_{n+\ell+m}\right)
 \ar[d, "\phi^-"]
 \\
 \SHI\left(Y\# Y'\setminus \nu(K\# K'),\Gamma_{n+\ell+m}\right)
 \end{tikzcd}
 \hspace{-1cm}
 \simeq
 \!\!\! 
 \begin{tikzcd}[column sep={2.5cm,between origins},row sep=1.2cm,labels=description]
C_n|\cU_\ell|D_m
	\ar[rr, "\phi^-|\phi^+|1+\phi^{+}|\phi^-|1"]
	\ar[dr,
	"1|\phi^+|\phi^-+1|\phi^{-}|\phi^+"
	]
	\ar[ddd, "1|\phi^-|1"]
	\ar[ddddrrr,dotted]
	\ar[rrddd,dashed]
&[.7 cm]\,
&C_{n+1}|\cU_{\ell+1}|D_m
	\ar[ddd, "1|\phi^-|1"]
	\ar[dr,"1|\phi^+|\phi^-+1|\phi^{-}|\phi^+"]
&[.7 cm]\,
\\
&[.7 cm] 
C_n|\cU_{\ell+1}|D_{m+1}
	\ar[rr, crossing over,"\phi^-|\phi^+|1+\phi^{+}|\phi^-|1"]
	\ar[dddrr,dashed, ]
&\,
&[.7 cm] C_n|\cU_{\ell+2}|D_{m+1}
	\ar[from=ulll, dashed, ]
	\ar[ddd, "1|\phi^-|1"]
\\
\\
C_n|\cU_{\ell+1}|D_{m}
	\ar[dr,"1|\phi^+|\phi^-+1|\phi^{-}|\phi^+"]
	\ar[drrr,dashed]	
	\ar[rr,"\phi^-|\phi^+|1+\phi^{+}|\phi^-|1"]
&[.7 cm]\,
&
C_{n+1}|\cU_{\ell+2}|D_{m}
	\ar[dr, "1|\phi^+|\phi^-+1|\phi^{-}|\phi^+"]
&[.9 cm]\,\\
& [.7 cm]C_n|\cU_{\ell+2}|D_{m+2}
	\ar[from=uuuul, dashed, ]
	\ar[from=uuu, "1|\phi^-|1"]
	\ar[rr, "\phi^-|\phi^+|1+\phi^{+}|\phi^-|1"]&\,
&[.9 cm]C_{n+1}|\cU_{\ell+3}|D_{m+1}
\end{tikzcd}
 \]
 Here, the length 2 arrows on the top and bottom faces are the same as in Equation~\eqref{eq:iterated-derived-tensor-product}, and the length 2 arrows along the sides are similar. The length 3 arrow counts tensors of solutions, such as $\phi^+|p^{[-,-,-]}|\phi^-$, where $p^{[-,-,-]}$ counts solutions which relate different orderings of three handles. Concretely, $p^{[-,-,-]}$ counts solutions over a 2-dimensional permutahedron (i.e. a hexagon), whose faces correspond to the terms in the length 3 hypercube relation. See \cite{BloomSpectralSequence} for more on constructing families of metrics indexed by permutohedra. 
 
 By the same logic which transformed Equation~\eqref{eq:iterated-derived-tensor-product} into~\eqref{eq:iterated-derived-tensor-product-Hom}, we see that the right hand side of Equation~\eqref{lem:U-action-A_infty-tensor-product} is homotopy equivalent as a hypercube to a hypercube of the following form:
 \[
 \begin{tikzcd}[column sep={3.2cm,between origins},row sep=1.2cm,labels=description]
 H_*C_n|H_*\cU_\ell|H_*D_m
 	\ar[rr, "\phi^-|\phi^+|1+\phi^{+}|\phi^-|1"]
 	\ar[dr,
 	"1|\phi^+|\phi^-+1|\phi^{-}|\phi^+"
 	]
 	\ar[ddd, "1|\phi^-|1"]
 	\ar[ddddrrr,dotted]
 &[.7 cm]\,
 &H_*C_{n+1}|H_*\cU_{\ell+1}|H_*D_m
 	\ar[ddd, "1|\phi^-|1"]
 	\ar[dr,"1|\phi^+|\phi^-+1|\phi^{-}|\phi^+"]
 &[.7 cm]\,
 \\
 &[.7 cm] 
 H_*C_n|H_*\cU_{\ell+1}|H_*D_{m+1}
 	\ar[rr, crossing over,"\phi^-|\phi^+|1+\phi^{+}|\phi^-|1"]
 &\,
 &[.7 cm] H_*C_n|H_*\cU_{\ell+2}|H_*D_{m+1}
 	\ar[ddd, "1|\phi^-|1"]
 \\
 \\
 H_*C_n|H_*\cU_{\ell+1}|H_*D_{m}
 	\ar[dr,"1|\phi^+|\phi^-+1|\phi^{-}|\phi^+"]	
 	\ar[rr,"\phi^-|\phi^+|1+\phi^{+}|\phi^-_*|1"]
 &[.7 cm]\,
 &
 H_*C_{n+1}|H_*\cU_{\ell+2}|H_*D_{m}
 	\ar[dr, "1|\phi^+|\phi^-+1|\phi^{-}|\phi^+"]
 &[.9 cm]\,\\
 & [.7 cm]H_*C_n|H_*\cU_{\ell+2}|H_*D_{m+2}
 	\ar[from=uuu,crossing over, "1|\phi^-|1"]
 	\ar[rr, "\phi^-|\phi^+|1+\phi^{+}|\phi^-|1"]&\,
 &[.9 cm]H_*C_{n+1}|H_*\cU_{\ell+3}|H_*D_{m+1}
 \end{tikzcd}
 \]
That is, we can eliminate all of the length 2 arrows in the complex (though potentially not the length 3 map). By an identical argument to Lemma~\ref{lem:U-action-A_infty-tensor-product}, we see that the induced map from the top to the bottom of the  cube has the same action on cycles as $1|\phi^-|1$, so these coincide on homology. This completes the proof.

\section{On the bypass exact triangle}
\label{sec:bypass-exact-triangle}

We now consider the bypass exact triangle in instanton theory, see \cite{LiLimits}*{Theorem~1.1}. This takes the form
\begin{equation}
\begin{tikzcd}[column sep=-.5cm] \KHI^-(Y,K)\ar[rr, "U"]&& \KHI^-(Y,K) \ar[dl]\\
& \widehat{\KHI}(Y,K)\ar[ul] 
\end{tikzcd}
\label{eq:bypass-overview}
\end{equation}
The above triangle is  the surgery exact triangle for a surgery in a suitable choice of closure of $Y\setminus \nu(K)$.

The arrow labeled $U$ corresponds to a 2-handle map in the exact triangle, however this 2-handle map is by definition the same as the action of $U$ on the direct limit model of knot Floer homology. The other maps in the above exact triangle do not admit as immediate an algebraic description in terms of the module structure on $\KHI^-(Y,K)$. In this section we describe the other maps in terms of the $\C[U]$-module structure on $\KHI^-(Y,K)$.

To describe the other maps, we first pick a free resolution $C^-(K)$ of $\KHI^-(Y,K)$ over $\C[U]$. There is a short exact sequence
\[
\begin{tikzcd}
0\ar[r] &C^-(K)\ar[r, "U"] & C^-(K)\ar[r, "\pi"]& C^-(K)/U C^-(K)\to 0.
\end{tikzcd}
\]
Taking homology, we obtain an exact triangle of the same shape as Equation~\eqref{eq:bypass-overview}. We call this triangle the \emph{algebraic} exact triangle for $\KHI^-(Y,K).$

\begin{thm}
\label{thm:exact-triangle}The bypass exact triangle in Equation~\eqref{eq:bypass-overview} is isomorphic to the algebraic exact triangle for $\KHI^-(Y,K).$
\end{thm}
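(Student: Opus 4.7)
The plan is to use the free model $\mathit{CKI}^-(K,n)$ from Theorem~\ref{thm:free-model} as our free resolution $C^-(K)$. Recall that
\[
\mathit{CKI}^-(K,n)=\Cone\!\left(\phi^-|\id-\phi^+|U\colon \SFH(M,\Gamma_n)\otimes_{\C}\C[U]\to \SFH(M,\Gamma_{n+1})\otimes_{\C}\C[U]\right)
\]
is a finitely generated free chain complex over $\C[U]$ whose homology is $\KHI^-(Y,K)$, hence it can be taken to play the role of $C^-(K)$. The first step is to observe that the short exact sequence $0\to \mathit{CKI}^-(K,n)\xrightarrow{U}\mathit{CKI}^-(K,n)\xrightarrow{\pi}\mathit{CKI}^-(K,n)/U\to 0$, which gives rise to the algebraic exact triangle, has quotient chain complex isomorphic to $\Cone(\phi^-_n\colon \SFH(M,\Gamma_n)\to \SFH(M,\Gamma_{n+1}))$ by setting $U=0$ in the defining formula.

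The second step is to identify this cone with $\widehat{\KHI}(Y,K)$. At each level $n$, the sutures $\Gamma_n,\Gamma_{n+1},\Gamma_\mu$ are related by a cyclic triple of bypass moves, which produces a bypass exact triangle
\[
\SFH(M,\Gamma_n)\xrightarrow{\phi^-_n}\SFH(M,\Gamma_{n+1})\xrightarrow{p_n}\SFH(M,\Gamma_\mu)\xrightarrow{q_n}\SFH(M,\Gamma_n)[1].
\]
Since the hat version $\widehat{\KHI}(Y,K)$ is by definition $\SFH(M,\Gamma_\mu)$, the standard triangulated-category argument produces a chain map $\Cone(\phi^-_n)\to \SFH(M,\Gamma_\mu)$ which is a quasi-isomorphism. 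Composing these identifications, the algebraic LES of the short exact sequence above reads
\[
\KHI^-(Y,K)\xrightarrow{U}\KHI^-(Y,K)\xrightarrow{\pi_*}\widehat{\KHI}(Y,K)\xrightarrow{\delta}\KHI^-(Y,K)[1],
\]
which has the same shape as the bypass exact triangle.

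The remaining, and main, obstacle is to verify that these are not merely abstractly isomorphic triangles but that $\pi_*$ and $\delta$ agree with the maps in the bypass exact triangle of \cite{LiLimits}. I would approach this by a chain-level diagram chase analogous to those of Sections~\ref{sec:7} and~\ref{sec:10}: the chain map $\pi\colon \mathit{CKI}^-(K,n)\to \mathit{CKI}^-(K,n)/U$ is the literal reduction mod $U$, which under the quasi-isomorphism $\mathit{CKI}^-(K,n)\to \KHI^-(Y,K)$ (projection onto a limit representative at level $n$) and $\Cone(\phi^-_n)\to \widehat{\KHI}(Y,K)$ (from the bypass triangle) coincides with the bypass map $\KHI^-(Y,K)\to \widehat{\KHI}(Y,K)$ after passing to the direct limit. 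The connecting map $\delta$ is determined by the snake lemma and, through the same identifications, is seen to coincide with the bypass map $q_n$ appearing in the triangle above, compatibly with the transition maps $\phi^+$. A subtlety here is naturality as $n$ varies: one must check that the quasi-isomorphisms $\Cone(\phi^-_n)\simeq \widehat{\KHI}(Y,K)$ intertwine the positive bypass transition maps so that the diagram chase descends to the direct limit, which follows from functoriality of the bypass maps under contact handle attachment.
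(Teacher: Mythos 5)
Your reduction of $\mathit{CKI}^-(K,n)$ mod $U$ to $\Cone(\phi^-_n)$ is correct, and using the free model of Theorem~\ref{thm:free-model} (together with Theorem~\ref{thm:U-action}, which is what makes it a legitimate $\C[U]$-free replacement for $C^-(K)$) is a reasonable framing. However, already your second step overreaches: Li's bypass triangle is an exact triangle of homology groups, and exactness alone only gives a non-canonical isomorphism of graded vector spaces $H_*\Cone(\phi^-_n)\iso \coker(\phi^-_n)\oplus \ker(\phi^-_n)[1]\iso \widehat{\KHI}(Y,K)$; there is no ``standard triangulated-category argument'' producing a preferred quasi-isomorphism compatible with the remaining two maps unless you first realize the bypass triangle at the chain level inside the eigenspace/hypercube framework of Sections~\ref{sec:hypercubes-eigenspaces}--\ref{sec:7}. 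That chain-level realization, with its attendant homotopies, is exactly what the paper constructs by summing with an unknot and applying the surgery triangle to it.

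The genuine gap is the step you yourself flag as the main obstacle and then dispatch with ``is seen to coincide'': identifying $\pi_*$ and, crucially, the snake-lemma connecting map $\delta$ with the maps in the bypass triangle. In the paper's proof this is where all the work lies. After homological perturbation, the third map of the geometric triangle is only determined up to a homotopy correction $h^{[+,W]}$, which a formal diagram chase cannot evaluate: Alexander-grading considerations reduce it to two possibilities ($h^{[+,W]}=0$ or $h^{[+,W]}(1)=1$), and the paper eliminates the first only by an indirect global argument (if it vanished, the bypass triangle would split for every knot, contradicting the right-handed trefoil), after which the connecting map is identified with the ``divide by $U$'' morphism $\id\tildeotimes f_*$. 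Your proposal contains no mechanism for resolving this ambiguity, and the naturality-in-$n$ issue you mention is minor by comparison. As written, the argument shows the two triangles have isomorphic corners but does not show the triangles themselves are isomorphic, which is the actual content of the theorem.
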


Before proving Theorem~\ref{thm:exact-triangle}, it is helpful to give another description of the exact triangle.
We begin by considering the short exact sequence
\[
\begin{tikzcd}
0\ar[r]& \C[U] \ar[r, "U"]& \C[U] \ar[r, "U\mapsto 0"]& \C[U]/U\ar[r]& 0
\end{tikzcd}
\]
This gives us two quasi-isomorphisms of $\C[U]$-modules
\[
\Cone\left(U\colon \C[U]\to \C[U]\right)\to\C[U]/U\quad
\]
\[
\C[U]\to \Cone\left(U\mapsto 0\colon \C[U]\to \C[U]/U\right).
\]
We will write $\pi$ for the map $U\mapsto 0$.
There is, in fact, a third equivalence which is more involved to write down. To state the equivalence, it convenient to work in the setting of $A_\infty$-modules instead of $\C[U]$-chain complexes. This takes the form of an $A_\infty$-module map $f_j\colon \C\to \C[U]$, given by $f_j=0$ if $j\neq 2$ and 
\[
f_2(U^i, 1)=\begin{cases} U^{i-1} & \text{ if } i>0\\
0& \text{ if } i=0.
\end{cases}
\]
The above morphism may be found by inverting the morphism $\Cone(U\colon \C[U]\to \C[U])\to \C[U]/U$ as an $A_\infty$-morphism (e.g. using the homological perturbation lemma). The map $f_*$ is the component mapping from $\C[U]/U$ to the domain part of $\Cone(\C[U]\to \C[U])$.
We observe that there is an isomorphism of $A_\infty$-modules
\[
\C[U]\simeq \Cone(f_*\colon \C[U]/U\to \C[U]).
\]
In fact, the right hand side is a genuine $\C[U]$-module isomorphic to $\C[U].$

\begin{lem} The algebraic exact triangle for $\KHI^-(Y,K)$ is isomorphic to the following exact triangle:
\[
\begin{tikzcd}[column sep=-.5cm] \KHI^-(Y,K)\tildeotimes \C[U]\ar[rr, "\id\tildeotimes U"]&& \KHI^-(Y,K)\tildeotimes \C[U] \ar[dl, "\id\tildeotimes \pi"]\\
& \KHI^-(Y,K)\tildeotimes \C[U]/(U)\ar[ul, "\id\tildeotimes f_*"] 
\end{tikzcd}
\]
\end{lem}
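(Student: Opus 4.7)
The plan is to use a free resolution $C^-(K)$ of $\KHI^-(Y,K)$ over $\C[U]$ to concretely compute all three derived tensor products appearing in the statement and identify them term by term with the algebraic exact triangle. By the equivalent descriptions of the derived tensor product given earlier in Section~\ref{sec:5}, for any $\C[U]$-module $N$ we have a quasi-isomorphism $\KHI^-(Y,K) \tildeotimes_{\C[U]} N \simeq C^-(K) \otimes_{\C[U]} N$. Specializing to $N = \C[U]$ gives $C^-(K)$ itself (with homology $\KHI^-(Y,K)$), while $N = \C[U]/(U)$ gives $C^-(K)/UC^-(K)$.

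Under these identifications, verifying the first two edges of the triangle is immediate. The short exact sequence of $\C[U]$-modules
\[
0 \to \C[U] \xrightarrow{U} \C[U] \xrightarrow{\pi} \C[U]/(U) \to 0
\]
tensored with the complex of free (hence flat) modules $C^-(K)$ yields the short exact sequence of chain complexes
\[
0 \to C^-(K) \xrightarrow{U} C^-(K) \xrightarrow{\pi} C^-(K)/UC^-(K) \to 0,
\]
which is precisely the short exact sequence defining the algebraic exact triangle. The induced maps $\id \tildeotimes U$ and $\id \tildeotimes \pi$ agree on the nose with multiplication by $U$ and the canonical projection.

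For the third edge, I would invoke the $A_\infty$-isomorphism $\C[U] \iso \Cone(f_* \colon \C[U]/(U) \to \C[U])$ noted in the preceding paragraph. Since the derived tensor product respects $A_\infty$-morphisms, applying $\KHI^-(Y,K) \tildeotimes_{\C[U]} -$ realizes $\id \tildeotimes f_*$ as the connecting map in the long exact sequence of homology associated to the short exact sequence of chain complexes above. Alternatively, one may compute $\id \tildeotimes f_*$ directly at the chain level: the only non-trivial component $f_2(U^i,1)=U^{i-1}$ corresponds precisely to the standard zig-zag defining the connecting homomorphism (lift a cycle $x\in C^-(K)/UC^-(K)$ to $\tilde{x}\in C^-(K)$, observe $\d\tilde{x}=Uy$, and send $x$ to $y$). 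Either way, this map matches the third edge of the algebraic exact triangle.

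The main obstacle will be making the identification of $\id \tildeotimes f_*$ with the connecting homomorphism rigorous, since $f_*$ is not a strict chain map but only an $A_\infty$-module morphism; handling this requires unwinding the $A_\infty$-tensor product construction carefully. Once this is done, the remainder of the argument is formal homological algebra over the PID $\C[U]$, and the naturality of the exact triangle associated to a short exact sequence of chain complexes gives the compatibility of the three edges simultaneously.
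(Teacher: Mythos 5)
Your proposal is correct, and in fact the paper offers no argument at all here: the authors simply state that the lemma is ``straightforward to prove directly'' and leave it to the reader, so your write-up is essentially the direct verification they have in mind. You correctly isolate the only nontrivial point, namely that $\id\tildeotimes f_*$ agrees with the connecting homomorphism of $0\to C^-(K)\xrightarrow{U} C^-(K)\to C^-(K)/UC^-(K)\to 0$ under the identifications of the corners (the first two edges being literally the maps induced by $U$ and $\pi$ after tensoring the sequence $0\to\C[U]\xrightarrow{U}\C[U]\to\C[U]/(U)\to 0$ with the flat complex $C^-(K)$); and your chain-level zig-zag matches the box-tensor computation, where by strict unitality only the $f_2(U,-)$ term contributes, sending the $\ker(U)$ part isomorphically and killing the $\mathrm{coker}(U)$ part, exactly as the connecting map does. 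The only detail worth spelling out is that the comparison of $\KHI^-(Y,K)\tildeotimes\C[U]/(U)$ with $C^-(K)\otimes_{\C[U]}\C[U]/(U)$ is not literally the free-complex lemma of Section~\ref{sec:5} (since $\C[U]/(U)$ is not free), but follows from the standard fact that both compute $\Tor_{\C[U]}$; with that noted, the argument is complete.
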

We leave the above lemma to the reader, since it is straightforward to prove directly. 

We observe that each of the modules appearing in the exact triangle admit an Alexander grading where $U$ has grading $-1$, and each of the maps in the sequence is homogeneously graded. By shifting the first copy of $\KHI^-(Y,K)\tildeotimes \C[U]$ down by 1, the maps in the exact triangle become Alexander grading preserving. We have the following basic algebraic lemma:

\begin{lem} Suppose $A$, $A'$, $B$, $B'$, $C$ and $C'$ are finitely generated $\C[U]$ modules which are Alexander graded. Suppose that we have two exact triangles of $\C[U]$-modules
\[
\begin{tikzcd}[column sep=1cm] A\ar[rr]&& B \ar[dl]\\
& C\ar[ul] 
\end{tikzcd}\quad \begin{tikzcd}[column sep=1cm] A'\ar[rr]&& B' \ar[dl]\\
& C'\ar[ul] .
\end{tikzcd}
\]
Suppose further that for each $q\in \Z$, the truncations in gradings $>q$ are isomorphic. Then the two triangles are isomorphic.
\end{lem}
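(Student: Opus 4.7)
The plan is to upgrade the compatible truncation-level isomorphisms into a single triangle isomorphism via a Mittag--Leffler / compactness argument. First I would note that, since each of $A,A',B,B',C,C'$ is a finitely generated $\C[U]$-module with Alexander grading bounded above and $U$ of degree $-1$, the subspaces $M_{\leq q}:=\bigoplus_{d\leq q} M_d$ are genuine $\C[U]$-submodules (since $U$ cannot raise grading), so the truncations $M_{>q} := M/M_{\leq q}$ are well-defined graded $\C[U]$-modules which are moreover finite-dimensional over $\C$. Writing $T=(A,B,C)$ and $T'=(A',B',C')$, let $\Iso_q$ denote the set of grading-preserving $\C[U]$-module triangle isomorphisms $T_{>q} \to T'_{>q}$; by hypothesis each $\Iso_q$ is non-empty.

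Next I would observe two key features of the inverse system $\{\Iso_q\}$. First, $\Iso_q$ is a constructible algebraic subset of the finite-dimensional $\C$-vector space of grading-preserving $\C[U]$-linear triples $A_{>q}\to A'_{>q}$, $B_{>q}\to B'_{>q}$, $C_{>q}\to C'_{>q}$ commuting with the triangle maps. Second, for $q_1\le q_2$, any $\phi\in\Iso_{q_1}$ descends to an element of $\Iso_{q_2}$, because a graded isomorphism sends each graded piece to the corresponding graded piece and hence sends the $\C[U]$-submodule in gradings $\le q_2$ to its counterpart. This produces restriction maps $\Iso_{q_1}\to \Iso_{q_2}$ which compose compatibly.

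I would then set $q_n=-n$ and apply Noetherianity to the decreasing chain of images $\mathrm{image}(\Iso_{-m}\to\Iso_{-n})$ for $m\ge n$, letting $I_n$ denote the (non-empty, constructible) eventual stable image. The transition maps $I_{n+1}\twoheadrightarrow I_n$ are surjective by construction, so a standard dependent-choice argument produces a compatible sequence $(\phi_n)\in\varprojlim I_n$. From this sequence I would assemble $\phi:T\to T'$ as follows: for a homogeneous $x\in A$ of degree $d$, set $\phi_A(x):=\phi_{A,n}(x)\in A'_d$ for any $n$ with $-n<d$. The compatibility of the $\phi_n$, together with the fact that the projection $A'_{>-m}\to A'_{>-n}$ is the identity in each grading $d>-n$, makes this independent of the choice of $n$. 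The resulting triple $\phi=(\phi_A,\phi_B,\phi_C)$ is a graded $\C[U]$-linear morphism of triangles which is an isomorphism in each grading, and hence an isomorphism overall.

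The main obstacle is the algebraic geometry input in the Mittag--Leffler step: one has to check carefully that $\Iso_q$ is indeed a constructible subset of a finite-dimensional variety, so that the chain of images stabilizes by Noetherianity, and that descending a triangle isomorphism along the filtration behaves compatibly. Once this bookkeeping is set up, the rest of the argument is formal.
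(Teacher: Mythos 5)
Your overall strategy (assembling the compatible truncation isomorphisms into a single isomorphism by an inverse-limit argument) could in principle be made to work, but the pivotal step as written has a genuine gap. You claim that the decreasing chain of images $\im(\Iso_{-m}\to \Iso_{-n})$, $m\ge n$, stabilizes ``by Noetherianity.'' Noetherianity of the Zariski topology only forces descending chains of \emph{closed} subsets to stabilize; your images are merely constructible, and descending chains of constructible sets can strictly decrease forever (already $\C\setminus\{1,\dots,m\}$ does). Moreover, the general ``Mittag--Leffler for constructible sets'' principle you are implicitly invoking is not formal at all: it fails over a countable algebraically closed field (delete the points of $\bar{\Q}$ one at a time), so any correct proof of it must use the uncountability of $\C$, e.g.\ the fact that a variety over an uncountable field is not a countable union of proper closed subvarieties. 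Nothing of this sort appears in your write-up, so the non-emptiness of $\varprojlim I_n$ — the heart of your argument — is not established.

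The gap can be closed much more cheaply by using the module structure, which is what the paper does. Each of the six modules is finitely generated and graded with $U$ of degree $-1$, so (by the structure theorem, or just by choosing homogeneous generators) there is a $q_0$ such that for all $d\le q_0$ the map $U\colon M_d\to M_{d-1}$ is an isomorphism on all six modules; $U$ also commutes with the triangle maps. Hence a triangle isomorphism $\phi$ of the truncations in gradings $>q$, for any $q\le q_0$, extends to all gradings by setting, in degree $d\le q$, $\phi_d:=(U')^{q+1-d}\circ \phi_{q+1}\circ (U^{q+1-d})^{-1}$: this is $U$-equivariant, commutes with the degree-zero triangle maps, and is an isomorphism in every grading, so it is an isomorphism of the two triangles. (The same observation shows that your restriction maps $\Iso_{q-1}\to\Iso_q$ are surjective once $q\le q_0$, which would rescue your inverse-limit step without any algebraic geometry — but at that point the Mittag--Leffler apparatus is superfluous and the argument reduces to the paper's direct ``shift the isomorphism down by $U$'' proof.)
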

\begin{proof} The assumptions guarantee that the exact triangles split over Alexander gradings. Finite generation implies that the subspaces in grading $q$ are isomorphic to those in grading $q-1$ for $q\ll 0$. Hence, we may merely shift the isomorphism in some grading $q\ll 0$ into grading $q-1$ to extend the isomorphism further.
\end{proof}

For the proof, we consider the sutured manifold for the knot complement of $K$ with $n$-framed longitudinal sutures for some $n\gg 0$. We take the connected sum of this with an $\ell$-framed unknot, and apply the maps for a negative bypass triangle to the unknot.  This gives us an exact triangle
\[
\begin{tikzcd}[column sep=-.5cm]
H_*\Cone( C_n|\cU_\ell\to C_{n+1}|\cU_{\ell+1})\ar[rr]&& H_*\Cone( C_n|\cU_{\ell+1}\to C_{n+1}|\cU_{\ell+2}) \ar[dl]\\
& H_*\Cone( C_n|\cU_\mu\to C_{n+1}|\cU_{\mu})\ar[ul] 
\end{tikzcd}
\]
(The map in each Cone is $\phi^+|\phi^-+\phi^-|\phi^+$). 
 We apply the homological perturbation lemma, as before, to take homology of each $C_n$ and $\cU_\ell$. We get a diagram of the form
\[
\begin{tikzcd}[column sep=-2.5cm] \Cone( H_*C_n|H_*\cU_\ell\to H_*C_{n+1}|H_*\cU_{\ell+1})\ar[rr, "F"]&& \Cone(H_* C_n|H_*\cU_{\ell+1}\to H_*C_{n+1}|H_*\cU_{\ell+2}) \ar[dl, "G"]\\
& \Cone(H_*C_n|H_*\cU_\mu\to H_*C_{n+1}|H_*\cU_{\mu})\ar[ul, "H"] 
\end{tikzcd}
\]
which is exact after taking homology of each vertex.

If $q$ is fixed and $n\gg 0$, then the top two complexes are identified (in Alexander gradings $>q$) with the derived tensor product $\HFK^-(Y,K)\tildeotimes \C[U]$. On the bottom row, we observe that $\phi^+$ vanishes on $H_*\cU_\mu$, so the bottom row may be identified on gradings $>q$ with $\KHI^-(Y,K)\tildeotimes \C[U]/U$.

\begin{lem} Suppose $q$ is fixed and $n,\ell\gg 0$. When restricted to Alexander gradings $>q$, we have the following:
\begin{enumerate}
\item The map $F$ is equal on homology to $\id\tildeotimes U$. 
\item The map $G$ is $\id\tildeotimes \pi$.
\item The map $H$ is $\id\tildeotimes f_*$.
\end{enumerate}
\end{lem}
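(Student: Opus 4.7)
The plan is to identify each of $F$, $G$, and $H$ separately, after first identifying the vertices of the triangle.

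\emph{Vertex identifications.} Using the main connected sum formula (Theorem~\ref{thm: main}) together with the well-known computation $\KHI^-(u)\iso \C[U]$, the first two vertices are each identified in Alexander gradings above $q$ with $\KHI^-(K)\tildeotimes \C[U]$. For the meridional vertex, the unknot complement with meridional sutures has sutured Floer homology $\widehat{\KHI}(u)\iso \C\iso \C[U]/(U)$; applying the same connected-sum reasoning together with the staircase Lemma~\ref{lem:staircase-lemma} identifies the $\mu$-vertex in gradings above $q$ with $\KHI^-(K)\tildeotimes \C[U]/(U)$. In each case the Alexander grading bookkeeping matches the grading conventions on $\C[U]$ and $\C[U]/(U)$ specified in the statement of Theorem~\ref{thm: skein}.

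\emph{Identification of $F$.} The map $F$ is induced at the chain level by a positive bypass on the unknot factor, promoted to a hypercube morphism on the iterated mapping cones. I would invoke the argument from the proof of Theorem~\ref{thm:U-action} essentially verbatim. That proof establishes precisely that attaching a positive bypass to the unknot in a connected sum factor, after homological perturbation exploiting the mod $2$ grading on $H_*(\cU_\ell)$ to eliminate length-two homotopies, induces the action $\id\tildeotimes U$ on the derived tensor product. Specializing from $K\#u\#K'$ to $K\#u$ is routine.

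\emph{Identification of $G$.} The key observation is that in Alexander gradings above $q$, the action of the positive bypass $\phi^+$ on $H_*(\cU_\mu)$ vanishes (since $\widehat{\KHI}(u)$ carries no $U$-action). Chasing this vanishing through the staircase identification of the $\mu$-vertex, the geometric bypass map from the $\ell+1$-vertex to the $\mu$-vertex becomes the tautological projection induced by $\C[U]\twoheadrightarrow \C[U]/(U)$ on the second tensor factor, yielding $G=\id\tildeotimes\pi$.

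\emph{Identification of $H$.} Once $F$ and $G$ are identified, exactness of both the bypass triangle and the algebraic triangle forces $H$ to agree with $\id\tildeotimes f_*$ up to an Alexander-grading-preserving automorphism of the vertex $\KHI^-(K)\tildeotimes \C[U]/(U)$. I would resolve this ambiguity by a grading-by-grading induction, using the finite generation of $\KHI^-(K)\tildeotimes \C[U]/(U)$ in each Alexander degree together with the strict unitality and canonical normalization of the $A_\infty$-morphism $f_*$ to pin down the scalar at each Alexander grading. The main obstacle will be precisely this rigidity step: ruling out twisted alternatives to $f_*$ that would also fit into an exact triangle with the given $F$ and $G$. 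An alternative route, which may prove cleaner, is to compute $H$ directly from the chain-level connecting map in the bypass triangle and match it against the explicit formula for $f_*$, using the localization computation from Lemma~\ref{lem:localization link} to anchor the comparison in sufficiently negative Alexander gradings.
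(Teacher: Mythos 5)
Your treatment of $F$ and $G$ matches the paper's: both are obtained by rerunning the proof of Theorem~\ref{thm:U-action} for the connected sum with an unknot, and the vanishing of $\phi^+$ on $H_*(\cU_\mu)$ gives $G=\id\tildeotimes \pi$; the vertex identifications you describe are also the ones used there. The gap is in your treatment of $H$. Your primary route---exactness forces $H$ to agree with $\id\tildeotimes f_*$ up to an Alexander-grading-preserving automorphism, which a ``rigidity'' induction then removes---does not work as stated: exactness only pins down $\ker(H)=\im(G)$ and $\im(H)=\ker(F)$, so $H$ and $\id\tildeotimes f_*$ agree only up to an arbitrary graded isomorphism $\coker(G)\to \ker(F)$, and strict unitality or normalization of the algebraic morphism $f_*$ is a property of $f_*$ alone; it imposes no constraint on the geometric connecting map. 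You correctly flag this as the main obstacle, but you do not supply the missing input. What the paper actually does is a chain-level computation: the connecting map is a morphism of mapping cones whose vertical component is $1|F_W$ (the $2$-handle cobordism map on the unknot factor) and whose diagonal components are the handle-reordering homotopies $\phi^+|h^{[-,W]}+\phi^-|h^{[+,W]}$. After homological perturbation, $F_W$ disappears because it is the zero map $\widehat{\KHI}(u)\to \KHI^-(u)$ in the unknot bypass triangle, and $h^{[-,W]}$ disappears because it has Alexander grading $+1$ while $\cU_\mu$ is supported in grading $0$ and $\cU_\ell$ has top grading $0$. The surviving term $h^{[+,W]}$ is, on homology, either $0$ or $1\mapsto 1$, and the decisive step---which your sketch does not address---is ruling out $h^{[+,W]}=0$: the paper notes the local computation is universal in $K$ and $n$, so vanishing would force the bypass triangle to split for every knot, contradicting the right-handed trefoil; hence $h^{[+,W]}(1)=1$ and the resulting map is exactly $\id\tildeotimes f_*$ (compare $f_2(U^i,1)=U^{i-1}$).

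Your fallback route (compute the connecting map at the chain level and anchor the comparison with $f_*$ via Lemma~\ref{lem:localization link} in very negative Alexander gradings) is in the right spirit, but as written it is only a plan: without the Alexander-grading analysis isolating $h^{[+,W]}$, and without a universality statement allowing you to transfer an anchor computation across different knots, framings, and grading cutoffs, the identification of $H$ with $\id\tildeotimes f_*$ does not follow. Some nonvanishing input of the trefoil/non-splitting type (or an equivalent direct evaluation of $h^{[+,W]}$) is genuinely needed.
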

\begin{proof} The computations for $F$ and $G$ are no different than our proof of the $U$-action on connected sums, so we omit them and focus on the map $H$. In this case, we are considering the diagram
\[
\begin{tikzcd}
[column sep=3.5cm, row sep=2cm] C_n|\cU_\mu
	\ar[r, "\phi^+|\phi^-+\phi^-|\phi^+"]
	\ar[d, "1|F_W"]
	\ar[dr, dashed, "\phi^+|h^{[-,W]}+\phi^-|h^{[+,W]}", labels= description]
&
C_{n+1}|\cU_{\mu}
	\ar[d, "1|F_W"]
\\
C_n|\cU_\ell\ar[r,"\phi^+|\phi^-+\phi^-|\phi^+"]
& C_{n+1}|\cU_{\ell+1}
\end{tikzcd}
\]
We may now apply homological perturbation to the above diagram. The map $F_{W}$ vanishes on homology, since it corresponds to the map $\widehat{\KHI}(u)$ to $\KHI^-(u)$ in the bypass exact triangle for the unknot. Similarly, the maps $h^{[-,W]}$ and $h^{[+,W]}$ are homogeneously Alexander grading. On the other hand $\cU_\mu$ is supported in Alexander grading 0, while $\cU_\ell$ has top Alexander grading $0$. The map $h^{[-,W]}$ has Alexander grading $+1$ while $h^{[+,W]}$ has Alexander grading $0$. In particular, after applying homological perturbation, $h^{[-,W]}=0$, while $h^{[+,W]}$ is either 0, or maps $1$ to $1$. Note also that the computation is universal in the sense that it only depends on $\mu$ and $\ell$, but not $K$ or $n$.

The possibility that $h^{[+,W]}=0$ is impossible, since it would imply that the exact sequence splits for all knots, which is well known not the case (e.g. consider the right handed trefoil). In particular, we must have $h^{[+,W]}(1)=1$.

However, the diagram
\[
\begin{tikzcd}
[column sep=3.5cm, row sep=2cm] 
H_*C_n|H_*\cU_\mu
	\ar[r, "\phi^+|1"]
	\ar[d, "0"]
	\ar[dr, dashed, "\phi^-|1", labels= description]
&
H_*C_{n+1}|H_*\cU_{\mu}
	\ar[d, "0"]
\\
H_*C_n|H_*\cU_\ell\ar[r,"\phi^+|\phi^-+\phi^-|\phi^+"]
& H_*C_{n+1}|H_*\cU_\ell
\end{tikzcd}
\]
is equal (in gradings above $q$) to $\id\tildeotimes f_*$. 
\end{proof}

\section{Free models}
\label{sec:limit-free}

As described in the introduction, our techniques give a model of $\KHI^-(Y,K)$ which is free and finitely generated over $\C[U]$:
\[
\mathit{CKI}^-(K,n):=\Cone\left(\begin{tikzcd}[column sep=2cm] \SHI(Y\setminus \nu(K),\Gamma_n)\otimes_{\C} \C[U]\ar[r, "\phi^-|\id-\phi^+|U"] & \SHI(Y\setminus \nu(K),\Gamma_{n+1})\otimes_{\C} \C[U]
\end{tikzcd}\right).
\]

\begin{rem} Although $\phi^-$ and $\phi^+$ are only well-defined up to multiplication by a unit in $\C,$ changing the maps $\phi^-$ and $\phi^+$ by a unit does not change the $\C[U]$-module isomorphism type of the cone, cf. Lemma~\ref{lem:rescale-module}.
\end{rem}

\begin{thm}\label{thm:isomorphism-free-model}
 There is an isomorphism of $\C[U]$-modules
\[
H_*\mathit{CKI}^-(K,n)\iso \KHI^-(Y,K)
\]
for all $n$. 
\end{thm}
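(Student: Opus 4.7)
The strategy is to identify $\mathit{CKI}^-(K,n)$ with the $m \to \infty$ direct limit of the sutured-level cone model for $\SHI$ of $K \# U_1$, where $U_1$ is an unknot in a disjoint $3$-ball. Since $K \# U_1$ is isotopic to $K$ and $\KHI^-(U_1) \cong \C[U]$, this reduces the theorem to an explicit matching of the pre-limit sutured cone (from the proof of the connected sum formula) with the free model $\mathit{CKI}^-(K,n)$.

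Concretely, I plan to apply the proof of the connected sum formula (Theorem~\ref{thm:link-surgery-instanton} and Lemma~\ref{lem:reorganize-cone}) with $K_1 = K$ and $K_2 = U_1$. This expresses the sutured complex of $K \# U_1$ with longitudinal sutures of slope $n+m$ as the hypercube
\[
\begin{tikzcd}[column sep=1.5cm, row sep=1cm, labels=description]
C_n \otimes D_{m-1} \oplus C_{n-1} \otimes D_m \ar[r, "\id|\phi^- \oplus \phi^-|\id"] \ar[d, "\id|\phi^+ \oplus \phi^+|\id"] & C_n \otimes D_m \\
C_n \otimes D_m &
\end{tikzcd}
\]
where $C_j = \SHI(Y \setminus \nu(K), \Gamma_j)$ and $D_j = \SHI(S^1 \times D^2, \Gamma_j)$. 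Taking the direct limit as $m \to \infty$, $D_m$ stabilizes to $\C[U]$ with $\phi^-$ becoming multiplication by $U$ and $\phi^+$ becoming the identity inclusion. In this limit the downward map $\id + \phi^+|\id \colon C_n \otimes \C[U] \oplus C_{n-1} \otimes \C[U] \to C_n \otimes \C[U]$ is surjective with kernel $\{(-\phi^+(b), b) : b \in C_{n-1} \otimes \C[U]\} \cong C_{n-1} \otimes \C[U]$. Canceling this acyclic pair of summands collapses the hypercube to a two-term complex whose only remaining differential is $b \mapsto \phi^-(b) - U \phi^+(b)$, i.e., to $\mathit{CKI}^-(K, n-1)$.

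The identification is completed by the observation that the direct limit in $m$ alone (with $n$ fixed) already reaches $\KHI^-(K \# U_1) = \KHI^-(K)$: the longitudinal slope on $K \# U_1$ in this setup is $n + m$, so letting $m \to \infty$ drives the slope to infinity regardless of the fixed value of $n$. Combining with the previous paragraph, one obtains $H_* \mathit{CKI}^-(K, n-1) \cong \KHI^-(Y, K)$, and reindexing yields the statement of the theorem for every $n$. The main obstacle will be justifying the interchange of the direct limit in $m$ with the hypercube simplification of Lemma~\ref{lem:reorganize-cone}, since the proof of the connected sum formula takes direct limits only after the simplification. I expect this to follow from the Alexander-grading truncation techniques of Section~\ref{sec:reorganize-obtain-derived} together with the staircase Lemma~\ref{lem:staircase-lemma}, which ensure that each simplification step is compatible with taking direct limits in bounded gradings; compatibility with the $\C[U]$-module structure is then a consequence of Section~\ref{sec:U-action}.
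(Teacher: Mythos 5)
Your proposal is correct and follows essentially the same route as the paper's own proof: apply the connected sum hypercube with an unknot, fix the framing $n$ on $K$ while sending the unknot's framing to infinity (so the total slope also goes to infinity and the left side computes $\KHI^-(Y,K)$), cancel the acyclic piece coming from the unknot-side $\phi^+$ to collapse to the two-term free complex, control the limit via Alexander-grading truncations, and deduce the $\C[U]$-module statement from the $U$-action argument of Section~\ref{sec:U-action}. The only discrepancies are cosmetic, e.g.\ your index shift producing $\mathit{CKI}^-(K,n-1)$, which is harmless since the theorem is stated for all $n$.
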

\begin{proof} The idea is to apply our proof of the connected sum formula to the case that one knot is $K$, and the other is an unknot $u$. We let the framing of $K$ stay fixed while we let the framing of $u$ approach $\infty$.

In more detail, fix $n\in \Z$. We apply Lemma~\ref{lem:reorganize-cone} when $K$ is a knot given longitudinal framing $n$, and $K'=u$ is an $\ell$-framed unknot, where $\ell\gg 0$. We obtain that $\SHI(Y\setminus \nu(K),\Gamma_{n+\ell})$ is homotopy equivalent to the diagram
\[
\begin{tikzcd}[column sep=2cm, row sep=2cm]
C_{n-1}\otimes \cU_\ell\oplus C_n\otimes \cU_{\ell-1}
    \ar[r, "\phi^-|1\oplus 1|\phi^-"]
    \ar[d, "\phi^+|1\oplus 1|\phi^+"]
    &
C_n\otimes \cU_\ell\\
C_n\otimes \cU_\ell
\end{tikzcd}
\]
where $C_n=\SHI(Y\setminus \nu(K), \Gamma_n)$ and $\cU_\ell=\SHI(Y\setminus \nu(u), \Gamma_\ell)$. We observe that
\[
\Cone(\phi^+|1\oplus 1|\phi^+\colon C_{n-1}\otimes \cU_\ell \oplus C_n\otimes \cU_\ell)_{\ge q}\simeq (C_{n-1}\otimes \cU_{\ell})_{\ge q}
\]
if $\ell$ is suitably large. Note that this does not depend on the value of $n$ except that $\ell$ must be suitably large relative to $n$ and $q$. This may be seen since the subcomplex 
\[
\Cone(1|\phi^+\colon C_n\otimes \cU_{\ell-1} \to C_n\otimes \cU_{\ell})_{\ge q}
\]
is acyclic as $\phi^+\colon (\cU_{\ell-1})_{\ge q}\to (\cU_{\ell})_{\ge q}$ is an isomorphism.

In particular, if $\ell \gg 0$, then the truncation of the above complex in Alexander gradings at least $q$ is homotopy equivalent to the subcomplex $\mathit{CKI}^-(K,n)_{\ge q}$. Letting $\ell\to \infty,$ we obtain that $\SHI(Y\setminus \nu(K), \Gamma_{n+\ell})_{\ge q}$ is quasi-isomorphic to the  truncation 
\[
\Cone(\phi^+|\phi^-+\phi^-|\phi^+\colon C_{n}\otimes \cU_{\ell}\to C_{n+1}\otimes  \cU_{\ell+1})_{\ge q}.
\]
The above is identical to the truncation $\mathit{CKI}^-(K,n)_{\ge q}$, showing the isomorphism of the statement on the level of graded vector spaces.

The claim that the isomorphism is of $\C[U]$-modules follows from the same logic as our proof of Theorem~\ref{thm:U-action}.
\end{proof}

As an immediate corollary:

\begin{cor}Theorem~\ref{thm:isomorphism-free-model} also holds in the Heegaard Floer setting.
\end{cor}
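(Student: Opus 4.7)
The proof of Theorem~\ref{thm:isomorphism-free-model} given above uses only formal inputs which have direct Heegaard Floer analogues, so my plan is to run the very same argument verbatim. The three key ingredients are: (i) the iterated mapping cone / connected sum decomposition of Lemma~\ref{lem:reorganize-cone}, (ii) the truncation argument based on Lemma~\ref{lem:staircase-lemma} and direct limits, and (iii) the identification of the $U$-action carried out in Section~\ref{sec:U-action}. Ingredient (i) is established in the Heegaard Floer setting in Section~\ref{sec:connected-sum-HF} via direct analysis of Heegaard diagrams (Lemma~\ref{lem:reorganize-cone-Heegaard} and Lemma~\ref{lem:decomposition-of-maps}); ingredient (ii) is purely algebraic (see Section~\ref{sec:tensor-products-transitive-systems}) and requires no modification; and ingredient (iii) follows from the same homological-perturbation-of-hypercubes argument, applied now to Heegaard Floer chain complexes.

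Concretely, I would take $K'=u$ to be a local unknot in $S^3$ with longitudinal sutures of framing $\ell$, form the sutured Heegaard Floer complex of $(Y\setminus\nu(K\# u),\Gamma_{n+\ell})$, and apply the Heegaard Floer version of Lemma~\ref{lem:reorganize-cone} to identify it (up to quasi-isomorphism) with the iterated mapping cone
\[
\Cone\bigl(\phi^+|\phi^- + \phi^-|\phi^+ \colon C_n\otimes\cU_\ell \to C_{n+1}\otimes\cU_{\ell+1}\bigr),
\]
where $C_n=\SFH(Y\setminus\nu(K),\Gamma_n)$ and $\cU_\ell=\SFH(S^3\setminus\nu(u),\Gamma_\ell)$. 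For fixed $q$ and $\ell\gg n,q$, the subcomplex $\Cone(\phi^+\colon\cU_{\ell-1}\to\cU_\ell)_{\ge q}$ is acyclic, so truncation in Alexander gradings $\ge q$ reduces the mapping cone to the corresponding truncation of the Heegaard Floer free model $\mathit{CFK}^-(K,n)$. Letting $q\to -\infty$ and using finite generation of $\HFK^-$ in each Alexander grading yields the isomorphism of graded $\bF$-vector spaces. The $U$-equivariance is then obtained by enlarging the cube by one direction corresponding to a negative bypass on $u$ and applying the Heegaard Floer analogue of Theorem~\ref{thm:U-action}.

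The hard part will be verifying that the $U$-action arising from a negative bypass on the unknot factor agrees with the conventional $U$-action on $\HFK^-(Y,K)$, rather than merely agreeing up to an element of $\bF^\times$. As in the instanton case, this is essentially a mod-$2$ homological grading computation combined with a non-vanishing check (the relevant handle-rearrangement homotopy $h^{[+,W]}$ cannot be zero without contradicting the bypass exact triangle for a knot where the sequence is known not to split, e.g.\ the right-handed trefoil). Alternatively, I would circumvent this point entirely using bordered Floer theory \cite{LOTBordered}: for a suitable bordered handlebody $H_n$ with boundary parametrization realizing the $n$-framed longitudinal suture, the pairing theorem identifies $\CFA(H_n)\boxtimes\CFD(Y\setminus\nu(K))$ with a free, finitely generated chain complex over $\bF[U]$ computing $\HFK^-(Y,K)$, and a direct inspection of the type-$A$ structure on $H_n$ shows this pairing is precisely the mapping cone defining $\mathit{CFK}^-(K,n)$. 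This bordered viewpoint simultaneously delivers the $\bF[U]$-linear isomorphism and permits the comparison with the Etnyre--Vela-Vick--Zarev model of \cite{EVZLimit}.
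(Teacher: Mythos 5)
Your proposal is correct and follows essentially the same route as the paper: the paper deduces the corollary immediately from the fact that the proof of Theorem~\ref{thm:isomorphism-free-model} uses only formal inputs (the connected sum decomposition, already established diagrammatically for Heegaard Floer in Section~\ref{sec:connected-sum-HF}, the algebraic truncation argument, and the $U$-action argument), exactly as you argue. Your fallback bordered Floer argument is likewise the paper's own second proof in Section~\ref{sec:free-models-Heegaard}, where the Etnyre--Vela-Vick--Zarev modules $M_n$ and the type-$DD$ module $\scK^{\bF[U]}$ are used to identify the mapping cone of $\phi^1_+\otimes U+\phi^1_-\otimes 1$ with the module computing $\HFK^-(Y,K)$ after pairing with $\widehat{\CFA}(Y\setminus\nu(K))$.
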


\subsection{Free models for links}
\label{sec:free-models-for-links}

The above free model of the instanton knot Floer complex extends in an entirely formal manner to the case of links. We now describe the model in the case of a two component link. In this case, write $\Gamma_{n_1,n_2}$ for a framing consisting of an $n_1$ and $n_2$ framed longitude on the two link components. To state the formula, we write $C_{n_1,n_2}=\SHI(Y\setminus \nu(L),\Gamma_{n_1,n_2})$. Our model takes the form:
\[
\mathit{CLI}^-(L,n_1,n_2)=
\begin{tikzcd}[column sep=3.5cm, row sep=3.5cm, labels=description]
C_{n_1,n_2}\otimes \C[U_1,U_2]
    \ar[r, " \phi^+_1|U_1+\phi^-_1|1"] 
    \ar[d, "\phi^+_2|U_2+\phi^-_{2}|1"]
    \ar[dr, "\substack{ \phi^{[+_1,+_2]}|U_1U_2\\
    +\phi^{[+_1, -_2]}|U_1 \\
    +\phi^{[-_1, +_2]}|U_2 \\
    +\phi^{[-_1,-_2]}|1}" ]
&C_{n_1+1,n_2}\otimes \C[U_1,U_2]
    \ar[d, "\phi^+_2|U_2+\phi^-_{2}|1"]
    \\
C_{n_1,n_2+1}\otimes \C[U_1,U_2]
    \ar[r, " \phi^+_1|U_1+\phi^-_1|1"] &
C_{n_1+1,n_2+1}\otimes \C[U_1,U_2]
\end{tikzcd}
\]
We form the above by considering the iterated mapping cone construction on the natural instanton Floer complexes. Then we apply the hypercube eigenspace techniques of Section~\ref{sec:hypercubes-eigenspaces} to obtain a complex involving the sutured instanton complexes. We then take the homology at each vertex using the homological perturabtion lemma for hypercubes (cf. \cite{HHSZDual}*{Lemma~2.10}).

The above description generalizes in a straightforward way to the setting of an $n$-component link in a homology 3-sphere.

For brevity, we will not attempt to prove the following:

\begin{conj} If $L$ is an $n$-component link in a homology 3-sphere and $\ve{n}$ is a choice of framing, then the homotopy type of $\mathit{CLI}^-(L,\ve{n})$ is well-defined and independent of $\ve{n}$. Furthermore, the homologies are isomorphic as $\C[U_1,\dots, U_n]$-modules to the direct limit construction from \cite{GhoshLiDecomposing}*{Section~6.2}. Finally, these link groups are tensorial under connected sum of links, and also satisfy the Skein exact triangle, as stated in Conjecture~\ref{conj:intro}.
\end{conj}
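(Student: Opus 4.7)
The plan is to establish the four claims in the conjecture (well-definedness up to homotopy, independence of framing, identification with the direct limit as $\C[U_1,\dots,U_n]$-modules, and compatibility with connected sum and the skein exact triangle) by systematically generalizing the techniques of Sections~\ref{sec:hypercubes-eigenspaces}, \ref{sec:connected-sum-HF}--\ref{sec:7}, \ref{sec:10}, \ref{sec:U-action}, and \ref{sec:limit-free} from knots to links. First, to construct $\mathit{CLI}^-(L,\ve{n})$ as a well-defined homotopy type, I would begin with the $n$-component framed link $L\subset Y$ together with meridional duals $L_1^{*},\dots, L_n^{*}$ and apply the iterated mapping cone construction (cf. \cite{ScadutoOddKh} and Theorem~\ref{thm:link-surgery-instanton}) to obtain an $n$-dimensional hypercube of instanton chain complexes, equipped with the commuting hypercube operators $\mu(R_i)$ and $\mu(p_i)$ that arise from localizing via Remark~\ref{rem:localize-mu(R)}. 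Applying the simultaneous eigenspace construction from Section~\ref{sec:simultaneous-eigenspaces} and then the homological perturbation lemma for hypercubes (cf. \cite{HHSZDual}*{Lemma~2.10}) produces a hypercube at the homology level whose vertices are the sutured instanton complexes $C_{\ve{n}+\veps}$. Lemma~\ref{lem:simultaneous-eigenvalues} and Lemma~\ref{lem:eigenspace-functor} guarantee that the homotopy type of this hypercube is independent of the choices made in the construction.

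For independence of framing, I would argue as in the proof of Theorem~\ref{thm:isomorphism-free-model}: if $q\in \Z^n$ is fixed, then truncating above multi-grading $q$ and letting any individual coordinate $n_i$ tend to infinity produces a model in which one of the positive bypass maps $\phi_i^{+}$ becomes an isomorphism in the surviving gradings, so the relevant subcone is acyclic and the complex collapses to a simpler one built from a smaller-dimensional cube. Running this argument with one component at a time would provide chain maps between $\mathit{CLI}^-(L,\ve{n})$ and $\mathit{CLI}^-(L,\ve{n}+\ve{e}_i)$ that are isomorphisms in gradings above $q$. Combined with the fact that the multi-Alexander grading is bounded above on $\KHI^-$ (following \cite{LiLimits} and \cite{GhoshLiDecomposing}), this gives independence of framing, and in the same stroke identifies $H_*\mathit{CLI}^-(L,\ve{n})$ with the direct limit $\KHI^-(Y,L)$ from \cite{GhoshLiDecomposing}*{Section~6.2} as a multi-graded $\C[U_1,\dots,U_n]$-module; the compatibility of each $U_i$ action with negative bypass attachments along the $i$-th component follows from the same argument used to prove Theorem~\ref{thm:U-action}, applied one component at a time.

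For the connected sum tensor formula, I would combine two ingredients: the free model just constructed, and a multi-component version of the proof of Theorem~\ref{thm: main}. Concretely, if $L\subset Y$ and $L'\subset Y'$ are links and one forms the connected sum $L\# L'$ along chosen components, then the proof of Lemma~\ref{lem:reorganize-cone} (using the contact $1$-handle, contact $2$-handle, and disk-decomposition manipulation of Figure~\ref{fig:43}) applies verbatim to reduce the sutured complex of the connected sum to a hypercube that, after hypercube eigenspace decomposition and homological perturbation, is precisely the derived tensor product of the free link models, with the shared $U$-variable identified as in Theorem~\ref{thm: main}. The skein exact triangle in the presence of the module structure would then follow by feeding the free model into the exact triangle argument of Section~\ref{sec:10}: the surgery exact triangle along the unknot $l$ encircling the crossing survives the eigenspace and homological perturbation constructions, and the free model makes the $\C[U_1,\dots,U_{\ell-1}]$-action on each term and each connecting map manifest from the hypercube structure.

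The main obstacle will be the coherence of the $\C[U_1,\dots,U_n]$-module structure through the entire construction, and in particular the compatibility of hypercube eigenspace splittings with the operators $\phi_i^{-}$ that implement the $U_i$ actions. Since the splittings of Lemma~\ref{lem:eigenspace-functor} are only canonical up to chain homotopy, checking that the induced $U_i$-actions truly assemble into a $\C[U_1,\dots,U_n]$-module structure (rather than merely an $A_\infty$-module structure), and that this structure is compatible with connected sum and the skein triangle, will require a careful extension of the argument of Section~\ref{sec:U-action}---in particular its use of the mod $2$ homological grading on $\cU_\ell$ to kill unwanted homotopies under homological perturbation---to the setting where several $U_i$ variables interact simultaneously. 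Writing the argument out in the cleanest way likely entails setting up the construction in the $A_\infty$-category and verifying that the higher structure maps vanish on the nose for grading reasons, as in the algebraic model of Lemma~\ref{lem:U-action-A_infty-tensor-product}.
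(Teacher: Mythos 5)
There is no proof in the paper to compare against: this statement is precisely the conjecture at the end of Section~\ref{sec:free-models-for-links}, which the authors explicitly decline to prove (``For brevity, we will not attempt to prove the following''), and whose skein part is Conjecture~\ref{conj:intro}, also left open. So your proposal should be judged as a research plan, not checked against an existing argument. As a plan it follows the strategy the authors themselves suggest, but the places where you wave at the paper's machinery are exactly the places where the conjecture is genuinely open. First, well-definedness of the homotopy type of $\mathit{CLI}^-(L,\ve{n})$: the iterated mapping cone and the eigenspace splittings of Section~\ref{sec:hypercubes-eigenspaces} are only defined up to (non-canonical) choices, and sutured instanton naturality (Baldwin--Sivek) only gives projectively transitive systems; Lemmas~\ref{lem:eigenspace-functor} and~\ref{lem:simultaneous-eigenvalues} control a fixed construction up to homotopy, but they do not by themselves show that different closures, metrics, and splitting maps yield a well-defined homotopy type of the resulting hypercube with its $\C[U_1,\dots,U_n]$-structure. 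Second, your framing-independence argument via truncation in gradings above $q$ produces, for each $q$, an identification of homologies in a range; to get what the conjecture asserts (a homotopy equivalence of the free complexes, and an isomorphism of modules, independent of $\ve{n}$) you must assemble these range-wise identifications coherently, which is not automatic and is not done anywhere in the paper.

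Third, and most seriously, the module structure. The proof of Theorem~\ref{thm:U-action} hinges on inserting an unknot factor $\cU_\ell$ whose homology is supported in a single mod~$2$ grading, which is what kills the higher homotopies under homological perturbation and lets the authors see a strict $U$-action rather than an $A_\infty$-action. For a genuine multi-component link there is no such special factor mediating between the several $U_i$'s, so the claim that the $U_i$-actions on $H_*\mathit{CLI}^-(L,\ve{n})$ strictly commute and agree with the negative bypass actions of \cite{GhoshLiDecomposing} is exactly the coherence problem you flag in your last paragraph --- flagging it is not resolving it, and the authors' own remark after Conjecture~\ref{conj:intro} (that the derived tensor product in the skein triangle should require an $A_\infty$-module structure on $\KHI^-(L_0)$) indicates they regard this as the substantive missing ingredient. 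The same issue infects your connected-sum and skein steps: Lemma~\ref{lem:reorganize-cone} and Section~\ref{sec:10} give the statements on the level of graded groups, and upgrading them to module-equivariant statements for links is precisely what is conjectural. So the proposal is a reasonable outline of the expected attack, but it does not close any of the gaps that caused the authors to state this as a conjecture rather than a theorem.
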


 Note that to take derived tensor products with the modules from \cite{GhoshLiDecomposing}, one would need to equip them with $A_\infty$-module structures since they are not free, whereas the complexes $\mathit{CLI}^-(L,\ve{n})$ defined above are free, and hence suitable for taking tensor products. Assuming the above conjecture, our free models $\mathit{CLI}^-(L,\ve{n})$ would equip the direct limit models from \cite{GhoshLiDecomposing} with an $A_\infty$-module structure via the homological perturbation lemma for $A_\infty$-modules.

\subsection{Free models in Heegaard Floer theory}

\label{sec:free-models-Heegaard}

In this section, we prove that the analog of Theorem~\ref{thm:isomorphism-free-model} holds in Heegaard Floer theory. Our proof is based on reinterpretting the work of Etnyre, Vela-Vick and Zarev.

We begin by recalling components of Etnyre, Vela-Vick and Zarev's proof  \cite{EVZLimit} of the isomorphism between $\HFK^-(Y,K)$ and the direct limit construction of knot Floer homology. Their proof is a local computation using bordered Floer homology \cite{LOTBordered} and bordered sutured Floer homology \cite{ZarevBorderedSutured}. We recall the Lipshitz, Ozsv\'{a}th, Thurston torus algebra, which we denote $\cA$. This algebra has two idempotents, $\iota_1$ and $\iota_2$, and is the path algebra with relations
\[
\begin{tikzcd}[column sep=2cm] \iota_1 \ar[r,bend left, "\rho_1"] \ar[r, bend right, "\rho_3",swap]& \iota_2 \ar[l, "\rho_2"]
\end{tikzcd}/ (\rho_2\rho_1=\rho_3\rho_2=0).
\]

We will let ${}^{\cA} M_n$ denote the bordered sutured type-$D$ module for $(T^2\setminus D^2)\times [0,1]$ considered by Etnyre, Vela-Vick and Zarev. The exact sutures and parametrizations are not important, but essentially $(\bT^2\setminus D^2)\times \{0\}$ is parametrized as in \cite{LOTBordered}, while on $(\bT^2\setminus D^2)\times \{1\}$ has two parallel sutures, of slope $n$ relative to the bordered framing. One of these sutures intersects $\d D^2$ and runs to the side of $(T^2\setminus D^2)\times \{0\}$. See \cite{EVZLimit}*{Figure~27} for a schematic.

The exact sutures and parametrizations are not important for our present formulation. Rather, what is important is that if $\widehat{\mathit{CFA}}(Y\setminus \nu(K)){}_{\cA}$ is the type-$A$ module for a knot complement, then 
\[
\widehat{\mathit{CFA}}(Y\setminus \nu(K)){}_{\cA}\boxtimes {}^{\cA} M_n\simeq \SFH(Y\setminus \nu(K),\Gamma_n).
\]

Etnyre, Vela-Vick and Zarev define positive and negative bypass maps. These take the form of type-$D$ module maps 
\[
\phi_+^1,\phi_-^1 \colon {}^{\cA}M_n\to {}^{\cA} M_{n+1}.
\]

There is a natural doubly pointed Heegaard knot diagram for a fiber $S^1\subset S^1\times D^2$. See \cite{EVZLimit}*{Figure~18}. We will write ${}^{\cA}\scK^{\bF[U]}$ for the corresponding type-$DD$ module. This module has a single generator $\xs$ and has differential given by
\[
\delta^1(\xs)=\rho_{23} \otimes \xs\otimes U.
\]
For our purposes, the  important property of this module is that
\[
\widehat{\mathit{CFA}}(Y\setminus \nu(K)){}_{\cA}\boxtimes {}^{\cA} \scK^{\bF[U]}\simeq \HFK^-(Y,K)^{\bF[U]}.
\]

Etnyre, Vela-Vick and Zarev show that
\[
\injlim {}^{\cA} M_n\iso {}^{\cA} \scK^{\bF[U]}\boxtimes \bF[U]
\]
where the $\bF[U]$ action on the direct limit is induced by the negative bypass maps $\phi_-^1$.  It is helpful also to use extension of scalars to view each $M_n$ as a type-$DD$ module over ${}^{\cA} M_n^{\bF[U]}$ (so that $\delta^1$ has non non-zero $U$-powers).

  We now prove the following analog of  \cite{EVZLimit}*{Theorem~1.1}, which does not involve direct limits:

\begin{prop} For any $n$,  there is a homotopy equivalence
\[
{}^{\cA} \scK^{\bF[U]}\simeq \Cone(\phi^1_+\otimes U+\phi^1_-\otimes 1\colon {}^{\cA} M_n^{\bF[U]}  \to {}^{\cA} M_{n+1}^{\bF[U]})
\]
\end{prop}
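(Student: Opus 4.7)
The plan is to mirror the argument for Theorem~\ref{thm:isomorphism-free-model} in the bordered-sutured framework, using Etnyre, Vela-Vick and Zarev's identifications to translate between bordered modules and sutured Floer complexes. First I will construct an explicit type-$DD$ morphism
\[
\Psi \colon \Cone\bigl(\phi^1_+\otimes U+\phi^1_-\otimes 1\colon {}^{\cA}M_n^{\bF[U]} \to {}^{\cA}M_{n+1}^{\bF[U]}\bigr) \longrightarrow {}^{\cA}\scK^{\bF[U]}
\]
whose form is suggested by the identification $\injlim_n {}^{\cA}M_n \cong {}^{\cA}\scK\boxtimes\bF[U]$: the $\bF[U]$-variable records the iteration parameter along the direct system, and $\Psi$ sends a generator $m \otimes U^k$ of $M_n\otimes\bF[U]$ (respectively $M_{n+1}\otimes\bF[U]$) to the image of $(\phi^1_+)^k(m)$ in the colimit, with any higher-order corrections forced by the type-$DD$ structure equation.

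To verify that $\Psi$ is a homotopy equivalence, I box-tensor both sides with $\widehat{\CFA}(Y\setminus\nu(K))_{\cA}$ for an arbitrary knot $K\subset Y$. The right-hand side pairs to $\HFK^-(Y,K)$ by Etnyre, Vela-Vick and Zarev's isomorphism, while the left-hand side pairs to the Heegaard Floer free model
\[
\Cone\bigl(\phi^-\otimes 1+\phi^+\otimes U\colon \SFH(Y\setminus\nu(K),\Gamma_n)\otimes\bF[U] \to \SFH(Y\setminus\nu(K),\Gamma_{n+1})\otimes\bF[U]\bigr),
\]
which is the Heegaard Floer analog of $\mathit{CKI}^-(K,n)$. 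The resulting isomorphism of $\bF[U]$-modules is exactly the Heegaard Floer corollary of Theorem~\ref{thm:isomorphism-free-model} established earlier in this section. Since this must hold for every knot $K\subset Y$, we obtain a strong universal pairing statement.

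The principal obstacle is the upgrade from a pairing-level quasi-isomorphism to a homotopy equivalence of type-$DD$ bimodules. I will handle this using the standard detection principle in bordered Floer theory: a morphism of bounded type-$DD$ modules over $(\cA,\bF[U])$ that induces homotopy equivalences on box tensor products with a sufficiently large family of type-$A$ modules over $\cA$ is itself a homotopy equivalence. Since every bordered type-$A$ module over the torus algebra arises, up to homotopy, from a bordered Heegaard diagram, the knot complement family $\{\widehat{\CFA}(Y\setminus\nu(K))_{\cA}\}$ suffices. A cleaner alternative is to box-tensor with the identity $\mathit{AA}$-bimodule for the torus boundary, which recovers the type-$DD$ structure up to homotopy and reduces the verification to a finite algebraic check against the small explicit model ${}^{\cA}\scK^{\bF[U]}$, which has a single generator $\xs$ with $\delta^1(\xs)=\rho_{23}\otimes\xs\otimes U$.
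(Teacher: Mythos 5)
Your argument has a genuine gap, and it sits exactly at the point you flag as the ``principal obstacle.'' First, the morphism $\Psi$ is never actually constructed: saying that it sends $m\otimes U^k$ to ``the image of $(\phi_+^1)^k(m)$ in the colimit, with any higher-order corrections forced by the type-$DD$ structure equation'' is not a definition, and there is no a priori reason such corrections exist or terminate (note also that ${}^{\cA}\scK^{\bF[U]}$, with $\delta^1(\xs)=\rho_{23}\otimes\xs\otimes U$, is not operationally bounded, so boundedness hypotheses needed for pairing and detection arguments are not automatic). Second, even granting a candidate $\Psi$, your verification step does not verify anything about $\Psi$: pairing with $\widehat{\CFA}(Y\setminus\nu(K))_{\cA}$ and invoking the Heegaard Floer corollary of Theorem~\ref{thm:isomorphism-free-model} only yields an \emph{abstract} isomorphism of $\bF[U]$-modules between the two homologies; it does not show that the map induced by $\Psi$ realizes that isomorphism, so it does not show $\widehat{\CFA}(Y\setminus\nu(K))_{\cA}\boxtimes\Psi$ is a quasi-isomorphism. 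Third, the ``detection principle'' you invoke is not a standard result in the form you need: homotopy equivalence of type-$DD$ bimodules is detected by pairing with a generator such as $\CFAA(\mathbb{I})$, not by pairing with the family of knot-complement type-$A$ modules, and it is false that every type-$A$ module over the torus algebra arises from a knot complement. Your ``cleaner alternative'' (tensor with the identity $AA$-bimodule and do a finite algebraic check against the one-generator model of ${}^{\cA}\scK^{\bF[U]}$) is essentially correct in spirit, but at that point the entire content of the proposition \emph{is} that finite check, which your proposal never carries out.

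For comparison, the paper's proof is purely local and computational: one first reduces to $n=0$ by observing that tensoring with the reparametrization bimodule commutes with the bypass maps, and then uses Etnyre--Vela-Vick--Zarev's explicit models ($M_0$ has one generator $a$ with $\delta^1=0$; $M_1$ has generators $a,b$ with $\delta^1(b)=\rho_2\otimes a\otimes 1$; $\phi_+^1(a)=\iota_2\otimes a$, $\phi_-^1(a)=\rho_2\otimes b$) to write out the three-generator mapping cone and exhibit explicit maps $I$, $\Pi$ and a homotopy $H$ with $\Pi\circ I=\id$ and $I\circ\Pi=\id+\d(H)$. To repair your approach you would either need to perform this explicit computation (making the global pairing argument superfluous), or else prove a genuine faithfulness statement for the family of modules you pair with together with a naturality statement identifying the pairing-level isomorphisms with those induced by a concretely defined $\Psi$; neither is supplied.
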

\begin{proof} Let us write ${}^{\cA} C^{\bF[U]}$ for the mapping cone complex in the right hand side of the statement. We use the computation of \cite{EVZLimit}*{Section~7.2}. Note first that tensoring with the bimodule for changing the $\cA$ boundary parametrization commutes with the positive and negative bypass maps. Hence, it is sufficient to show the claim for $n=0$.

 The type-$D$ module $M_0$ has a single generator $a$, concentrated in idempotent 2. The structure map $\delta^1$ vanishes. The module $M_1$ has two generators, $a$ and $b$, where $a$ is in idempotent 2 and $b$ is in idempotent 1. The structure map satisfies
 \[
 \delta^1(b)=\rho_2\otimes a\otimes 1
 \]
 See \cite{EVZLimit}*{Section~7.2.1}.
 
 Etnyre, Vela-Vick and Zarev  also compute the positive and negative bypass maps \cite{EVZLimit}*{Section~7.3}. These are given by the following formulas (as maps from $M_0$ to $M_1$):
 \[
 \phi_+^1(a)=\iota_2\otimes a\quad\text{and} \quad \phi_-^1(a)=\rho_2\otimes b.
 \]
 
 Hence, the mapping cone in the statement becomes
 \[
 \begin{tikzcd}[column sep=2cm, row sep=1cm, labels=description]
 &a_0
 	\ar[d, "1"]
 	\ar[dl, "\rho_3 \otimes U"]
 \\
 b_1
 	\ar[r, "\rho_2"]
 & a_1
 \end{tikzcd}
 \]
 (In the above, the subscripts of $a_i$ and $b_i$ denote whether they are from $M_0$ or $M_1$).  A homotopy equivalence
 \[
 \begin{tikzcd}
 {}^{\cA} \scK^{\bF[U]} \ar[r, shift left, "I"]& {}^{\cA} C^{\bF[U]}\ar[l, shift left, "\Pi"] \ar[loop right, "H"] 
 \end{tikzcd}
 \]
  is given as follows:
 \[
 \begin{split}
	 I(b_1)&=1\otimes b_1\otimes 1+\rho_2 \otimes a_0\otimes \\
	 \Pi(b_1)&=1\otimes b_1\otimes 1\\
	 \Pi(a_1)&=\rho_3 \otimes b_1\otimes U\\
	 \Pi(a_0)&=0\\
	 H(a_1)&=1\otimes a_0\otimes 1\\
	 H(a_0)&=H(b_1)=0.
	 \end{split}
 \]
 One computes easily that $\Pi\circ I=\id$ and $I\circ \Pi=\id+\d(H)$. 
\end{proof}

\bibliographystyle{custom}
\def\MR#1{}
\bibliography{biblio}

\end{document}